\title{Prismatic $F$-crystals and Lubin-Tate \texorpdfstring{$(\varphi_q,\Gamma)$}{(phi\_q, Gamma)}-modules}
\author{Samuel Marks}
\date{\vspace{-3.5ex}}
\begin{document}

\maketitle

\begin{abstract}
	Let $L/\Q_p$ be a finite extension. We introduce \textit{$L$-typical prisms}, a mild generalization of prisms. Following ideas of Bhatt, Scholze, and Wu, we show that certain vector bundles, called Laurent $F$-crystals, on the $L$-typical prismatic site of a formal scheme $X$ over $\Spf\O_L$ are equivalent to $\O_L$-linear local systems on the generic fiber $X_\eta$. We also give comparison theorems for computing the \'etale cohomology of a local system in terms of the cohomology of its corresponding Laurent $F$-crystal. In the case $X = \Spf\O_K$ for $K/L$ a $p$-adic field, we show that this recovers the Kisin-Ren equivalence between Lubin-Tate $(\varphi_q,\Gamma)$-modules and $\O_L$-linear representations of $G_K$ and the results of Kupferer and Venjakob for computing Galois cohomology in terms of Herr complexes of $(\varphi_q,\Gamma)$-modules. We can thus regard Laurent $F$-crystals on the $L$-typical prismatic site as providing a suitable notion of relative $(\varphi_q,\Gamma)$-modules.
\end{abstract}

\section{Introduction}
Let $K/\Q_p$ be a $p$-adic field, let $K_\infty$ be the $p$-adic completion of the infinite cyclotomic extension $K(\zeta_{p^\infty})$, and let $\Gamma_K = \Gal(K_\infty/K)$. In this setting, Fontaine's theory of $(\varphi,\Gamma)$-modules \cite{F} gives an equivalence of categories
\[\Mod_{\A_K}^{\varphi,\Gamma_K,et}\simeq \Mod_{W(K_\infty^\flat)}^{\varphi,\Gamma_K,et}\simeq \Rep_{\Z_p}(G_K)\]
between -- on the representation theoretic side -- the category of finite free $\Z_p$-linear representations of the absolute Galois group $G_K = \Gal(\overline{K}/K)$ and -- on the semi-linear algebraic side -- categories of $(\varphi,\Gamma)$-modules over the \textit{perfect} period ring $W(K_\infty^\flat)$ or a certain \textit{deperfected} period ring $\A_K\subseteq W(K_\infty^\flat)$. Here, the word ``deperfected'' refers to the fact that the imperfect sub-$\F_p$-algebra $\E_K = \A_K/p\subseteq K_\infty^\flat = W(K_\infty^\flat)/p$ becomes $K_\infty^\flat$ under completed perfection.

Following the discussion in \cite[\secsymb0.2]{KL}, we distinguish between two ways one might hope to relativize the theory of $(\varphi,\Gamma)$-modules. First, one might hope for a \textit{geometric} relativization. On the representation theoretic side, this means replacing $\Rep_{\Z_p}(G_K)$ with \'etale local systems $\Loc_{\Z_p}(X_\eta)$ on the generic fiber of a formal scheme $X/\Z_p$. One then hopes to get a corresponding semi-linear algebraic category of objects which can be thought of as $(\varphi,\Gamma)$-modules varying over the base $X$. The most satisfactory candidate here is the \textit{Laurent F-crystals} of \cite{BScrys}. Recall that these are vector bundles $\mc M\in \Vect(X_{\Prism}, \O_\Prism[\tfrac{1}{\mc I}]^\wedge_{(p)})^{\phi = 1}$ over a certain structure sheaf on the prismatic site of $X$ equipped with an isomorphism $\phi^*\mc M\stackrel{\sim}{\rightarrow}\mc M$. Bhatt-Scholze's key theorem is as follows.
\begin{thm}{\em \cite[corollary~3.8]{BScrys}}
	Let $X$ be a bounded formal scheme adic over $\Spf \Z_p$ with adic generic fiber $X_\eta$. Then there is an equivalence $\Vect(X_\Prism, \O_\Prism[\tfrac{1}{\mc I}]^\wedge_{(p)})^{\phi = 1}\simeq \Loc_{\Z_p}(X_\eta)$. 
\end{thm}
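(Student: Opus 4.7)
The plan is to factor the claimed equivalence through the \emph{perfect} prismatic subsite $X_\Prism^{\mathrm{perf}}\subset X_\Prism$ spanned by perfect prisms, and then apply a Fontaine-style Artin-Schreier argument on that subsite. After reducing to the affine case $X = \Spf R$ (both sides satisfy \'etale descent on $X$), I would first check that restriction from $X_\Prism$ to $X_\Prism^{\mathrm{perf}}$ induces an equivalence on Laurent F-crystals. Since every prism $(A, I)$ is covered by its perfection $(A_{\mathrm{perf}}, I A_{\mathrm{perf}})$, this amounts to a descent statement; the $\phi=1$ rigidity of Laurent F-crystals, combined with faithfully flat descent for the $p$-completed, $\mc I$-inverted structure sheaf along the perfection map, is what should make this descent feasible.

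Next I would identify $X_\Prism^{\mathrm{perf}}$ with (a localization of) the pro-\'etale site of $X_\eta$. The assignment $(A, I)\mapsto \Spa(A[\tfrac{1}{I}], A)$ sends a perfect prism over $X$ to an affinoid perfectoid pro-\'etale over $X_\eta$, and via tilting this should be essentially an equivalence of sites. Under this comparison, the structure sheaf $\O_\Prism[\tfrac{1}{\mc I}]^\wedge_{(p)}$ corresponds to the pro-\'etale sheaf
\[\Spa(S, S^+)\longmapsto W(S^\flat)\bigl[\tfrac{1}{[\pi]}\bigr]^\wedge_p,\]
the $p$-completion of the inverted-Teichm\"uller Witt ring of the tilt.

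The final step is the classical Artin-Schreier short exact sequence on an affinoid perfectoid $S$,
\[0 \longrightarrow \Z_p \longrightarrow W(S^\flat)\bigl[\tfrac{1}{[\pi]}\bigr]^\wedge_p \xrightarrow{\phi - 1} W(S^\flat)\bigl[\tfrac{1}{[\pi]}\bigr]^\wedge_p \longrightarrow 0,\]
which I would check holds v-locally on $X_\eta$. This immediately yields mutually inverse functors: send a Laurent F-crystal $\mc M$ to its pro-\'etale sheaf of Frobenius invariants $\mc M^{\phi=1}$ (a $\Z_p$-local system, by the exactness together with the vector bundle hypothesis on $\mc M$), and send a $\Z_p$-local system $L$ to the $\phi$-module $L\otimes_{\Z_p}W((-)^\flat)[\tfrac{1}{[\pi]}]^\wedge_p$ with Frobenius acting on the second factor. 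That these are mutually inverse reduces, after trivializing $L$ locally, to the exact sequence above.

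I expect the main obstacle to be the descent in the first step: while full faithfulness of the restriction to $X_\Prism^{\mathrm{perf}}$ follows relatively directly from the $\phi = 1$ condition and a Frobenius-averaging argument, essential surjectivity requires genuine faithfully flat descent for vector bundles over a non-Noetherian, $p$-completed structure sheaf along the perfection of a prism. Handling this correctly — and in particular establishing that Laurent F-crystals on the whole prismatic site can be reconstructed from their restriction to the perfect subsite — is where most of the technical weight of the argument should concentrate.
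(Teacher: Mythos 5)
Your overall architecture — reduce to the perfect prismatic subsite, identify that subsite with perfectoid covers of $X_\eta$, and finish with an Artin--Schreier sequence \`a la Katz--Fontaine — is the same as the paper's (corollary~\ref{cor:F-crystal-perfect-site}, proposition~\ref{prop:F-crystal-over-perfectoid-local-system}, and theorem~\ref{thm:F-crystals-local-systems}). But the mechanism you propose for the first and, as you say, heaviest step is wrong. The perfection $(A_{\perf}, IA_{\perf})$ is \emph{not} in general a cover of $(A,I)$ in the prismatic site: $A\to A_{\perf}$ factors through iterates of $\phi$, and by Kunz's theorem $\phi$ on $A/p$ is flat only when $A/p$ is regular, so for a typical prism in $X_\Prism$ (e.g.\ a term of a \v{C}ech--Alexander complex, or any prism with singular special fiber) the map to the perfection is not $(p,I)$-completely faithfully flat. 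Even where it is flat, descent would only give you ``modules over $A$ $=$ modules over $A_{\perf}$ plus descent data,'' whereas what you need is that base change along this single map is already an equivalence, with no descent data — a statement flat descent cannot produce. So ``faithfully flat descent along the perfection map'' is not available and would not suffice.

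The correct mechanism (proposition~\ref{prop:perfection-base-change}, following Wu) is: by $p$-adic d\'evissage reduce to \'etale $\varphi$-modules over $A[\tfrac1I]/p$ and $A_{\perf}[\tfrac1I]/p$; these two rings have equivalent \'etale sites (topological invariance of the \'etale site under perfection and completion); and Katz's theorem identifies \'etale $\varphi$-modules over each with $\F_p$-local systems on that common \'etale site. It is precisely the $\phi$-structure, not flatness, that makes the non-flat base change an equivalence. A secondary imprecision: the comparison of $X_\Prism^{\perf}$ with $X_\eta$ should go through the v-topology on the diamond $X_\eta^\diamond$ (arbitrary perfectoids over $X$ give v-covers, not pro-\'etale covers, and your $\Spa(A[\tfrac1I],A)$ should be $\Spa((A/I)[\tfrac1p],(A/I))$); this is fixable, but the descent used at the end must be v-descent for lisse sheaves, as in the paper. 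Your Artin--Schreier step and the construction of the mutually inverse functors are fine once these repairs are made.
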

\noindent In the case $X = \Spf\O_K$ for $K/\Q_p$ a $p$-adic field, work of Wu \cite{Wu} shows that 
\[\Vect((\O_K)_\Prism, \O_\Prism[\tfrac{1}{\mc I}]^\wedge_{(p)})^{\phi = 1}\simeq \Mod_{\A_K}^{\varphi,\Gamma_K,et}\simeq \Mod_{W(K_\infty^\flat)}^{\varphi,\Gamma_K,et},\] recovering Fontaine's original theory. 

\begin{rmk}
	Due to obstructions related to the fact that Cohen rings can be formed functorially for perfect fields (via the Witt vector construction) but not for arbitrary characteristic $p$ fields, it is significantly easier to give a relative construction of $(\varphi,\Gamma)$-modules over the perfect period ring $W(K_\infty^\flat)$; for example, relative $(\varphi,\Gamma)$-modules over a perfect period sheaf $W(\O_X^\flat)$ are defined in work of Kedlaya and Liu \cite{KL}. In follow-up work, Kedlaya and Liu \cite{KL2} attempt to define satisfactory imperfect period sheaves via an axiomatic approach, but these axioms fail to attain in the important Lubin-Tate case discussed below \cite{SV-decomp}. On the other hand, the Bhatt-Scholze approach to relative $(\varphi,\Gamma)$-modules circumvents this difficulty using the theory of prisms \cite{BS}, which can be viewed as deperfections of perfectoid rings. 
\end{rmk}

Alternatively, one might also want \textit{arithmetic} relativizations of the theory of $(\varphi,\Gamma)$-modules. On the representation theory side, this means replacing the $\Z_p$ in $\Rep_{\Z_p}(G_K)$ with affinoid algebras over $\Z_p$, as in \cite{BC-arith,KPX,Berger-Langlands}. The simplest such case is to study $\Rep_{\O_L}(G_K)$ for $K/L/\Q_p$ a finite subextension. A key goal of this paper is to extend Bhatt-Scholze's prismatic approach to relative $(\varphi,\Gamma)$-modules to this case. We do this by introducing a mild generalization of prisms, which we call $L$-typical prisms, and the $L$-typical prismatic site $X_{\Prism_L}$ of a formal scheme $X/\O_L$. This done, we show the following.
\begin{thm}\label{thm:intro-version-main-result}
	Let $L/\Q_p$ be a finite extension with uniformizer $\pi$, and let $X$ be a bounded formal scheme adic over $\Spf\O_L$ with adic generic fiber $X_\eta$. 
	\begin{enumerate}
		\item[(1)] There is an equivalence of categories
		\[\Vect(X_{\Prism_L}, \O_\Prism[\tfrac{1}{I}]^\wedge_{(\pi)})^{\phi = 1} \simeq \Loc_{\O_L}(X_\eta)\]
		between Laurent $F$-crystals on $X_{\Prism_L}$ and $\O_L$-local systems on $X_\eta$.
		\item[(2)] If $\mc M\in \Vect(X_{\Prism_L}, \O_\Prism[\tfrac{1}{I}]^\wedge_{(\pi)})^{\phi = 1}$ and $T\in \Loc_{\O_L}(X_\eta)$ correspond under the equivalence above, then there is an isomorphism
		\[R\Gamma(X_{\Prism_L}, \mc M)^{\phi = 1} \cong R\Gamma(X_{\eta,et}, T).\]
	\end{enumerate}
\end{thm}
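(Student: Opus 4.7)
The plan is to mimic Bhatt-Scholze's proof of their Corollary~3.8 in \cite{BScrys}, substituting the $L$-typical analog of each construction. The argument is local-to-global, and I would begin by establishing both parts over a single \emph{perfect} $L$-typical prism $(A, I)$. I expect perfect $L$-typical prisms to correspond, via an $L$-typical tilting equivalence, to perfectoid $\O_L$-algebras $R = A/I$, with $A \cong W_{\O_L}(R^\flat)$ (ramified Witt vectors) and $A[\tfrac{1}{I}]^\wedge_{(\pi)}$ identifiable with $W_{\O_L}(S^\flat)$ for $S = R[\tfrac{1}{\pi}]$. Over this latter ring, part~(1) reduces to the statement that finite projective $\phi_q$-fixed $W_{\O_L}(S^\flat)$-modules correspond to finite free $\O_L$-linear \'etale local systems on $\Spec S$; this should follow from the $L$-typical Artin-Schreier-Witt short exact sequence
\[0 \longrightarrow \O_L \longrightarrow W_{\O_L}(S^\flat) \xrightarrow{\phi_q - 1} W_{\O_L}(S^\flat) \longrightarrow 0\]
on $\Spec(S)_{et}$ (with $q$ the residue cardinality of $\O_L$), exactly as in the classical $p$-typical case.

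Next I would globalize by descent. The key point is that $X_{\Prism_L}$ should admit enough perfect covers to reduce computations to the previous step, and that the same perfectoid covers yield a matching descent description of \'etale $\O_L$-local systems on $X_\eta$. Concretely, one would take a pro-\'etale cover of $X_\eta$ by affinoid perfectoids, pass to the associated perfect $L$-typical prisms, and check that both $\Vect(-,\,\O_\Prism[\tfrac{1}{I}]^\wedge_{(\pi)})^{\phi = 1}$ and $\Loc_{\O_L}(-)$ satisfy descent along the resulting hypercover, with matching local data by the perfect-prism case. Part~(2) proceeds in parallel: over a perfect prism the Artin-Schreier-Witt sequence identifies $R\Gamma(\mc M)^{\phi = 1}$ with the \'etale cohomology of the corresponding local system, and globalization follows by matching the two \v{C}ech spectral sequences associated to the chosen hypercover.

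The main obstacle will be the foundational work supporting the descent argument: (i) developing a functorial theory of $L$-typical Witt vectors $W_{\O_L}$ that is compatible with tilting, so that perfect $L$-typical prisms are genuinely equivalent to perfectoid $\O_L$-algebras; (ii) verifying the arc- or quasisyntomic-descent properties of the structure sheaf $\O_\Prism[\tfrac{1}{I}]^\wedge_{(\pi)}$ along perfectoid covers in $X_{\Prism_L}$; and (iii) proving the $L$-typical Artin-Schreier-Witt sequence is exact on the relevant pro-\'etale sites. Once these technical inputs are in place, the remaining arguments should go through essentially formally by analogy with the Bhatt-Scholze case, since no fundamentally new phenomena arise beyond replacing $\Z_p$, the $p$-Frobenius, and ordinary Witt vectors by their Lubin-Tate counterparts $\O_L$, $\phi_q$, and $W_{\O_L}$.
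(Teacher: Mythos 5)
Your proposal is correct and follows essentially the same route as the paper: reduce to perfect $L$-typical prisms (equivalently, perfectoid $\O_L$-algebras via ramified Witt vectors and tilting), apply the Katz/Artin--Schreier-type equivalence between \'etale $\varphi_q$-modules and $\O_L$-local systems there, and globalize both sides by (v-)descent along perfectoid covers of $X_\eta$. The one step you gloss over is the passage from arbitrary prisms to perfect ones, which the paper handles not via ``perfect covers'' but by showing that the perfection functor induces an equivalence on \'etale $\varphi_q$-modules (theorem~\ref{thm:intro-version-perfection-base-change}), proved by d\'evissage to the $\pi$-torsion case and topological invariance of the \'etale site.
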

Note that this theorem comes with an \'etale comparison generalizing \cite[theorem~1.9(i)]{Guo}, itself a generalization of the Bhatt-Scholze \'etale comparison \cite[theorem~1.8(4)]{BS}. Here and throughout the paper, if $E$ is a complex in a derived category with an endomorphism $\phi$, then $E^{\phi = 1} := \mathrm{Cone}(\phi - \mathrm{id})[-1]$ is the mapping cocone of $\phi - \mathrm{id}$. 

Before going on, we say a few words about $L$-typical prisms, which were independently defined by Ito and called ``$\O_L$-prisms'' in his concurrent work \cite{Ito}. The category of $L$-typical prisms is a mild generalization of the category of prisms, arising by replacing $\delta$-rings with what we call $\delta_L$-algebras. In the same way that $p$-complete $\delta$-rings relate to $\Z_p$-algebras with a lift of Frobenius, $\delta_L$-algebras relate to $\O_L$-algebras with a lift of $q$-Frobenius. And just as the category of prisms has a subcategory of perfect prisms, which is equivalent to the category of (integral) perfectoid rings (this is what we mean when we say that prisms can be viewed as ``deperfections of perfectoid rings''), we will show the following.
\begin{thm}\label{thm:intro-version-perfect-prism-pfctd-OL-alg}
	Let $L/\Q_p$ be a finite extension. The categories of $L$-typical prisms and perfectoid $\O_L$-algebras (i.e. integral perfectoid rings which are also $\O_L$-algebras) are equivalent.
\end{thm}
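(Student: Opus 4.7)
The plan is to mimic Bhatt--Scholze's proof that perfect prisms are equivalent to integral perfectoid rings (taking the statement to refer to \emph{perfect} $L$-typical prisms, by parallel with the classical case), replacing the classical Witt vector functor $W$ and its associated Frobenius-lift structure everywhere by a ramified Witt vector functor $W_{\O_L}$ and the $\delta_L$-structure, which lifts the $q$-Frobenius. In one direction I would send a perfect $L$-typical prism $(A,I)$ to the $\O_L$-algebra $R := A/I$; in the other I would send a perfectoid $\O_L$-algebra $R$ to the pair $(W_{\O_L}(R^\flat),\,\ker\theta_L)$, where $\theta_L : W_{\O_L}(R^\flat)\to R$ is the $L$-typical analogue of Fontaine's $\theta$-map.

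Before constructing either functor I would develop the $L$-typical analogue of the classification of perfect $\delta$-rings. Adapted to the $\pi$-adic setting, $W_{\O_L}$ should carry perfect $\F_q$-algebras to $\pi$-torsion-free, $\pi$-adically complete $\delta_L$-algebras on which the lift of $q$-Frobenius is an isomorphism, and this restriction should be fully faithful with essential image precisely these ``perfect $\delta_L$-algebras''; moreover, for such an $A$ one should have $A\cong W_{\O_L}(A/\pi)$ canonically. These are routine adaptations of the classical proofs once one knows that ramified Witt vectors commute with perfection along the $q$-Frobenius. Granting this classification, if $(A,I)$ is a perfect $L$-typical prism then $A/\pi$ is a perfect $\F_q$-algebra, $R = A/I$ is $\pi$-adically complete with $R/\pi = A/(\pi,I)$ perfect, and $I\subset A$ is principal; the usual Bhatt--Morrow--Scholze criteria then identify $R$ as an integral perfectoid $\O_L$-algebra whose tilt is $A/(\pi,I)$, which produces the first functor.

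For the reverse direction, given a perfectoid $\O_L$-algebra $R$, the tilt $R^\flat$ is a perfect $\F_q$-algebra, so $A := W_{\O_L}(R^\flat)$ is a perfect $\delta_L$-algebra. Applying the universal property of $W_{\O_L}$ to the projection $R^\flat = \varprojlim_{x\mapsto x^q} R/\pi \to R/\pi$ and lifting along $R\twoheadrightarrow R/\pi$ yields a surjection $\theta_L : A \twoheadrightarrow R$. The main obstacle is to show that $I := \ker\theta_L$ is principal and generated by a distinguished element, i.e. that $\delta_L(\xi)\in A^\times$ for some generator $\xi$. I would address this by writing down an explicit generator using the perfectoid hypothesis: there exists a compatible system of $\pi$-power roots of a unit multiple of $\pi$ in $R$, giving rise to an element $\varpi^\flat\in R^\flat$, and the candidate $\xi := [\varpi^\flat] - \pi$ (adjusted by a unit if necessary) should generate $\ker\theta_L$. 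Verifying that $\delta_L(\xi)$ is a unit is the step where the $\pi$-versus-$p$ distinction in ramified Witt vectors requires the most care, since the relevant ghost-component identities involve $\pi$-powers rather than $p$-powers and there is no longer a universal formula like $\delta(x) = (x^p - x)/p$ to fall back on. Once distinguishedness is established, $(A,I)$ is a perfect $L$-typical prism, and checking that the two assignments are quasi-inverse becomes a formal unwinding of universal properties, structurally identical to the $L = \Q_p$ case.
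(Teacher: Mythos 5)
Your overall architecture matches the paper's: the same pair of functors $(A,I)\mapsto A/I$ and $R\mapsto(W_L(R^\flat),\ker\theta)$, resting on the same classification of $\pi$-adically complete perfect $\delta_L$-algebras as ramified Witt vectors of perfect $\F_q$-algebras. The equivalence between perfect $L$-typical prisms and the category of quotients $A/I$ (with the paper's \emph{internal} definition of ``perfectoid $\O_L$-algebra'') goes through essentially as you describe. But the theorem as stated identifies these with \emph{integral perfectoid rings} carrying an $\O_L$-algebra structure, and your forward direction has a genuine gap exactly there: you assert that ``the usual Bhatt--Morrow--Scholze criteria'' identify $R=A/I$ as an integral perfectoid ring, but those criteria are $p$-adic (surjectivity of the $p$-power Frobenius on $R/p$, principality of the kernel of the \emph{$p$-typical} $\theta:W(R^\flat)\to R$), whereas the prism structure only hands you $q$-Frobenius data modulo $\pi$. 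When $L/\Q_p$ is ramified, passing from ``$\varphi_q$ surjective on $R/\pi$'' to ``$\varphi$ surjective on $R/p$'' is not formal; the paper does it by a successive-approximation argument lifting $q$-th roots modulo increasing powers of $\pi$ up to $\pi^e$, and even to set this up it first reduces to the $\pi$-torsionfree and perfect-$\F_q$-algebra cases via a fiber-product decomposition $R\cong\overline{R}\times_{\overline{S}}S$ and a gluing result for perfectoid rings. None of this is visible in your sketch, and the slip ``$R/\pi=A/(\pi,I)$ perfect'' (it is not: $\varphi_q$ on $R/\pi$ has kernel generated by $\varpi$; the tilt is $A/\pi$, not $A/(\pi,I)$) suggests the difficulty is being elided rather than deferred.

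In the reverse direction you correctly locate the hard point (principality of $\ker\theta$ and distinguishedness of a generator) and propose the right shape of generator, $\xi=[\varpi^\flat]-\pi u$; the paper resolves this by citing Ito's computation that $\ker\theta$ is generated by an element $\pi-[\varpi^\flat]b$, that such a generator is a nonzerodivisor because $W(\F_q)\to\O_L$ is flat and the $p$-typical $\ker\theta$ is generated by a nonzerodivisor, and that $\delta_L(\xi)\equiv 1-\pi^{q-1}\pmod{(\pi,\xi)}$ is a unit. So the reverse direction is an acknowledged but fillable gap; the forward direction is the one where a real argument is missing.
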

\begin{rmk}
	The notion of $\delta_L$-algebras defined here coincides with Borger's notion of a $\pi$-typical $\Lambda_{\O_L}$-ring \cite{Borger}. More generally, following a suggestion of Kisin, the author suspected that Borger's $\Lambda$-rings were the right formalism for arithmetically relativizing $(\varphi,\Gamma)$-modules in general. We hope that this work -- which carries out this relativization in the simplest case beyond $\Z_p$-coefficients -- provides evidence that the same techniques will be useful more generally.
\end{rmk}

Fix now a Lubin-Tate formal $\O_L$-module $\G$ corresponding to the uniformizer $\pi$ of $\O_L$. If $K/L$ is a $p$-adic field, then we let $K_\infty$ be the $p$-adic completion of the infinite extension $K(\G[\pi^\infty])$ formed by adjoining the $\pi$-power torsion points of $\G$. In this case, one can use the periods of $\G$ to construct an element $\omega\in W(K_\infty^\flat)\otimes_{W(\F_q)}\O_L$ and a period ring $\A_K\subseteq W(K_\infty^\flat)\otimes_{W(\F_q)}\O_L$ (different in general from the ring $\A_K$ discussed above, but coinciding in the cyclotomic case $\G = \mu_{p^\infty}$). One also gets a category $\Mod_{\A_K}^{\varphi_q,\Gamma_K}$ of \textit{Lubin-Tate $(\varphi_q,\Gamma)$-modules}, first studied by Kisin and Ren \cite{KR} following ideas of Fontaine, and recently a subject of significant interest in the context of explicit reciprocity laws, $p$-adic local Langlands, and Iwasawa theory \cite{Berger-Iwasawa,Berger-Langlands,Schneider-Coates,SV20,Fourquaux_2013}. 

In \secsymb\ref{ssec:prism-maps} we give general constructions for producing interesting subprisms of a perfect $L$-typical prisms. When applied with inputs derived from periods of $\G$ and the perfect $L$-typical prism $(\Ainf(\O_{K_\infty}),\ker\theta)$ corresponding via theorem~\ref{thm:intro-version-perfect-prism-pfctd-OL-alg} to the perfectoid $\O_L$-algebra $\O_{K_\infty}$, we show that this construction produces a prism $(\A_K^+,(q_n(\omega)))$ with $\A_K = \A_K^+[\tfrac{1}{q_n(\omega)}]^\wedge_{(\pi)}$. This period ring interestingly depends on the Lubin-Tate formal group $\G$; for example, we construct a prismatic logarithm map $T\G\rightarrow \A_L^+\{1\}$ to the Breuil-Kisin twist, as in \cite{BL}. Using the prism $(\A_K^+,(q_n(\omega)))$, we show that theorem~\ref{thm:intro-version-main-result} recovers both the Kisin-Ren equivalence $\Mod_{\A_K}^{\varphi_q,\Gamma_K,et}\simeq \Rep_{\O_L}(G_K)$ as well as the computation of Galois cohomology in terms of $\varphi$-Herr complexes from \cite{Venjacob-Herr}.

\begin{thm}\label{thm:intro-version-kisin-ren}
	Let $L/\Q_p$ be a finite extension with uniformer $\pi$, and let $K/L$ be a $p$-adic field.
	\begin{enumerate}
		\item[(1)] There are equivalences of categories
		\begin{align*}
		 	\Mod_{\A_K}^{\varphi_q,et}\simeq \Mod_{W_L(K_\infty^\flat)}^{\varphi_q,et}&\simeq \Vect((\O_{K_\infty})_{\Prism_L}, \O_\Prism[\tfrac{1}{\mc I}]^\wedge_{(\pi)})^{\phi = 1}\simeq \Rep_{\O_L}(G_{K_\infty}) \\
			\Mod_{\A_K}^{\varphi_q,\Gamma_K,et}\simeq \Mod_{W_L(K_\infty^\flat)}^{\varphi_q,\Gamma_K,et}&\simeq \Vect((\O_{K})_{\Prism_L}, \O_\Prism[\tfrac{1}{\mc I}]^\wedge_{(\pi)})^{\phi = 1}\simeq \Rep_{\O_L}(G_{K}).
		 \end{align*}
		(Here $W_L(K_\infty^\flat) = \Ainf(\O_{K_\infty})[\tfrac{1}{\ker\theta}]^\wedge_{(\pi)}$ is the period ring corresponding to the perfect $L$-typical prism $(\Ainf(\O_{K_\infty}),\ker\theta)$.)
		\item[(2)] If $M\in \Mod_{\A_K}^{\varphi_q,et}$ corresponds to $T\in \Rep_{\O_L}(G_{K_\infty})$ under the above equivalence, then 
		\[R\Gamma(K_{\infty,et}, T) \cong \left(M\stackrel{\phi - 1}{\longrightarrow} M\right)\]
		where the complex on the right is concentrated in degrees $0$ and $1$. 
		\item[(3)] If $M\in \Mod_{\A_K}^{\varphi_q,\Gamma_K,et}$ corresponds to $T\in \Rep_{\O_L}(G_K)$, then 
		\[R\Gamma(K_{et},T)\cong C_{\mathrm{cont}}^\bullet(\Gamma_K,M)^{\phi = 1}\] 
		where $C_{\mathrm{cont}}^{\bullet}(\Gamma_K, M)$ denotes the continuous cochain complex of $\Gamma_K$ with values in $M$.
	\end{enumerate}
\end{thm}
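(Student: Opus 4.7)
The plan is to deduce all three parts from Theorem~\ref{thm:intro-version-main-result} applied to $X = \Spf\O_{K_\infty}$ and $X = \Spf\O_K$, using the concrete prisms $(\Ainf(\O_{K_\infty}),\ker\theta)$ and $(\A_K^+,(q_n(\omega)))$ constructed in \S\ref{ssec:prism-maps}. The rightmost equivalences of part~(1) with Galois representations are immediate from Theorem~\ref{thm:intro-version-main-result}(1). For the equivalences on the left, one evaluates Laurent $F$-crystals at the relevant prism: for $X = \Spf\O_{K_\infty}$, Theorem~\ref{thm:intro-version-perfect-prism-pfctd-OL-alg} identifies $(\Ainf(\O_{K_\infty}),\ker\theta)$ as a final object of $(\O_{K_\infty})_{\Prism_L}^{\mathrm{op}}$, so evaluation at this prism is an equivalence onto $\Mod_{W_L(K_\infty^\flat)}^{\varphi_q,et}$; the further equivalence with $\Mod_{\A_K}^{\varphi_q,et}$ is the Kisin--Ren base change (descent of finite projective \'etale $\varphi_q$-modules from $W_L(K_\infty^\flat)$ to $\A_K$). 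For $X = \Spf\O_K$, the same two prisms belong to $(\O_K)_{\Prism_L}$ but now carry a continuous $G_K$-action factoring through $\Gamma_K$; evaluation picks up this $\Gamma_K$-action via descent along the corresponding covers, producing the $(\varphi_q,\Gamma_K)$-module structures.

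For part~(2), I would apply Theorem~\ref{thm:intro-version-main-result}(2) to $X = \Spf\O_{K_\infty}$. Since $(\Ainf(\O_{K_\infty}),\ker\theta)$ is a final object in $(\O_{K_\infty})_{\Prism_L}^{\mathrm{op}}$, the prismatic cohomology $R\Gamma((\O_{K_\infty})_{\Prism_L},\mc M)$ reduces to the value $M$ at this single prism, concentrated in degree $0$. Taking $(-)^{\phi=1} = \mathrm{Cone}(\phi-\mathrm{id})[-1]$ then gives precisely the 2-term complex $M\xrightarrow{\phi-1}M$ in degrees $0$ and $1$.

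For part~(3), I would apply Theorem~\ref{thm:intro-version-main-result}(2) to $X = \Spf\O_K$. Here the prism $(\A_K^+,(q_n(\omega)))$ is no longer final but only a weakly final cover, so I would compute the prismatic cohomology via its \v{C}ech--Alexander complex. The key input is an identification of the $(n+1)$-fold self-product of $(\A_K^+,(q_n(\omega)))$ in $(\O_K)_{\Prism_L}$ with the ring of continuous $\A_K^+$-valued functions on $\Gamma_K^n$; morally this is the standard Galois-descent computation for the perfectoid tower $\O_{K_\infty}/\O_K$ with group $\Gamma_K$, transposed to the deperfected setting. Granted this, the \v{C}ech complex for $\mc M$ becomes $C^\bullet_{\mathrm{cont}}(\Gamma_K,M)$, and combining with Theorem~\ref{thm:intro-version-main-result}(2) and $(-)^{\phi=1} = \mathrm{Cone}(\phi-\mathrm{id})[-1]$ recovers the Herr complex $C^\bullet_{\mathrm{cont}}(\Gamma_K,M)^{\phi=1}$.

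The main obstacle will be this \v{C}ech-nerve computation in part~(3): one must control the interaction between the $\delta_L$- and prism-structures on the self-products of $(\A_K^+,(q_n(\omega)))$ in the $L$-typical prismatic site and match them with continuous $\Gamma_K$-cochains. Once this identification is in place, the remaining arguments are essentially formal, reducing to the general comparison theorem and Kisin--Ren-type base change from imperfect to perfect Lubin--Tate period rings.
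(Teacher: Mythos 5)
Your overall strategy --- deducing everything from theorem~\ref{thm:intro-version-main-result} together with a \v{C}ech descent that picks up the $\Gamma_K$-action --- matches the paper's, but there are two genuine gaps.

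First, in part (2) you assert that because $(\Ainf(\O_{K_\infty}),\ker\theta)$ is a final object of the site, $R\Gamma((\O_{K_\infty})_{\Prism_L},\mc M)$ is just $M$ concentrated in degree $0$. Finality only identifies $H^0$: sections over the final object is the global sections functor, and its higher derived functors need not vanish (compare the \'etale site of a field). The paper proves the required acyclicity as a separate lemma: by derived Nakayama one reduces to the mod-$\pi$ sheaf $S\mapsto S[\tfrac{1}{\pi}]^\flat$ on the perfect site, whose higher cohomology vanishes by the tilting equivalence and the vanishing $H^i(K_\infty^\flat,\overline{K}^\flat)=0$ for $i\ge 1$. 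This almost-purity-type input is not optional and is missing from your argument.

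Second, and more seriously, the key input you propose for part (3) --- that the $(n+1)$-fold self-product of the \emph{imperfect} prism $(\A_K^+,(q_n(\omega)))$ in $(\O_K)_{\Prism_L}$ is $\mathrm{Hom}_{\mathrm{cont}}(\Gamma_K^n,\A_K^+)$ --- is false. Self-coproducts of imperfect prisms are prismatic envelopes of completed tensor products; $\Spf(\A_K^+/(q_n(\omega)))$ is only a finite ramified cover of $\Spf\O_K$, not a pro-\'etale $\Gamma_K$-torsor, so its \v{C}ech nerve does not split as continuous functions on powers of $\Gamma_K$. The paper avoids this by first passing to the perfect site (theorem~\ref{thm:F-crystal-base-change}, corollary~\ref{cor:F-crystal-perfect-site}) and running the \v{C}ech descent along the perfect prism $(W_L(\O_{K_\infty}^\flat),\ker\theta)$, whose self-products \emph{are} computed to be $\mathrm{Hom}_{\mathrm{cont}}(\Gamma_K^n,W_L(\O_{K_\infty}^\flat))$ via Scholze's pro-\'etale results applied to the torsor $\plim\Spa(K_m,\O_{K_m})\rightarrow\Spa(K,\O_K)$ (lemmas~\ref{lem:Ainf-cover-of-final-object} and \ref{lem:self-product-computation}); the statements over $\A_K$ are then recovered from the perfection equivalence of proposition~\ref{prop:perfection-base-change} and corollary~\ref{cor:perfection-base-change-phi-invariants}. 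As written, your plan stalls at the self-product computation.
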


\subsection{Explicit reciprocity laws and Iwasawa theory}
A key motivation for this work is explicit reciprocity laws in Iwasawa theory. Let $K_n = \Q_p(\zeta_{p^n})$ and $K = \Q_p$. In the most classical case, Iwasawa's explicit reciprocity law \cite{Iwasawa} computes, for a system $u = (u_n)_n\in \plim K_n^\times$ of $p$-power compatible units and $m\ge 1$, the image of $u$ under the composition
\[\lambda_m:\plim K_n^\times\stackrel{\kappa}{\longrightarrow} \plim H^1(K_n, \Z_p(1))\cong \plim H^1(K_n,\Z_p(k))\stackrel{\mathrm{Tr}_{K_n/K_m}}{\rightarrow} H^1(K_m,\Z_p)\stackrel{\exp^*}{\longrightarrow} K_m\]
where $\kappa$ is the Kummer map, the isomorphism is a Soul\'e twist\footnote{Concretely, using the isomorphism $\plim H^1(K_n, \Z_p(1))\cong H^1(K, \Z_p\ps{\Gamma_K}\otimes_{\Z_p}\Z_p(1))$, the Soul\'e twist arises from the isomorphism $\Z_p\ps{\Gamma_K}\longrightarrow \Z_p\ps{\Gamma_K}\otimes_{\Z_p}\Z_p(1)$ of $G_K$-modules given by $\gamma\mapsto \gamma\otimes \gamma e$ corresponding to a choice of basis $e$ of $\Z_p(1)$.}, and $\exp^*$ is the Bloch-Kato dual exponential map \cite[II.1.2]{Kato1993}. Explicitly, 
\[\lambda_m(u) = p^{-m}u_m(\dlog\theta_u)(u_m - 1)\]
where $\theta_u\in \Z_p\ps{T}^\times$ is the Coleman power series for $u$ and $\dlog\theta = \frac{\theta'(T)}{\theta(T)}$. The Iwasawa cohomology group $H^1_{Iw}(K_\infty/K,\Z_p(1)):=\plim H^1(K_n/K,\Z_p(1))$ is important, in part, because its contains as an element the Euler system of cyclotomic units. This formula for $\lambda_m$ thereby allows one to relate this Euler system to zeta values.

More generally, let $L/\Q_p$ be a finite extension with uniformizer $\pi$, let $\G$ be a Lubin-Tate formal $\O_L$-module corresponding to $\pi$, let $L_n = L(\G[\pi^n])$, and let $T\G\in \Rep_{\O_L}(G_L)$ be the Tate module of $\G$. Then for each $m\ge 1$ and $k\in\Z$ there is a map
\[\lambda_{m,k}:\plim L_n^\times\stackrel{\kappa}{\rightarrow}H^1_{Iw}(L_\infty/L, \Z_p(1))\cong H^1_{Iw}(L_\infty/L, T\G^{\otimes -k}(1))\stackrel{\mathrm{Tr}}{\rightarrow} H^1(L_m, T\G^{\otimes -r}(1))\stackrel{\exp^*}{\longrightarrow} L_m t_\G^{k}t_{cycl}^{-1}\]
where $t_\G\in D_{dR}(T\G^{\otimes -1})$ and $t_{cycl}\in D_{dR}(\O_L(-1))$ are the usual de Rham periods. Then work of Bloch and Kato \cite{BK} gives the explicit reciprocity law
\[\lambda_{m,k}(u) = \frac{1}{k!}\pi^{-mk}(\partial_\G^k \log\theta_u)(u_m)t_\G^kt_{cycl}^{-1}\]
for $k\ge 1$, where $\theta_u\in\O_L\ps{T}$ is again a Coleman power series and $\partial_\G(f(T)) := \frac{1}{g(T)}f'(T)$ with $g(T)dT$ being the invariant differential for $\G$.

Intuitively speaking, for a fixed $k\ge 1$, the above explicit reciprocity law for $\lambda_{m,k}$ extracts information from the system $(u_n)_{n\ge 1}$ related to the special value of a $p$-adic $L$-function at $s = k$. On the other hand, work of Perrin-Riou, Colmez, and Cherbonnier \cite{PR,CC} in the cyclotomic case $\G = \mu_{p^\infty}$ and Schneider and Venjakob \cite{Schneider-Coates} in the general case shows how to interpolate all of the above ``little'' explicit reciprocity laws into one ``big'' explicit reciprocity law which sees the entire $p$-adic $L$-function at once. More precisely, if $M\in \Mod_{\A_L}^{\varphi_q,et}$ corresponds to $T\G\in \Rep_{\O_L}(G_L)$ under theorem~\ref{thm:intro-version-kisin-ren}, then there is a big dual exponential map \cite[\secsymb5]{Schneider-Coates}
\[\mathrm{Exp}^*: H^1_{Iw}(L_\infty/L,\O_L(1))\stackrel{\sim}{\longrightarrow} M^{\psi = 1}\]
where $\psi$ is a certain endomorphism of $M$. Moreover, we have $M \cong \Omega^1_{\O_\G/\O_L}\cong \Omega^1_{\O_L\ps{T}/\O_L}$ and the big explicit reciprocity law
\[(\mathrm{Exp}^*\circ \kappa)(u) = \dlog \theta_u.\]
Intuively, this shows how to relate a $p$-adic $L$-function corresponding to a system $(u_n)_n$ of units to a function $\theta_u\in \O_\G\cong \A_L\cong \O_L\ps{T}$ on the Lubin-Tate group $\G$.

Two ingredients were essential for the above big explicit reciprocity law to be formulated and proved. First, there is a map $\O_G\rightarrow \A_L$ from the ring of functions on $\G$ to the period ring for the $\varphi$-modules. Second, the period ring $\A_L$ is \textit{imperfect}; indeed, the corresponding perfect period ring $W_L(L_\infty^\flat)$ has $\Omega^1_{W_L(L_\infty^\flat)/\O_L} = 0$, presenting a fundamental obstruction to a big explicit reciprocity law like the one above. Moreover, in \cite{Schneider-Coates}, $\psi$ is shown to be related to the endomorphism $\phi$ of $\A_L$ via Pontryagin duality using an argument that makes use of local Tate duality and a \textit{residue} pairing
\[\A_L\otimes_{\A_L}\Omega^1_{\A_L/\O_L}\stackrel{\mathrm{res}}{\longrightarrow}\O_L,\]
which suggests that $\A_L$ being not too much larger than $\O_\G$ is key.

In settings beyond the case of Lubin-Tate formal groups, there are families of little explicit reciprocity laws which lack big explicit reciprocity laws. For instance Kato's generalized explicit reciprocity law \cite{Kato-GER}, a key technical ingredient to Kato's work \cite{Kato-euler} on Iwasawa main conjectures for modular forms, is used to relate special values of $L$-functions with special values of derivatives of logarithms of \textit{Siegel units}, which are certain functions on the $p$-divisible group of an elliptic curve. 

The author suspects that the path forward in formulating and proving big explicit reciprocity laws in this setting involves constructing certain imperfect prisms $(A,I)$ over (the ordinary locus of) a modular curve $X$ such that the $p$-divisible group $\mathcal{E}[p^\infty]$ of the universal elliptic curve $\mathcal E\rightarrow X$ has a map $\O_{\mathcal E[p^\infty]}\rightarrow A$. Some partial progress is presented in example~\ref{example:elliptic}: given an ordinary elliptic curve over a $p$-complete ring $R$ equipped with a compatible system of sections $\Spf R_n\rightarrow \ker F^n$ of the subgroups $\ker F^n$ over \'etale $R$-algebras, the general constructions given in \secsymb~\ref{ssec:prism-maps} produce a map $\O_{\varinjlim \ker F^n}\rightarrow W((\varinjlim R_n)^\flat)$. (If $(\varinjlim R_n)^\wedge_{(p)}$ is perfectoid, then $(W((\varinjlim R_n)^\flat), \ker\theta)\in R_{\Prism}$ is a perfect prism.)

\subsection{Overview of the proofs}
We briefly outline the key ideas in the proofs of theorems~\ref{thm:intro-version-main-result} and \ref{thm:intro-version-kisin-ren}. When $X = \Spf R$ for a perfectoid $\O_L$-algebra $R$, $\Vect(R_{\Prism_L}, \O_\Prism[\tfrac{1}{\mc I}]^\wedge_{(\pi)})^{\phi = 1}\simeq \Mod_{W(R[\tfrac1\pi]^\flat)}^{\varphi_q,et}$, and theorem~\ref{thm:intro-version-main-result} is shown via standard arguments (due originally to Katz and Fontaine \cite{K,F}) for relating \'etale $\varphi$-modules and local systems. Theorem~\ref{thm:intro-version-main-result} is then shown in general via a descent argument from the perfectoid case. This crucially relies on the fact that there is a perfection functor $(A,I)\mapsto (A,I)_{\perf}$ which induces an equivalence on the corresponding categories of \'etale $\varphi_q$-modules.
\begin{thm}\label{thm:intro-version-perfection-base-change}
		(c.f. \cite[theorem~4.6]{Wu} for the $\Q_p$-typical case). Let $(A,I)$ be a bounded $L$-typical prism with perfection $(A_{\perf}, IA_{\perf})$. Then base change induces an equivalence
		\begin{align*}
			\Mod_{(A,I)}^{\phi,et}&\stackrel{\sim}{\longrightarrow}\Mod_{(A,I)_{\perf}}^{\phi,et} \\
			M&\mapsto M\otimes_{A[\tfrac{1}{I}]^\wedge_{(\pi)}}A_{\perf}[\tfrac{1}{I}]^\wedge_{(\pi)}
		\end{align*}
		between the categories of \'etale $\varphi_q$-modules over $(A,I)$ and $(A,I)_{\perf}$. 
\end{thm}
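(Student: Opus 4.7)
The plan is to follow Wu's strategy from the $\Z_p$-typical case \cite{Wu}, reducing the equivalence to a mod-$\pi$ statement, where it becomes a classical claim about \'etale $\phi$-modules over an $\F_q$-algebra and its perfection.

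First, an \'etale $\phi$-module over $B := A[\tfrac{1}{I}]^\wedge_{(\pi)}$ or $B_\perf := A_\perf[\tfrac{1}{I}]^\wedge_{(\pi)}$ is by definition finite projective over a $\pi$-adically complete ring, hence derived $\pi$-complete. By derived Nakayama, full faithfulness and essential surjectivity of the base-change functor reduce to the same statements modulo $\pi^n$ for every $n$; successive d\'evissage along the short exact sequences $0\to \pi^{n-1}M/\pi^n M\to M/\pi^n M\to M/\pi^{n-1}M\to 0$ further reduces this to the mod-$\pi$ case.

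Second, modulo $\pi$ the ring $B$ becomes $\overline{B} := (A/\pi)[\tfrac{1}{\overline d}]$, an $\F_q$-algebra on which $\phi$ acts as the absolute $q$-Frobenius $x\mapsto x^q$ by the axioms of a $\delta_L$-structure. Granted the compatibility $\overline{B_\perf}\simeq (\overline{B})_\perf$ of perfection with reduction mod $\pi$ and with inverting $I$, the problem reduces to the classical statement that for an $\F_q$-algebra $R$ with perfection $R_\perf$, base change gives an equivalence $\Mod_R^{\phi,et}\simeq \Mod_{R_\perf}^{\phi,et}$. The quasi-inverse takes an \'etale $\phi$-module $N$ over $R_\perf$ and descends it to $R$ using the isomorphism $\phi^*N\simeq N$, following a Katz-Fontaine style argument also used in \cite{Wu}.

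The main obstacle is verifying the compatibility $\overline{A_\perf[\tfrac{1}{I}]^\wedge_{(\pi)}}\simeq ((A/\pi)[\tfrac{1}{\overline d}])_\perf$ used above, since $A_\perf$ is defined by a derived $(\pi,I)$-completion of $\varinjlim(A\stackrel{\phi}{\rightarrow}A\stackrel{\phi}{\rightarrow}\cdots)$; one must control the higher derived terms in order to interchange the completion with the colimit, which is possible thanks to the boundedness hypothesis on $(A,I)$, just as in Wu's original argument.
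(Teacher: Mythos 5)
Your reduction to the mod-$\pi$ case and the appeal to a Katz--Fontaine equivalence in characteristic $p$ match the paper's strategy (the paper reduces by d\'evissage to $\pi$-torsion and then invokes Katz's equivalence $\Mod_R^{\varphi_q,et}\simeq\Loc_{\F_q}(R)$). But your final step contains a genuine gap. The compatibility you propose to verify, $\overline{A_{\perf}[\tfrac1I]^\wedge_{(\pi)}}\simeq\bigl((A/\pi)[\tfrac1{\overline d}]\bigr)_{\perf}$, is false as stated: by definition $A_{\perf}=(\varinjlim_\phi A)^\wedge_{(\pi,I)}$, so reducing mod $\pi$ and inverting $d$ gives $(\varinjlim_{\varphi_q}A/\pi)^\wedge_{(\overline d)}[\tfrac1{\overline d}]$, i.e.\ the \emph{$\overline d$-adically completed} perfection, whereas $\bigl((A/\pi)[\tfrac1{\overline d}]\bigr)_{\perf}=(\varinjlim_{\varphi_q}A/\pi)[\tfrac1{\overline d}]$ is the uncompleted colimit. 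These rings genuinely differ (e.g.\ for $A=\A_K^+$ one gets $K_\infty^\flat$ versus the non-complete perfect closure of $k\ls{\overline\omega}$), and the discrepancy is not a matter of controlling higher derived terms of the completion or of the boundedness hypothesis --- it is an honest classical completion that changes the ring. Your Katz--Fontaine descent argument (descend a finite projective module from a filtered colimit to a finite stage and untwist by $\varphi$) handles the uncompleted perfection but says nothing about the completion step.

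What is missing is the statement that \'etale $\varphi_q$-modules (equivalently, by Katz, $\F_q$-local systems, or finite \'etale covers) are insensitive to the $\overline d$-adic completion
\[(\varinjlim_{\varphi_q}A/\pi)[\tfrac1{\overline d}]\longrightarrow(\varinjlim_{\varphi_q}A/\pi)^\wedge_{(\overline d)}[\tfrac1{\overline d}].\]
This is a non-formal input: the paper gets it from the invariance of the \'etale site under such completions, \cite[proposition~5.4.53]{GR} (an Elkik/Gabber-type result), after first using topological invariance of the \'etale site to pass from $A/\pi[\tfrac1{\overline d}]$ to its colimit perfection. If you add this ingredient your argument closes; without it, the proof does not go through.
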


The $X = \Spf\O_{K_\infty}$ part of theorem~\ref{thm:intro-version-kisin-ren} follows nearly immediately from theorem~\ref{thm:intro-version-main-result}. Intuitively, one would like to conclude the $X = \Spf\O_K$ part by descending along $Y = \Spf\O_{K_\infty}\rightarrow X = \Spf\O_K$ and picking up a semilinear action of $\Gamma_K = \Gal(K_\infty/K)$. However, instead of using this angle of attack, we will use a more delicate descent argument along the \v{C}ech nerve $(W_L(\O_{K_\infty}^\flat), \ker\theta)^\bullet$ in the perfect prismatic site $(\O_K)_{\Prism_L}^{\perf}$. This argument allows us to recover a Laurent $F$-crystal $\mc M$ over $(\O_K)_{\Prism_L}$ from the data of $M = \mc M(W_L(\O_{K_\infty}^\flat), \ker\theta)$ and a semilinear action of $\Aut_{(\O_K)_{\Prism_L}}(W_L(\O_{K_\infty}^\flat), \ker\theta)\cong \Gamma_K$, and to compute $R\Gamma((\O_K)_{\Prism_L}, \mc M) \cong C_{\mathrm{cont}}^\bullet(\Gamma_K, M)$. 

\subsection{Structure of the paper}

In \secsymb\ref{sec:delta-algebras} we introduce $\delta_L$-algebras, review ramified Witt vectors, and develop basic results about distinguished elements and perfect $\delta_L$-algebras. In \secsymb\ref{sec:L-typical-prisms} we then introduce $L$-typical prisms, with perfectoid $\O_L$-algebras, the proof of theorem~\ref{thm:intro-version-perfect-prism-pfctd-OL-alg}, and the perfection functor appearing in \secsymb\ref{ssec:perfect-prisms-perfectoid-algebras}. In \secsymb\ref{ssec:prism-maps} we describe two general constructions which -- given an $L$-typical prism $(A,I)$, a perfectoid $\O_L$-algebra $R$, and a $\phi$-compatible system of maps $(\iota_n:A\rightarrow R)_n$ -- produce a map $(A,I)\rightarrow (\Ainf(R),\ker\theta)$ to the perfect $L$-typical prism corresponding to $R$; the example~\ref{example:elliptic} discussed above, involving constructing a map from a sub-$p$-divisible group of the $p$-divisible group of an elliptic curve to $W((\varinjlim R_n)^\flat)$, is also sited here.

Starting in \secsymb\ref{sec:LT-phi-G-modules}, we will take $\G$ to be a Lubin-Tate formal $\O_L$-module corresponding to a uniformizer $\pi$ of $L$. We explain in \secsymb\ref{ssec:S_K} how to equip $\O_\G\cong \O_L\ps{T}$ with ideals $(q_n(T))$ which turn it into an $L$-typical prism; furthermore, the constructions from \secsymb\ref{ssec:prism-maps} allow us to, given a choice of basis $e$ for the rank one $\O_L$-module $T\G$, produce an embedding $(\O_\G, (q_n(T)))\hookrightarrow (W_L(\O_{L_\infty}^\flat), \ker\theta)$ into a perfect prism. Given a $p$-adic field $K/L$, we extend this construction in \secsymb\ref{ssec:A_K} to give a prism $(\A_K^+, (q_n(\omega)))\in (\O_K)_{\Prism_L}$ with perfection $(W_L(\O_{K_\infty}^\flat), \ker\theta)$. In \secsymb~\ref{ssec:phi_q-Gamma-modules} we review the basics of the theory of Lubin-Tate $(\varphi_q,\Gamma)$-modules and the $\Gamma_K$-action on $\A_K$. Then \secsymb~\ref{ssec:prismatic-logarithm} contains discussion of the prismatic logarithm for $\G$; we included this section because we believed the construction was interesting, but it plays no further role in this paper.

Finally, \secsymb\ref{sec:F-crystals} is the technical heart of the paper. In \secsymb\ref{ssec:phi-modules-over-prisms} we define $\varphi_q$-modules over $L$-typical prisms and prove theorem~\ref{thm:intro-version-perfection-base-change}. Then \secsymb\ref{ssec:laurent-F-crystals} defines Laurent $F$-crystals and proves theorem~\ref{thm:intro-version-main-result}, with theorem~\ref{thm:kisin-ren-equiv} following in \secsymb\ref{ssec:phi-G-modules-F-crystals}.

\subsection{Acknowledgements}

This work would not have been possible without the support, guidance, and frequent prophetic suggestions of Mark Kisin. I also thank Alexander Petrov for help with various aspects of the prismatic theory, and Daniel Li-Heurta for help with v-descent results for diamonds. Finally, an earlier version of this document contained errors which were kindly pointed out by Kazuhiro Ito, including that my original statement of theorem~\ref{thm:intro-version-perfect-prism-pfctd-OL-alg} was incorrect; I thank Dr. Ito for identifying these errors and for helping me arrive at a proof for the corrected theorem~\ref{thm:intro-version-perfect-prism-pfctd-OL-alg}.

\section{\texorpdfstring{$\delta_L$}{delta\_L}-algebras and ramified Witt vectors}\label{sec:delta-algebras}
Recall that a $\delta$-ring is a ring $A$ together with a map $\delta:A\rightarrow A$ of sets satisfying certain properties which guarantee that
\begin{align*}
	\phi : &A\longrightarrow A\\
	&x\mapsto x^p + p\delta(x)
\end{align*}
is a ring homomorphism lifting the Frobenius endomorphism $x\mapsto x^p$ of $A/p$. In this section, we will recall a mild generalization of the theory of $\delta$-rings which applies in the following context.

Let $L/\Q_p$ be a finite extension with ring of integers $\O_L$, uniformizer $\pi$, and residue field $\O_L/\pi$ of size $q$. Then a $\delta_L$-algebra will be an $\O_L$-algebra $A$ equipped with a map $\delta_L:A\rightarrow A$ of sets satisfying certain properties which guarantee that
\[\phi(x) = x^q + \pi\delta_L(x)\]
is a ring homomorphism lifting the $q$-Frobenius $\varphi_q(x) = x^q$ of $A/\pi$. 

\begin{rmk}
	By a theorem of Wilkerson \cite{Wilk}, $\delta$-rings are the same as $p$-typical $\lambda$-rings, a notion generalized by the $\Lambda$-rings of Borger \cite{Borger}. The results of this section are obtained as special cases of Borger's theory of $\Lambda$-rings over $\O_L$ in the $\pi$-typical setting.
\end{rmk}

\subsection{Basic theory}\label{ssec:delta-algebras-basic-theory}

\begin{defn}~
	\begin{enumerate}
		\item[(1)] A \textit{$\delta_L$-algebra} is an $\O_L$-algebra $A$ equipped with a map $\delta_L:A\rightarrow A$ of sets satisfying the identities
		\begin{align}
			\delta_L(\alpha) &= \frac{\alpha - \alpha^q}{\pi} &\text{for }\alpha\in \O_L \notag \\
			\delta_L(xy) &= \delta_L(x)y^q + x^q\delta_L(y) + \pi\delta_L(x)\delta_L(y) &\text{for }x,y\in A \notag \\
			\delta_L(x + y) &= \delta_L(x) + \delta_L(y) + \frac{x^q + y^q - (x + y)^q}{\pi} &\text{for } x,y\in A\label{delta-sum-identity}
		\end{align}
		where in (\ref{delta-sum-identity}) the expression $\frac{x^q + y^q - (x + y)^q}{\pi}$ is shorthand for
		\[-\sum_{i = 1}^{q - 1} \frac{1}{\pi}\binom{q}{i}x^iy^{q - i}\]
		which makes sense even when $A$ has $\pi$-torsion. If $A$ is an $\O_L$-algebra then by a \textit{$\delta_L$-structure} on $A$ we mean a choice of map $\delta_L:A\rightarrow A$ as above making $A$ into a $\delta_L$-algebra.

		\item[(2)] There is an evident category $\Alg_{\delta_L}$ of $\delta_L$-algebras, with maps being $\O_L$-algebra maps which commute with the $\delta_L$-structures. 


		\item[(3)] If $A$ is a $\delta_L$-algebra, then we have a map $\phi_{A,\delta_L}:A\rightarrow A$ given by $\phi_{A,\delta_L}(x) = x^q + \pi\delta_L(x)$ which lifts the $q$-Frobenius $\varphi_q$ on $A/\pi$. Using the assumed identities on $\delta_L$, one verifies that $\phi_{A,\delta_L}$ is an $\O_L$-algebra homomorphism. Usually $A$ and $\delta_L$ will be clear from context and we will simply write $\phi_A$ or $\phi$ for $\phi_{A,\delta_L}$.

	\end{enumerate}

\end{defn}

\begin{rmk}~
	\begin{enumerate}
		\item[(1)] \label{rmk:d_pi-q-frob} If $A$ is a $\pi$-torsion-free $\O_L$-algebra and $\phi$ is an endomorphism lifting $\varphi_q$, then we obtain a $\delta_L$-structure on $A$ by 
		\[\delta_L(x) = \frac{\phi(x) - x^q}{\pi}.\]
		This is easily seen to give a one-to-one correspondence between $\delta_L$-structures on $A$ and lifts of $\varphi_q$ to $A$. When $A$ has $\pi$-torsion, having a $\delta_L$-structure is stronger than having a lift of $\varphi_q$.
		\item[(2)] \label{rmk:q-derivation} Taking the defining relations of a $\delta_L$-structure modulo $\pi$, we see that $\delta_L$-structure on an $\O_L$-algebra $A$ induces an $\F_q$-module map $\delta_L : A/\pi\rightarrow A/\pi$ such that $\delta_L(\alpha) = 0$ for $\alpha\in \F_q$ and we have the analogue
		\[\delta_L(xy) = x^q\delta_L(y) + y^q\delta_L(x)\]
		of the Leibnitz rule.
		\item[(3)] The properties defining the map $\delta_L$ evidently depend on the choice of uniformizer $\pi$, so one might worry that $\delta_L$ algebra structures on an $\O_L$-algebra $A$ might depend on the choice of $\pi$ as well. Fortunately, there is no essential dependence: if $A$ has a $\delta_L$-structure with respect to $\pi$ and $\pi' = u\pi$ for $u\in \O_L^\times$ is another uniformizer, then $\alpha\mapsto u^{-1}\delta_L(\alpha)$ is a $\delta_L$-structure with respect to $\pi'$.
		\item[(4)] \label{rmk:lim-colim-closed} See \cite[remark~2.2.7]{Ito} (generalizing \cite[remark~2.4]{BS}) for an alternative characterization of $\delta_L$-structures on $A$ in terms of $\O_L$-algebra sections of the length $2$ ramified Witt vectors $W_{L,2}(A)$. In particular, this characterization immediately implies that the category of $\delta_L$-algebras admits all limits and colimits, and that they commute with the forgetful functor to $\O_L$-algebras. 
		\item[(5)] \label{rmk:I-adic-closed} The category of $\delta_L$-algebras is also closed with respect to classical $I$-adic completion with respect to an ideal $I\subseteq A$ containing $\pi$ (cf. \cite[lemma~2.2.10]{Ito} or the proof of \cite[lemma~2.17]{BS}).
	\end{enumerate}
\end{rmk}

A key fact about $\delta_L$-algebras is that the forgetful functor $\Alg_{\delta_L}\rightarrow \Alg_{\O_L}$ has a right adjoint $W_L$, which is identified with Hazewinkel's ramified Witt vector functor \cite{Haz}. Explicitly, for $n\ge 0$, let 
\[ w_n(X_0, \dots, X_n) = X_0^{q^n} + \pi X_1^{q^{n - 1}} + \dots + \pi^{n - 1}X_{n - 1}^q + \pi^nX_n\in \O_L[X_0, \dots, X_n]\subseteq \O_L[X_0, X_1, \dots] \]
be the $n$th ghost component polynomial. For any $\O_L$-algebra $R$, let $W_L(R) = R^\N$ as sets, and let
\begin{align*}
	w_R: W_L(R)&\longrightarrow R^\N \\
	x = (x_0, x_1, \dots) &\mapsto (w_0(x), w_1(x), \dots)
\end{align*}
be the ghost component map. Since $w_R$ is a bijection when $R$ is $\pi$-torsion free and any $\O_L$-algebra is a quotient of a free $\O_L$-algebra, there is a unique choice of $\O_L$-algebra structure on $W_L(R)$ such that $w_R$ is a map of $\O_L$-algebras \textit{and} $W_L$ is a functor $\Alg_{\O_L}\rightarrow \Alg_{\O_L}$; equip $W_L(R)$ with this $\O_L$-algebra structure. One also checks that the projection map $W_L(R) = R^\N\twoheadrightarrow R$ onto the first factor is an $\O_L$-algebra homomorphism.


The above paragraph explains the $\O_L$-algebra structure on $W_L(R)$; we now explain the $\delta_L$-structure. In the case that $R$ is $\pi$-torsion-free, $W_L(R)$ is $\pi$-torsion-free as well, so giving a $\delta_L$-structure is the same as giving a lift of $q$-Frobenius. This is provided by the canonical Witt vector Frobenius.
\begin{prop}
	If $R$ is an $\O_L$-algebra, then there are endomorphisms $F_R$ and $V_R$ of $W_L(R)$, natural in $R$, such that for $x,y\in W_L(R)$ we have
	\begin{align*}
		F_R(x) &\equiv x^q\mod{\pi W_L(R)}, \\
		F_R(V_R(x)) &= \pi x, \\
		V_R(xF_R(y)) &= V_R(x)y, \\
	\end{align*}
	and the diagrams
	\begin{equation}\label{frobenius-ghost}
		\begin{tikzcd}[sep=large]
			W_L(R)\ar{r}{w_R} \ar{d}{F_R} & R^\N\ar{d}{(w_0, w_1, \dots)\mapsto (w_1, w_2, \dots)} &&& W_L(R)\ar{r}{w_R}\ar{d}{V_R} & R^\N\ar{d}{(w_0,w_1,\dots)\mapsto (0, \pi w_0, \pi w_1,\dots)} \\
			W_L(R)\ar{r}{w_R} & R^\N &&& W_L(R)\ar{r}{w_R} & R^\N
		\end{tikzcd}
	\end{equation}
	commute.
\end{prop}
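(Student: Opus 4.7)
The plan is to mimic the classical Witt-vector construction of Frobenius and Verschiebung, adapted to the ramified setting: define both maps by universal Witt polynomials with $\O_L$-coefficients (so that naturality is automatic), and verify all stated identities on the universal $\pi$-torsion-free case, where the ghost map is injective.

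The Verschiebung is immediate. I would set $V_R(x_0,x_1,\dots)=(0,x_0,x_1,\dots)$, i.e.\ use universal polynomials $V_0=0$ and $V_n(X_0,X_1,\dots)=X_{n-1}$ for $n\geq 1$. Commutativity of the $V$-ghost diagram follows directly from the definition of $w_n$, since $w_n(0,x_0,x_1,\dots)=\pi x_0^{q^{n-1}}+\pi^2 x_1^{q^{n-2}}+\cdots+\pi^n x_{n-1}=\pi\cdot w_{n-1}(x_0,x_1,\dots)$.

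The Frobenius is more subtle. I would work in the universal $\pi$-torsion-free $\O_L$-algebra $R_0=\O_L[X_0,X_1,\dots]$ equipped with the Frobenius lift $\phi_{R_0}(X_i)=X_i^q$, and invoke the \emph{ramified Dwork lemma}: for a $\pi$-torsion-free $\O_L$-algebra $R$ with lift $\phi$ of $q$-Frobenius, a sequence $(a_n)\in R^{\N}$ lies in $\mathrm{Im}(w_R)$ if and only if $a_n\equiv \phi(a_{n-1})\pmod{\pi^n R}$ for every $n\geq 1$. For $R_0$ the shifted ghost vector satisfies $w_{n+1}(X)-\phi_{R_0}(w_n(X))=\pi^{n+1}X_{n+1}\in\pi^{n+1}R_0$, and therefore lies in $\mathrm{Im}(w_{R_0})$; injectivity of $w_{R_0}$ then yields unique universal polynomials $F_n\in\O_L[X_0,\dots,X_{n+1}]$ with $w_n(F_0,F_1,\dots)=w_{n+1}(X_0,X_1,\dots)$, which define $F_R$ for every $\O_L$-algebra $R$.

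Each stated identity is a polynomial identity in Witt components, which it suffices to check universally on $R_0'=\O_L[X_i,Y_j]_{i,j\geq 0}$ via the injective ghost map. The first two ($F\circ V=\pi\cdot\mathrm{id}$ and the projection formula $V(x\cdot Fy)=V(x)y$) are one-line ghost-component calculations. For the Frobenius congruence $F_R(x)\equiv x^q\pmod{\pi W_L(R)}$, I would exhibit $y\in W_L(R_0')$ with $F(x)-x^q=\pi y$ by a second application of Dwork to the candidate ghost vector $a_n=\pi^{-1}(w_{n+1}(x)-w_n(x)^q)$: the congruence $a_n\equiv\phi_{R_0'}(a_{n-1})\pmod{\pi^n}$ is verified using $w_n(x)=\phi_{R_0'}(w_{n-1}(x))+\pi^n X_n$ and expanding $w_n(x)^q-\phi_{R_0'}(w_{n-1}(x))^q$, whose cross-terms absorb into $\pi^{n+1}R_0'$ because $\pi\mid q$. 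The principal technical obstacle is the ramified Dwork lemma itself, whose proof rests on the congruence ``$a\equiv b\pmod{\pi^n R}$ implies $a^q\equiv b^q\pmod{\pi^{n+1}R}$ for $n\geq 1$ in a $\pi$-torsion-free $\O_L$-algebra'' --- itself obtained from expanding $(b+\pi^n c)^q$ and using the divisibility $\pi\mid\binom{q}{i}$ for $1\leq i\leq q-1$ (which combines $p\mid\binom{q}{i}$, valid since $q$ is a prime power, with $\pi\mid p$ in $\O_L$).
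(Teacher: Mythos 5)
Your proposal is correct and is essentially the argument the paper appeals to: the paper's proof is a one-line citation to the standard construction of ramified Witt vectors (Schneider's book), which proceeds exactly as you do — $V$ as the shift, $F$ via universal polynomials produced by the ramified Dwork lemma on $\O_L[X_0,X_1,\dots]$ with the Frobenius lift $X_i\mapsto X_i^q$, and all identities checked on ghost components in the universal $\pi$-torsion-free case. Your verification of the Dwork congruences (including the divisibility $\pi\mid\binom{q}{i}$ and the absorption of cross-terms into $\pi^{n+1}$) is the right technical content and fills in what the citation leaves implicit.
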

\begin{proof}
	This uses the same arguments as for $p$-typical Witt vectors; see \cite[pg.~14]{SchnBook} for details.
\end{proof}

In fact, $W_L(R)$ has a $\delta_L$-structure even when $R$ is not $\pi$-torsion-free.

\begin{lem}~\label{lem:W-delta_pi-functor}
$W_L$ extends to a functor $\Alg_{\O_L}\rightarrow \Alg_{\delta_L}$ which is right adjoint to the forgetful functor. Explicitly, this means that if $A$ is a $\delta_L$-algebra then any $\O_L$-algebra map $A\rightarrow R$ lifts to a unique $\delta_L$-algebra map $A\rightarrow W_L(R)$ making the following diagram commute. 
	\begin{center}
		\begin{tikzcd}
			A\ar{dr}\ar[dashed]{r} & W_L(R)\ar[twoheadrightarrow]{d} \\
			& R
		\end{tikzcd}
	\end{center}
\end{lem}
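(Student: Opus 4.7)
The proof naturally splits into two tasks: (a) constructing a canonical $\delta_L$-structure on $W_L(R)$ for every $\O_L$-algebra $R$, and (b) verifying the universal property by building, for each $\O_L$-algebra map $f:A\to R$ from a $\delta_L$-algebra $A$, a unique $\delta_L$-algebra lift $\tilde f:A\to W_L(R)$. Throughout, the strategy is to reduce everything to the $\pi$-torsion-free case, where the ghost map is injective and $\delta_L$-structures correspond bijectively to lifts of the $q$-Frobenius (remark~\ref{rmk:d_pi-q-frob}).

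For (a), first suppose $R$ is $\pi$-torsion-free. Then $W_L(R)$ is also $\pi$-torsion-free (via the injection $w_R$), and the Witt vector Frobenius $F_R$ lifts the $q$-Frobenius on $W_L(R)/\pi$, so by remark~\ref{rmk:d_pi-q-frob} we obtain a canonical $\delta_L$-structure $\delta_L(x)=(F_R(x)-x^q)/\pi$. For general $R$, choose a presentation $\tilde R_1\rightrightarrows \tilde R_0\twoheadrightarrow R$ with $\tilde R_i$ free polynomial $\O_L$-algebras. By functoriality, the two maps $W_L(\tilde R_1)\rightrightarrows W_L(\tilde R_0)$ are $\delta_L$-algebra maps between $\pi$-torsion-free $\delta_L$-algebras; by remark~\ref{rmk:lim-colim-closed}, their coequalizer in $\Alg_{\delta_L}$ has underlying $\O_L$-algebra equal to the coequalizer in $\Alg_{\O_L}$, namely $W_L(R)$. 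This endows $W_L(R)$ with a canonical $\delta_L$-structure, independent of the presentation by a standard universal argument.

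For (b), the key input is a $q$-typical $\delta_L$-analogue of Dwork's lemma. Let $A_\infty = \O_L\{x\}$ be the free $\delta_L$-algebra on one generator; concretely, $A_\infty\cong \O_L[x_0,x_1,x_2,\ldots]$ with $x_n:=\delta_L^n(x)$, and in particular it is $\pi$-torsion-free. By induction on $n$, using the relation $\phi(y)=y^q+\pi\delta_L(y)$, there exist unique universal polynomials $P_n(X_0,\ldots,X_n)\in \O_L[X_0,\ldots,X_n]$ with $P_0=X_0$, $P_1=X_1$, and
\[w_n(P_0(x_0),P_1(x_0,x_1),\ldots,P_n(x_0,\ldots,x_n))=\phi^n(x)\qquad\text{in }A_\infty.\]
Now for $A$ a $\delta_L$-algebra, $a\in A$, and $f:A\to R$, define
\[\tilde f(a) := \bigl(f(P_n(a,\delta_L(a),\ldots,\delta_L^n(a)))\bigr)_{n\ge 0}\ \in\ R^\N = W_L(R).\]
By construction $w_n(\tilde f(a)) = f(\phi^n(a))$. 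To verify that $\tilde f$ is a ring map, a $\delta_L$-algebra map, and that its composition with the projection $W_L(R)\twoheadrightarrow R$ recovers $f$, each assertion is a polynomial identity in iterated $\delta_L$'s which reduces, via the universal property of $A_\infty$ and $A_\infty\otimes_{\O_L}A_\infty$, to an identity in a $\pi$-torsion-free $\delta_L$-algebra; after applying the injective ghost map $w_R$, it becomes the transparent statement that $f\circ \phi^n$ is a ring map commuting with iteration of $\phi$. Uniqueness follows by the same mechanism: a $\delta_L$-algebra lift $g$ of $f$ must satisfy $w_n(g(a))=w_0(g(\phi^n(a)))=f(\phi^n(a))$ (using the ghost-component identity $w_n\circ F_R=w_{n+1}$), so $g$ and $\tilde f$ agree on ghost components, hence coincide in the $\pi$-torsion-free case, and in general by naturality after mapping down from a $\pi$-torsion-free cover.

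The main obstacle is the construction of the universal polynomials $P_n$ — the $q$-typical Dwork lemma — together with the care required to descend both the $\delta_L$-structure on $W_L(R)$ and the verifications of the universal property from $\pi$-torsion-free cases to general $R$ and $A$. Borger's $\Lambda_{\O_L}$-ring framework provides a more conceptual setup, but the direct argument above is self-contained.
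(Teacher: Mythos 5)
The paper offers no proof of this lemma beyond a citation to Borger, so a self-contained argument is welcome, and your overall strategy --- reduce everything to the $\pi$-torsion-free case via universal polynomials and a $q$-typical Dwork lemma, then define the lift by $w_n(\tilde f(a)) = f(\phi^n(a))$ --- is the standard and correct one. There are, however, two genuine gaps. The first is in part (a): you assert that the coequalizer in $\Alg_{\O_L}$ of the two maps $W_L(\tilde R_1)\rightarrow W_L(\tilde R_0)$ is $W_L(R)$. But $W_L$ is a \emph{right} adjoint (that is the content of the lemma), so there is no reason it should preserve coequalizers, and remark~\ref{rmk:lim-colim-closed}(4) only says that the coequalizer in $\Alg_{\delta_L}$ is computed on underlying rings --- it does not identify that ring with $W_L(R)$. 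Concretely, the kernel of $W_L(\tilde R_0)\twoheadrightarrow W_L(R)$ consists of all Witt vectors whose coordinates lie in $I=\ker(\tilde R_0\rightarrow R)$, and you would need to show this ideal is generated by the elements $f_*(w)-g_*(w)$, which is not at all clear. The standard fix bypasses this: by naturality on torsion-free rings, $\delta_L$ on $W_L(-)$ is given coordinate-wise by universal polynomials with $\O_L$-coefficients in the Witt coordinates; these polynomials define $\delta_L$ on $W_L(R)$ for arbitrary $R$ (equivalently, $\delta_L$ visibly preserves the coordinate-wise kernel of $W_L(\tilde R_0)\twoheadrightarrow W_L(R)$ and so descends), and the $\delta_L$-axioms hold because they are universal polynomial identities already verified in the torsion-free case.

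The second gap is the uniqueness step in part (b) when $R$ has $\pi$-torsion. You correctly derive $w_n(g(a)) = f(\phi^n(a))$ for any $\delta_L$-lift $g$, but for torsion $R$ the ghost components do not separate points of $W_L(R)$, and ``mapping down from a $\pi$-torsion-free cover'' goes the wrong way: two maps \emph{into} $W_L(R)$ cannot be compared by lifting them along $W_L(\tilde R_0)\twoheadrightarrow W_L(R)$. What rescues the argument is that $g$ commutes with $\delta_L$ itself, not merely with $\phi$: hence $w_0(\delta_L^n(g(a))) = f(\delta_L^n(a))$ is forced for every $n$, and the map $W_L(R)\rightarrow R^{\N}$ sending $x\mapsto (w_0(\delta_L^n(x)))_{n\ge 0}$ is a bijection for \emph{every} $R$, because $w_0(\delta_L^n(x))$ is a universal polynomial of the form $x_n + (\text{polynomial in }x_0,\dots,x_{n-1})$ (checked, once more, in the torsion-free universal case). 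With these two repairs --- both of which use the universal-polynomial device you already deploy elsewhere --- the proof is complete.
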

\begin{proof}
	See \cite{Borger}.
\end{proof}

We will make use of two distinct sections of $W_L(R)\twoheadrightarrow R$. One is the usual \textit{Teichm\"uller} lift $r\mapsto [r]$, a multiplicative section which exists for any $\O_L$-algebra $R$. In the case that $R$ also has a $\delta_L$-structure, another section exists which is moreover a $\delta_L$-algebra map.

\begin{prop}~\label{prop:W_pi-sections}
	\begin{enumerate}
		\item[(1)] If $R$ is any $\delta_L$-algebra, then there is a unique map $s_R:R\rightarrow W_L(R)$ of $\delta_L$-algebras which is a section of $W_L(R)\twoheadrightarrow R$. It satisfies $w_n(s_R(\alpha)) = \phi_R^n(\alpha)$ for all $n\ge 0$ and $\alpha\in R$.
		\item[(2)] If $R$ is a $\O_L$-algebra, then the map
		\begin{align*}
			[-] : R&\longrightarrow W_L(R) \\
			r&\mapsto (r, 0, 0,\dots)
		\end{align*}
		is a multiplicative section of $W_L(R)\twoheadrightarrow R$. 
	\end{enumerate}
\end{prop}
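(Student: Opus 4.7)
The plan is to deduce part (1) directly from the universal property in Lemma \ref{lem:W-delta_pi-functor}, and to verify part (2) by a ghost-component calculation after reducing to the $\pi$-torsion-free case.

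For part (1), I would apply the adjunction $\Hom_{\Alg_{\delta_L}}(R, W_L(R)) \cong \Hom_{\Alg_{\O_L}}(R, R)$ to the identity morphism $\id_R$ on the right-hand side, obtaining a unique $\delta_L$-algebra map $s_R : R \to W_L(R)$. That $s_R$ is a section of the projection $W_L(R) \twoheadrightarrow R$ is precisely the unit--counit identity of the adjunction, since this projection is the counit. For the ghost formula, first assume $R$ is $\pi$-torsion-free, so $W_L(R)$ is also $\pi$-torsion-free and the ghost map $w_R$ is an injective $\O_L$-algebra map; by diagram (\ref{frobenius-ghost}) it identifies the Frobenius $\phi_{W_L(R)}$ with the shift on $R^\N$. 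Since $s_R$ is a $\delta_L$-algebra map it commutes with Frobenius, so $w_{n+1}(s_R(\alpha)) = w_n(s_R(\phi_R(\alpha)))$; combined with $w_0 \circ s_R = \id_R$ (the section property), induction yields $w_n(s_R(\alpha)) = \phi_R^n(\alpha)$. The general case then follows by writing an arbitrary $\delta_L$-algebra $R$ as a quotient of a $\pi$-torsion-free $\delta_L$-algebra $\widetilde{R}$ (e.g.\ a free $\delta_L$-$\O_L$-algebra on a set of generators, which is $\pi$-torsion-free by inspection), lifting $\alpha$, and using naturality of $s$ and $w_n$ in $R$.

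For part (2), I would reduce in the same way: any $\O_L$-algebra $R$ is a quotient of a polynomial $\O_L$-algebra $\widetilde{R}$, which is $\pi$-torsion-free, and functoriality of $W_L$ together with the observation that the map $r \mapsto (r,0,0,\dots)$ is preserved under the induced map on Witt vectors lets me transport multiplicativity from $\widetilde{R}$ to $R$. It thus suffices to treat the $\pi$-torsion-free case, where $W_L(R)$ is $\pi$-torsion-free and $w_R$ is an injective $\O_L$-algebra homomorphism. A direct computation from the definition of $w_n$ gives $w_n((r,0,0,\dots)) = r^{q^n}$, so $w_n([r]\cdot[s]) = r^{q^n}s^{q^n} = (rs)^{q^n} = w_n([rs])$ for every $n$, and injectivity of $w_R$ forces $[r][s] = [rs]$. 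The section property is immediate from the definition.

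The main obstacle is not substantive but organizational: in the $\pi$-torsion setting one cannot read the $\delta_L$-structure on $W_L(R)$ off from ghost components, so both the ghost formula in (1) and the multiplicativity of $[-]$ in (2) must be bootstrapped from the $\pi$-torsion-free case via functoriality of $W_L$, which is only guaranteed by Lemma \ref{lem:W-delta_pi-functor}. Once this two-step pattern is set up, the actual identities to check are forced by injectivity of the ghost map in the torsion-free case.
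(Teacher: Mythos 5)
Your proposal is correct and follows essentially the same route as the paper: part (1) via the unit of the adjunction in Lemma \ref{lem:W-delta_pi-functor} together with the Frobenius--ghost diagram, and part (2) by checking multiplicativity on ghost components after reducing to the $\pi$-torsion-free case. The only cosmetic difference is that your detour through the torsion-free case in part (1) is not needed: since diagram (\ref{frobenius-ghost}) commutes for every $\O_L$-algebra $R$, the identity $w_n = w_0\circ F_{W_L(R)}^n$ combined with $F_{W_L(R)}^n\circ s_R = s_R\circ\phi_R^n$ gives the ghost formula directly, which is exactly the paper's one-line computation.
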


Note that if $R$ is $\pi$-torsion-free, then the formula in (1) uniquely determines the map $s_R$.

\begin{proof}
For part (1), $s_R$ is the unit of the adjunction from lemma~\ref{lem:W-delta_pi-functor} (i.e. apply the lemma to $\id: R\rightarrow R$). The formula for $w_n(s_R(\alpha))$ follows from the left diagram in (\ref{frobenius-ghost}) and the defining property of $s_R$ as 
	\[w_n(s_R(\alpha)) = w_0(F_{W_L(R)}^ns_R(\alpha)) = w_0(s_R(\phi_R^n\alpha)) = \phi_R^n\alpha.\]
Part (2) is clear, as one only needs to check that formula given defines a multiplicative map. But let us explain the relationship to part (1): let $R^\circ$ denote $R$ viewed as a multiplicative monoid. Then the free $\O_L$-algebra $\O_L[R^\circ]$ has a lift of $q$-Frobenius induced by $r\mapsto r^q$. Thus applying lemma~\ref{lem:W-delta_pi-functor} to the canonical map $\O_L[R^\circ]\rightarrow R$ gives a $\delta_L$-algebra map $\O_L[R^\circ]\rightarrow W_L(R)$, and the Teichm\"uller map is the composite
\[R^\circ\rightarrow \O_L[R^\circ]\rightarrow W_L(R).\]
To get the formula $[r] = (r, 0, 0, \dots)$, one uses the same reasoning as in part (1) to show that this formula holds when $R$ is $\pi$-torsion-free, from which it follows in general.
\end{proof}


\subsection{Distinguished elements and perfect \texorpdfstring{$\delta_L$}{delta\_L}-algebras}

This section develops results about distinguished elements and perfect $\delta_L$-algebras analogous to those in \cite[\secsymb2.3,\secsymb2.4]{BS}.

\begin{defn}
	Let $A$ be a $\delta_L$-algebra. An element $d\in A$ is \textit{distinguished} if $\delta_L(d)$ is a unit of $A$.
\end{defn}

\begin{rmk}~\label{rmk:pi-distinguished}
	\begin{enumerate}
		\item[(1)] As $\delta_L(\pi) = 1 - \pi^{q - 1}$, we have that $\pi$ is distinguished in any $\delta_L$-algebra.
		\item[(2)] The significance of distinguished elements is that if $(A,I)$ is a $L$-typical prism (to be introduced in \secsymb\ref{sec:L-typical-prisms}), then $I$ is locally generated by distinguished elements (see condition (iii) in the following lemma). As such, we are interested in the case that $A$ is $d$-adically complete; more generally we will assume that $d\in \mathrm{Rad}(A)$ is in the Jacobson radical of $A$.
	\end{enumerate}
\end{rmk}

\begin{lem}\label{lem:dist-characterization}
	Let $A$ be a $\delta_L$-algebra, and let $d\in \mathrm{Rad}(A)$. The following are equivalent:
	\begin{enumerate}
		\item[(i)] $d$ is distinguished.
		\item[(ii)] The ideal $(d)$ contains a distinguished element.
		\item[(iii)] $\pi\in (d^q,\phi(d))$.
		\item[(iv)] $\pi\in (d,\phi(d))$.
	\end{enumerate}
\end{lem}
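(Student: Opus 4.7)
The plan is to prove the cycle $(i) \Rightarrow (ii)$, $(i) \Rightarrow (iii) \Rightarrow (iv)$ by direct manipulation, and then close the loop with the two substantial implications $(ii) \Rightarrow (i)$ and $(iv) \Rightarrow (i)$ via a common maximal-ideal argument exploiting that $d \in \mathrm{Rad}(A)$ forces $d \in \mathfrak{m}$ for every maximal $\mathfrak{m}$. The easy directions are mechanical: $(i) \Rightarrow (ii)$ takes $d$ itself; $(i) \Rightarrow (iii)$ solves $\phi(d) = d^q + \pi\delta_L(d)$ for $\pi = \delta_L(d)^{-1}(\phi(d) - d^q) \in (d^q, \phi(d))$; and $(iii) \Rightarrow (iv)$ is automatic. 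In both substantial implications I aim to show $\delta_L(d) \notin \mathfrak{m}$ for every maximal $\mathfrak{m}$, which forces $\delta_L(d) \in A^\times$.

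For $(ii) \Rightarrow (i)$: if $rd$ is distinguished, the $\delta_L$-product rule combined with the definition of $\phi$ gives
\[ \delta_L(rd) = d^q\delta_L(r) + \phi(r)\delta_L(d). \]
Modulo any maximal $\mathfrak{m} \ni d$, the $d^q$-term vanishes, so $\delta_L(rd) \equiv \phi(r)\delta_L(d) \pmod{\mathfrak{m}}$. Since $\delta_L(rd) \in A^\times$ this is nonzero in $A/\mathfrak{m}$, so $\delta_L(d) \notin \mathfrak{m}$.

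For $(iv) \Rightarrow (i)$: write $\pi = xd + y\phi(d)$ and substitute $\phi(d) = d^q + \pi\delta_L(d)$ to obtain the rewriting
\[ \pi(1 - y\delta_L(d)) = d(x + yd^{q-1}). \quad (\ast) \]
Fix a maximal $\mathfrak{m}$. If $\pi \notin \mathfrak{m}$, then $\pi \in (A/\mathfrak{m})^\times$ while the RHS of $(\ast)$ lies in $(d) \subseteq \mathfrak{m}$, so $y\delta_L(d) \equiv 1 \pmod{\mathfrak{m}}$ and $\delta_L(d) \notin \mathfrak{m}$. If $\pi \in \mathfrak{m}$, then $\mathfrak{m} \supseteq (d, \pi)$ and it suffices to prove $\delta_L(d)$ is a unit in $A/(d, \pi)$. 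I do this by applying $\delta_L$ to both sides of $\pi = xd + y\phi(d)$ and reducing modulo $(d, \pi)$: using $\delta_L(\pi) = 1 - \pi^{q-1} \equiv 1$, the $\delta_L$-product rule (noting $d^q, \pi \equiv 0$), and the identity $\delta_L(\phi(d)) = \phi(\delta_L(d)) \equiv \delta_L(d)^q \pmod{\pi}$ (which rests on $\delta_L \circ \phi = \phi \circ \delta_L$), the computation collapses to
\[ \delta_L(d)\bigl(x^q + y^q\delta_L(d)^{q-1}\bigr) \equiv 1 \pmod{(d, \pi)}. \]

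The main obstacle I anticipate is the bookkeeping in this $\pi \in \mathfrak{m}$ case, in particular handling the additive correction $-\sum_{i=1}^{q-1}\tfrac{\binom{q}{i}}{\pi}(xd)^i(y\phi(d))^{q-i}$ coming from the $\delta_L$-sum rule: one must check that it is well-defined in $A$ (requiring $\pi \mid \binom{q}{i}$ in $\O_L$ for $1 \leq i \leq q-1$, which follows from $p \mid \binom{q}{i}$ and $\pi \mid p$) and that it vanishes modulo $(d, \pi)$ because each summand carries a factor of $d^i \in (d)$.
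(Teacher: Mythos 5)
Your proof is correct and follows essentially the same route as the paper's: the substantive implications (ii)$\Rightarrow$(i) and (iv)$\Rightarrow$(i) are both obtained by applying $\delta_L$ to the given identity and reducing modulo $(\pi,d)$ (equivalently, modulo a maximal ideal containing both), exactly as in the paper. Your only refinement is packaging the conclusion via arbitrary maximal ideals and treating $\pi\notin\mathfrak{m}$ separately in (iv)$\Rightarrow$(i), which makes the argument work assuming only $d\in\mathrm{Rad}(A)$, whereas the paper's final step tacitly also uses $\pi\in\mathrm{Rad}(A)$.
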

\begin{proof}
	Clearly (i)$\implies$(ii). Conversely, suppose we have $d' = \alpha d$ for some $\alpha,d'\in A$ with $d'$ distinguished. Applying $\delta_L$ and working mod $(\pi, d)$ (using remark~\ref{rmk:q-derivation}(2) to simplify) we have
	\[\delta_L(d')\equiv \alpha^q\delta_L(d)\pmod{(\pi, d)},\]
	which shows that $\delta_L(d)$ is a unit in $A/(\pi, d)$. As $\pi, d\in \mathrm{Rad}(A)$, we have that $\delta_L(d)\in A^\times$ as well.

	We now show (i)$\implies$(iii)$\implies$(iv)$\implies$(i). The first implication follows directly from the formula $\phi(d) = d^q + \pi \delta_L(d)$, and the second implication is clear. For the last implication, suppose that $\pi = \alpha d + \beta \phi(d)$ for some $\alpha,\beta\in A$. Applying $\delta_L$ to this formula and working mod $(\pi, d)$ we get
	\[\delta_L(\pi) \equiv \delta_L(d)(\alpha^q + \beta^q\delta_L(d)^{q - 1})\pmod{(\pi, d)}.\]
	Then since $\pi$ is distinguished in any $\delta_L$-algebra, we conclude that $\delta_L(d)$ is a unit in $A/(\pi, d)$ and thus in $A$ as well.
\end{proof}

\begin{defn}
	A $\delta_L$-algebra $A$ is \textit{perfect} if $\phi_A$ is an isomorphism. 
\end{defn}

\begin{lem}{(See \cite[Lemma~2.28]{BS}.)}\label{lem:perfect-implies-torsionfree}
	Let $A$ be a $\delta_L$-algebra. Then if $\alpha\in A$ is $\pi$-torsion, we have $\phi(\alpha) = 0$. In particular, if $A$ is perfect then $A$ is $\pi$-torsion free.
\end{lem}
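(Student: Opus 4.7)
The plan is to start from the relation $\pi\alpha = 0$ and apply $\delta_L$ to both sides, using the defining multiplicative identity for $\delta_L$ to expand $\delta_L(\pi\alpha)$, then simplify using $\delta_L(\pi) = 1 - \pi^{q-1}$ (which comes from the defining identity $\delta_L(\alpha') = (\alpha' - (\alpha')^q)/\pi$ for $\alpha' \in \O_L$). This yields
\[
0 \;=\; \delta_L(\pi\alpha) \;=\; \delta_L(\pi)\alpha^q + \pi^q\delta_L(\alpha) + \pi\delta_L(\pi)\delta_L(\alpha) \;=\; (1 - \pi^{q-1})\alpha^q + \pi^q\delta_L(\alpha) + \pi(1 - \pi^{q-1})\delta_L(\alpha).
\]

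The key observation that makes things collapse is that $\pi\alpha = 0$ forces $\pi^{q-1}\alpha^q = 0$ and $\pi^q \delta_L(\alpha) - \pi^q\delta_L(\alpha) = 0$, so after expanding and cancelling the terms with factors of $\pi^{q-1}\alpha^q$ or $\pi^q$, the displayed equation reduces to $0 = \alpha^q + \pi\delta_L(\alpha)$. But the right-hand side is precisely $\phi(\alpha)$, giving the first claim.

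For the second claim, if $A$ is perfect then $\phi$ is in particular injective, so $\phi(\alpha) = 0$ forces $\alpha = 0$; hence $A$ has no $\pi$-torsion. No step is really an obstacle here — the whole proof hinges on identifying the right algebraic identity to apply $\delta_L$ to, and everything else is bookkeeping with the Leibnitz-type rule and the explicit value of $\delta_L(\pi)$. This mirrors exactly the argument of \cite[Lemma~2.28]{BS} in the $p$-typical case, with $p$ replaced by $\pi$ and Frobenius replaced by $q$-Frobenius throughout.
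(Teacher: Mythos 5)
Your proof is correct and follows essentially the same route as the paper: apply $\delta_L$ to $\pi\alpha=0$, expand with the product rule and $\delta_L(\pi)=1-\pi^{q-1}$, and recognize $\phi(\alpha)$. The only (harmless) difference is bookkeeping — you cancel the two $\pi^q\delta_L(\alpha)$ terms directly and kill $\pi^{q-1}\alpha^q$ using $\pi\alpha=0$, whereas the paper isolates $\delta_L(\pi)\phi(\alpha)$, shows $\pi^q\delta_L(\alpha)=\phi(\pi^{q-1}\alpha)-\pi^{q-1}\alpha^q=0$ separately, and then divides by the unit $\delta_L(\pi)$; your version never needs that $\delta_L(\pi)$ is a unit.
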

\begin{proof}
	Applying $\delta_L$ to $\pi \alpha = 0$ gives
	\[0 = \pi^q\delta_L(\alpha) + \delta_L(\pi)\alpha^q + \pi \delta_L(\pi)\delta_L(\alpha) = \pi^q\delta_L(\alpha) + \delta_L(\pi)\phi(\alpha).\]
	As $\delta_L(\pi) = 1 - \pi^{q - 1}$ is a unit and
	\[\pi^q\delta_L(\alpha) = \phi(\pi^{q - 1}\alpha) - \pi^{q - 1}\alpha^q = 0\]
	we are done.
\end{proof}
\begin{lem}\label{lem:distinguished-elt-nonzerodivisor}
	If $A$ is a perfect and $\pi$-adically complete $\delta_L$-algebra, and $d$ is distinguished, then $d$ is a nonzerodivisor.
\end{lem}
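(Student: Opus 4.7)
The plan is to reduce to showing $I := \mathrm{Ann}_A(d) \subseteq \pi A$; combined with $\pi$-torsion-freeness of $A$ (Lemma~\ref{lem:perfect-implies-torsionfree}), iteration of this inclusion yields $I \subseteq \bigcap_n \pi^n A = 0$ by $\pi$-adic separatedness of the complete ring $A$. So it suffices to show that $da = 0$ forces $a \in \pi A$.

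For this, the key computation is to apply $\delta_L$ to the relation $da = 0$. Expanding via the Leibniz-type axiom and using that $\delta_L(d) \in A^\times$ by distinguishedness, one solves for $\phi(a)$ and finds
\[\phi(a) = -\delta_L(d)^{-1} d^q \delta_L(a) \in (d^q).\]
Perfectness of $A$ means $\phi$ is bijective, so setting $d_1 := \phi^{-1}(d)$ gives $a \in \phi^{-1}((d^q)) = (d_1^q)$; write $a = d_1^q b$ for some $b \in A$. Reducing modulo $\pi$ and using $d = \phi(d_1) = d_1^q + \pi\delta_L(d_1)$, we see $\bar d = \bar{d_1}^q$ in $\bar A := A/\pi$, so $\bar a = \bar d\,\bar b$.

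Substituting $\bar a = \bar d\,\bar b$ into $\bar d\,\bar a = 0$ gives $\bar d^2 \bar b = 0$, hence $(\bar d\,\bar b)^2 = \bar d^2 \bar b^2 = 0$. Now $\bar A$ is a perfect $\F_q$-algebra---the $q$-Frobenius on $\bar A$ is the reduction of $\phi$, and is bijective since $\phi^{-1}$ fixes $\pi$ and so preserves $\pi A$---and any perfect $\F_q$-algebra is reduced, as any nilpotent $z$ with $z^n = 0$ satisfies $z^{q^k} = 0$ for $q^k \geq n$, whence $z = \phi_q^{-k}(0) = 0$. Therefore $\bar d\,\bar b = 0$, giving $\bar a = 0$ and $a \in \pi A$, as desired.

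No step is a serious obstacle. The main conceptual point is that applying $\delta_L$ yields divisibility of $\phi(a)$ by $d^q$ rather than merely by $d$; this extra factor is precisely what produces the square $(\bar d\,\bar b)^2 = 0$ after reduction modulo $\pi$, which reducedness of the perfect $\F_q$-algebra $\bar A$ then collapses to $\bar d\,\bar b = 0$.
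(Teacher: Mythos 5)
Your proof is correct and follows essentially the same strategy as the paper's: reduce to showing $\pi \mid a$, apply $\delta_L$ to $da = 0$, exploit that $\delta_L(d)$ is a unit, and finish using that $A/\pi$ is a perfect (hence reduced) $\F_q$-algebra together with $\pi$-torsion-freeness and $\pi$-adic separatedness. The only difference is in the middle: the paper multiplies the identity $\delta_L(d)a^q + \delta_L(a)\phi(d) = 0$ by $\phi(a)$ and uses $\phi(d)\phi(a) = \phi(da) = 0$ to get $a^q\phi(a) = 0$ directly, whereas you solve for $\phi(a) \in (d^q)$ and pull back along $\phi^{-1}$; both manipulations are valid and of comparable length.
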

\begin{proof}
	Suppose that $d\alpha = 0$ and suppose towards a contradiction that $\alpha\neq 0$. Since $A$ is $\pi$-torsionfree by lemma~\ref{lem:perfect-implies-torsionfree} and $\pi$-adically complete, we can further assume that $\pi\nmid \alpha$. Applying $\delta_L$ to $d\alpha = 0$ gives
	\[\alpha^q\delta_L(d) + \delta_L(\alpha)\phi(d) = 0.\]
	Multiplying by $\phi(\alpha)$ and using that $d$ is distinguished then implies $\alpha^q\phi(\alpha) = 0$. Thus $\alpha^{2q}\equiv 0\pmod{\pi}$. But as $\phi$ is a bijection, $\varphi_q$ is injective, so $\pi|\alpha$, a contradiction.
\end{proof}

A key fact about perfect $\delta_L$-algebras is the following.
\begin{prop}{(See \cite[Corollary~2.31]{BS}.)}\label{prop:perfect-d-algs-equiv} The following functors are equivalences of categories.
\begin{center}
	\begin{tikzcd}
		\left\{\begin{array}{c} \text{$\pi$-adically complete} \\ \text{perfect $\delta_L$-algebras}\end{array}\right\} \ar{r}{forget}
		& \left\{\begin{array}{c} \text{$\pi$-adically complete} \\ \text{$\pi$-torsion free $\O_L$-algebras $A$} \\ \text{with $A/\pi$ perfect}\end{array}\right\}\ar{r}{A\mapsto A/\pi}
		& \left\{\text{perfect $\F_q$-algebras}\right\}\ar[bend left]{ll}{W_L(R)\mapsfrom R}
	\end{tikzcd}
\end{center}
\end{prop}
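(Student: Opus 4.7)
The plan is to adapt the argument of \cite[corollary~2.31]{BS}, with $W_L$ serving as the inverse to the composition of the two forgetful arrows.

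First I would verify well-definedness of each functor. The forgetful functor from perfect $\pi$-complete $\delta_L$-algebras lands in the middle category by lemma~\ref{lem:perfect-implies-torsionfree} ($\pi$-torsion-freeness), together with the observation that $\phi_A$ reduces to $\varphi_q$ on $A/\pi$, so $A/\pi$ is perfect whenever $\phi_A$ is bijective. The reduction functor is tautologically well-defined. For $R \mapsto W_L(R)$ with $R$ perfect, standard Witt-vector theory gives that $W_L(R)$ is $\pi$-adically complete, $\pi$-torsion-free (using $\pi = F_RV_R$, $V_R$ a shift on components, and $F_R$ bijective on perfect $R$), and $W_L(R)/\pi \cong R$. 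The Witt-vector Frobenius $F_R$ provides a $\delta_L$-structure, which is an isomorphism since it reduces modulo $\pi$ to the bijection $\varphi_q$ and its inverse lifts using $\pi$-adic completeness and torsion-freeness.

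The crux of the argument is the $L$-typical analog of the classical Fontaine-Witt theorem: for any $\pi$-torsion-free, $\pi$-adically complete $\O_L$-algebra $A$ with $A/\pi$ a perfect $\F_q$-algebra, there is a canonical isomorphism $A \cong W_L(A/\pi)$. For each $r \in A/\pi$, using perfectness select lifts $\tilde r_n \in A$ of $r^{1/q^n}$, and define the Teichm\"uller lift $[r] := \lim_{n \to \infty} \tilde r_n^{q^n}$. This limit exists in the $\pi$-adic topology and is independent of choices: for $x \equiv y \pmod{\pi}$ in $A$, writing $y = x + \pi z$ and expanding shows $x^q \equiv y^q \pmod{\pi^2}$, since for $1 \le i \le q-1$ the cross-term $\binom{q}{i} x^{q-i} \pi^i z^i$ lies in $\pi^2 A$ (using $p \mid \binom{q}{i}$ for such $i$ and $\pi \mid p$ in $\O_L$), and $\pi^q z^q \in \pi^2 A$ trivially. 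Iterating yields the Cauchy property and independence of choices. Then
\[
\Psi : W_L(A/\pi) \longrightarrow A, \qquad (r_0, r_1, \dots) \longmapsto \sum_{n \ge 0} \pi^n [r_n^{1/q^n}]
\]
reduces modulo $\pi$ to the identity, so by $\pi$-adic completeness and torsion-freeness on both sides $\Psi$ is bijective. That $\Psi$ is a ring homomorphism is a formal consequence of the universal addition and multiplication polynomials for Witt vectors, which one checks by reducing to the case $A = W_L(R)$ where $\Psi$ is the identity map.

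Transporting the canonical perfect $\delta_L$-structure on $W_L(A/\pi)$ along $\Psi$ equips $A$ with a perfect $\delta_L$-structure lifting its $\O_L$-algebra structure, giving essential surjectivity of the forgetful functor. Full faithfulness follows because $\delta_L$-structures on $\pi$-torsion-free rings are in bijection with lifts of $q$-Frobenius (remark in \secsymb\ref{ssec:delta-algebras-basic-theory}), and the Frobenius on $W_L(A/\pi)$ is canonically determined by the ring structure via the above characterization. The main obstacle is verifying that $\Psi$ is a ring homomorphism: this is the technical core and requires comparing the Teichm\"uller expansion to the polynomial formulas defining $W_L$; once this is established, all other steps follow formally.
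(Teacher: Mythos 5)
Your proof is correct in outline, but it takes a genuinely different route from the paper's. You run the classical strict-$\pi$-ring argument (Serre/Fontaine): build Teichm\"uller representatives $[r]=\lim \tilde r_n^{q^n}$ by hand, show every element of $A$ has a unique Teichm\"uller expansion, and identify $A$ with $W_L(A/\pi)$ via $\Psi$, with the ring-homomorphism property of $\Psi$ checked against the universal Witt polynomials. The paper instead splits the work differently: the equivalence between the middle and right-hand categories is obtained from deformation theory (vanishing of $\mathbb{L}_{R/\F_q}$ for perfect $R$ gives a \emph{unique} $\pi$-complete flat lift $\tilde R$), and the identification $\tilde R\cong W_L(R)$ is then extracted from the adjunction of lemma~\ref{lem:W-delta_pi-functor}: the canonical Frobenius lift on $\tilde R$ produces $s_{\tilde R}:\tilde R\to W_L(R)$, which is an isomorphism because it is one mod $\pi$. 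The payoff of the paper's route is that it never has to verify by hand that an explicitly constructed map is a ring homomorphism --- exactly the step you correctly flag as the technical core of your approach --- since that work is absorbed into the already-established universal property of $W_L$. The payoff of your route is that it is more elementary and self-contained (no cotangent complexes), at the cost of the universal-polynomial computation. Two small points of care in your write-up: the bijectivity of $\Psi$ should be argued by the successive-approximation/uniqueness-of-Teichm\"uller-expansion argument rather than by invoking ``iso mod $\pi$ implies iso,'' which as stated presumes $\Psi$ is already additive; and for full faithfulness you should make explicit that the Frobenius lift on a $\pi$-complete, $\pi$-torsion-free ring with perfect reduction is \emph{unique} (forced on Teichm\"uller representatives by $\phi([r])=[r^q]$), so that every ring map automatically commutes with it.
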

\begin{proof}
	By lemma~\ref{lem:perfect-implies-torsionfree}, the forgetful functor has image in $\pi$-torsion free rings. By the vanishing of the cotangent complex $\mathbb{L}_{R/\F_q}$ for a perfect $\F_q$-algebra $R$ and deformation theory, there is a unique $\pi$-adically complete and $\pi$-torsion-free $\O_L$-algebra $\tilde{R}$ such that $\tilde{R}/\pi\cong R$. Since $R\mapsto \tilde{R}$ is clearly quasi-inverse to $A\mapsto A/\pi$, it suffices to show that $\tilde{R}$ is naturally isomorphic to $\mathrm{forget}(W_L(R))$.

	Since $R\mapsto \tilde{R}$ is a functor, $\tilde{R}$ comes equipped with a canonical lift of $q$-Frobenius and thus by lemma~\ref{lem:W-delta_pi-functor} a canonical map $s_{\tilde{R}}:\tilde{R}\rightarrow W_L(R)$ lifting $\tilde{R}\twoheadrightarrow \tilde{R}/\pi = R$. By \cite[prop.~1.1.18]{SchnBook}, $W_L(R)$ is $\pi$-adically complete, so it suffices to show that $s_{\tilde{R}}$ induces an isomorphism $R\rightarrow W_L(R)/\pi$. But this is clear since an inverse is given by the map $W_L(R)/\pi\rightarrow R/\pi = R$ induced by $W_L(R)\twoheadrightarrow R$. 
\end{proof}

\begin{cor}\label{cor:W_L-perfect-alg}
	If $R$ is a perfect $\F_q$ algebra, then $W_L(R)\cong W(R)\otimes_{W(\F_q)}\O_L$ where $W$ denotes the $p$-typical Witt vectors. In particular $W_L(\F_q) = \O_L$.
\end{cor}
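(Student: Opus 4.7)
The plan is to apply Proposition~\ref{prop:perfect-d-algs-equiv} to identify both sides as the unique lift of the perfect $\F_q$-algebra $R$. The proposition tells us that a $\pi$-adically complete perfect $\delta_L$-algebra $A$ is determined up to unique isomorphism by the perfect $\F_q$-algebra $A/\pi$, and that the forgetful functor to $\pi$-torsion-free, $\pi$-adically complete $\O_L$-algebras with perfect reduction is an equivalence. Since $W_L(R)$ is manifestly a perfect $\pi$-adically complete $\delta_L$-algebra with $W_L(R)/\pi \cong R$, it suffices to verify that $B := W(R) \otimes_{W(\F_q)} \O_L$ is another such algebra with $B/\pi \cong R$; the equivalence then produces the desired canonical isomorphism.

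First I would verify the ring-theoretic hypotheses for $B$. Write $e$ for the ramification index of $L/\Q_p(\mu_{q-1})$, so that $\O_L$ is a finite free $W(\F_q)$-module generated by $1,\pi,\dots,\pi^{e-1}$ via an Eisenstein relation. Since $W(R)$ is $p$-torsion-free, it is flat over the DVR $W(\F_q)$, hence $B$ is $\O_L$-flat and therefore $\pi$-torsion-free. As a finite module over the $p$-adically complete ring $W(R)$, $B$ is $p$-adically complete, and since $p \in \pi^e\O_L + \pi^{e+1}\O_L \cdot (\text{unit terms})$, equivalently $(p) = (\pi^e)$ in $\O_L$ up to a unit, it is also $\pi$-adically complete. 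Finally,
\[
B/\pi \;=\; W(R) \otimes_{W(\F_q)} (\O_L/\pi) \;=\; W(R) \otimes_{W(\F_q)} \F_q \;=\; W(R)/pW(R) \;=\; R.
\]

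Next I would equip $B$ with a lift of $q$-Frobenius. Writing $q = p^f$, the Witt vector Frobenius $F$ on $W(R)$ satisfies $F(x) \equiv x^p \pmod p$, so $F^f$ lifts $x \mapsto x^q$ on $R$. Crucially, $F^f$ restricts to the identity on $W(\F_q) \subseteq W(R)$ (because the absolute Frobenius on $\F_q$ has order $f$), so $F^f \otimes \mathrm{id}_{\O_L}$ is a well-defined $\O_L$-algebra endomorphism of $B$ lifting the $q$-Frobenius on $B/\pi = R$. Since $B$ is $\pi$-torsion-free, Remark~\ref{rmk:d_pi-q-frob} converts this into a $\delta_L$-structure on $B$, making it a perfect $\pi$-adically complete $\delta_L$-algebra with $B/\pi = R$. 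Proposition~\ref{prop:perfect-d-algs-equiv} then yields a unique isomorphism $B \cong W_L(R)$ of $\O_L$-algebras lifting the identity on $R$, which is automatically an isomorphism of $\delta_L$-algebras. Specializing to $R = \F_q$ gives $W_L(\F_q) \cong W(\F_q) \otimes_{W(\F_q)} \O_L = \O_L$.

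There is no substantive obstacle here; the only mild subtlety is ensuring that the Witt vector Frobenius descends across the tensor product, which is exactly the observation $F^f|_{W(\F_q)} = \mathrm{id}$. All remaining compatibilities (naturality in $R$, $\O_L$-linearity) are forced by the uniqueness clause in Proposition~\ref{prop:perfect-d-algs-equiv}.
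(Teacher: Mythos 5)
Your argument is correct and is exactly the route the paper intends: the corollary is stated without proof as an immediate consequence of Proposition~\ref{prop:perfect-d-algs-equiv}, and your verification that $W(R)\otimes_{W(\F_q)}\O_L$ is a $\pi$-adically complete, $\pi$-torsion-free $\O_L$-algebra with perfect reduction $R$ carrying the $q$-Frobenius lift $F^f\otimes\mathrm{id}$ is precisely the check that makes this immediate. Nothing to add.
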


\section{\texorpdfstring{$L$}{L}-typical prisms}\label{sec:L-typical-prisms}

As before, let $L/\Q_p$ be a finite extension with uniformizer $\pi$ and residue field $\F_q$. In this section, we introduce $L$-typical prisms, which are a mild generalization of prisms obtained by replacing $\delta$-rings with $\delta_L$-algebras. In \secsymb\ref{ssec:L-typical-basic-theory} we define $L$-typical prisms and the $L$-typical prismatic site of a formal scheme $X$ over $\Spf\O_L$. 

Prisms as defined can be viewed as ``deperfections'' of perfectoid rings, in the sense that the subcategory of perfect prisms is equivalent to the category of perfectoid rings. Similarly, in \secsymb\ref{ssec:perfect-prisms-perfectoid-algebras} we show that the category of $L$-typical prisms has a subcategory of perfect $L$-typical prisms, which are equivalent to \textit{perfectoid $\O_L$-algebras} (i.e. perfectoid rings with an $\O_L$-algebra structure). We also show that there is a perfection functor for $L$-typical prisms.

In \secsymb\ref{ssec:prism-maps}, we give two constructions which -- given an $L$-typical prism $(A,I)$, a perfectoid $\O_L$-algebra $R$, and a system of $\phi$-compatible maps $(\iota_n:A\rightarrow R)_n$ -- produce a map $(A,I)\rightarrow (\Ainf(R),\ker\theta)$ to the perfect $L$-typical prism corresponding to $R$. These constructions will play a crucial role in \secsymb\ref{sec:LT-phi-G-modules}, where they are used to embed an $L$-typical prism coming from a Lubin-Tate formal $\O_L$-module inside a perfect $L$-typical prism.

\subsection{Basic theory}\label{ssec:L-typical-basic-theory}


\begin{defn}~
	\begin{enumerate}
		\item[(1)] An \textit{$L$-typical prism} is a pair $(A, I)$ where $A$ is a $\delta_L$-algebra and $I\subseteq A$ is an ideal defining a Cartier divisor on $\Spec(A)$ such that $A$ is derived $(\pi,I)$-complete and $\pi\in I + \phi_A(I)$. A morphism $(A,I)\rightarrow (B, J)$ of prisms is a $\delta_L$-algebra morphism $f:A\rightarrow B$ such that $f(I)\subseteq J$.
		\item[(2)] An $L$-typical prism $(A,I)$ is \textit{perfect} if $A$ is a perfect $\delta_L$-algebra. It is \textit{bounded} if $A/I$ has bounded $\pi^\infty$-torsion, i.e. $A/I[\pi^\infty] = A/I[\pi^n]$ for some $n\ge 0$. 
		\item[(3)] If $X$ is a formal scheme over $\Spf\O_L$ then the (absolute) $L$-typical prismatic site $X_{\Prism_L}$ has
			\begin{itemize}
			 	\item objects: bounded $L$-typical prisms $(A,I)$ together with a map of formal schemes $\Spf(A/I)\rightarrow X$;
			 	\item morphisms: maps of $L$-typical prisms compatible with the structure map to $X$;
			 	\item covers: morphisms $(A,I)\rightarrow (B,J)$ such that $A\rightarrow B$ is $(\pi,I)$-completely faithfully flat.
			 \end{itemize}
			 If $X = \Spf(R)$ then we write $R_{\Prism_L}$ for $X_{\Prism_{L}}$.
	\end{enumerate}
\end{defn}
The same definition was independently given in the concurrent work of Ito \cite{Ito}, where $L$-typical prisms are called $\O_L$-prisms in Ito's terminology.
\begin{rmk}
	For the notions of derived $I$-completeness and $I$-complete faithful flatness, see \cite[\secsymb1.2]{BS}. Note that \cite{Wu} omits the word ``faithfully'' in the definition of a cover.
\end{rmk}

As suggested by the definition of the prismatic site, we will only be interested in bounded prisms. In this case, we need not worry about the word ``derived'' in the definition of a $L$-typical prism.
\begin{lem}
	If $(A,I)$ is a bounded $L$-typical prism, then $A$ is classically $(\pi, I)$ complete.
\end{lem}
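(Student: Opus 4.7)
The plan is to adapt the proof of \cite[Lemma~3.7]{BS} to the $L$-typical setting. Since $I$ defines a Cartier divisor, it is Zariski-locally principal on $\mathrm{Spec}\, A$, and classical $(\pi, I)$-completeness is local on such a Zariski cover. So after passing to a cover we may assume $I = (d)$ for some $d \in A$; writing $\pi = x + \phi(y)$ with $x, y \in I$ then forces $\pi \in (d, \phi(d))$, so by lemma~\ref{lem:dist-characterization} the element $d$ is distinguished (the hypothesis $d \in \mathrm{Rad}(A)$ holds by derived $(\pi, I)$-completeness of $A$).

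The first substantive step is to show that $d$ is a nonzerodivisor on $A$, i.e.\ the $L$-typical analogue of \cite[Lemma~3.5]{BS}. Given $\alpha \in A$ with $d\alpha = 0$, applying $\delta_L$ yields
\[d^q \delta_L(\alpha) + \delta_L(d)\phi(\alpha) = 0,\]
and since $\delta_L(d) \in A^\times$ we obtain $\phi(\alpha) \in (d^q)$. A bootstrap using this identity, combined with derived $(\pi, d)$-completeness to kill the intersection of the resulting $(d)$-adic divisibilities of iterated Frobenii of $\alpha$, should force $\alpha = 0$. This extends lemma~\ref{lem:distinguished-elt-nonzerodivisor} beyond the perfect case.

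Next I would upgrade the boundedness hypothesis $(A/I)[\pi^\infty] = (A/I)[\pi^n]$ to the statement that $A$ itself has bounded $\pi^\infty$-torsion. Given $\alpha \in A[\pi^m]$, its image in $A/d$ is killed by $\pi^n$, so $\pi^n \alpha = d\beta$ for some $\beta \in A$; since $d$ is a nonzerodivisor and $d(\pi^m\beta) = \pi^m(d\beta) = \pi^{m+n}\alpha = 0$, we get $\beta \in A[\pi^m]$ as well. Iterating shows $\pi^{kn}\alpha \in d^k A$ for every $k$, and the standard vanishing $\bigcap_k d^k A = 0$ (which follows from $d$ being a nonzerodivisor together with derived $d$-completeness of $A$) gives $A[\pi^\infty] = A[\pi^N]$ for some $N$.

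Finally I would invoke the general principle that a ring derived complete with respect to a finitely generated ideal $J$ is classically $J$-complete provided its torsion with respect to a generating set of $J$ is bounded. Here $d$-torsion on $A$ is zero (step~2) and $\pi^\infty$-torsion on $A$ is bounded (step~3), so derived $(\pi, d)$-completeness upgrades to classical $(\pi, I)$-completeness. The main obstacle is step~2: the $\delta_L$-bootstrap must be closed off using derived completeness in a genuinely non-circular way, which lies at the heart of the argument and has no analogue in the perfect case handled by lemma~\ref{lem:distinguished-elt-nonzerodivisor}.
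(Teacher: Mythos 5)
Your overall shape (reduce to $I=(d)$, then upgrade derived to classical completeness via a bounded-torsion criterion) matches the paper's proof, which follows \cite[lem.~3.7]{BS}. But two of your steps have genuine problems.

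First, the step you flag as ``the heart of the argument'' --- showing $d$ is a nonzerodivisor --- is a phantom obstacle: the definition of an $L$-typical prism requires $I$ to define a Cartier divisor on $\Spec(A)$, so $I$ is an invertible ideal and is therefore locally generated by a \emph{nonzerodivisor}. The paper simply writes ``we may suppose that $I=(d)$ for a nonzerodivisor $d$.'' Moreover, the $\delta_L$-bootstrap you sketch would not close without perfectness: from $d\alpha=0$ you get $\phi(\alpha)\in(d^q)$, but with no inverse of $\phi$ available there is no way to transfer $d$-divisibility of $\phi^k(\alpha)$ back to $\alpha$ (compare lemma~\ref{lem:distinguished-elt-nonzerodivisor} and proposition~\ref{prop:prism-perfection}, both of which use bijectivity of $\phi$ in exactly this place). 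So this step is both unnecessary and, as an argument, broken.

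Second, your torsion analysis aims at the wrong statement and its final deduction is a non sequitur. Knowing $\pi^{kn}\alpha\in d^kA$ for every $k$ does not put any single element into $\bigcap_k d^kA$, since the element $\pi^{kn}\alpha$ changes with $k$; so bounded $\pi^\infty$-torsion of $A$ does not follow. More to the point, that is not the input the completeness criterion needs. The paper's argument writes
\[A\cong R\lim_m R\lim_n\bigl(A\otimes^L_{\Z[d]}\Z[d]/(d^m)\bigr)\otimes^L_{\Z[\pi]}\Z[\pi]/(\pi^n),\]
identifies $A\otimes^L_{\Z[d]}\Z[d]/(d^m)$ with $A/d^m$ because $d$ is a nonzerodivisor, and then needs bounded $\pi^\infty$-torsion of $A/d^m$ \emph{for every} $m$ --- obtained by d\'evissage from boundedness of $A/d$ along $0\to A/d^{m-1}\xrightarrow{d}A/d^m\to A/d\to 0$ --- to replace the derived $\pi$-completion of $A/d^m$ by the classical one. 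Your ``general principle'' (derived completeness plus bounded torsion with respect to each generator separately implies classical completeness) is not the standard statement and does not obviously hold in that form: completing in the order $\pi$ then $d$ would require $d$ to remain a nonzerodivisor (or have bounded torsion) on the classical $\pi$-completion, which you have not shown. Replacing your steps 2--4 with the d\'evissage on $A/d^m$ and the two-step $R\lim$ computation above repairs the proof.
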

\begin{proof}
	This is the same as in \cite[lem.~3.7]{BS}. In more detail, we may suppose that $I = (d)$ for a nonzerodivisor $d$. Then by the derived $(\pi,d)$-completeness of $A$, the fact that $A/d^m$ has bounded $\pi$-torsion for all $m$ (by devissage), and \cite[\href{https://stacks.math.columbia.edu/tag/091X}{Tag 091X}]{stacks-project}, we have
	\begin{align*}
	A&\cong R\lim_mR\lim_n (A\otimes_{\Z[d]}^L\Z[d]/(d^m))\otimes_{\Z[\pi]}^L \Z[\pi]/(\pi^n) \\
	&\cong R\lim_mR\lim_n A/(d^m)\otimes_{\Z[\pi]}^L \Z[\pi]/(\pi^n)\cong \lim_m\lim_n A/(d^m,\pi^n)
	\end{align*}
	as desired.
\end{proof}

If $(A,I)$ is a $L$-typical prism with $I$ \textit{principal}, then lemma~\ref{lem:dist-characterization} shows that the condition $\pi\in (I,\phi(I))$ is equivalent to $I$ being generated by a distinguished element. Under the weaker assumption that $I$ is Zariski-locally principal, the condition $\pi\in (I,\phi(I))$ is equivalent to $I$ being ind-Zariski-locally generated by a distinguished element. (The 'ind-' is necessary because after after passing to a Zariski open, we may no longer have $(\pi,I)\subseteq \mathrm{Rad}(A)$, which necessitates passing to a further localization along $(\pi, I)$; see \cite[footnote 8]{BS} for more details.)

\begin{lem}\label{lem:ind-zariski-locally-distinguished}
	Let $A$ be a $\delta_L$-algebra and $I\subseteq A$ a Zariski-locally principal ideal such that $(\pi,I)\subseteq \mathrm{Rad}(A)$. Then the following are equivalent:
	\begin{enumerate}
		\item[(i)] $\pi\in (I^q,\phi(I))$.
		\item[(ii)] $\pi \in (I,\Phi(I))$.
		\item[(iii)] There is a faithfully flat map of $\delta_L$-algebras $A\rightarrow A'$ with $A'$ an ind-(Zariski localization) of $A$ such that $IA'$ is generated by a distinguished element $d$ and $(\pi, d)\in \mathrm{Rad}(A')$.
	\end{enumerate}
\end{lem}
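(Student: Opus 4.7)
The plan is to establish the cycle (i) $\Rightarrow$ (ii) $\Rightarrow$ (iii) $\Rightarrow$ (i). The implication (i) $\Rightarrow$ (ii) is immediate from $I^q \subseteq I$. For (iii) $\Rightarrow$ (i), observe that in $A'$ the generator $d$ of $IA'$ is distinguished and lies in $\mathrm{Rad}(A')$, so lemma~\ref{lem:dist-characterization}(i)$\Rightarrow$(iii) gives $\pi \in (d^q, \phi(d)) = (I^q, \phi(I))A'$; faithful flatness of $A \to A'$ then descends this relation to $\pi \in (I^q, \phi(I))$ in $A$.

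The substantive implication is (ii) $\Rightarrow$ (iii). I would construct $A'$ as follows. By Zariski-local principality, choose $f_1, \ldots, f_n \in A$ generating the unit ideal together with elements $d_i \in I$ satisfying $IA[1/f_i] = (d_i)$. For each $i$, further localize $A[1/f_i]$ at the multiplicative set $T_i := \{f_i^k : k \geq 0\} \cdot (1 + (\pi, d_i) A[1/f_i])$, obtaining $A_i' := T_i^{-1} A[1/f_i]$, and set $A' := \prod_i A_i'$. The set $T_i$ is $\phi$-stable: $\phi(f_i) = f_i^q(1 + \pi \delta_L(f_i)/f_i^q) \in f_i^q \cdot (1 + \pi A[1/f_i])$, and for $x \in (\pi, d_i) A[1/f_i]$ every summand of $\phi(1+x) - 1 = ((1+x)^q - 1) + \pi\delta_L(1+x)$ lies in $(\pi, d_i)$. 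Hence $A_i'$ inherits a canonical $\delta_L$-structure, and by construction $(\pi, d_i) \subseteq \mathrm{Rad}(A_i')$. Base change of hypothesis (ii) then gives $\pi \in (d_i, \phi(d_i))$ in $A_i'$, so lemma~\ref{lem:dist-characterization}(iv)$\Rightarrow$(i) yields that $d_i$ is distinguished.

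For faithful flatness of $A \to A'$: given $\mathfrak{p} \in \Spec A$, choose a maximal ideal $\mathfrak{m} \supseteq \mathfrak{p}$; the hypothesis $(\pi, I) \subseteq \mathrm{Rad}(A)$ forces $(\pi, I) \subseteq \mathfrak{m}$, so some $f_{i_0} \notin \mathfrak{m}$. If an element $1 + y/f_{i_0}^k$ with $y \in (\pi, d_{i_0})A$ were contained in $\mathfrak{p} A[1/f_{i_0}]$, then $f_{i_0}^k + y \in \mathfrak{p} \subseteq \mathfrak{m}$ would give $f_{i_0}^k \in \mathfrak{m} + (\pi, d_{i_0}) = \mathfrak{m}$, a contradiction; hence $\mathfrak{p}$ lifts to $A_{i_0}'$. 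The main technical obstacle, as flagged in \cite[footnote~8]{BS}, is that naive Zariski localization $A \to A[1/f_i]$ does not preserve the hypothesis $(\pi, I) \subseteq \mathrm{Rad}$, so the auxiliary localization at $1 + (\pi, d_i)$ is needed to restore it; the delicate point is that this extra step could in principle break faithful flatness, but the argument above shows that $(\pi, I) \subseteq \mathrm{Rad}(A)$ is precisely what forces every prime of $A$ to survive the composite localization.
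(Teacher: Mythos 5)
Your proof is correct and follows essentially the same route as the paper: the same cycle (i)$\Rightarrow$(ii)$\Rightarrow$(iii)$\Rightarrow$(i), with (ii)$\Rightarrow$(iii) proved by passing to the ind-Zariski localization $\prod_i A[1/f_i]$ along $V((\pi,I))$ and invoking lemma~\ref{lem:dist-characterization}, and (iii)$\Rightarrow$(i) by faithfully flat descent of the relation $\pi\in(I^q,\phi(I))$. The only difference is that you spell out explicitly the $\phi$-stability of the multiplicative sets and the surjectivity on $\Spec$ that the paper delegates to the $L$-typical analogue of \cite[remark~2.16]{BS} and to \cite[footnote~8]{BS}.
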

\begin{proof}
	We follow \cite[lem.~3.1]{BS}. Clearly (i)$\implies$(ii). For (ii)$\implies$(iii), since $I$ is locally principal we can select $f_1,\dots, f_n\in A$ generating the unit ideal in $A$ such that each $IA[1/f_i]$ is principal. Take $A' = \left(\prod A[1/f_i]\right)_{(\pi,I)}$, where the subscript denotes Zariski localization along $V((\pi, I))$. Then $A'$ has a unique $\delta_L$-structure by the $L$-typical analogue of \cite[rmk.~2.16]{BS}, $A\rightarrow A'$ is a faithfully flat of $\delta_L$-algebras, and $I' = IA'$ is principal with $\pi\in (I',\phi(I'))$. By lemma~\ref{lem:dist-characterization}, any generator of $I'$ is distinguished.

	For (iii)$\implies$(i), we would like to check that $\pi = 0$ in $A/(I^q,\phi(I))$. This can be checked after faithfully flat extension to $A'$, in which case it follows from lemma~\ref{lem:dist-characterization}.
\end{proof}

Even though $I$ is only assumed locally principal, $\phi(I)$ is always principal.
\begin{lem}\label{lem:phi(I)-principal}
	The ideal $\phi(I)$ is principal and generated by a distinguished element for any $L$-typical prism $(A,I)$.
\end{lem}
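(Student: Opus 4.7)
The plan is to produce a global distinguished generator of $\phi(I)$ from the prism axiom $\pi \in I + \phi(I)$, after first sharpening this to $\pi \in I^q + \phi(I)$ globally. The key insight is that for any decomposition $\pi = b + z$ with $b \in \phi(I)$ and $z \in I^q$, the component $b$ will turn out to be the desired generator.

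First, I would show $\pi \in I^q + \phi(I)$ globally. Choose a Zariski cover $\{D(f_i)\}$ of $\Spec A$ on which $I$ is principal, generated by $d_i$. Since $d_i \in I \subseteq \mathrm{Rad}(A) \subseteq \mathrm{Rad}(A_{f_i})$ and $\pi \in (d_i, \phi(d_i))$ on $D(f_i)$, lemma~\ref{lem:dist-characterization} gives that $d_i$ is distinguished and $\pi \in (d_i^q, \phi(d_i))$ on $D(f_i)$. Since containment in an ideal is Zariski-local, this yields $\pi \in I^q + \phi(I)$ globally; fix a decomposition $\pi = b + z$ with $b \in \phi(I)$ and $z \in I^q$.

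Next, I would verify that $b$ generates $\phi(I)$ on each $D(f_i)$. Write $b = \beta \phi(d_i)$ and $z = \gamma d_i^q$ and apply $\delta_L$ to the identity $\pi = \beta \phi(d_i) + \gamma d_i^q$, reducing modulo $(\pi, d_i)$. Three observations drive the computation: (i) $\delta_L(d_i^q) \equiv 0 \pmod \pi$, since expanding $\delta_L(d^q) = (\phi(d)^q - d^{q^2})/\pi$ shows every term carries a factor of $\pi$ (the $k=1$ term because $q = p^f \in p\O_L \subseteq \pi\O_L$, higher $k$ because of explicit $\pi^{k-1}$ factors), whence $\delta_L(\gamma d_i^q) \equiv 0 \pmod{(\pi, d_i)}$; (ii) the cross term $-\sum_{k=1}^{q-1} \frac{1}{\pi}\binom{q}{k}(\beta\phi(d_i))^k(\gamma d_i^q)^{q-k}$ from the sum formula for $\delta_L$ vanishes modulo $d_i$, since every summand carries the factor $(d_i^q)^{q-k}$ with $q - k \geq 1$; and (iii) using $\phi\delta_L = \delta_L\phi$ together with $\phi(d_i) \equiv 0 \pmod{(\pi, d_i)}$, one obtains $\delta_L(\beta \phi(d_i)) \equiv \beta^q \delta_L(d_i)^q \pmod{(\pi, d_i)}$. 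Combined with $\delta_L(\pi) = 1 - \pi^{q-1} \equiv 1$, these give $1 \equiv \beta^q \delta_L(d_i)^q \pmod{(\pi, d_i)}$. Since $\delta_L(d_i)$ is a unit and $(\pi, d_i) \subseteq \mathrm{Rad}(A_{f_i})$, $\beta$ is a unit on $D(f_i)$. Hence $(b) = (\phi(d_i)) = \phi(I)A_{f_i}$, which globalizes to $(b) = \phi(I)$. The same calculation shows $\delta_L(b) \equiv \beta^q \delta_L(d_i)^q$ is a unit modulo $(\pi, d_i)$, so a unit in $A$; thus $b$ is distinguished.

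The main obstacle is the local $\delta_L$-calculation, especially verifying $\delta_L(d_i^q) \equiv 0 \pmod \pi$. This rests on the arithmetic fact $q = p^f \in p\O_L \subseteq \pi\O_L$, which is peculiar to the $L$-typical setting and has no counterpart in ordinary $\delta$-ring theory (where $q = p = \pi$ collapses the statement). Once this vanishing is established, the rest is a careful but mechanical unfolding of the defining identities for $\delta_L$.
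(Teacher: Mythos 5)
Your strategy is the same as the paper's: sharpen the prism axiom to $\pi = a + b$ with $a\in I^q$ and $b\in\phi(I)$, then show $b$ generates $\phi(I)$ by applying $\delta_L$ to this identity and reducing modulo $(\pi,d)$ for a local distinguished generator $d$ of $I$; your computation $\delta_L(\pi)\equiv\beta^q\delta_L(d)^q\pmod{(\pi,d)}$ is exactly the one in the paper. The gap is the inclusion $\mathrm{Rad}(A)\subseteq\mathrm{Rad}(A_{f_i})$, which is false in general: the Jacobson radical is not preserved by localization (for a local ring $(A,\mathfrak m)$ and $f\in\mathfrak m$, the ideal $\mathfrak m A_f$ contains the unit $f$). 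In the prismatic setting the containment $(\pi,I)\subseteq\mathrm{Rad}(A)$ comes from $(\pi,I)$-adic completeness, and $A_{f_i}$ is no longer complete. You lean on this inclusion twice, and both uses are load-bearing: first when invoking lemma~\ref{lem:dist-characterization} in $A_{f_i}$ to conclude that $d_i$ is distinguished there (the implication (iv)$\Rightarrow$(i) genuinely needs $(\pi,d)\subseteq\mathrm{Rad}$, since it promotes a unit modulo $(\pi,d)$ to a unit), and second at the end, to upgrade ``$\beta$ is a unit modulo $(\pi,d_i)$'' to ``$\beta$ is a unit in $A_{f_i}$.'' Without that upgrade you only learn that $(b)$ and $\phi(I)$ agree near $V((\pi,I))$, not on all of $D(f_i)$, so the gluing step fails. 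This is precisely the pitfall flagged in the parenthetical before lemma~\ref{lem:ind-zariski-locally-distinguished} (``after passing to a Zariski open, we may no longer have $(\pi,I)\subseteq\mathrm{Rad}(A)$'').

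The repair is the device of that lemma. Its statement (i) already gives $\pi\in(I^q,\phi(I))$ globally, so your first paragraph can simply be a citation. For the main step, replace $\prod_i A_{f_i}$ by its further ind-Zariski localization $A'$ along $V((\pi,I))$: the map $A\rightarrow A'$ is still faithfully flat, $IA'$ is principal with a distinguished generator $d$ satisfying $(\pi,d)\subseteq\mathrm{Rad}(A')$, your $\delta_L$-computation then legitimately yields $\beta\in(A')^\times$, and the equality $(b)=\phi(I)$ descends from $A'$ to $A$ by faithful flatness. With that substitution your argument coincides with the paper's proof. (A minor point: your verification of $\delta_L(d^q)\equiv0\pmod\pi$ divides by $\pi$, which presumes $\pi$-torsion-freeness; it is cleaner, and sufficient for the congruence modulo $(\pi,d)$, to apply the product rule to $\delta_L(d\cdot d^{q-1})$, every term of which carries a factor of $d$ or $\pi$.)
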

\begin{proof}
	By lemma~\ref{lem:ind-zariski-locally-distinguished}, we can pick $a\in I^q, b\in \phi(I)$ so that $\pi = a + b$. We will show that $b$ generated $\phi(I)$. This can be checked after passing to the ind-Zariski-localization $A'$ of lemma~\ref{lem:ind-zariski-locally-distinguished}. Let $d$ be a distinguished generator of $IA'$ so that $a = \alpha d^q$ and $b = \beta\phi(d)$ in $A'$; it suffices to show that $\beta$ is a unit. Indeed, applying $\delta_L$ to the equation $\pi = \alpha d^q + \beta\phi(d)$ and working mod $(\pi, d)$ gives
	\[\delta_L(\pi)\equiv \beta^q\delta_L(d)^q\pmod{(\pi,d)},\]
	which implies that $\beta$ is a unit in $A'/(\pi,d)$ and hence in $A'$, as desired.
\end{proof}

\subsection{Perfect \texorpdfstring{$L$}{L}-typical prisms and perfectoid \texorpdfstring{$\O_L$}{O\_L}-algebras}\label{ssec:perfect-prisms-perfectoid-algebras}

It is shown in \cite[Theorem~3.10]{BS} that the functor $(A,I)\mapsto A/I$ is (one half of) an equivalence of categories between perfect prisms and perfectoid rings. In fact, this can be taken as the definition of a perfectoid ring, as is done in \cite[IV]{bhattnotes}. We take the same perspective here, initially \textit{defining} perfectoid $\O_L$-algebras as those $\O_L$ algebras which come from perfect $L$-typical prisms. We will later show (theorem~\ref{thm:perfectoid-OL-alg-perfectoid-ring}) that one can equivalently define perfectoid $\O_L$-algebras as perfectoid rings equipped with an $\O_L$-algebra structure.

\begin{defn}
	An $\O_L$-algebra $R$ is a \textit{perfectoid $\O_L$-algebra} if it is isomorphic to $A/I$ for some perfect $L$-typical prism $(A,I)$.
\end{defn}

The functor from pefectoid rings to perfect prisms is $R\mapsto (\Ainf(R), \ker \theta)$. To generalize this functor to the present setting, recall that the \textit{tilt} of a ring $R$ is $R^\flat = \plim_{\varphi_p}R/p.$ If $R$ is an $\O_L$-algebra, then we have an isomorphism of rings
\[R^\flat \cong \plim_{\varphi_q} R/\pi\]
so that $R^\flat$ is in fact a perfect $\F_q$-algebra. If $R$ is moreover $\pi$-adically complete then we have an isomorphism of multiplicative monoids $R^\flat \stackrel{\sim}{\rightarrow} \plim_{x\mapsto x^q} R$; by composing this with projection onto the first factor of the inverse limit, we get a multiplicative map $\sharp: R^\flat\rightarrow R$ explicitly given by
\[x^\sharp = \lim_{n\rightarrow\infty}\widehat{x_n}^{q^n}\quad\quad\text{where }x = (\dots, x_1, x_0)\in \plim_{\varphi_q}R/\pi = R^\flat\]
and where the $\widehat{x_n}\in R$ are arbitrary lifts of the $x_n\in R/\pi$.

\begin{defn}
	If $R$ is a $\pi$-adically complete $\O_L$-algebra, then let $\Ainf(R) = W_L(R^\flat)$ and $\theta:\Ainf(R)\rightarrow R$ be the map given in Witt coordinates by
	\[(x_0, x_1, \dots)\mapsto \sum_{n\ge 0} \left(x_n^{1/q^n}\right)^\sharp \pi^n.\]
	By corollary~\ref{cor:W_L-perfect-alg} we have $W_L(R^\flat)\cong W(R^\flat)\otimes_{W(\F_q)}\O_L$, and $\theta$ is a ring homomorphism coinciding with the base change to $\O_L$ of the usual map $\theta:W(R^\flat)\rightarrow R$ of $p$-adic Hodge theory.
\end{defn}
\begin{rmk}
	If $L/\Q_p$ is unramified and $\pi = p$, then $\Ainf(R)$ and $\theta$ coincide with their usual meanings, but this is not the case when $\O_L/\Q_p$ is ramified.
\end{rmk}

\begin{lem}\label{lem:theta-surj-Ainf-complete}
	Let $R$ be a $\pi$-adically complete $\O_L$-algebra with $\varphi_q:R/\pi\rightarrow R/\pi$ surjective.
	\begin{enumerate}
		\item[(1)] The map $\theta: \Ainf(R)\rightarrow R$ is surjective. 
		\item[(2)] $\Ainf(R)$ is $(\pi, \ker\theta)$-adically complete.
	\end{enumerate}
\end{lem}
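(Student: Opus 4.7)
For part (1), I would observe that the mod-$\pi$ reduction of $\theta$ is identified, via the isomorphism $\Ainf(R)/\pi = W_L(R^\flat)/\pi = R^\flat = \plim_{\varphi_q} R/\pi$, with the projection onto the zeroth coordinate. This projection is surjective precisely because the transition maps $\varphi_q:R/\pi\to R/\pi$ are surjective (so that coherent systems of $q$-power roots exist). To upgrade surjectivity mod $\pi$ to honest surjectivity, I would approximate successively: given $r\in R$, set $r_0=r$ and inductively choose $\bar x_n\in R^\flat$ with $\theta([\bar x_n])\equiv r_n\pmod\pi$, and then $r_{n+1}\in R$ with $\pi r_{n+1}=r_n-\theta([\bar x_n])$. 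By the $\pi$-adic completeness of $\Ainf(R)$ (cited from \cite{SchnBook} in the proof of proposition~\ref{prop:perfect-d-algs-equiv}), the series $\sum_{n\ge 0}\pi^n[\bar x_n]$ converges in $\Ainf(R)$, and $\theta$ evidently carries it to $r$.

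For part (2), $\Ainf(R)=W_L(R^\flat)$ is already $\pi$-adically complete, and it is $\pi$-torsion-free because $R^\flat$ is perfect (lemma~\ref{lem:perfect-implies-torsionfree}). The strategy is to show each $\Ainf(R)/\pi^m$ is classically complete with respect to the image of $\ker\theta$; combined with $\pi$-adic completeness this yields $(\pi,\ker\theta)$-adic completeness of $\Ainf(R)$ by taking the iterated limit. Using the short exact sequences
\[
0\to \pi^{m-1}\Ainf(R)/\pi^m \to \Ainf(R)/\pi^m \to \Ainf(R)/\pi^{m-1}\to 0,
\]
whose outer terms are both isomorphic to $R^\flat$ (using $\pi$-torsion-freeness for the kernel), this reduces by induction on $m$ to the base case $m=1$: that $R^\flat$ is complete with respect to the ideal $J:=\ker(R^\flat\twoheadrightarrow R/\pi)$.

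The main obstacle is this final completeness assertion. An element $(0,a_1,a_2,\dots)\in J$ has $a_k^{q^k}=a_0=0$, so each component $a_k$ is nilpotent in $R/\pi$; more strongly, the $q^n$-th power of any element of $J$ has its first $n+1$ coordinates vanishing, so elements of $J$ are topologically nilpotent for the natural inverse-limit topology on $R^\flat=\plim_{\varphi_q}R/\pi$, under which $R^\flat$ is complete by construction. I would leverage this to compare the $J$-adic and inverse-limit topologies and deduce classical $J$-completeness. An alternative, possibly cleaner route is to first establish derived $(\pi,\ker\theta)$-completeness of $\Ainf(R)$ from the short exact sequence $0\to\ker\theta\to\Ainf(R)\to R\to 0$ together with derived $\pi$-completeness of all three terms (stable under extensions), and then upgrade derived completeness to classical completeness using the $\pi$-adic completeness and $\pi$-torsion-freeness of $\Ainf(R)$.
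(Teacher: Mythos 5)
Your part (1) is correct and coincides with the paper's proof: both reduce, via the $\pi$-adic completeness of $\Ainf(R)$, to the surjectivity of $R^\flat\rightarrow R/\pi$, and your successive-approximation paragraph is simply the standard lemma being invoked there, written out in full.

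In part (2) there is a genuine gap, located exactly where you flag ``the main obstacle'': the assertion that $R^\flat$ is classically complete for the $J$-adic topology, $J=\ker(R^\flat\twoheadrightarrow R/\pi)$, is never proved. Your topological-nilpotence observation only gives the inclusion $\varphi_q^n(J)\subseteq J^{q^n}$, i.e.\ that the inverse-limit topology on $R^\flat=\plim_{\varphi_q}R/\pi$ (whose basic neighborhoods of $0$ are the kernels $\varphi_q^n(J)$ of the higher coordinate projections) is \emph{finer} than the $J$-adic one. Completeness for a finer topology implies neither separatedness nor completeness for a coarser one; what is actually needed is the reverse cofinality, namely that each $\varphi_q^n(J)$ contains a power of $J$, and this does not follow from coordinatewise nilpotence (a product of $q$ elements whose $q$-th powers vanish need not vanish). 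The paper's proof supplies exactly this ingredient: using that $\varphi_q$ is an automorphism of the perfect ring $R^\flat$, it produces isomorphisms $R/\pi=R^\flat/J\stackrel{\varphi_q^n}{\longrightarrow}R^\flat/J^{q^n}$ --- equivalently, the identity $J^{q^n}=\varphi_q^n(J)$ --- whence $\plim_m R^\flat/J^m\cong\plim_n R^\flat/J^{q^n}\cong\plim_{\varphi_q}R/\pi=R^\flat$. That identity is the entire content of the base case and must be argued; it is the step your plan lacks. Your fallback via derived completeness does not close the gap either: $\ker\theta$ is not assumed finitely generated in this lemma, and even when it is, upgrading derived to classical completeness requires further input (bounded torsion, or again a cofinality comparison of filtrations), which returns you to the same point. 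Finally, and less seriously, your induction on $m$ through the extensions $0\to\pi^{m-1}\Ainf(R)/\pi^m\to\Ainf(R)/\pi^m\to\Ainf(R)/\pi^{m-1}\to 0$ needs an Artin--Rees-type comparison of the induced filtration on the subobject with its own $J$-adic filtration; classical completeness is not stable under extensions without such a comparison, so this reduction should be reorganized (for instance around the explicit inverse-limit description above) rather than treated as formal.
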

\begin{proof}
	First note that $\Ainf(R)$ is $\pi$-adically complete by \cite[prop.~1.1.18]{SchnBook}. Thus part (1) reduces to showing that $R^\flat\rightarrow R/\pi$ is surjective, which follows from the assumption that $\varphi_q$ is surjective.

	For (2), using again the $\pi$-completeness of $\Ainf(R)$, it suffices to check that $R^\flat$ is complete with respect to the ideal $J = \ker(R^\flat\rightarrow R/\pi)$ which is the mod $\pi$ reduction of $\ker\theta$. Indeed, we have $R^\flat = \plim_{\varphi_q}R/\pi\cong \plim_n R^\flat/J^{q^n}$ via the isomorphisms $R/\pi = R^\flat/J\stackrel{\varphi_q^n}{\rightarrow} R^\flat/J^{q^n}$.
\end{proof}

The following properties make perfect $L$-typical prisms especially well-behaved.
\begin{lem}\label{prop:prism-facts}
	Let $(A,I)$ be a perfect $L$-typical prism.
	\begin{enumerate}
		\item[(1)] $I$ is principal and generated by a distinguished element.
		\item[(2)] $(A,I)$ is bounded.
		\item[(3)] $A/I$ is $\pi$-adically complete. 
	\end{enumerate}
\end{lem}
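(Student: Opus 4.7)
The plan is to handle parts (1)--(3) in order, with (2) being the main technical point.

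For (1), I would apply lemma~\ref{lem:phi(I)-principal} to get that $\phi(I)$ is principal, generated by a distinguished element $\phi(d)$ for some $d\in I$. Since $A$ is perfect, $\phi$ is a ring automorphism of $A$, so $I = \phi^{-1}(\phi(I)) = (d)$ is principal. To see $d$ itself is distinguished, note that $\phi(d)$ being distinguished yields $\pi \in (\phi(d), \phi^2(d))$ by lemma~\ref{lem:dist-characterization}(iv); applying $\phi^{-1}$ and using $\phi(\pi) = \pi$ gives $\pi \in (d, \phi(d))$, so $d$ is distinguished.

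For (2), I would first use lemma~\ref{lem:distinguished-elt-nonzerodivisor} to get that $d$ is a nonzerodivisor, and proposition~\ref{prop:perfect-d-algs-equiv} to identify $A \cong W_L(R)$ with $R = A/\pi$ a perfect $\F_q$-algebra (hence reduced). I would then prove the sharp statement $(A/d)[\pi^n] = (A/d)[\pi]$ for all $n\geq 1$. Using $\pi$-torsion-freeness of $A$ and a standard Tor computation, $(A/d)[\pi^n] \cong \ker(d\cdot : W_{L,n}(R)\to W_{L,n}(R))$, under which the inclusion $(A/d)[\pi^n] \hookrightarrow (A/d)[\pi^{n+1}]$ corresponds to the shift map $\bar z\mapsto \pi\bar z$ (which in Witt coordinates sends $(z_0,\dots,z_{n-1})$ to $(0,z_0^q,\dots,z_{n-1}^q)$). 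Writing $d = (d_0, d_1, d_2, \dots)$ in Witt coordinates, a short ghost-coordinate computation shows $\delta_L(d)_0 = d_1$ in $R$, so distinguishedness of $d$ is equivalent to $d_1 \in R^\times$. Then an induction on $n$ using the explicit Witt multiplication polynomials, combined with the reducedness of $R$, shows that every element of $\ker(d\cdot : W_{L,n}(R)\to W_{L,n}(R))$ has the form $(0,\dots,0,z_{n-1})$ with $z_{n-1}\in\mathrm{Ann}_R(d_0)$. Under this parameterization the shift inclusion becomes $z\mapsto z^q$ on $\mathrm{Ann}_R(d_0)$, a bijection because $R$ is perfect. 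Hence all $(A/d)[\pi^n]$ coincide, giving bounded $\pi^\infty$-torsion (indeed killed by $\pi$).

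For (3), since $d$ is a nonzerodivisor and $A$ is derived $(\pi,d)$-complete (hence derived $\pi$-complete), the cokernel $A/d$ is derived $\pi$-complete. Combined with the bounded $\pi^\infty$-torsion from (2), the standard criterion (\cite[\href{https://stacks.math.columbia.edu/tag/091X}{Tag 091X}]{stacks-project}) gives that $A/d$ is classically $\pi$-adically complete.

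The main obstacle is the Witt-vector bookkeeping in part (2). Although the structural idea -- reduce kernels to their last Witt coordinate, then collapse inductively using that Frobenius is a bijection on the perfect $\F_q$-algebra $R$ -- is clean, carrying out the Witt multiplication calculations and invoking reducedness of $R$ at the right moments (for instance, to conclude that $d_0 z_0 = 0$ together with $d_0^q z_1 + d_1 z_0^q = 0$ forces $z_0 = 0$) demands care.
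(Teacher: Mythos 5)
Your proposal is correct. Parts (1) and (3) follow the paper: the paper deduces (1) from lemma~\ref{lem:phi(I)-principal} exactly as you spell out (using that $\phi$ is an automorphism to transport principality and distinguishedness from $\phi(I)$ back to $I$), and (3) is the same appeal to derived $\pi$-completeness plus bounded torsion. Where you genuinely diverge is part (2). The paper avoids Witt coordinates entirely: given $\pi^2\alpha=\beta d$, it applies $\delta_L$ to both sides, reduces mod $\pi$ to get $d^q\delta_L(\beta)+\beta^q\delta_L(d)\equiv 0$, multiplies by $\beta^q$ so that the first term dies (since $(d\beta)^q=(\pi^2\alpha)^q\equiv 0$), concludes $\pi\mid\beta^{2q}$, hence $\pi\mid\phi^2(\beta)$ and so $\pi\mid\beta$ by perfectness, and finally uses $\pi$-torsion-freeness of $A$ to cancel a $\pi$ and get $d\mid\pi\alpha$. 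This is a three-line argument in the $\delta_L$-calculus, structurally parallel to the proofs of lemmas~\ref{lem:perfect-implies-torsionfree} and \ref{lem:distinguished-elt-nonzerodivisor}, and it yields the same sharp conclusion $(A/I)[\pi^\infty]=(A/I)[\pi]$ that you obtain (since $(A/I)[\pi^2]=(A/I)[\pi]$ propagates to all $n$). Your route through $A\cong W_L(R)$, the identification $(A/d)[\pi^n]\cong\ker(d\cdot\colon W_{L,n}(R)\to W_{L,n}(R))$, and the explicit description of that kernel as $\mathrm{Ann}_R(d_0)$ sitting in the last coordinate is correct (the induction does reduce cleanly to the length-two case via Verschiebung, and reducedness of the perfect ring $R$ is used exactly where you say), and it buys a more concrete structural picture of the torsion; but it costs substantially more bookkeeping than the paper's $\delta_L$-identity manipulation, which is the intended technique throughout \S\ref{sec:delta-algebras}--\S\ref{sec:L-typical-prisms}.
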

\begin{proof}
	(1) follows from lemma~\ref{lem:phi(I)-principal}. Let $d\in A$ be the distinguished generator of $I$.

	For (2), we will in fact show that $A/d[\pi^2] = A/d[\pi]$. Suppose that $\alpha\in A/d[\pi^2]$ so that there is some $\beta\in A$ with $\pi^2\alpha = \beta d$. Applying $\delta_L$ and working mod $\pi$, we get that
	\[d^q\delta_L(\beta) + \beta^q\delta_L(d)\equiv 0\pmod{\pi}.\]
	Multiplying by $\beta^q$ and and using that $\delta_L(d)\in A^\times$ then gives that $\pi|\beta^{2q}$. This implies $\pi|\phi^2(\beta)$, so that $\pi|\beta$ since $\phi$ is an $\O_L$-linear isomorphism. Thus we have
	\[\pi^2\alpha = \pi\beta'd\quad\text{for some }\beta'\in A\] 
	so that $d|\pi\alpha$ by lemma~\ref{lem:perfect-implies-torsionfree}.

	(3) follows from \cite[\href{https://stacks.math.columbia.edu/tag/091X}{Tag 091X}]{stacks-project} since $A/d$ is derived $\pi$-complete with bounded $\pi$-power torsion.

\end{proof}

\begin{prop}\label{prop:perfect-prism-perfectoid-equiv}
	We have an equivalence of categories
	\begin{center}
	\begin{tikzcd}
		\left\{\begin{array}{c} \text{perfect} \\ \text{$L$-typical prisms}\end{array}\right\} \ar[bend left]{r}{(A,I)\mapsto A/I}
		& \left\{\begin{array}{c} \text{perfectoid} \\ \text{$\O_L$-algebras} \end{array}\right\}\ar[bend left]{l}{(\Ainf(R),\ker\theta)\mapsfrom R}
	\end{tikzcd}
\end{center}
\end{prop}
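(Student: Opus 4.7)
The plan is to verify that the two functors are mutually inverse. Since perfectoid $\O_L$-algebras are by definition the quotients $A/I$ for perfect $L$-typical prisms, the essential tasks are: (i) show the assignment $R \mapsto (\Ainf(R), \ker\theta)$ actually lands in perfect $L$-typical prisms; (ii) exhibit a natural isomorphism $\Ainf(R)/\ker\theta \cong R$ for perfectoid $R$; and (iii) exhibit a natural isomorphism $(A, I) \cong (\Ainf(A/I), \ker\theta)$ for perfect prisms $(A, I)$. The unit and counit of the adjunction come from $\theta$ and from the canonical $\delta_L$-algebra map $A \to W_L(A/\pi)$ of Proposition~\ref{prop:W_pi-sections}(1) composed with the isomorphism $A/\pi \cong R^\flat$ discussed below.

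For well-definedness (i), fix perfectoid $R \cong A/I$ with $(A, I)$ a perfect $L$-typical prism and $d \in I$ a distinguished generator (Lemma~\ref{prop:prism-facts}). Since $\phi_A$ is an isomorphism and $R/\pi = A/(\pi, d)$, the $q$-Frobenius is surjective on $R/\pi$, so Lemma~\ref{lem:theta-surj-Ainf-complete} yields (ii) and that $\Ainf(R)$ is $(\pi, \ker\theta)$-adically complete. Since $R^\flat$ is a perfect $\F_q$-algebra, Proposition~\ref{prop:perfect-d-algs-equiv} ensures $\Ainf(R) = W_L(R^\flat)$ is a perfect $\pi$-adically complete $\delta_L$-algebra (which is automatically bounded by Lemma~\ref{prop:prism-facts}). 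The remaining prism axioms for $(\Ainf(R), \ker\theta)$, namely that $\ker\theta$ defines a Cartier divisor and that $\pi \in \ker\theta + \phi(\ker\theta)$, will follow once (iii) is established, since then $\ker\theta$ inherits the corresponding properties of $I$ (using Lemma~\ref{lem:dist-characterization} and Lemma~\ref{lem:distinguished-elt-nonzerodivisor}).

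The heart of the proof is claim (iii). The first step is to show that $R^\flat \cong A/\pi$ as perfect $\F_q$-algebras. Indeed, perfectness of $A/\pi$ together with the surjection $A/\pi \twoheadrightarrow R/\pi = A/(\pi, \bar d)$ induces a natural map $A/\pi \to \plim_{\varphi_q} R/\pi = R^\flat$. Injectivity follows from the $\bar d$-adic separatedness of $A/\pi$ (a consequence of $(\pi, d)$-completeness of $A$): any element killed in $R^\flat$ lies in $\bigcap_n (\bar d^{q^n}) = 0$. Surjectivity is the standard tilting construction: given a compatible sequence $(x_n)_n \in R^\flat$, one lifts each $x_n$ to $\tilde x_n \in A/\pi$, defines $y_n := \varphi_q^{-n}(\tilde x_n)$, and checks that $(y_n)$ is $\bar d$-adically Cauchy, with limit the desired preimage. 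Once this isomorphism of $\F_q$-algebras is in hand, Proposition~\ref{prop:perfect-d-algs-equiv} upgrades it to an isomorphism $A \stackrel{\sim}{\to} W_L(R^\flat) = \Ainf(R)$ of $\delta_L$-algebras that, by construction, intertwines the quotient $A \twoheadrightarrow R$ with $\theta$, forcing $I$ to map onto $\ker\theta$.

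The main obstacle is the surjectivity half of the tilt identification; once this iterative $\varphi_q^{-1}$-convergence argument is in hand, everything else is formal. In particular, Lemma~\ref{lem:phi(I)-principal} applied to $(\Ainf(R), \ker\theta)$ then confirms that $\ker\theta$ is principal with a distinguished generator, completing the proof that $(\Ainf(R), \ker\theta)$ is a perfect $L$-typical prism, and the naturality of all constructions gives that the two functors are mutually quasi-inverse.
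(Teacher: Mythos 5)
Your proposal is correct and follows essentially the same route as the paper: both reduce everything to identifying $A/\pi$ with $R^\flat = \plim_{\varphi_q}A/(\pi,I)$ (the paper cites the $I$-adic completeness of $A/\pi$ where you spell out injectivity via separatedness and surjectivity via the Cauchy-sequence tilting argument) and then invoke Proposition~\ref{prop:perfect-d-algs-equiv} to upgrade this to $A\cong\Ainf(R)$ compatibly with the maps to $R$. Only a cosmetic slip: in the surjectivity step you want $y_n:=\varphi_q^{\,n}(\tilde x_n)$, not $\varphi_q^{-n}(\tilde x_n)$, so that $y_{n+1}-y_n\in(\bar d^{\,q^n})$.
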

\begin{proof}
	Let $R = A/I$ be a perfectoid $\O_L$-algebra coming from a perfect $L$-typical prism $(A,I)$. Since $R$ is $\pi$-adically complete by proposition~\ref{prop:prism-facts} and $\varphi_q:R/\pi\rightarrow R/\pi$ is surjective (as it's the mod $(\pi,I)$ reduction of $\phi:A\rightarrow A$), lemma~\ref{lem:theta-surj-Ainf-complete}(1) implies that $\theta:\Ainf(R)\rightarrow R$ is surjective. Thus to prove the proposition, it suffices to show that $\Ainf(R)$ identifies with $A$ in such a way that
	\begin{center}
	\begin{tikzcd}
		A\ar[-]{rr}{\sim}\ar[twoheadrightarrow]{dr} & & \Ainf(R)\ar[twoheadrightarrow]{dl}{\theta} \\
		&A/I = R
	\end{tikzcd}
	\end{center}
	commutes (thereby identifying $I$ with $\ker \theta$). Since $\Ainf(R)$ and $A$ are $\pi$-adically complete perfect $\delta_L$-algebras, by proposition~\ref{prop:perfect-d-algs-equiv} it suffices to show that $A/\pi$ identifies with $R^\flat$ compatibly with the maps to $A/(\pi, I) = R/\pi$. Indeed, we have a commutative diagram
	\begin{center}
		\begin{tikzcd}
			A/\pi\ar[-]{r}{\sim}\ar[twoheadrightarrow]{dr} & \plim_{\varphi_q} A/(\pi, I)\ar[twoheadrightarrow]{d} & R^\flat\ar[-,swap]{l}{\sim}\ar[twoheadrightarrow]{dl} \\
			& A/(\pi, I)
		\end{tikzcd}
	\end{center}
	via the $I$-adic completeness of $A/\pi$.
\end{proof}

\begin{lem}\label{lem:pi-faithfully-flat-pi-I-faithfully-flat}
	A map $R\rightarrow S$ of perfectoid $\O_L$-algebras is $\pi$-completely (faithfully) flat if and only if the corresponding map$\Ainf(R)\rightarrow \Ainf(S)$ is $(\pi,\ker\theta)$-completely (faithfully) flat.
\end{lem}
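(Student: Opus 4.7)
The plan is to reduce both flatness conditions to the same statement about the mod-$\pi$ reductions, via the standard derived criterion for complete flatness plus a computation identifying the two relevant derived base changes.

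\textbf{Step 1 (standard criterion).} I would first invoke the usual criterion (e.g.\ \cite[Lemma~4.7]{BS} and its faithfully flat analogue): a map of derived $I$-complete rings $A\to B$ is $I$-completely (faithfully) flat iff $B\otimes_A^L A/I$ is concentrated in degree $0$ and a (faithfully) flat $A/I$-module. Applied with $I=(\pi)$ to $R\to S$, this reduces $\pi$-complete (faithful) flatness to the degree-$0$ concentration and (faithful) flatness of $S\otimes_R^L R/\pi$. Applied with $I=(\pi,\ker\theta)$ to $\Ainf(R)\to\Ainf(S)$, and using that $\Ainf(R)/(\pi,\ker\theta)=R/\pi$, it reduces $(\pi,\ker\theta)$-complete (faithful) flatness to the analogous condition on $\Ainf(S)\otimes_{\Ainf(R)}^L R/\pi$.

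\textbf{Step 2 (identification of derived base changes).} By lemma~\ref{prop:prism-facts}(1), $\ker\theta_R=(d)$ for a distinguished element $d$, which is a nonzerodivisor in $\Ainf(R)$ by lemma~\ref{lem:distinguished-elt-nonzerodivisor}. Hence $R$ has the length-one free resolution $[\Ainf(R)\xrightarrow{d}\Ainf(R)]$, which gives
\[
\Ainf(S)\otimes_{\Ainf(R)}^L R \;\simeq\; \bigl[\Ainf(S)\xrightarrow{\iota(d)}\Ainf(S)\bigr],
\]
where $\iota\colon\Ainf(R)\to\Ainf(S)$ is the structure map. Now $\iota(d)$ is distinguished in $\Ainf(S)$ (since $\delta_L$-maps preserve distinguishedness) and lies in $\ker\theta_S=(d')$ for some distinguished generator $d'$; writing $\iota(d)=ud'$ and applying $\delta_L$ modulo $(\pi,d')$, as in the proof of lemma~\ref{lem:phi(I)-principal}, forces $u^q\delta_L(d')\equiv\delta_L(\iota(d))\pmod{(\pi,d')}$. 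Since $\delta_L(\iota(d))$ and $\delta_L(d')$ are both units, $u$ is a unit modulo $(\pi,d')$, and then a unit in $\Ainf(S)$ because $(\pi,d')\subseteq\operatorname{Jac}(\Ainf(S))$. Thus $\iota(d)$ also generates $\ker\theta_S$, is a nonzerodivisor by another application of lemma~\ref{lem:distinguished-elt-nonzerodivisor}, and the displayed Koszul complex collapses to $\Ainf(S)/\iota(d)=S$. Base-changing once more along $R\to R/\pi$ and using associativity of the derived tensor product yields
\[
\Ainf(S)\otimes_{\Ainf(R)}^L (R/\pi)\;\simeq\;(\Ainf(S)\otimes_{\Ainf(R)}^L R)\otimes_R^L(R/\pi)\;\simeq\; S\otimes_R^L(R/\pi),
\]
identifying the two derived base changes from Step~1 as complexes of $R/\pi$-modules.

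\textbf{Step 3 (conclusion).} Under the identification of Step~2, the criterion for $(\pi,\ker\theta)$-complete (faithful) flatness of $\Ainf(R)\to\Ainf(S)$ becomes literally the criterion for $\pi$-complete (faithful) flatness of $R\to S$, proving both implications simultaneously.

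The main obstacle is the unit-comparison step establishing that $\iota(d)$ generates $\ker\theta_S$, not merely a subideal; this is what allows the length-one Koszul complex to collapse to $S$ in degree $0$. Everything else is routine manipulation of derived completeness, Koszul resolutions, and base change.
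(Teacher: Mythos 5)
Your proof is correct and follows essentially the same route as the paper's: both reduce the two completed flatness conditions to (faithful) flatness of $R/\pi\to S\otimes_R^L R/\pi$ by identifying $\Ainf(S)\otimes^L_{\Ainf(R)}\Ainf(R)/\ker\theta$ with $S$. The only difference is that the paper cites the rigidity of distinguished elements (\cite[lemmas~3.5, 2.24]{BS}) for this identification, whereas you prove it in-line via the $\delta_L$-computation showing $\iota(d)$ generates $\ker\theta_S$; your unit-comparison argument is the correct one.
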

\begin{proof}
	It is easy to show that $\Ainf(S)\otimes^L_{\Ainf(R)}\Ainf(R)/\ker\theta_{\Ainf(R)}\cong S\otimes_{R}^LR$ using either the $L$-typical analogue of the rigidity result \cite[lemma~3.5]{BS} or the fact that a distinguished element can only factor as a unit times another distinguished element \cite[lemma~2.24]{BS}. Thus $R\rightarrow S$ being $\pi$-competely (faithfully) flat and $\Ainf(R)\rightarrow \Ainf(S)$ being $(\pi,\ker\theta)$-completely (faithfully) flat are both equivalent to $R/\pi\rightarrow S\otimes_R^L R/\pi$ being (faithfully) flat. 
\end{proof}

Given a $L$-typical prism $(A,I)$ we can form its perfection.
\begin{defn}~\label{defn:prism-perfection}
	If $(A,I)$ is a $L$-typical prism, then we write 
	\[A_{\perf} = (\varinjlim_\phi A)^\wedge_{(\pi,I)}\]
	for the (classical) $(\pi,I)$-completion of the naive perfection $\varinjlim_\phi A$. We call $(A_{\perf},IA_{\perf})$ the \textit{perfection} of $(A,I)$.
\end{defn}
By remarks \ref{rmk:lim-colim-closed}(4)-(5), $A_{\perf}$ is a perfect $\delta_L$-algebra. We now show that $(A_{\perf}, IA_{\perf})$ is the initial $L$-typical prism over $(A,I)$. 
\begin{prop}(cf. \cite[Lemma~3.9]{BS})\label{prop:prism-perfection}
	Let $(A,I)$ be a $L$-typical prism.
	\begin{enumerate}
		\item[(1)] The derived $(\pi,I)$-adic completion of $\varinjlim_\phi A$ coincides with the classical $(\pi,I)$-adic completion (and thus with $A_{\perf}$).
		\item[(2)] The map $(A,I)\rightarrow (A_{\perf},IA_{\perf})$ is initial among maps from $(A,I)$ to a perfect $L$-typical prism. 
	\end{enumerate}
\end{prop}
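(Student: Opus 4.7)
The plan is to tackle (1) first, then verify that $(A_{\perf}, I A_{\perf})$ is itself a perfect $L$-typical prism, and finally establish the universal property in (2). Write $A_\infty = \varinjlim_\phi A$ for the naive perfection, so that $A_{\perf}$ is by definition the classical $(\pi, I)$-adic completion of $A_\infty$. By remark~\ref{rmk:lim-colim-closed}(4) the colimit $A_\infty$ is a $\delta_L$-algebra; since the colimit is taken along $\phi$, it is perfect, and hence $\pi$-torsion-free by lemma~\ref{lem:perfect-implies-torsionfree}. For (1), reduce via lemma~\ref{lem:ind-zariski-locally-distinguished} (applied to $A_\infty$) to the case where $I$ is generated by a single distinguished element $d$. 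The same $\delta_L$-calculation as in the proof of lemma~\ref{prop:prism-facts}(2) --- applying $\delta_L$ to an equation $\pi^2\alpha = \beta d$, multiplying by $\beta^q$, and using that $\phi$ is bijective on $A_\infty$ --- shows that $A_\infty/d$ has $\pi^\infty$-torsion bounded by $\pi$. Combined with the $\pi$-torsion-freeness of $A_\infty$, \cite[\href{https://stacks.math.columbia.edu/tag/091T}{Tag 091T}]{stacks-project} then implies that the classical $(\pi, d)$-adic completion $A_{\perf}$ is derived $(\pi, d)$-complete, giving (1).

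Next, verify that $(A_{\perf}, I A_{\perf})$ is a perfect $L$-typical prism: the $(\pi, I)$-adic completion preserves the $\delta_L$-structure (remark~\ref{rmk:I-adic-closed}(5)), and $A_{\perf}$ is perfect since $A_\infty$ already is. Working locally to assume $I = (d)$ with $d$ distinguished, the element $d$ remains distinguished in the perfect, $\pi$-adically complete algebra $A_{\perf}$, hence is a nonzerodivisor by lemma~\ref{lem:distinguished-elt-nonzerodivisor}, so $I A_{\perf}$ defines a Cartier divisor. The condition $\pi \in I A_{\perf} + \phi(I A_{\perf})$ is inherited from $(A, I)$, and derived $(\pi, I)$-completeness follows from (1). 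For (2), let $f : (A, I) \to (B, J)$ be a morphism with $(B, J)$ a perfect $L$-typical prism. Since $\phi_B$ is bijective, $f$ extends uniquely to a $\delta_L$-algebra map $A_\infty \to B$ by the universal property of the filtered colimit. Because $(B, J)$ is bounded it is classically $(\pi, J)$-complete, and since the map sends $(\pi, I)$ into $(\pi, J)$, it extends uniquely to a $\delta_L$-algebra map $A_{\perf} \to B$, establishing the initiality of $(A_{\perf}, I A_{\perf})$ among perfect $L$-typical prisms over $(A, I)$.

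The main technical obstacle is the bounded $\pi^\infty$-torsion calculation for $A_\infty/d$ used in (1). One cannot directly quote lemma~\ref{prop:prism-facts}(2), whose proof is carried out in a perfect prism and so implicitly uses $\pi$-adic completeness; the key observation is that the argument actually only requires perfectness and $\pi$-torsion-freeness of the ambient $\delta_L$-algebra, both of which hold for the uncompleted naive perfection $A_\infty$, so the proof applies verbatim. A smaller subtlety is the reduction to principal $I$, which can be handled as in the proof of \cite[Lemma~3.9]{BS}, by noting that ind-Zariski faithfully flat covers along $V((\pi, I))$ descend both the completeness comparison in (1) and the initiality statement in (2).
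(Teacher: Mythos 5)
Your part (2) and the verification that $(A_{\perf}, IA_{\perf})$ is a perfect prism are essentially sound, but part (1) has a genuine gap at the final step. The conditions you establish --- $A_\infty := \varinjlim_\phi A$ is $\pi$-torsion-free and $A_\infty/d$ has $\pi^\infty$-torsion killed by $\pi$ --- are the hypotheses relevant for showing that a \emph{derived-complete} ring is classically complete (as in the lemma following the definition of $L$-typical prisms); they do not by themselves show that the derived $(\pi,d)$-completion of the non-complete ring $A_\infty$ is concentrated in degree $0$ and agrees with $\lim A_\infty/(\pi,d)^n$. The appeal to \cite[Tag 091T]{stacks-project} only yields that the classical completion $A_{\perf}$ is derived complete; this gives a canonical map from the derived completion of $A_\infty$ to $A_{\perf}$ but says nothing about it being an isomorphism. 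What is actually needed --- and what the paper's proof supplies --- is control of the $d$-torsion: one first forms $B = (\varinjlim_\phi A)^\wedge_{(\pi)}$ (classical $=$ derived since $A_\infty$ is $\pi$-torsion-free) and then proves that the distinguished generator $d$ of $IB$ is a \emph{nonzerodivisor on $B$}, via the $\delta_L$-computation $f^q\delta_L(d) + d^q\delta_L(f)\equiv 0 \pmod{\pi}$ together with the $\pi$-adic separatedness of $B$ (which lets one assume $\pi\nmid f$). Only then does the further $d$-adic completion have derived $=$ classical. You never establish that $d$ is a nonzerodivisor anywhere before completing along it, and your bounded-$\pi$-torsion computation for $A_\infty/d$ does not substitute for it.

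A secondary point: your reduction to principal $I$ via lemma~\ref{lem:ind-zariski-locally-distinguished} is both delicate (that lemma needs $(\pi,I)\subseteq\mathrm{Rad}$, which may fail for $A_\infty$, and descending a ``derived $=$ classical completion'' statement along an ind-Zariski cover that does not commute with completion is not automatic) and unnecessary: since the canonical map $A\to A_\infty$ factors through $\phi$, the ideal $IA_\infty$ is generated by the image of $\phi(I)$, which is already principal and generated by a distinguished element by lemma~\ref{lem:phi(I)-principal}. This is the route taken in the paper (and in \cite[Lemma~3.9]{BS}), and it removes the localization issue entirely.
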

\begin{proof}
	(1) clearly implies (2). 
	To show (1) first note that by construction $\varinjlim_\phi A$ is a perfect $\delta_L$-algebra. Thus by lemma~\ref{lem:perfect-implies-torsionfree} $A$ is $\pi$-torsionfree, so that the derived and classical $\pi$-adic completions agree. As $A\rightarrow (\varinjlim_\phi A)^\wedge_{(\pi)}$ factors through $\phi:A\rightarrow A$, lemma~\ref{lem:phi(I)-principal} implies that $I(\varinjlim_\phi A)^\wedge_{(\pi)}$ is principal and generated by a distinguished element $d$. By lemma~\ref{lem:dist-characterization} 
	it thus suffices to show that $d$ is a nonzerodivisor.

	For this, suppose that $fd = 0$ for some $0\neq f\in (\varinjlim_\phi A)^\wedge_{(\pi)}$; since this ring is $\pi$-torsionfree and classically $\pi$-adically complete (and thus $\pi$-adically separated), we can suppose that $\pi\nmid f$ by dividing out powers of $\pi$. Applying $\delta_L$ and working mod $\pi$ we get
	\[f^q\delta_L(d) + d^q\delta_L(f)\equiv 0\pmod{\pi}.\]
	Multiplying by $f^q$ and using that $\delta_L(d)$ is a unit then shows that $\pi|f^{2q}$. Thus $\pi|\phi^2(f)$, which implies that $\pi|f$ since $\phi$ is a $\O_L$-linear isomorphism. But this is a contradiction, so $d$ must be a nonzerodivisor. 
\end{proof}

Finally, we will show that perfectoid $\O_L$-algebras coincide with perfectoid rings (in the sense of \cite[definition~3.5]{BMS}) equipped with an $\O_L$-algebra structure. We begin by establishing a more intrinsic criterion for being a perfectoid $\O_L$-algebra; indeed the following proposition is the $L$-typical version of \cite[proposition~IV.2.10]{bhattnotes}, in which $p$ is replaced by $\pi$ and the Frobenius is replaced by the $q$-Frobenius.

\begin{prop}~\label{prop:perfectoid-charactierzation}
	Let $R$ be an $\O_L$-algebra. Then $R$ is a perfectoid $\O_L$-algebra if and only if
	\begin{enumerate}
		\item[(1)] $R$ is $\pi$-adically complete,
		\item[(2)] there exists some $\varpi\in R$ such that $\varpi^q = \pi u$ for some $u\in R^\times$,
		\item[(3)] $\varphi_q:R/\pi\rightarrow R/\pi$ is surjective, and
		\item[(4)] the kernel of $\theta:\Ainf(R)\rightarrow R$ is principal. 
	\end{enumerate}
	If $R$ is assumed $\pi$-torsionfree, then the above remains true with (4) replaced by
	\begin{enumerate}
		\item[(4')] if $x\in R[1/\pi]$ with $x^q\in R$, then $x\in R$. 
	\end{enumerate}
\end{prop}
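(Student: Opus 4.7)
The plan is to prove both directions via Proposition~\ref{prop:perfect-prism-perfectoid-equiv}, which identifies perfectoid $\O_L$-algebras with perfect $L$-typical prisms via $R \leftrightarrow (\Ainf(R),\ker\theta)$. For the forward direction, assume $R = A/I$ for a perfect $L$-typical prism $(A,I)$. Conditions (1), (3), (4) fall out immediately: (1) is Lemma~\ref{prop:prism-facts}(3); (3) holds because $\phi$ is bijective on the perfect $\delta_L$-algebra $A$ and descends to $\varphi_q$ on $A/(\pi,I)=R/\pi$; and (4) is Lemma~\ref{prop:prism-facts}(1). For (2) I would take a distinguished generator $d$ of $\ker\theta$ and use the $\pi$-adic Teichm\"uller expansion $d=\sum_{n\ge 0}[a_n]\pi^n$ afforded by $A=W_L(R^\flat)$. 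The key local computation, using $\pi\mid q$ (since $q=p^f$ and $\pi\mid p$) to kill all binomial cross-terms in $d^q$ modulo $\pi^2$, gives $\delta_L(d)\equiv a_1^q\pmod\pi$ in $R^\flat$, so distinguishedness of $d$ is equivalent to $a_1\in(R^\flat)^\times$. Rearranging $\theta(d)=\sum a_n^\sharp\pi^n=0$ then writes $a_0^\sharp$ as $\pi$ times a unit (using $\pi$-adic completeness of $R$), and setting $\varpi:=(a_0^{1/q})^\sharp$ yields $\varpi^q=a_0^\sharp=\pi u$.

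For the backward direction, set $A=\Ainf(R)=W_L(R^\flat)$, which is automatically a perfect $\delta_L$-algebra since $R^\flat$ is a perfect $\F_q$-algebra. Lemma~\ref{lem:theta-surj-Ainf-complete}, applied to (1) and (3), gives $\theta$ surjective and $A$ classically $(\pi,\ker\theta)$-adically complete; condition (4) makes $\ker\theta$ principal. It remains to produce a distinguished element $\xi\in\ker\theta$, after which Lemma~\ref{lem:dist-characterization}(ii) transports distinguishedness to any generator of $\ker\theta$ and Lemma~\ref{lem:distinguished-elt-nonzerodivisor} gives non-zero-divisibility, completing the verification that $(A,\ker\theta)$ is a perfect $L$-typical prism. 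To build $\xi$ I would use (3) to construct lifts $\varpi^\flat,u^\flat\in R^\flat$ of the mod-$\pi$ reductions of $\varpi$ and $u$, with $u^\flat\in(R^\flat)^\times$, and form $\xi_0=[(\varpi^\flat)^q]-\pi[u^\flat]$; the $\pi\mid q$ identity forces $\theta(\xi_0)\in\pi^2R$. Iteratively correcting by higher-order Teichm\"uller terms $-\pi^{n+2}[y_n^\flat]$ (which converges $\pi$-adically in $A$) produces a genuine $\xi\in\ker\theta$, and crucially these corrections only affect coefficients $a_i$ for $i\ge 2$, so $a_1=-u^\flat$ remains a unit and $\xi$ is distinguished by the forward calculation.

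For the equivalence of (4) and (4$'$) under $\pi$-torsion-freeness, $(4)\Rightarrow(4')$ follows from the standard fact that a perfectoid ring is integrally closed in its generic fiber, so any $x\in R[1/\pi]$ with $x^q\in R$ is integral over $R$ and therefore lies in $R$. For $(4')\Rightarrow(4)$, I would still construct $\xi\in\ker\theta$ distinguished as above and show that the surjection $A/(\xi)\to R$ is an isomorphism via the tilting-style identification $A/(\xi,\pi)\cong R^\flat/(\varpi^\flat)\cong R/(\varpi)$, combined with (4$'$) and $\pi$-torsion-freeness to pin down the kernel after $\pi$-adic completion. The main obstacle I anticipate is this last step, together with the iterative construction of $\xi$: both require careful manipulation of the tilting correspondence in the ramified Witt-vector setting, though they follow the template of the $p$-typical arguments in~\cite[\S IV]{bhattnotes}.
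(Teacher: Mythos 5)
Most of your outline tracks the paper's proof closely: the forward direction via a perfect prism $(A,I)\cong(\Ainf(R),\ker\theta)$, the use of lemma~\ref{lem:theta-surj-Ainf-complete} and lemma~\ref{lem:dist-characterization} in the converse, and the strategy for $(4')\Rightarrow(4)$ of showing that a distinguished element generates $\ker\theta$ by reducing mod $\pi$ and using $(4')$ to see that $(\varpi^\flat)^q$ generates $\ker(R^\flat\to R/\pi)$. Your Teichm\"uller computation for (2) is correct (it is essentially the paper's own earlier draft of that step), though the paper's published argument is slicker: it sets $\varpi=\theta(\phi^{-1}(d))$ and reads off $\varpi^q=-\pi\,\theta(\phi^{-1}(\delta_L(d)))$ directly from $\phi^{-1}(d)^q=d-\pi\phi^{-1}(\delta_L(d))$. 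Likewise, your iterated Teichm\"uller correction to manufacture a distinguished element of $\ker\theta$ is workable but unnecessary: since $\theta$ is surjective, one may take \emph{arbitrary} lifts $\omega,v$ of $\varpi,u$, so that $\omega^q-\pi v$ lies in $\ker\theta$ on the nose; writing $\omega^q-\pi v=\alpha d$ and applying $\delta_L$ mod $(\pi,d)$ then shows $\delta_L(d)$ is a unit with no approximation argument.

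The genuine gap is your treatment of $(4)\Rightarrow(4')$. You invoke ``the standard fact that a perfectoid ring is integrally closed in its generic fiber.'' First, this is circular in the paper's logical order: the identification of perfectoid $\O_L$-algebras with perfectoid rings in the sense of \cite{BMS} is theorem~\ref{thm:perfectoid-OL-alg-perfectoid-ring}, which is proved \emph{after} and \emph{using} proposition~\ref{prop:perfectoid-charactierzation}. Second, even granting that identification, the cited fact is stronger than what is known: the standard property of perfectoid rings is $p$-root (here $q$-root) closedness in $R[1/p]$, not full integral closedness, and $q$-root closedness is precisely the statement $(4')$ you are trying to prove --- so the appeal proves nothing. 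The correct argument is short and direct: the element $\varpi$ produced in the forward direction generates $\ker(\varphi_q\colon R/\pi\to R/\pi)$ (remark~\ref{rmk:varpi-generates-ker-varphiq}), so the $q$-power map induces an isomorphism $R/\varpi\xrightarrow{\sim}R/\varpi^q$. Given $x\in R[1/\pi]$ with $x^q\in R$, take $n\ge 0$ minimal with $\varpi^nx\in R$; if $n\ge 1$ then $(\varpi^nx)^q\in\varpi^{nq}R\subseteq\varpi^qR$ forces $\varpi^nx\in\varpi R$, and $\varpi$-torsion-freeness gives $\varpi^{n-1}x\in R$, a contradiction. You should replace your citation of integral closedness with this argument (or an equivalent one proved from conditions (1)--(4) alone).
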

\begin{rmk}~\label{rmk:q-power-roots}
	Note that once a $q$th root of $\pi u$ as in (2) exists, we get a full $q$-power-compatible system of roots $(\varpi^{1/q^n})$ by letting $\varpi^\flat\in R^\flat$ be any lift of $\varpi$ along the map $R^\flat\rightarrow R/\pi$ (which is surjective by assumption (3)) and then taking $(\varpi^{1/q^n})$ to be the image of $\varpi^\flat$ under the bijection $R^\flat\cong \plim_{x\mapsto x^q} R$ (which exists by assumption (1)).
\end{rmk}
\begin{proof}[Proof of proposition~\ref{prop:perfectoid-charactierzation}.]
	Suppose that $R = A/I$ is a perfectoid $\O_L$-algebra coming from a perfect $L$-typical prism $(A,I)$; using proposition~\ref{prop:perfect-prism-perfectoid-equiv} we can identify $(A,I)\cong (\Ainf(R),\ker\theta)$. (1) and (4) follow from proposition~\ref{prop:prism-facts}, and (3) follows from the surjectivity of $\phi:A\rightarrow A$. For (2), let $d\in \Ainf(R)$ be a distinguished generator of $\ker\theta$ (which exists by proposition~\ref{prop:prism-facts}). Then we can take $\varpi = \theta(\phi^{-1}(d))$ since
	\[\varpi^q = \theta\left(\phi^{-1}(d^q)\right) = \theta\left(d - \pi\phi^{-1}(\delta_L(d))\right) = -\pi\theta(\phi^{-1}(\delta_L(d))),\]
	with $u = -\theta(\phi^{-1}(\delta_L(d)))\in R^\times$. 

	\begin{rmk}~\label{rmk:varpi-generates-ker-varphiq}
		Using the diagram
		\begin{center}
			\begin{tikzcd}
				\Ainf(R)/(\pi,d)\ar["\sim", "\theta"']{r}\ar{d}{\phi} & R/\pi\ar{d}{\varphi_q} \\
				\Ainf(R)/(\pi,d)\ar["\sim", "\theta"']{r} & R/\pi
			\end{tikzcd}
		\end{center}
		we note that the mod $\pi$ reduction of the element $\varpi$ constructed above is the kernel of $\varphi_q:R/\pi\rightarrow R/\pi$. Thus the surjective map $\varphi_q:R/\pi\rightarrow R/\pi = R/\varpi^q$ factors through an isomorphism $R/\varpi\stackrel{\sim}{\rightarrow} R/\varpi^q$. This fact will be used later in the proof.
	\end{rmk}

	For the converse, suppose that $R$ is an $\O_L$-algebra satisfying (1)-(4); we want to show that $(\Ainf(R), \ker\theta)$ is a perfect $L$-typical prism. $\Ainf(R) = W_L(R^\flat)$ is a perfect $\delta_L$-algebra by proposition~\ref{prop:perfect-d-algs-equiv} and is $(\pi,\ker\theta)$-adically complete by lemma~\ref{lem:theta-surj-Ainf-complete}. By assumption $\ker\theta = (d)$ for some $d\in \Ainf(R)$. Thus by lemma~\ref{lem:dist-characterization} it suffices to show that $d$ is distinguished.

	Let $\varpi, u\in R$ be as in (2), and let $\omega,v \in \Ainf(R)$ be lifts along $\theta$. Then $\omega^q - \pi v\in \ker\theta$, so we can write
	\[\omega^q - \pi v = \alpha d\]
	for some $\alpha\in \Ainf(R)$. Applying $\delta_L$ to this equation and working mod $(\pi, d)$ (using remark~\ref{rmk:q-derivation}(2) to simplify) gives
	\[-v\delta_L(\pi)\equiv \alpha^q\delta_L(d)\pmod{(\pi, d)}.\]
	As $-v\delta_L(\pi)\in \left(\Ainf(R)/(\pi, d)\right)^\times$, this shows that $\delta_L(d)\in \left(\Ainf(R)/(\pi,d)\right)^\times$ as well, and thus $\delta_L(d)\in \Ainf(R)^\times$ by $(\pi, d)$-completeness.

	Assume now that $R$ is $\pi$-torsion free. Supposing $R$ is a perfectoid $\O_L$-algebra, we prove (4'). Suppose that $x\in R[1/\pi]$ with $x^q\in R$. Let $\varpi\in R$ be the element satisfying (2) constructed earlier in this proof; by remark~\ref{rmk:varpi-generates-ker-varphiq} we have that the $q$-power map $R/\varpi\rightarrow R/\varpi^q$ is bijective. Let $n\ge 0$ be minimal such that $\varpi^n x\in R$ (such an $n$ exists since $\varpi^q|\pi$), and suppose towards a contradiction that $n\ge 1$. Then 
	\[(\varpi^nx)^q = \varpi^{nq}x^q\in \varpi^{nq}R\subseteq \varpi^q R,\]
	which implies that $\varpi^nx\in \varpi R$. As $R$ is $\varpi$-torsionfree, this implies that $\varpi^{n-1}x\in R$, giving the contradiction.

	Finally, suppose that $R$ is a $\pi$-torsionfree $\O_L$-algebra satisfying (1) - (3) and (4'); we will prove (4). Let $\varpi\in R$ be as in (2), and let $(\varpi^{1/q^n})$ be a system of $q$-power roots of $\varpi$, which exists by remark~\ref{rmk:q-power-roots} (which uses only (1)-(3)). We have that $\varpi^{1/q^n}$ mod $\pi$ generates $\ker(\varphi_q^n:R/\pi\rightarrow R/\pi)$: if $x\in R$ with $\pi|x^{q^n}$ then the $q^n$-th power of $x/\varpi^{1/q^n}\in R[1/\pi]$ is in $R$, so that $x/\varpi^{1/q^n}\in R$ as well by assumption. It follows that the element
	\[(\dots, \overline{\varpi^{1/q^2}}, \overline{\varpi^{1/q}}, \overline{\varpi}, 0)\in R^\flat\]
	formed from the mod $\pi$ reductions of the $\varpi^{1/q^n}$ generates $\ker(R^\flat\rightarrow R/\pi) = \plim \ker(\varphi_q^n)$. But since $R$ is $\pi$-torsionfree and $\ker \theta$ is $\pi$-adically complete with mod $\pi$ reduction $\ker(R^\flat\rightarrow R/\pi)$, this implies that $\ker\theta$ is principal as well.
\end{proof}

\begin{thm}\label{thm:perfectoid-OL-alg-perfectoid-ring}
	Let $R$ be a ring. Then $R$ is a perfectoid $\O_L$-algebra if and only if $R$ is a perfectoid ring and an $\O_L$-algebra.
\end{thm}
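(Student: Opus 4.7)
Both directions follow by verifying the intrinsic conditions from Proposition~\ref{prop:perfectoid-charactierzation} against the analogous Bhatt--Morrow--Scholze characterization of perfectoid rings (where $\pi$, $\F_q$, $W_L$ are replaced by $p$, $\F_p$, $W$). The key observation is that for a $\pi$-adically complete $\O_L$-algebra $R$, the relation $p = \pi^e\cdot(\text{unit})$ ensures that $\pi$-adic and $p$-adic completeness agree, and the tilts $\varprojlim_{\varphi_q}R/\pi$ and $\varprojlim_{\varphi_p}R/p$ canonically coincide; moreover, $W_L(R^\flat) = W(R^\flat)\otimes_{W(\F_q)}\O_L$ and the paper's $\theta$ is the base change of $\theta_{\mathrm{BMS}}\colon W(R^\flat)\to R$ along $W(\F_q)\to \O_L$.

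For the backward direction, suppose $R$ is perfectoid in the BMS sense and an $\O_L$-algebra. The $\pi$-adic completeness of $R$ follows from its $p$-adic completeness; the surjectivity of $\varphi_q = \varphi_p^f$ on $R/\pi$ follows from that of $\varphi_p$ on $R/p$; and principality of $\ker\theta$ follows formally from principality of $\ker\theta_{\mathrm{BMS}}$ via $\ker\theta = \ker\theta_{\mathrm{BMS}}\cdot W_L(R^\flat)$. To produce $\varpi\in R$ with $\varpi^q = \pi u$, I use that in a perfectoid ring any element of $R/p$ obtained from a compatible $p$-power system in $R^\flat$ lifts to an element of $R$ with full $p$-power roots; combining this with $\pi^e = p\cdot(\text{unit})$ produces a suitable $\varpi$ as a $p^{f-1}e$-th root (up to units) of $\pi\cdot(\text{unit})$.

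For the forward direction, assume $R$ satisfies (1)--(4) of Proposition~\ref{prop:perfectoid-charactierzation}. Setting $\mu := \varpi^{p^{f-1}e}$, we have $\mu^p = (\varpi^q)^e = (\pi u)^e = p\cdot(\text{unit})$, so $\mu^p\mid p$, and $R$ is $p$-adically complete. An iterative lifting through the $\pi$-adic filtration shows that the bijectivity of $\varphi_q$ on $R/\pi$ (Remark~\ref{rmk:varpi-generates-ker-varphiq}) gives surjectivity of $\varphi_p$ on $R/p$. The critical step is the principality of $\ker\theta_{\mathrm{BMS}}$: since $\varpi$ admits compatible $q$-power roots by Remark~\ref{rmk:q-power-roots}, $\mu$ inherits compatible $p$-power roots in $R$, yielding $\mu^\flat\in R^\flat$. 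Define
\[\xi := [\mu^\flat]^p - p\tilde u \in W(R^\flat),\]
where $\tilde u\in W(R^\flat)^\times$ lifts $u := \mu^p/p\in R^\times$. Then $\theta_{\mathrm{BMS}}(\xi) = \mu^p - pu = 0$, and since $\delta$ vanishes on Teichm\"uller lifts (because $\phi([x]) = [x^p] = [x]^p$ in $W(R^\flat)$), a short computation yields $\delta(\xi)\equiv -\tilde u^p\pmod p$, which is a unit. Thus $\xi$ is distinguished, $(W(R^\flat),\xi)$ is a perfect $\Z_p$-typical prism, and $R' := W(R^\flat)/(\xi)$ is perfectoid by \cite[Theorem~3.10]{BS}. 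The surjection $R'\twoheadrightarrow R$ induced by $\theta_{\mathrm{BMS}}$ is then verified to be an isomorphism by reduction modulo $p$, where both sides identify with $R^\flat/(\mu^\flat)^p$, after which $p$-adic completeness of both sides yields the full isomorphism.

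The main obstacle is the forward direction's injectivity of $R'\to R$ modulo $p$, which amounts to showing $\ker(R^\flat\to R/p) = ((\mu^\flat)^p)$ as ideals of $R^\flat$. In the $\pi$-torsion-free case this follows directly from Proposition~\ref{prop:perfectoid-charactierzation}(4'), since given $x = (x_n)\in R^\flat$ with $x_0\in pR = \mu^p R$, the quotients $x_n/\mu^{1/p^{n-1}}$ have $p^n$-th powers in $R$ and lie in $R[1/\pi]$. The general (possibly $\pi$-torsion-bearing) case requires either a reduction to the $\pi$-torsion-free setting or an alternative direct construction of a generator of $\ker\theta_{\mathrm{BMS}}$ from the known principal generator of $\ker\theta$ in $W_L(R^\flat)$, e.g., via the norm along the finite free extension $W(R^\flat)\to W_L(R^\flat)$.
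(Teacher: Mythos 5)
Your proposal has two genuine gaps, one in each direction. In the backward direction, the claimed identity $\ker\theta = \ker\theta_{\mathrm{BMS}}\cdot W_L(R^\flat)$ is false whenever $L/\Q_p$ is ramified: the map $\theta$ is the base change of $\theta_{\mathrm{BMS}}$ \emph{followed by} the multiplication map $R\otimes_{W(\F_q)}\O_L\to R$, and the latter has nontrivial kernel (indeed $W_L(R^\flat)/\bigl(\ker\theta_{\mathrm{BMS}}\cdot W_L(R^\flat)\bigr)\cong R\otimes_{W(\F_q)}\O_L\ne R$). So principality of $\ker\theta$ does \emph{not} follow formally from principality of $\ker\theta_{\mathrm{BMS}}$; this is exactly the point where the paper invokes Ito's argument producing a single generator of the special form $\xi=\pi-[\varpi^\flat]b$ (which simultaneously yields the element $\varpi$ with $\varpi^q=\pi u$). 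Relatedly, your construction of $\varpi$ as a ``$p^{f-1}e$-th root'' of $\pi\cdot(\text{unit})$ does not parse when the ramification index $e$ is not a power of $p$, since the only roots you can extract from a compatible system in $R^\flat$ are $p$-power roots.

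In the forward direction, you explicitly leave the non-$\pi$-torsionfree case unresolved, and that is precisely where the paper does its real work: it uses Lemma~\ref{lem:perfectoid-structure} to write $R$ as a fiber product $\overline{R}\times_{\overline{S}}S$ with $\overline{R},\overline{S}$ perfect $\F_q$-algebras and $S$ $\pi$-torsionfree, and then glues via \cite[proposition~2.1.4]{cesnavicius2023purity}. Without that reduction (or a substitute), your identification $\ker(R^\flat\to R/p)=((\mu^\flat)^p)$ is not established in general, so the proof is incomplete. I would also note that in the $\pi$-torsionfree case your route through an explicit distinguished element $\xi=[\mu^\flat]^p-p\tilde u$ and the untilt $W(R^\flat)/(\xi)$ is a correct but heavier alternative to the paper's argument, which simply observes that condition (4$'_p$) follows trivially from (4$'$) because $x^p\in R$ implies $x^q=(x^p)^{q/p}\in R$, and then quotes the $p$-typical characterization of perfectoid rings directly.
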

\begin{proof}
	First, suppose that $R$ is a perfectoid ring and an $\O_L$-algebra. To show that $R$ is a perfectoid $\O_L$-algebra, it suffices to show that $(\Ainf(R),\ker\theta)$ is an $L$-typical prism (it is automatically perfect by proposition~\ref{prop:perfect-d-algs-equiv}). This is done in \cite[lemma~2.4.3]{Ito}; we briefly sketch the argument here. First, one shows that $\ker(\theta:\Ainf(R^\flat)\rightarrow R)$ is generated by an element of the form $\xi = \pi - [\varpi^\flat]b$, where $\varpi\in R$ is such that $R$ is $\varpi$-adically complete and $\varpi^p|p$, the element $\varpi^\flat\in R^\flat$ satisfies $(\varpi^\flat)^\sharp = \varpi$, and $b\in \Ainf(R)$. Since any generator of $\ker(W(R^\flat)\rightarrow R)$ is a nonzerodivisor and $W(\F_q)\rightarrow \O_L$ is flat, any generator of $\ker\theta$ ia a nonzerodivisor. It is easy to show that $\Ainf(R)$ is $(\pi,\xi)$-adically complete. And $\xi$ is distinguished as $\delta_L(\xi)\equiv 1 - \pi^{q - 1}\pmod{(\pi,\xi)}$ is a unit in $R$.

	Conversely, suppose that $R$ is a perfectoid $\O_L$-algebra; we want to show that $R$ is a perfectoid ring. By lemma~\ref{lem:perfectoid-structure} below, we have that $R$ can be written as a fiber product $\overline{R}\times_{\overline{S}}S$ where $\overline{R}\rightarrow \overline{S}$ is a surjection of perfect $\F_q$-algebras, and $S$ is a $\pi$-torsionfree perfectoid $\O_L$-algebra.	Once we show that $\overline{R}$, $\overline{S}$, and $S$ are perfectoid rings, we may conclude that $R$ is a perfectoid ring as well by \cite[proposition~2.1.4]{cesnavicius2023purity}. Thus we may assume that $R$ is a perfect $\F_q$-algebra or $\pi$-torsionfree. In the former case, the result is clear since perfect $\F_p$-algebras are perfectoid rings.

	Suppose now that $R$ is $\pi$-torsionfree. By \cite[proposition~IV.2.10]{bhattnotes} it suffices to show that $R$ satisfies the ``$p$-analogues'' of properties (1)-(3),(4') in proposition~\ref{prop:perfectoid-charactierzation}:
	\begin{enumerate}
		\item[(1\textsubscript{p})] $R$ is $p$-adically complete,
		\item[(2\textsubscript{p})] there exists some $\varpi'\in R$ such that $(\varpi')^p = p u'$ for some $u'\in R^\times$,
		\item[(3\textsubscript{p})] $\varphi:R/p\rightarrow R/p$ is surjective, and
		\item[(4'\textsubscript{p})] if $x\in R[1/p]$ with $x^p\in R$, then $x\in R$.  
	\end{enumerate}
	(1\textsubscript{p}) and (4'\textsubscript{p}) follow immediately from (1) and (4'), respectively. Taking $\varpi,u\in R$ as in (2) and letting $e$ be the ramification index of $L/\Q_p$, we conclude (2\textsubscript{p}) by taking $\varpi' = \varpi^{qe/p}$ and $u' = u^e$. To show (3\textsubscript{p}), it suffices to show the $q$-Frobenius $\varphi_q:R/p\rightarrow R/p$ is surjective. So let $\alpha\in R$; we will successively approximate $\alpha^{1/q}$ modulo higher powers of $\pi$. Indeed, by (3) there is $\beta_1\in R$ so that $\beta_1^q\equiv \alpha\pmod{\pi}$. Thus $\frac{\alpha - \beta_1^q}{u\pi}\in R$, where $u\in R^\times$ is as in (2). Again by (3), there is $\gamma_1\in R$ so that $\gamma_1^q\equiv \frac{\alpha - \beta_1^q}{u\pi}\pmod{\pi}$. Then let $\beta_2 = \beta_1 + \gamma_1\varpi$ with $\varpi$ as in (2). We see that $\beta_2^q\equiv \beta_1^q + \gamma_1^qu\pi = \alpha\pmod{\pi^2}$ (so long as the ramification index $e\ge 2$; if $e = 1$ then $\pi = p$, so we were already done). Repeating this process, we get for $1\le n\le e$ elements $\beta_n\in R$ satisfying $\beta_n^q\equiv \alpha\pmod{\pi^n}$. We then get that $\beta_e^q\equiv \alpha\pmod{p}$ as desired.
\end{proof}

\begin{lem}\label{lem:perfectoid-structure}
	(cf. \cite[proposition~IV.3.2]{bhattnotes}.) Let $R$ be a perfectoid $\O_L$-algebra. Then the $\O_L$-algebras
	\[\overline{R} = R/\sqrt{\pi R},\quad S = R/R[\sqrt{\pi R}],\quad\text{and}\quad \overline{S} = S/\sqrt{\pi S}\]
	are perfectoid $\O_L$-algebras and the commutative square
	\begin{center}
		\begin{tikzcd}
			R\ar{r}\ar{d} & S\ar{d} \\
			\overline{R}\ar{r} & \overline{S}
		\end{tikzcd}
	\end{center}
	is Cartesian.
\end{lem}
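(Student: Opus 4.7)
My plan is to adapt \cite[Proposition IV.3.2]{bhattnotes} from the setting of perfectoid rings and the usual Frobenius to that of perfectoid $\O_L$-algebras and the $q$-Frobenius, replacing $p$ by $\pi$ throughout. The two key inputs are the intrinsic characterization of perfectoid $\O_L$-algebras in Proposition~\ref{prop:perfectoid-charactierzation} and the compatible system of $q$-power roots $(\varpi^{1/q^n})_{n\geq 0}$ of an element $\varpi$ with $\varpi^q = \pi u$, $u\in R^\times$, guaranteed by Remark~\ref{rmk:q-power-roots}.

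Set $I = \sqrt{\pi R}$ and $J = R[\sqrt{\pi R}]$. I would first check that $\overline R$ and $\overline S$ are perfect $\F_q$-algebras: they are reduced $\F_q$-algebras by construction (since $\pi$ lies in the relevant ideal), surjectivity of the $q$-Frobenius descends from criterion (3) of Proposition~\ref{prop:perfectoid-charactierzation} applied to $R/\pi$ and $S/\pi$, and injectivity of the $q$-Frobenius on a reduced $\F_q$-algebra is automatic. Any perfect $\F_q$-algebra is a perfectoid $\O_L$-algebra via the associated perfect $L$-typical prism $(W_L(-),(\pi))$.

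Next I would verify that $S = R/J$ is a $\pi$-torsionfree perfectoid $\O_L$-algebra. It is $\pi$-torsionfree because if $\pi x \in J$, then some $a\in I$ annihilates $\pi x$, and since $a\pi\in I$, this forces $x\in J$. To establish the four criteria of Proposition~\ref{prop:perfectoid-charactierzation}: (2) is witnessed by the image of $\varpi$; (3) descends from $R$; (4') uses that $J$ is closed under the extraction of $q$-th roots up to a unit, so that any lift to $R$ of $x\in S[1/\pi]$ with $x^q\in S$ can be adjusted modulo $J$ to an element satisfying (4') for $R$; and (1) will follow from the Cartesian property once established, via the formula $R = \overline R \times_{\overline S} S$ combined with the $\pi$-adic completeness of $R$ and the fact that $\overline R$ and $\overline S$ are killed by $\pi$.

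The main obstacle is the Cartesian assertion, which is equivalent to $I \cap J = 0$ together with $I + J$ equalling the preimage of $\sqrt{\pi S}$ under $R \twoheadrightarrow S$. The second equality is routine given that $J\subseteq \ker(R\to \overline S)$ and that $\overline S$ is reduced. For the vanishing $I \cap J = 0$: an element $x\in I\cap J$ satisfies both $x^N \in \pi R$ for some $N$ and $\varpi^{1/q^n} x = 0$ for some $n$. Iterating the extraction of $q$-th roots from the defining relations (using the compatible system $(\varpi^{1/q^m})$) as in the proof of \cite[Proposition IV.3.2]{bhattnotes}, one shows that $x$ is divisible by $\varpi^{1/q^m}$ for every $m$, and hence lies in $\bigcap_m \varpi^{1/q^m} R \subseteq \bigcap_m \pi R = 0$ by the $\pi$-adic separatedness of $R$. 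This is the one step that genuinely uses the perfectoid hypothesis on $R$; once it is established, the remaining verifications are formal consequences of Proposition~\ref{prop:perfectoid-charactierzation} transported through the fiber product structure.
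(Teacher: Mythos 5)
Your overall strategy is genuinely different from the paper's: you try to verify everything directly on $R$ using the intrinsic characterization of proposition~\ref{prop:perfectoid-charactierzation}, whereas the paper works entirely on the tilt. The paper sets $I = (a_0^{1/q^\infty})\subseteq R^\flat$ and $J = R^\flat[I]$, invokes the homotopy-fibre-square lemma for perfect $\F_p$-algebras (\cite[lemma~IV.3.1]{bhattnotes}) to get a Cartesian and Tor-independent square of ramified Witt rings $W_L(R^\flat/\,\cdot\,)$, and then applies $-\otimes^L_{W_L(R^\flat)}R$, using that the distinguished generator $d$ of $\ker\theta$ is a nonzerodivisor in each of the four perfect $\delta_L$-algebras (lemma~\ref{lem:distinguished-elt-nonzerodivisor}) so that the fibre-square property survives. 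The point of that detour is that the crucial vanishing $I\cap J = 0$ is essentially free in characteristic $p$: an element of $I\cap R^\flat[I]$ is square-zero, and a perfect ring is reduced.

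Your direct argument for the corresponding untilted statement $\sqrt{\pi R}\cap R[\sqrt{\pi R}] = 0$ has a genuine gap. You conclude by placing $x$ in $\bigcap_m \varpi^{1/q^m}R$ and asserting this is contained in $\bigcap_m \pi R = 0$. That inclusion is backwards: since $\varpi^{1/q^m} = (\varpi^{1/q^{m+1}})^q$, the ideals $(\varpi^{1/q^m})$ \emph{increase} with $m$ and each contains $\pi R = \varpi^q u R$, so $\bigcap_m \varpi^{1/q^m}R = (\varpi)\neq 0$; divisibility by $\varpi^{1/q^m}$ for all $m$ is a vacuous condition, not a strong one. What a direct argument actually yields is only $x^2 = 0$ (any $x$ in the intersection kills itself), and $R$ need not be reduced, so this does not finish. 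A similar imprecision affects your $\pi$-torsionfreeness claim for $S$: from $a\pi x = 0$ for all $a\in\sqrt{\pi R}$ you need $ax=0$ for all such $a$, which amounts to dividing the relation by $\varpi^q$ and is not the formal implication you state. Both difficulties are exactly what the passage to the tilt is designed to circumvent, so I would either adopt the paper's route or supply a correct replacement for the intersection step (for instance by first proving the untilted analogues $\sqrt{\pi R}=\bigcup_n\varpi^{1/q^n}R$ and $R[\sqrt{\pi R}]=R[\varpi^{1/q^n}]$ for all $n$, which themselves are usually established via the tilt).
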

\begin{proof}
	This is proved the same way as in \cite{bhattnotes}, so we only sketch the argument here. As in the proof of proposition~\ref{prop:perfectoid-charactierzation}, we can write a distinguished generator of $\ker(\theta:\Ainf(R)\rightarrow R)$ as $d =[a_0] - \pi u$ for $a_0\in R^\flat$ such that $R^\flat$ is $a_0$-adically complete and $a_0^\sharp = \pi$, and $u\in \Ainf(R)^\times$ (concretely $a_0 = (\varpi^\flat)^q$ for $\varpi^\flat$ as in remark~\ref{rmk:q-power-roots}). Let $I = (a_0^{1/q^\infty})\subseteq R^\flat$ and $J = R^\flat[I]$. Then the commutative square
	\begin{center}
		\begin{tikzcd}
			W_L(R^\flat)\ar{r}\ar{d} & W_L(R^\flat/J)\ar{d} \\
			W_L(R^\flat/I)\ar{r} & W_L(R^\flat/(I + J)) 
		\end{tikzcd}
	\end{center}
	is a homotopy fibre square by the general result \cite[lemma~IV.3.1]{bhattnotes} regarding perfect $\F_p$ algebras and devissage. Since $d$ is a nonzerodivisor in all of these perfect $\delta_L$-algebras by lemma~\ref{lem:distinguished-elt-nonzerodivisor}, the square remains a homotopy fibre square upon application of $-\otimes_{W_L(R^\flat)}^LR$.
	\begin{center}
		\begin{tikzcd}
			W_L(R^\flat)/(d)\ar{r}\ar{d} & W_L(R^\flat/J)/(d)\ar{d} \\
			W_L(R^\flat/I)/(d)\ar{r} & W_L(R^\flat/(I + J))/(d) 
		\end{tikzcd}
	\end{center}
	It is easy to see that the rings in this square are all perfectoid $\O_L$-algebras. Finally, one shows that the this cartesian square identifies with the one in the statement of the lemma; see \cite[IV]{bhattnotes} for the details.
\end{proof}

\subsection{Constructing maps of \texorpdfstring{$\pi$}{pi}-typical prisms}\label{ssec:prism-maps}

In \secsymb\ref{sec:LT-phi-G-modules} we will construct inclusions of $L$-typical prisms coming from Lubin-Tate formal groups into perfect $L$-typical prisms. The construction used can be understood in at least three different ways, one of which is specific to the scenario in \secsymb\ref{sec:LT-phi-G-modules}, and two of which are general constructions for producing maps between $L$-typical prisms. Here we explain the two general constructions.

\begin{const}\label{const:algebraic-construction}
	Let $R$ be an $\O_L$-algebra, and let $A$ be a $\delta_L$-algebra with a sequence of $\phi$-compatible $\O_L$-algebra maps $\iota_n:A\rightarrow R$ for $n\ge 0$, i.e. a sequence of maps $\iota_n$ making the diagram
	\begin{center}
		\begin{tikzcd}[sep=large]
			A\ar{r}{\phi}\ar{d}{\iota_0}&A\ar{r}{\phi}\ar{dl}{\iota_1} &A\ar{r}{\phi}\ar[bend left=15]{dll}{\iota_2} &\cdots \\
			R
		\end{tikzcd}
	\end{center}
	commute. We will construct from this data a map $\iota: A\rightarrow W_L(R^\flat)$ of $\delta_L$-algebras.

	Indeed, using that $\varphi_q$ commutes with maps of $\F_q$-algebras, we can form a map
	\begin{align*}
		\overline{\iota}:A/&\pi\longrightarrow \plim_{\varphi_q} R/\pi = R^\flat \\
		&a\longmapsto (\overline{\iota_n}(a))_n
	\end{align*}
	in characteristic $p$, where $\overline{\iota_n}:A/\pi\rightarrow R/\pi$ denotes the mod $\pi$ reduction. Then applying the universal property of $W_L$ (lemma~\ref{lem:W-delta_pi-functor}) to the $\O_L$-algebra map $A\twoheadrightarrow A/\pi\stackrel{\overline{\iota}}{\rightarrow} R^\flat$, we get a $\delta_L$-algebra map $\iota: A\rightarrow W_L(R^\flat)$. Note that this construction is purely $\delta_L$-algebraic; it uses nothing from the theory of prisms.
\end{const}
\begin{rmk}\label{rmk:right-shift}
	Intuitively, we think of the system $(\iota_n)_n$ as giving a way to extract $\phi$-power roots in the perfect $\delta_L$-alebra $W_L(R^\flat)$. More precisely, it's easy to show that $\phi^{-m}(\iota(a))$ coincides with $\iota^{\rightarrow m}(a)$, where $\iota^{\rightarrow m}$ denotes the map produced by applying the above construction to the right-shifted system $(\iota_{n + m})_n$.
\end{rmk}

If $R$ is $\pi$-adically complete then we additionally have a map $\theta:W_L(R^\flat)\rightarrow R$. The following lemma computes the composite $A\stackrel{\iota}{\rightarrow} W_L(R^\flat)\stackrel{\theta}{\rightarrow}R$ (possibly with a $\phi$-twist).

\begin{prop}\label{prop:delta-alg-map-computation}
	Fix all notation as above, with $R$ being a $\pi$-adically complete $\O_L$-algebra. Then for any $n\ge 0$, we have
	\[\theta\circ \phi^{-n}\circ \iota = \iota_n.\]
\end{prop}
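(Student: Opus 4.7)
The plan is to first reduce to the case $n = 0$ and then prove that case by a Witt-vector computation identifying $\theta(\iota(a))$ with an explicit $\pi$-adic limit. For the reduction, by remark~\ref{rmk:right-shift}, $\phi^{-n} \circ \iota$ equals the map $\iota^{\rightarrow n}$ produced by applying construction~\ref{const:algebraic-construction} to the right-shifted system $(\iota_{m + n})_m$. So $\theta \circ \phi^{-n} \circ \iota$ is the $n = 0$ case of the proposition applied to this shifted system, and it suffices to prove $\theta \circ \iota = \iota_0$ in general.

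For the $n = 0$ case, I would use the uniqueness in lemma~\ref{lem:W-delta_pi-functor} to factor $\iota = W_L(\overline{\iota}) \circ s_A$, where $s_A : A \to W_L(A)$ is the unit of the adjunction: both composites agree after postcomposition with $W_L(R^\flat) \twoheadrightarrow R^\flat$, so they coincide by uniqueness. Since $W_L$ of a ring map acts componentwise on Witt coordinates, writing $s_A(a) = (x_0, x_1, \dots) \in W_L(A)$ gives $\iota(a) = (\overline{\iota}(\bar x_0), \overline{\iota}(\bar x_1), \dots)$ in $W_L(R^\flat)$. The defining formula for $\theta$ then produces
\[\theta(\iota(a)) = \sum_{n \geq 0} \pi^n \bigl(\overline{\iota}(\bar x_n)^{1/q^n}\bigr)^\sharp,\]
and unpacking the descriptions of $1/q^n$-th roots as shifts in $R^\flat = \plim_{\varphi_q} R/\pi$ and of the sharp map identifies the $n$th term with the $\pi$-adic limit $T_n := \lim_{N \to \infty} \iota_N(x_n)^{q^{N - n}}$ in $R$.

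Independently, the defining property $w_N(s_A(a)) = \phi_A^N(a)$ of $s_A$ unpacks to the algebraic identity $\phi_A^N(a) = \sum_{k=0}^N \pi^k x_k^{q^{N-k}}$ in $A$. Applying the ring homomorphism $\iota_N$ and using the iterated $\phi$-compatibility $\iota_N \circ \phi_A^N = \iota_0$ yields
\[\iota_0(a) = \sum_{k=0}^N \pi^k \iota_N(x_k)^{q^{N-k}}\]
for every $N \geq 0$. Taking $N \to \infty$ should identify this with $\theta(\iota(a)) = \sum_{k \geq 0} \pi^k T_k$.

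The main obstacle is justifying the passage to the limit. The needed estimate is the standard $\pi$-adic convergence underlying the sharp map: from $\phi$-compatibility combined with $\phi(y) = y^q + \pi \delta_L(y)$ one gets $\iota_{N+1}(x_k)^q = \iota_N(x_k) - \pi\, \iota_{N+1}(\delta_L(x_k))$, which implies $\iota_N(x_k)^{q^{N - k}} - T_k \in \pi^{N - k + 1} R$ for $N \geq k$. Multiplying by $\pi^k$ and summing over $k \leq N$, together with the tail bound $\sum_{k > N} \pi^k T_k \in \pi^{N+1} R$, gives $\iota_0(a) \equiv \sum_k \pi^k T_k \pmod{\pi^{N+1} R}$ for every $N$, which forces equality by the $\pi$-adic completeness and separatedness of $R$.
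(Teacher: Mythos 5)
Your proposal is correct and follows essentially the same route as the paper's proof: reduce to $n=0$ via remark~\ref{rmk:right-shift}, factor $\iota = W_L(\overline{\iota})\circ s_A$, expand $\theta(\iota(a))$ in Witt coordinates, and match it against $\iota_0(a) = \iota_N(\phi^N(a)) = \iota_N(w_N(s_A(a)))$ using the congruence $a\equiv b \pmod{\pi^m}\Rightarrow a^q\equiv b^q\pmod{\pi^{m+1}}$ (which is exactly what the paper's lemma~\ref{lem:w_n-congruence} packages). The only cosmetic difference is that you run the approximation from the $\iota_0(a)$ side with explicit term-by-term error bounds, while the paper truncates the series for $\theta(\iota(a))$ and invokes the ghost-polynomial congruence lemma.
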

\begin{proof}
	By remark~\ref{rmk:right-shift}, it suffices to prove this for $n = 0$; thus we will show that $\theta\circ \iota = \iota_0$. The proof is by direct computation. Fixing some $a\in A$, it suffices to show that $\theta(\iota(a))\equiv \iota_0(a)\pmod{\pi^{k + 1}}$ for all $k\ge 0$.

	We can factor the map $\iota$ as
	\[A\stackrel{s_A}{\longrightarrow} W_L(A)\stackrel{W_L(\overline{\iota})}{\longrightarrow} W_L(R^\flat)\]
	where $s_A$ is the section of proposition~\ref{prop:W_pi-sections}(1). Writing $(s_0,s_1,\dots,) = s_A(a)\in W_L(A)$, we find
	\begin{align}
		\theta(\iota(a)) &= \sum_{n = 0}^\infty\left(\overline{\iota}(\overline{s_n})^{1/q^n}\right)^\# \pi^n = \sum_{n = 0}^\infty \lim_{m\rightarrow\infty}\left(\overline{\iota}(\overline{s_n})_{m}\right)^{\wedge,q^{m - n}}\pi^n \notag \\
		&\equiv \lim_{m\rightarrow\infty}\sum_{n = 0}^k\left(\overline{\iota}(\overline{s_n})_{k + m}\right)^{\wedge,q^{k + m - n}}\pi^n \pmod{\pi^{k + 1}} \notag \\
		&= \lim_{m\rightarrow\infty}w_k\left(\left(\overline{\iota}(\overline{s_0})_{k + m}\right)^{\wedge,q^m}, \dots, \left(\overline{\iota}(\overline{s_k})_{k + m}\right)^{\wedge,q^m}\right). \label{eqn:theta-factor-comp}
	\end{align}
	Here we've written $\overline{s_n}$ for the mod $\pi$ reduction of $s_n\in A$, and for $r = (\dots, r_1, r_0)\in R^\flat$, we've written $(r_n)^\wedge$ for an arbitrary lift of $r_n\in R/\pi$ to $R$. Since
	\[\left(\overline{\iota}(\overline{s_n})_{k + m}\right)^{q^m} = \overline{\iota}(\overline{s_n})_k\equiv \iota_k(s_n)\pmod{\pi},\]
	lemma~\ref{lem:w_n-congruence} below allows us to continue (\ref{eqn:theta-factor-comp}):
	\begin{align*}
		\theta(\iota(a)) &\equiv w_k(\iota_k(s_0), \dots, \iota_k(s_k))\pmod{\pi^{k + 1}} \\
		&= \iota_k(w_k(s_0,\dots, s_k)) = \iota_k(\phi^k(a)) = \iota_0(a) 
	\end{align*}
	which is what we wanted. Here we've used that $\iota_k$ is an $\O_L$-algebra map (and thus commutes with $w_k$) and the fact that $w_k\circ s_A = \phi^k$ from the second part of proposition~\ref{prop:W_pi-sections}(1).
\end{proof}

\begin{lem}\label{lem:w_n-congruence}
	If $R$ is an $\O_L$-algebra, $a_0,\dots, a_k,b_0,\dots, b_k\in R$, and $a_n\equiv b_n\pmod{\pi^s}$ for $n = 0,\dots, k$, then
	\[w_k(a_0,\dots, a_k)\equiv w_k(b_0,\dots, b_k)\pmod{\pi^{s + k}}.\]
\end{lem}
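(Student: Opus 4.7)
The plan is to expand $w_k$ term by term and reduce to a standard lemma about $q$-th powers in $\O_L$-algebras.

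First I would observe that
\[w_k(a_0,\dots,a_k) - w_k(b_0,\dots,b_k) = \sum_{n=0}^{k} \pi^n\bigl(a_n^{q^{k-n}} - b_n^{q^{k-n}}\bigr),\]
so it suffices to prove that for each $n$ with $0\le n\le k$ we have $a_n^{q^{k-n}} \equiv b_n^{q^{k-n}} \pmod{\pi^{s+k-n}}$; multiplying by $\pi^n$ then lands the difference in $\pi^{s+k}R$, and summing gives the claim. Thus the whole lemma reduces to the sublemma: \emph{if $a\equiv b\pmod{\pi^s}$ in an $\O_L$-algebra $R$ (with $s\ge 1$), then $a^{q^m}\equiv b^{q^m}\pmod{\pi^{s+m}}$ for all $m\ge 0$.}

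The sublemma I would prove by induction on $m$, the base $m=0$ being the hypothesis. The inductive step reduces in turn to the $m=1$ case applied to $a^{q^{m-1}}$ and $b^{q^{m-1}}$ (with $s$ replaced by $s+m-1\ge 1$). For the $m=1$ case, write $a = b + \pi^s c$ and binomially expand:
\[a^q - b^q = \sum_{i=1}^{q} \binom{q}{i} b^{q-i} \pi^{si} c^i.\]
The key number-theoretic input is that $p \mid \binom{q}{i}$ for $1\le i\le q-1$, and that $\pi\mid p$ in $\O_L$, so $\pi\mid\binom{q}{i}$ for $1\le i\le q-1$; combined with $si\ge s$ this makes each of those terms divisible by $\pi^{s+1}$. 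The remaining term $\pi^{qs}c^q$ is divisible by $\pi^{s+1}$ because $qs\ge s+1$ when $s\ge 1$ and $q\ge 2$.

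I do not anticipate a significant obstacle: the only substantive ingredient is the divisibility $\pi\mid\binom{q}{i}$, and the rest is bookkeeping. The statement in fact requires $s\ge 1$ (the $s=0$ case fails, as one sees already with $k=1$, $a_0=1$, $b_0=a_1=b_1=0$), but this is the only regime in which the lemma is invoked in the proof of proposition~\ref{prop:delta-alg-map-computation}, so the proof goes through as outlined.
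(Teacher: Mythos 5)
Your proof is correct; the paper simply cites \cite[lemma~1.1.2]{SchnBook} here, and your binomial-expansion induction (via the sublemma $a\equiv b\pmod{\pi^s}\Rightarrow a^{q^m}\equiv b^{q^m}\pmod{\pi^{s+m}}$, using $\pi\mid p\mid\binom{q}{i}$ for $0<i<q$) is essentially the standard argument given in that reference. Your observation that the statement implicitly requires $s\ge 1$, and that this is the only case used in proposition~\ref{prop:delta-alg-map-computation}, is also accurate.
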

\begin{proof}
	See \cite[lemma~1.1.2]{SchnBook}.
\end{proof}

Proposition~\ref{prop:delta-alg-map-computation} suggests viewing the map $\iota:A\rightarrow W_L(R^\flat)$ as a map of prisms when doing so makes sense, i.e. when $R$ is a perfectoid $\O_L$-algebra and $(A,I)$ is a prism with $I\subseteq \ker\iota_0$. This is the perspective taken in the following construction.

\begin{const}\label{const:prismatic-construction}
	Let $(A,I)$ is a $L$-typical prism, let $R$ be a perfectoid $\O_L$-algebra, and suppose given a map $\iota_0:A/I\rightarrow R$. If $(A,I)$ were assumed perfect, then proposition~\ref{prop:perfect-prism-perfectoid-equiv} would allow us to lift $\iota_0$ into a map $\iota:(A,I)\rightarrow (\Ainf(R),\ker\theta)$, but we do not make this assumption. Instead, we further assume given a collection of $\phi$-compatible $\O_L$-algebra maps $\iota_n:A\rightarrow R$ as above with $I\subseteq \ker\iota_0$; this will allow us to construct such a lift $\iota$.

	Using the $\iota_n$, we can factor $\iota_0:A/I\rightarrow R$ through 
	\[\left(\varinjlim_\phi A\right)/I\stackrel{(\iota_n)_n}{\longrightarrow} R,\] 
	and then, after $\pi$-adically completing, through the map of perfectoid $\O_L$-algebras 
	\[A_{\perf}/IA_{\perf} = (\varinjlim_\phi A)^\wedge_{(\pi)}/I\rightarrow R.\] 
	Applying proposition~\ref{prop:perfect-prism-perfectoid-equiv} to the map $A_{\perf}/IA_{\perf}\rightarrow R$ gives a map of prisms fitting into the following diagram.
	\begin{center}
		\begin{tikzcd}[sep=large]
			A\ar{r}\ar[twoheadrightarrow]{d} & A_{\perf}\ar{r}{\text{using prop. }\ref{prop:perfect-prism-perfectoid-equiv}}\ar[twoheadrightarrow]{d} &\Ainf(R)\ar[twoheadrightarrow]{d} \\ 
			A/I\ar{r} & A_{\perf}/IA_{\perf}\ar{r}{\text{using }(\iota_n)_n} & R
		\end{tikzcd}
	\end{center}
	We take $\iota$ to be the composite along the top row, which a map of $L$-typical prisms $(A,I)\rightarrow (\Ainf(R),\ker\theta)$ by construction.
\end{const}
\begin{prop}\label{prop:map-to-perfect-prism}
	Let $X$ be an adic space over $\Spf\O_L$, let $(A,I)\in X_{\Prism_L}$, and let $R$ be a perfectoid $\O_L$-algebra with a structure map $\Spf R\rightarrow X$ over $\Spf \O_L$. Suppose we have a $\phi$-compatible direct system of $\O_L$-algebra maps $\iota_n:A\rightarrow R$ such that $I\subseteq \ker\iota_0$ and the map $\iota_0: A/I\rightarrow R$ is an $X$-morphism. Then there is morphism
	\[\iota:(A,I)\longrightarrow (W_L(R^\flat),\ker\theta)\]
	in $X_{\Prism_L}$ reducing to $\iota_0:A/I\rightarrow R$. Moreover, the map $A\rightarrow \Ainf(R) = W_L(R^\flat)$ of $\delta_L$-algebras obtained this way coincides with that of construction~\ref{const:algebraic-construction}.
\end{prop}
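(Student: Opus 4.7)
The plan is to execute Construction~\ref{const:prismatic-construction} and then verify that the resulting map agrees with that of Construction~\ref{const:algebraic-construction} via the universal property of $W_L$. First, the $\phi$-compatibility of the $(\iota_n)_n$ assembles them into an $\O_L$-algebra map $\varinjlim_\phi A \to R$ sending $a\in A$ at the $n$-th copy to $\iota_n(a)$. Since $I\subseteq \ker\iota_0$, this descends to a map $(\varinjlim_\phi A)/I \to R$, and after $(\pi,I)$-adic completion yields a map of perfectoid $\O_L$-algebras $\iota_\infty : A_{\perf}/IA_{\perf} \to R$. Applying proposition~\ref{prop:perfect-prism-perfectoid-equiv} lifts $\iota_\infty$ uniquely to a morphism of perfect $L$-typical prisms $(A_{\perf}, IA_{\perf}) \to (\Ainf(R), \ker\theta)$, and composing with the perfection morphism $(A,I) \to (A_{\perf}, IA_{\perf})$ from definition~\ref{defn:prism-perfection} gives the desired $\iota$.

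To see that $\iota$ is a morphism in $X_{\Prism_L}$, observe that its reduction modulo $\ker\theta$ factors through $\iota_\infty$, and on the $0$-th level of the colimit restricts to the given $\iota_0 : A/I \to R$; since $\iota_0$ is an $X$-morphism by hypothesis, the structure maps to $X$ agree. For the comparison with Construction~\ref{const:algebraic-construction}, lemma~\ref{lem:W-delta_pi-functor} implies that any $\delta_L$-algebra map $A \to \Ainf(R) = W_L(R^\flat)$ is determined by its composite $A \to R^\flat$ with the projection $W_L(R^\flat) \twoheadrightarrow R^\flat$. For Construction~\ref{const:algebraic-construction} this composite is by definition $\overline{\iota}: a\mapsto (\overline{\iota_n(a)})_n$. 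For the map produced above, the composite factors as
\[ A \longrightarrow A_{\perf}/\pi \longrightarrow (A_{\perf}/IA_{\perf})^\flat \longrightarrow R^\flat, \]
and tracking $a\in A$ through the colimit presentation of $A_{\perf}$, its canonical $\varphi_q^n$-th root in $A_{\perf}/(\pi, IA_{\perf})$ is the class of $a$ sitting in the $n$-th copy of $A$ inside $\varinjlim_\phi A$, which maps under $\iota_\infty$ to $\iota_n(a) \bmod \pi$. Hence the induced map $A\to R^\flat$ is again $a\mapsto (\overline{\iota_n(a)})_n$, and by uniqueness the two $\delta_L$-algebra lifts to $W_L(R^\flat)$ coincide.

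The main delicacy lies in the last identification: one must recognize that the canonical $\varphi_q^n$-th root of $a$ inside $(A_{\perf}/IA_{\perf})^\flat = \plim_{\varphi_q} A_{\perf}/(\pi, IA_{\perf})$ is exactly the image of $a$ from the $n$-th copy of $A$ in $\varinjlim_\phi A$, since in the perfect $\delta_L$-algebra $A_{\perf}$ the element $\phi^{-n}(a)$ is represented precisely by this class. Once this bookkeeping is in place, the $\phi$-compatibility of the $(\iota_n)_n$ and the universal property of $W_L$ immediately give the desired agreement.
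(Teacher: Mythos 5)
Your proposal is correct and follows essentially the same route as the paper: existence via construction~\ref{const:prismatic-construction}, the structure maps handled by noting the reduction mod $\ker\theta$ is $\iota_0$, and the comparison with construction~\ref{const:algebraic-construction} reduced to characteristic $p$ via the identification $A_{\perf}/\pi\cong (A_{\perf}/IA_{\perf})^\flat$. Your bookkeeping of where $\phi^{-n}(a)$ sits in $\varinjlim_\phi A$ is exactly the verification the paper leaves as ``easy to check,'' and your use of the adjunction in lemma~\ref{lem:W-delta_pi-functor} to compare maps out of $A$ directly (rather than out of $A_{\perf}$) is an equivalent reduction.
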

\begin{proof}
	The map $\iota$ of the proposition is given by construction~\ref{const:prismatic-construction}; it is immediate that the morphism constructed this way respects the structure maps to $X$. To show that this coincides with construction~\ref{const:algebraic-construction}, it suffices to show that the maps $\iota^\delta,\iota^\Prism: A_{\perf}\rightarrow W_L(R^\flat)$ induced by constructions~\ref{const:algebraic-construction} and \ref{const:prismatic-construction}, respectively, coincide. By the definition of the $\Ainf$ functor, $\iota^\Prism$ is $W_L$ of the tilt of $A_{\perf}/I\rightarrow R$, so by proposition~\ref{prop:perfect-d-algs-equiv} it suffices to show that taking $\lambda^\delta$ mod $\pi$ gives
	\[\left(A_{\perf}/I\right)^\flat\rightarrow R^\flat.\]
	This is easy to check using the identification $(A_{\perf}/I)^\flat\cong A_{\perf}/\pi$ from the proof of proposition~\ref{prop:perfect-prism-perfectoid-equiv}.
\end{proof}

\begin{example}\label{example:elliptic}
	Let $R$ be a $p$-adically complete ring, and let $E$ be an ordinary elliptic curve over $R$, and let $E[p^\infty] = \varinjlim E[p^n]$ denote the $p$-divisible group of $E$. Then by the theory of the canonical subgroup, there are lifts $F:E\rightarrow E^{(p)}$ of the relative Frobenius $E/p\rightarrow (E/p)^{(p)}$ and $V:E^{(p)}\rightarrow E$ of the Verschiebung with $VF = [p]$. It follows that we have maps
	\[\dots \longrightarrow \ker F^3\stackrel{[p]}{\longrightarrow} \ker F^2\stackrel{[p]}{\longrightarrow}\ker F.\]
	Note that $A = \plim \O_{\ker F^n}$ (inverse limit taken with respect to the inclusion maps $\ker F^n\hookrightarrow \ker F^{n + 1}$) has a lift of Frobenius given by $\phi = [p]^*$. 

	For $n\ge 1$, suppose $R_n$ are \'etale $R$-algebras with sections $e_n:\Spf R_n\rightarrow \ker F^n$. Then, setting $R_\infty = \left(\varinjlim R_n\right)^\wedge_{(p)}$, we have that the maps
	\[\iota_n:A\twoheadrightarrow \ker F^n\stackrel{e_n^*}{\longrightarrow} R_n\hookrightarrow R_\infty\]
	form a $\phi$-compatible system. Thus construction~\ref{const:algebraic-construction} gives a map $A\rightarrow W(R_\infty^\flat)$. 
\end{example}

\section{Lubin-Tate \texorpdfstring{$(\varphi_q,\Gamma)$}{(phi, Gamma)}-modules}\label{sec:LT-phi-G-modules}

In this section, we introduce the key objects involved in Kisin-Ren's theory of $(\varphi_q, \Gamma)$-modules. Pleasantly, much of this theory can be succintly stated in the prismatic language developed in section~\ref{sec:L-typical-prisms}.

As before, let $L/\Q_p$ be a finite extension with uniformizer $\pi$, and let $q = |\O_L/\pi|$. Let $\G$ be a Lubin-Tate formal $\O_L$-module. 

By a \textit{$p$-adic field} $K/L$ we will mean an algebraic extension such that $\O_K$ is a discrete valuation ring with perfect residue field; equivalently this means that $K$ has a perfect residue field $k$ and $K/W_L(k)[1/p]$ is finite. If $K/L$ is a $p$-adic field and $n\ge 0$ we write
\[K_n = K(\G[\pi^n])\]
for the extension given by adjoining the $\pi^n$-torsion points of $\G$. We also write $K_\infty$ for the $p$-adic completion of $\bigcup K_n$ and $\Gamma_K = \Gal(K_\infty/K)$. The action of the absolute Galois group $G_K$ on the free $\O_L$-rank one Tate module $T\G$ factors through $\Gamma_K$ and gives an injective character $\chi_\G:\Gamma_K\rightarrow \O_L^\times$. If $K = L$ then by local class field theory $\chi_\G$ is an isomorphism $\Gamma_L\stackrel{\sim}{\rightarrow} \O_L^\times$.

Throughout this section, we fix once and for all
\begin{itemize}
	\item a coordinate $T$ on $\G$, so that the action of $\O_L$ on $\G\cong \Spf(\O_L\ps{T})$ is given by power series $[a](T)\in \O_L\ps{T}$ for $a\in \O_L$;
	\item a basis $e = (e_n)_{n\ge 0}$ of the free $\O_L$-module $T\G$, viewed as a sequence of $e_n\in \O_{\overline{K}}$ such that $[\pi](e_n) = e_{n - 1}$, $e_0 = 0$, and $e_1\neq 0$.
\end{itemize}
Note that $[\pi](T)\equiv T^q\pmod{\pi}$ and $K_n = K(e_n)$.

In \secsymb\ref{ssec:S_K}, we'll see that $\O_\G\otimes_{\O_L}W_L(k)\cong W_L(k)\ps{T}$ is a $L$-typical prism in $(W_L(k))_{\Prism_L}$, and that our choice of basis $e\in T\G$ allows us to naturally view this prism inside of the perfect prism $\Ainf(\O_{K_\infty})$ via the constructions in \secsymb\ref{ssec:prism-maps}. Though the map $\O_K\ps{T}\rightarrow \Ainf(\O_{K_\infty})$ depends on the choices of $T$ and $e$, its image does not, and we denote this image by $\S_K$. In \secsymb\ref{ssec:A_K}, we extend $\S_K$ to a prism $\A_K^+$ in $(\O_K)_{\Prism_L}$ using the theory of imperfect norm fields. In \secsymb\ref{ssec:phi_q-Gamma-modules} we recall the definition of Lubin-Tate $(\varphi_q, \Gamma)$-modules. Later, in \secsymb\ref{ssec:phi-G-modules-F-crystals} we will see that $\A_K^+$ and $\Ainf(\O_{K_\infty})$ are ``large enough'' that Laurent $F$-crystals over $(\O_K)_{\Prism_L}$ are equivalent to $(\varphi_q, \Gamma_K)$-modules over $\A_K$ or $W_L(K_\infty^\flat)$.

\subsection{Construction of \texorpdfstring{$\S_K$}{S\_K}}\label{ssec:S_K}

In this subsection we construct prisms $(\S_K,(q_n(\omega)))\in (W_L(k))_{\Prism_L}$ for $n\ge 1$; when $n = 1$, this prism plays the same role that the $q$-de Rham prism $(\Z_p\ps{q - 1}, \tfrac{q^p - 1}{q - 1})$ plays in the cyclotomic theory. By construction we will have that $\S_K$ is a sub-$\delta_L$-algebra of $W_L(\O_{K_\infty}^\flat)$, so that the map $\phi^{-1}:\S_K\rightarrow W_L(\O_{K_\infty}^\flat)$ maps sense, for $\phi = \phi_{W_L(\O_{K_\infty}^\flat)}$. We will also show that $\O_{K_\infty}$ is a perfectoid $\O_L$-algebra and that the map $\phi^{-n}: (\S_K,(q_n(\omega)))\rightarrow (W_L(\O_{K_\infty}^\flat),\ker\theta)$ is a map of prisms.

Fix all notation as at the start of this section, and equip $W_L(k)\ps{T}\cong W_L(k)\otimes_{\O_L}\O_L\ps{T}$ with the lift $\phi$ of $q$-Frobenius coming from the natural $\phi$-actions on $W_L(k)$ and $\O_L\ps{T}\cong \O_\G$; explicitly this is given by
	\[f(T)\mapsto f^{\phi_{W_L(k)}}([\pi](T))\]
where $f^{\phi_{W_L(k)}}$ denotes the coefficient-wise action on $f\in W_L(k)\ps{T}$. For $n\ge 1$ set
\[q_n(T) = \frac{[\pi^n](T)}{[\pi^{n - 1}](T)}\in \O_L\ps{T}.\]
\begin{lem}\label{lem:S_K-L-typical-prism}
	$\left(W_L(k)\ps{T}, (q_n(T))\right)$ is a $L$-typical prism in $(W_L(k))_{\Prism_L}$ for every $n\ge1$.
\end{lem}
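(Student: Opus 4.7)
The plan is to verify the five defining conditions of an $L$-typical prism for $(A, I) := (W_L(k)\ps{T}, (q_n(T)))$: that $A$ is a $\delta_L$-algebra, that $(q_n(T))$ is a Cartier divisor, that $A$ is derived $(\pi, q_n(T))$-complete, that $\pi\in (q_n(T), \phi(q_n(T)))$, and that $A/q_n(T)$ has bounded $\pi^\infty$-torsion (for $(A,I)$ to belong to $(W_L(k))_{\Prism_L}$). The first two are immediate: $A$ is $\pi$-torsionfree with a lift of $q$-Frobenius, giving a canonical $\delta_L$-structure by remark~\ref{rmk:d_pi-q-frob}, and $A$ is an integral domain with $q_n(T)\neq 0$. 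Derived completeness reduces to classical completeness, which holds because the $(\pi, T)$-adic topology agrees with the $(\pi, q_n(T))$-adic topology: one inclusion comes from $q_n(T)\in (\pi,T)$, and the other from $q_n(T)\equiv T^{q^{n-1}(q-1)}\pmod{\pi}$, which gives $(\pi, T)^{q^{n-1}(q-1)}\subseteq (\pi, q_n(T))$.

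The substantive step is the prism axiom $\pi\in (q_n(T), \phi(q_n(T)))$. By lemma~\ref{lem:dist-characterization} (applied with $d = q_n(T)\in (\pi, T) = \mathrm{Rad}(A)$), this is equivalent to $q_n(T)$ being distinguished, i.e., $\delta_L(q_n(T))\in A^\times$. The key observation is that $q_n$ has coefficients in $\O_L\subseteq W_L(k)$, where $\phi$ acts trivially, so direct substitution gives
\[\phi(q_1(T)) = q_1([\pi](T)) = \frac{[\pi^2](T)}{[\pi](T)} = q_2(T),\]
and inductively $q_n(T) = \phi^{n-1}(q_1(T))$. Combined with the standard identity $\phi\circ \delta_L = \delta_L\circ \phi$ (verified in the $\pi$-torsionfree setting using the formula $\delta_L(x) = (\phi(x) - x^q)/\pi$), this gives $\delta_L(q_n(T)) = \phi^{n-1}(\delta_L(q_1(T)))$, so it suffices to show $\delta_L(q_1(T))$ is a unit. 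Since $A$ is local with maximal ideal $(\pi, T)$, this reduces to the mod $(\pi, T)$ computation $\delta_L(q_1(T))\equiv 1 \pmod{(\pi, T)}$. Writing $[\pi](T) = T^q + \pi T h(T)$ with $h(0) = 1$ (forced by $[\pi](T)\equiv \pi T\pmod{T^2}$ and $[\pi](T)\equiv T^q\pmod{\pi}$) so that $q_1(T) = T^{q-1} + \pi h(T)$, a short expansion of $\pi\delta_L(q_1(T)) = q_2(T) - q_1(T)^q$ modulo $\pi^2$, using that $\binom{q}{i}\in (\pi)$ for $0 < i < q$, yields this congruence.

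For the boundedness condition, Weierstrass preparation applied to $q_n(T)$ (congruent to $T^{q^{n-1}(q-1)}$ mod $\pi$) writes $q_n(T) = u(T)\cdot P_n(T)$ with $u\in A^\times$ and $P_n(T)$ a distinguished polynomial of degree $q^{n-1}(q-1)$; then $A/q_n(T)\cong W_L(k)[T]/P_n(T)$ is a free $W_L(k)$-module of finite rank, hence $\pi$-torsionfree. I expect the explicit mod $(\pi, T)$ computation identifying $\delta_L(q_1(T))$ as a unit to be the only genuine calculation in the proof; all other axioms reduce to formal bookkeeping with the $\delta_L$-structure on $W_L(k)\ps{T}$.
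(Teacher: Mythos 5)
Your proof is correct, but the substantive step is handled by a genuinely different route than the paper's. For the axiom $\pi\in (q_n(T),\phi(q_n(T)))$, the paper argues directly: since $q_1(T) = [\pi](T)/T\equiv \pi\pmod{T}$, one can write $\pi = q_1(T) + Tf(T)$ and apply $\phi^n$ to get $\pi = q_{n+1}(T) + q_n(T)[\pi^{n-1}](T)f([\pi^n](T))\in (q_n(T),q_{n+1}(T))$ — no appeal to distinguished elements and no expansion of $\delta_L$ is needed. You instead invoke lemma~\ref{lem:dist-characterization} to reduce to showing $q_n(T)$ is distinguished, use $q_n = \phi^{n-1}(q_1)$ and $\phi\circ\delta_L = \delta_L\circ\phi$ to reduce to $n=1$, and then compute $\delta_L(q_1(T))\equiv 1\pmod{(\pi,T)}$ by hand; this computation is valid (the binomial terms do lie in $(\pi)$ and $W_L(k)\ps{T}$ is local with maximal ideal $(\pi,T)$), but it is the longer path, and the identity $q_1(T)\equiv\pi\pmod{T}$ you derive along the way already yields the paper's one-line argument. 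On the other hand, your write-up is more complete in two respects: you record the identity $q_n(T)=\phi^{n-1}(q_1(T))$ explicitly, and you verify boundedness of the prism (via Weierstrass preparation, giving $A/q_n(T)$ free of finite rank over $W_L(k)$, hence $\pi$-torsion-free), which the paper omits from this proof and only establishes later in lemma~\ref{lem:prism-maps-flat}(3). Both arguments are sound; the paper's is shorter, yours is more self-contained.
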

\begin{proof}
	For $(\pi,q_n(T))$-adic completeness use that $q_n(T)\equiv T^{q^n - q^{n - 1}}\pmod{\pi}$ and the $(\pi,T)$-adic completeness of $W_L(k)\ps{T}$. To see that $\pi\in (q_n(T),\phi(q_n(T))) = (q_n(T), q_{n + 1}(T))$, note that $q_1(T) = \frac{[\pi](T)}{T}\equiv \pi\pmod{T}$ so that
	\[\pi = q_1(T) + Tf(T)\]
	for some $f(T)\in \O_L\ps{T}$; applying $\phi^n$ then gives
	\[\pi = q_{n + 1}(T) + [\pi^n](T)f([\pi^n](T)) = q_{n + 1}(T) + q_n(T)[\pi^{n - 1}](T)f([\pi^n](T))\in (q_n(T), q_{n + 1}(T)).\]
\end{proof}

We now invoke construction~\ref{const:algebraic-construction} to produce a map $W_L(k)\ps{T}\rightarrow W_L(\O_{K_\infty}^\flat)$ of $\delta_L$-algebras. Since we have not yet shown that $\O_{K_\infty}$ is a perfectoid $\O_L$-algebra, we cannot yet use construction~\ref{const:prismatic-construction}, but once we have shown that $\O_{K_\infty}$ is perfectoid the two constructions amount to the same thing.

Indeed, apply construction~\ref{const:algebraic-construction} with $A = W_L(k)\ps{T}$, $R = \O_{K_\infty}$, and $\iota_n:W_L(k)\ps{T}\rightarrow R$ defined by sending $f(T)$ to $f^{\phi_{W_L(k)}^{-n}}(e_n)$. Note that the system $(\iota_n)_n$ is $\phi$-compatible since  
\[\iota_{n + 1}(\phi(f)) = \iota_{n + 1}(f^{\phi_{W_L(k)}}([\pi](T))) = f^{\phi_{W_L(k)}^{-n}}([\pi](e_{n+1})) = \iota_n(f),\]
where we've used that $\phi$ acts as the identity on $\O_L$ and $[\pi](e_{n + 1}) = e_n$. This gives us a map $\delta_L$-algebra map $\iota:W_L(k)\ps{T}\rightarrow W_L(\O_{K_\infty}^\flat)$ lifting the map $k\ps{T}\rightarrow \O_K^\flat$ given by
\[T\mapsto \overline{\omega} := (\dots, \overline{e_2}, \overline{e_1}, 0)\]
where $\overline{e_n}\in \O_{K_\infty}/\pi$ is the mod $\pi$ reduction of $e_n\in \O_{K_n}$ for $n\ge 0$. Let $\omega = \iota(T)\in W_L(\O_{K_\infty}^\flat)$ denote the given lift of $\overline{\omega}$, and write $\S_K = W_L(k)\ps{\omega}\subseteq W_L(\O_{K_\infty})$ for the image of $\iota$. By lemma~\ref{lem:S_K-L-typical-prism}, $(S_K,q_n(\omega))$ is a $L$-typical prism for every $n\ge 1$. 

\begin{rmk}
	When $n = 1$ and $\G = \mu_{p^\infty}$, we see that $q_1(T) = \frac{(1 + T)^p - 1}{T}$. After a change of variables $T\mapsto T - 1$, we thus get the $q$-de Rham prism.
\end{rmk}

\begin{rmk}\label{rmk:change-by-unit}
	A different choice of coordinate on $\G$ amounts to changing $T$ by a unit in $\O_L\ps{T}$, and a different choice of basis for the Tate module of $\G$ corresponds to multiplying $e$ by a unit in $\O_L$. Hence changing $T$ and $e$ results in changing $\omega$ by a unit but does not change the image $\S_K$ of $W_L(k)\ps{T}$ in $\Ainf(\O_{K_\infty})$.
\end{rmk}

\begin{lem}\label{lem:K_infty-perfectoid}
	$\O_{K_\infty}$ is a perfectoid $\O_L$-algebra, and 
	\[(\S_K, (q_n(\omega)))\stackrel{\phi^{-n}}{\longrightarrow}(\Ainf(\O_{K_\infty}), \ker\theta)\]
	is a map of prisms for every $n\ge 1$.
\end{lem}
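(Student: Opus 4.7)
The plan is to first verify the criteria of proposition~\ref{prop:perfectoid-charactierzation} for $\O_{K_\infty}$ — which furnishes the perfect $L$-typical prism $(\Ainf(\O_{K_\infty}),\ker\theta)$ — and then to check that $\phi^{-n}(q_n(\omega)) \in \ker\theta$ via a direct application of proposition~\ref{prop:delta-alg-map-computation}.

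For the perfectoid claim, we observe that $\O_{K_\infty}$ is the valuation ring of the $\pi$-adic completion of the valued field $\bigcup_n K_n$, hence $\pi$-torsion-free, so it suffices to verify (1)--(3) and (4') of proposition~\ref{prop:perfectoid-charactierzation}. Condition (1) holds by construction, and (4') is immediate from the observation that $x \in K_\infty$ with $x^q \in \O_{K_\infty}$ satisfies $v(x) \ge 0$. For (2), classical Lubin-Tate theory asserts that $K_2/L$ is totally ramified of degree $q(q-1)$ with uniformizer $e_2$ (after Weierstrass preparation, $q_2(T)$ is a unit times an Eisenstein polynomial with constant term of the same $\pi$-valuation as $q_2(0) = \pi$); thus $e_2^{q(q-1)} = \pi u$ for some $u \in \O_{K_2}^\times$, and we can take $\varpi := e_2^{q-1}$.

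The main obstacle will be (3), the surjectivity of $\varphi_q$ on $\O_{K_\infty}/\pi$. The key input is the congruence $[\pi](T) \equiv T^q \pmod{\pi}$ in $\O_L\ps{T}$, which upon evaluation at $e_{n+1}$ yields $e_{n+1}^q \equiv e_n \pmod{\pi}$. Since $\O_{K_\infty}/\pi$ has characteristic $p$, the $q$-th power map is a ring homomorphism; combined with the perfection of the residue field $k$, any element of $\O_{K_n}/\pi$ expressed in terms of $e_n$ and coefficients in $W_L(k)$ admits a $q$-th root obtained by replacing $e_n$ with $e_{n+1}$ and taking $q$-th roots of the coefficients. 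Since $\O_{K_\infty}/\pi = \bigcup_n \O_{K_n}/\pi$, this exhibits every element as a $q$-th power.

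With the perfectoid property in hand, $\Ainf(\O_{K_\infty})$ is a perfect $\delta_L$-algebra by proposition~\ref{prop:perfect-d-algs-equiv}, so $\phi^{-n}$ is an automorphism of $\Ainf(\O_{K_\infty})$ and restricts to a $\delta_L$-algebra map $\S_K \to \Ainf(\O_{K_\infty})$. To check this is a map of prisms, it suffices to verify $\phi^{-n}(q_n(\omega)) \in \ker\theta$. Since $q_n(\omega) = \iota(q_n(T))$ and $q_n(T) \in \O_L\ps{T}$ has $\phi$-fixed coefficients, proposition~\ref{prop:delta-alg-map-computation} yields
\[\theta(\phi^{-n}(q_n(\omega))) = \iota_n(q_n(T)) = q_n(e_n).\]
Finally, the identity $q_n(T) \cdot [\pi^{n-1}](T) = [\pi^n](T)$ evaluated at $T = e_n$ gives $q_n(e_n) \cdot e_1 = 0$, and since $e_1 \ne 0$ is a nonzerodivisor in the integral domain $\O_{K_\infty}$, we conclude $q_n(e_n) = 0$, as desired.
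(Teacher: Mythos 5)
The second half of your argument (that $\phi^{-n}(q_n(\omega))\in\ker\theta$ via proposition~\ref{prop:delta-alg-map-computation} and the identity $q_n(e_n)[\pi^{n-1}](e_n)=[\pi^n](e_n)=0$) is exactly the paper's. For the perfectoid half you also follow the paper in routing through proposition~\ref{prop:perfectoid-charactierzation}, but you distribute the difficulty differently: the paper treats (3) as immediate and spends its effort on (2), which it verifies prismatically by setting $d=\phi^{-n}(q_n(\omega))$, $\xi=\theta(\phi^{-1}(d))$ and computing $\xi^q=-\pi\,\theta(\phi^{-1}(\delta_L(d)))$ with the last factor a unit because $q_n(T)$ is distinguished. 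Your classical verification of (2) is also correct (modulo the slip that it is $L_2/L$, not $K_2/L$, that is totally ramified of degree $q(q-1)$; since $e_2\in\O_{L_2}\subseteq\O_{K_\infty}$ the conclusion $\varpi^q=\pi u$ survives), and it is arguably more concrete, though it imports facts about Lubin--Tate extensions rather than staying inside the $\delta_L$-formalism.

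The genuine gap is in your treatment of (3). Your argument produces a $q$-th root mod $\pi$ only for elements of $\O_{K_n}/\pi$ that are polynomials in $e_n$ with coefficients in $W_L(k)$, i.e.\ it implicitly assumes $W_L(k)[e_n]\twoheadrightarrow\O_{K_n}/\pi$. This holds when $K=W_L(k)[1/\pi]$ is unramified over $L$ (then $e_n$ is a uniformizer of the totally ramified extension $K_n/K$ and $\O_{K_n}=W_L(k)[e_n]$), but the lemma is stated for an arbitrary $p$-adic field $K/L$, and when $K$ carries ramification not coming from the Lubin--Tate tower one has $\O_{K_n}\supsetneq W_L(k)[e_n]$ even mod $\pi$ (e.g.\ $K=L(p^{1/2})$): a uniformizer of $K$ need not admit a $q$-th power root mod $\pi$ inside $\bigcup_m W_L(k)[e_m]$. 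To close this you need an additional input, e.g.\ that $K_\infty$ is a finite extension of the perfectoid field $(W_L(k)[1/\pi])_\infty$ (to which your argument does apply) together with almost purity, or the strictly-APF/norm-field machinery of proposition~\ref{prop:norm-field-facts}, which is how the extra ramification in $K$ is handled elsewhere in the paper.
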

\begin{proof}
	Note first that by proposition~\ref{prop:delta-alg-map-computation}, we have $\phi^{-n}(q_n(\omega))\in \ker\theta$ since
	\[\theta(\phi^{-n}(q_n(\omega))) = (\theta\circ\phi^{-n}\circ\iota)(q_n(T)) = \iota_n(q_n(T)) = \frac{[\pi^n](e_n)}{[\pi^{n - 1}](e_n)} = 0.\]
	Thus, we'll be done if we show that $(W_L(\O_{K_\infty}^\flat), \ker\theta)$ is a prism (equivalently, that $\O_{K_\infty}$ is a perfectoid $\O_L$-algebra). One way to procede would be to use a rigidity result like \cite[Lemma~3.6]{BS}. Instead, we'll use proposition~\ref{prop:perfectoid-charactierzation}; $O_{K_\infty}$ clearly satisfies conditions (1), (2), and (4'), so it sufices to show that it satifies condition (2) as well.

	Let $d = \phi^{-n}(q_n(\omega))\in \ker\theta$. Following the proof of proposition~\ref{prop:perfectoid-charactierzation}, we guess that $\xi = \theta(\phi^{-1}(d))\in \O_{K_\infty}$ satisfies $\xi^d = \pi u$ for a unit $u\in \O_{K_\infty}^\times$. Indeed, we have
	\[\xi^d = \theta(\phi^{-q}(d^q)) = \theta(d - \pi\delta_L(\phi^{-1}(d))) = \pi\theta(-\delta_L(\phi^{-1}(d))),\]
	and since $q_n(T)\in \S_K$ is distinguished, so is $\phi^{-1}(d) = \phi^{-n - 1}\iota(q_n(T))\in W_L(\O_{K_\infty}^\flat)$.
\end{proof}

\begin{rmk}
	Though we will not use this fact, we note that in this setting, there is an analytic way to construct the map $\iota:W_L(k)\ps{T}\rightarrow W_L(\O_{K_\infty}^\flat)$. Namely, following \cite[lemma~1.2]{KR}, we let $\hat\omega$ be any lift of $\overline{\omega} = (\dots,\overline{e_2},\overline{e_1},0)\in \O_{K_\infty}^\flat$, and set
	\[\omega = \lim_{n\rightarrow\infty}[\pi^n](\phi_{W_L(\O_{K_\infty}^\flat)}^{-n}(\hat\omega)).\]
	Then $\phi(\omega) = [\pi](\omega)$, so that
	\begin{align*}
		W_L(k)\ps{T}&\rightarrow W_L(\O_{K_\infty}^\flat) \\
		T&\mapsto \omega
	\end{align*}
	is a $\delta_L$-algebra map lifting the $\O_L$-algebra map $W_L(k)\ps{T}\rightarrow \O_{K_\infty}^\flat$ via $T\mapsto \overline{\omega}$, hence it coincides with $\iota$ by lemma~\ref{lem:W-delta_pi-functor}.
\end{rmk}

\subsection{Extension to \texorpdfstring{$\A_K^+$}{A\_K\^+}}\label{ssec:A_K}

The prisms $(\S_K,(q_n(\omega)))$ from \secsymb\ref{ssec:S_K} can be viewed as objects in $(W_L(k)[e_n])_{\Prism_L}$. However, there is ramification in $K/L$ outside of the ramification in $L_\infty/L$ (i.e. $\O_K\not\subseteq \bigcup_{n\ge 0}W_L(k)[e_n]$) then $(\S_K,(q_n(\omega)))$ will never be a prism over $\Spf \O_K$. In this section, we will extend $\S_K$ to a larger sub-$\delta_L$-algebra $\A_K^+$ of $\Ainf(\O_{K_\infty})$ which is a prism over $\Spf\O_K$. The key point is that the formation of $\S_K$ is insensitive to taking ramified extensions of $K$; we capture ramification coming from the tower $L_\infty/L$ by our choice of $n$ (since $\S_K/q_n(\omega)\cong W_L(k)[e_n]$), but capturing the rest of the ramification in $K/W_L(k)[1/\pi]$ requires Fontaine and Wintenberger's theory of imperfect norm fields.

Let
\[\E_K^+ = \left\{(\alpha_n)_n\in \plim_{\varphi_q} \O_{K_\infty}/e_1 = \O_{K_\infty}^\flat : \alpha_n\in \O_{K_n}/e_1 \text{ for }n\gg 0\right\}\subseteq \O_{K_\infty}^\flat,\]
so that $\overline{\omega} = (\dots, \overline{e_2}, \overline{e_1}, 0)\in \E_K^+$. We recall some facts from the theory of norm fields \cite{W}.
\begin{prop}~\label{prop:norm-field-facts}
	\begin{enumerate}
		\item[(1)] $\E_K^+$ is a complete discrete valuation ring with fraction field $\E_K:= \E_K^+[1/\overline{\omega}]\subseteq K_\infty^\flat$.
		\item[(2)] If $K/L$ is unramified, then $\E_K^+ = k\ps{\overline{\omega}}$. In general, $\E_K$ is a totally ramified extension of $\E_{W_L(k)[1/\pi]}$ of degree $[K_n:W_L(k)[e_n][1/\pi]]$ for $n$ large enough.
		\item[(3)] The completed perfection $(\varinjlim_{\varphi_q} \E_K^+)^\wedge_{(\overline{\omega})}$ of $\E_K^+$ is $\O_{K_\infty}^\flat$.
		\item[(4)] There is an equivalence of Galois categories
		\[\left\{\text{finite extensions of }\bigcup_{n\ge 1}L_n\text{ in }\overline{L}\right\}\simeq \left\{\text{finite seperable extensions of }\E_L\text{ in }K_\infty^\flat\right\}\]
		where, given a finite subextension $M/\bigcup_{n\ge 1}L_n$ of $\overline{L}$, the functor from the left to the right is given by selecting any finite $M'/L$ with $\bigcup_n M_n' = M$ and sending $M$ to $\E_{M'}$.
	\end{enumerate}
\end{prop}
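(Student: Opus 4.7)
The plan is to identify $\E_K^+$ with the ring of integers of the classical Fontaine-Wintenberger norm field $X_K(K_\infty)$ \cite{W}, after which (1)-(4) become direct translations of the main results of norm field theory. The key input is that $L_\infty/L$ is a strictly arithmetically profinite (APF) extension: the ramification of $L_n/L$ is governed by the Newton polygon of $[\pi^n](T)$, which is Eisenstein of degree $q^{n-1}(q-1)$ with root $e_n$. It follows that $K_\infty/K$ is also APF for any finite $K/L$ and that $[K_{n+1}:K_n] = q$ for $n$ sufficiently large.

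Wintenberger's construction produces a complete discretely-valued field $X_K(K_\infty)$ whose ring of integers is $\plim_n \O_{K_n}$ along the norm maps. Modulo $\pi$ (for large $n$), the norm $N_{K_{n+1}/K_n}$ reduces to the $q$-th power map, giving a canonical identification $\O_{X_K(K_\infty)} \simeq \E_K^+$; the condition ``$\alpha_n \in \O_{K_n}/e_1$ for $n \gg 0$'' in the definition of $\E_K^+$ precisely captures the image of this tower inside $\O_{K_\infty}^\flat$. With this identification in hand, (1) is immediate from Wintenberger's theorem. For (2), the unramified case reduces to the computation that $e_n$ is a uniformizer of $W_L(k)[e_n]$ (by Eisenstein-ness of $[\pi^n](T)$ over $W_L(k)$), so $\overline{\omega}$ is a uniformizer of $\O_{X_L(L_\infty)}$ with residue field $k$; the general case follows from the functoriality of norm fields under finite extensions, combined with the stabilization of $[K_n : W_L(k)[e_n][1/\pi]]$ for large $n$. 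Part (3) is the standard fact that the completed perfection of the norm field along its uniformizer equals the tilt of the perfectoid completion; since $\O_{K_\infty}$ is perfectoid by lemma~\ref{lem:K_infty-perfectoid}, its tilt is precisely $\O_{K_\infty}^\flat$.

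The main obstacle is (4), the equivalence of Galois categories, which is essentially the content of \cite[Theorem~3.2.2]{W}. Given a finite extension $M/\bigcup_n L_n$ inside $\overline{L}$, one chooses a finite $M'/L$ with $\bigcup_n M'_n = M$ (which exists since $M$ is finitely generated over $\bigcup_n L_n$), notes that $M/M'$ is again APF, and applies the norm field construction to obtain $\E_{M'}$ as a finite separable extension of $\E_L = X_L(L_\infty)$. That $\E_{M'} \subseteq K_\infty^\flat$ follows from (3) applied to $M'$: the tilt of the $\pi$-adic completion of $M$ is an algebraic extension of $K_\infty^\flat$ sitting inside the tilt of $\overline{L}$. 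Essential surjectivity and full faithfulness of the resulting functor are precisely the content of Wintenberger's theorem, and I would cite rather than reprove them.
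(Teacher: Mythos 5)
Your proposal is correct and matches the paper's treatment: the paper gives no proof of this proposition, simply recalling it from Wintenberger \cite{W}, and your sketch is exactly the standard derivation from that theory (strict APF-ness of the Lubin--Tate tower, identification of $\E_K^+$ with the ring of integers of the norm field $X_K(K_\infty)$ via the norm-equals-$q$-power congruence, density of the perfection in the tilt for (3), and Wintenberger's equivalence of Galois categories for (4)). One small slip worth fixing: the Eisenstein minimal polynomial of $e_n$ over $L$ of degree $q^{n-1}(q-1)$ is $q_n(T) = [\pi^n](T)/[\pi^{n-1}](T)$, not $[\pi^n](T)$ itself, which has degree $q^n$ and vanishes at $T=0$.
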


We would like to form Cohen rings $\A_K$ for the fields $\E_K$ in characteristic $p$. For $K = W_L(k)[1/\pi]$ unramified over $L$, we write
\[\A_K = \S_K[1/\omega]^\wedge_{(\pi)}\subseteq W_L(K_{\infty}^\flat)\]
for the $\pi$-adic completion of $\S_K[1/\omega]\cong W_L(k)\ps{T}[1/T]$. Then $\A_K$ is a complete discrete valuation ring in characteristic zero with uniformizer $\pi$, and by proposition~\ref{prop:norm-field-facts} $\A_K$ has residue field $\E_K$. When $K/L$ is possibly ramified, Hensel's lemma allows us to lift the extension $\E_K$ of $\E_{W_L(k)[1/\pi]}\cong k\ls{T}$ to an unramified extension $\A_K$ of $\A_{W_L(k)[1/\pi]}\cong W_L(k)\ps{T}[1/T]^\wedge$ inside of $W_L(K_\infty^\flat)$.
\begin{center}
	\begin{tikzcd}
		W_L(k)\ps{T}[1/T]^\wedge\arrow[r,"\iota","\sim"']\ar[twoheadrightarrow]{d} &\A_{W_L(k)[1/\pi]}\ar[hook]{r}\ar[twoheadrightarrow]{d} &\A_K\ar[hook]{r}\ar[twoheadrightarrow]{d}&W_L(K_\infty^\flat)\ar[twoheadrightarrow]{d} \\
		k\ls{T}\ar[r,"\overline{\iota}", "\sim"'] &\E_{W_L(k)[1/\pi]}\ar[hook]{r} & \E_K\ar[hook]{r} & K_\infty^\flat
	\end{tikzcd}
\end{center}
By construction, $\A_K$ is stable under $\phi_{W_L(K_\infty^\flat)}$ (since $\phi(a)\mod{\pi} = \overline{a}^q\in \E_K$ for any $a\in \A_K$). Thus we set
\[\A_K^+ = \A_K\cap W_L(\O_{K_\infty}^\flat),\]
which is $\phi$-stable as well. Since $\A_K^+$ is $\pi$-torsionfree, this gives it a $\delta_L$-algebra structure. Note that when $K/L$ is unramified, we have $\A_K^+ = \S_K$.

\begin{rmk}
	Instead of forming $\A_K$ by lifting the extension $\E_K/E_{W_L(k)[1/\pi]}$ over $\A_{W_L(k)[1/\pi]}$, we could have instead lifted the extension $\E_K/\E_L$ over $\A_L$. This would have amounted to the same thing. We also note that the $\phi$-action on $\A_K$ above clearly coincides with the one induced by lifting $\varphi_q:\E_K\rightarrow \E_K$ via Hensel's lemma (and using that $\A_{W_L(k)[1/\pi]}$ is $\phi$-stable by construction).
\end{rmk}
\begin{rmk}
	Note that $\A_K = \A_K^+[1/q_n(\omega)]^\wedge_{(\pi)}$ since $q_n(\omega)\equiv \omega^{q^{n - 1}(q - 1)}\pmod{\pi}$, so that after $\pi$-adically completing, inverting $\omega$ has the same effect as inverting $q_n(\omega)$. 
\end{rmk}

\begin{lem}~\label{lem:prism-maps-flat}
\begin{enumerate}
	\item[(1)] If $A\rightarrow B$ is a map of $\pi$-adically complete $\pi$-torsion free rings with $A$ noetherian and $A/\pi\rightarrow B/\pi$ is flat, then $A\rightarrow B$ is flat as well.
	\item[(2)] The maps
	\[\S_K\hookrightarrow \A_K^+,\quad\quad \A_K^+\hookrightarrow \Ainf(\O_{K_\infty}),\quad\text{and}\quad\quad \phi:\A_K^+\rightarrow \A_K^+\]
	are all faithfully flat.
	\item[(3)] $\S_K/q_n(\omega)$ and $\A_K^+/q_n(\omega)$ are $\pi$-torsion free. $(\S_K,(q_n(\omega)))$ and $(\A_K^+, (q_n(\omega)))$ are bounded.
\end{enumerate}
\end{lem}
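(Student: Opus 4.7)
The plan is to treat (1) as a standard flatness criterion, reduce (2) to mod-$\pi$ assertions via (1), and then bootstrap (3) from (2).

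For part (1), since $B$ is $\pi$-torsion free, the Koszul resolution $[A \xrightarrow{\pi} A] \simeq A/\pi$ (valid because $\pi$ is a nonzerodivisor on $A$) tensored with $B$ remains exact, so $\mathrm{Tor}_1^A(A/\pi, B) = 0$. Combined with flatness of $A/\pi \to B/\pi$ and the hypotheses that $A$ is noetherian and $B$ is $\pi$-adically complete (hence derived $\pi$-complete), the local criterion of flatness (e.g.\ \cite[\href{https://stacks.math.columbia.edu/tag/0523}{Tag 0523}]{stacks-project}) shows that $B$ is flat over $A$.

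For part (2), I would first verify that $\A_K^+$ is noetherian and $\pi$-adically complete by showing it is a finite free $\S_K$-algebra: by proposition~\ref{prop:norm-field-facts}(2), $\E_K^+$ is a totally ramified finite DVR extension of $k\ps{\overline{\omega}}$, and applying Hensel's lemma in $W_L(\O_{K_\infty}^\flat)$ lifts an Eisenstein polynomial for a uniformizer of $\E_K^+$ to a polynomial over $\S_K$ whose root generates $\A_K^+$ as such an algebra. Applying part (1) then reduces each flatness claim to its mod-$\pi$ analogue; the reductions of the three source/target rings are $k\ps{\overline{\omega}}$, $\E_K^+$, and $\O_{K_\infty}^\flat$, with $\phi$ reducing to the $q$-Frobenius $\varphi_q$. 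Then $k\ps{\overline{\omega}} \hookrightarrow \E_K^+$ is a finite (hence faithfully flat) extension of complete DVRs by proposition~\ref{prop:norm-field-facts}(2); $\E_K^+ \hookrightarrow \O_{K_\infty}^\flat$ is flat since $\O_{K_\infty}^\flat$ is torsion-free over the DVR $\E_K^+$ (both sitting in the field $K_\infty^\flat$), and faithfully flat by locality; and $\varphi_q$ on $\E_K^+$ is flat by Kunz's theorem applied to this regular characteristic-$p$ DVR, and faithfully flat by locality. Faithful flatness of the original maps then follows because all maximal ideals of the $\pi$-adically complete local rings $\S_K$ and $\A_K^+$ contain $\pi$.

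For part (3), the congruence $q_n(T) \equiv T^{q^n - q^{n-1}} \pmod{\pi}$ makes $q_n(T)$ distinguished, so Weierstrass preparation gives $q_n(T) = u(T) P(T)$ in $W_L(k)\ps{T}$ with $u$ a unit and $P$ a distinguished polynomial of degree $q^{n-1}(q-1) = [L_n : L]$. Since $e_n$ is a root of $q_n$ (hence of $P$) and $[L(e_n):L] = \deg P$, the division algorithm gives $\S_K/q_n(\omega) \cong W_L(k)[T]/P(T) \cong W_L(k)[e_n]$, which is a DVR in characteristic zero and in particular $\pi$-torsion free. For $\A_K^+$, flatness of $\S_K \to \A_K^+$ from part (2) lets us tensor the injection $\S_K/q_n(\omega) \xrightarrow{\pi} \S_K/q_n(\omega)$ with $\A_K^+$ to conclude $\A_K^+/q_n(\omega)$ is also $\pi$-torsion free, whereupon boundedness is immediate. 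The principal technical obstacle throughout is the structural claim that $\A_K^+$ is finite free over $\S_K$ with $\A_K^+/\pi = \E_K^+$: this requires carefully unpacking the Hensel-lifting construction of $\A_K$ preceding the lemma to produce an explicit integral generator, and checking that $\A_K \cap W_L(\O_{K_\infty}^\flat)$ coincides with the resulting finite free $\S_K$-subalgebra.
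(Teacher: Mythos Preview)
Your argument is correct and follows the same broad outline as the paper: a standard flatness-lifting criterion for (1), reduction to mod-$\pi$ statements for (2), and the identification $\S_K/q_n(\omega)\cong W_L(k)[e_n]$ plus flatness of $\S_K\hookrightarrow\A_K^+$ for (3). Two small comparisons are worth making. First, in (2) the paper dispenses with Kunz's theorem and the finite-DVR-extension argument in favor of the single observation that any injective map from a DVR to an integral domain is flat; this handles all three mod-$\pi$ maps uniformly and faithful flatness is then checked by noting that $\overline{\omega}$ is a nonunit in each target. Second, and more substantively, you correctly flag that applying (1) to the maps with source $\A_K^+$ requires knowing $\A_K^+$ is noetherian and that $\A_K^+/\pi=\E_K^+$, and you supply this via the Hensel-lifting argument showing $\A_K^+$ is finite free over $\S_K$; the paper simply asserts the mod-$\pi$ reductions without further comment, so your version is more complete on this point. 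Your sketch of that structural claim is sound: lift an Eisenstein generator $\overline{\omega}_K$ of $\E_K^+$ to a root $\omega_K$ of a monic lift $P\in\S_K[X]$ simultaneously in $\A_K$ and in $W_L(\O_{K_\infty}^\flat)$ via Hensel, conclude $\omega_K\in\A_K^+$, and then a $\pi$-adic Nakayama argument gives $\A_K^+=\S_K[\omega_K]\cong\S_K[X]/(P)$.
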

\begin{proof}
	Part (1) is \cite[remark~4.31]{BMS} with $p$ replaced by $\pi$; the proof remains the same. The flatness in (2) follows from (1) since the mod $\pi$ reductions of the given maps are
	\[k\ps{T}\hookrightarrow \E_K^+,\quad\quad \E_K^+\hookrightarrow \O_{K_\infty}^\flat,\quad\text{and}\quad\quad \varphi_q:\E_K^+\rightarrow \E_K^+\]
	which are injective maps from discrete valuation rings to integral domains, hence flat. Faithful flatness follows since $\overline{\omega}$ is not a unit in $\E_K^+$ or $\O_{K_\infty}^\flat$. For (3), we have that $\S_K/q_n(\omega)\cong W_L(k)\ps{T}/q_n(T)\cong W_L(k)[e_n]$ is an integral domain hence $\pi$-torsion free. By part (2) we have that $\S_K/q_n(\omega)\rightarrow \A_K^+/q_n(\omega)$ is flat, so $\A_K^+/q_n(\omega)$ is $\pi$-torsion free as well.
\end{proof}

It follows immediately from lemma~\ref{lem:S_K-L-typical-prism} that $(\A_K^+,(q_n(\omega)))$ is a $L$-typical prism for every $n\ge 1$. Moreover, just as $\E_K^+$ can be viewed as a deperfection of $\O_{K_\infty}^\flat$, we have that the prism $(\A_K^+, (q_n(\omega)))$ can be viewed as a deperfection of the perfect prism $(\Ainf(\O_{K_\infty}), \ker\theta)$. 
\begin{prop}\label{prop:A_K^+-perfection}
	Let $(A_{\perf}, IA_{\perf})$ be the perfection of $(\A_K^+, (q_n(\omega)))$ as in proposition~\ref{prop:prism-perfection}. Then $A_{\perf}\cong \Ainf(\O_{K_\infty})$. The natural map $\A_K^+\rightarrow A_{\perf}\cong \Ainf(\O_{K_\infty})$ is the usual inclusion, i.e. the map on the left in the following commutative diagram.
	\begin{center}
	\begin{tikzcd}
		(\Ainf(\O_{K_\infty}),(q_n(\omega)))\arrow[rr, "\phi^{-n}", "\sim"'] && (\Ainf(\O_{K_\infty}),\ker\theta) \\
		&(\A_K^+,(q_n(\omega)))\ar{ul}{\subseteq}\arrow[ur,"\phi^{-n}"']
	\end{tikzcd}
	\end{center}
\end{prop}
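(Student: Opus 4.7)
The plan is to verify that $(\Ainf(\O_{K_\infty}), (q_n(\omega)))$ is itself a perfect $L$-typical prism receiving the inclusion $\A_K^+\hookrightarrow \Ainf(\O_{K_\infty})$ as a morphism of $L$-typical prisms, then invoke the universal property of proposition~\ref{prop:prism-perfection}(2) to produce a comparison map $\mu: A_{\perf} \to \Ainf(\O_{K_\infty})$, and finally show $\mu$ is an isomorphism by reducing modulo $\pi$ and applying proposition~\ref{prop:norm-field-facts}(3).

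To check that $(\Ainf(\O_{K_\infty}), (q_n(\omega)))$ is a perfect prism, note that $\Ainf(\O_{K_\infty})$ is a perfect $\delta_L$-algebra by proposition~\ref{prop:perfect-d-algs-equiv}. Since $\phi$ is an automorphism commuting with $\delta_L$, it preserves distinguishedness, so $q_n(\omega) = \phi^n(\phi^{-n}(q_n(\omega)))$ is distinguished because $\phi^{-n}(q_n(\omega))$ generates $\ker\theta$ and is distinguished by lemma~\ref{lem:K_infty-perfectoid}. Mod $\pi$ we have $\overline{q_n(\omega)} = \overline{\omega}^{q^n-q^{n-1}}$, and $\O_{K_\infty}^\flat$ is $\overline\omega$-adically complete (by lemma~\ref{lem:theta-surj-Ainf-complete}(2) applied to any distinguished generator of $\ker\theta$, then transported via $\phi$), so combined with $\pi$-adic completeness we get that $\Ainf(\O_{K_\infty})$ is $(\pi, q_n(\omega))$-adically complete. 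The inclusion $\A_K^+ \hookrightarrow \Ainf(\O_{K_\infty})$ sends $q_n(\omega)$ to $q_n(\omega)$ tautologically, so it is a morphism of $L$-typical prisms.

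Both $A_{\perf}$ and $\Ainf(\O_{K_\infty})$ are $\pi$-adically complete, $\pi$-torsion-free perfect $\delta_L$-algebras (using lemma~\ref{lem:perfect-implies-torsionfree}), so by proposition~\ref{prop:perfect-d-algs-equiv} it suffices to check that $\mu$ is an isomorphism modulo $\pi$. Since $\A_K^+$ is $\pi$-torsion-free (lemma~\ref{lem:prism-maps-flat}(3)), so is $\varinjlim_\phi \A_K^+$; hence the classical $(\pi, q_n(\omega))$-adic completion defining $A_{\perf}$ agrees with its derived version (proposition~\ref{prop:prism-perfection}(1)), and reducing mod $\pi$ identifies $A_{\perf}/\pi$ with the $\overline{q_n(\omega)}$-adic --- equivalently $\overline\omega$-adic --- completion of $\varinjlim_{\varphi_q} \E_K^+$. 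By proposition~\ref{prop:norm-field-facts}(3), this is $\O_{K_\infty}^\flat = \Ainf(\O_{K_\infty})/\pi$, and $\mu \bmod \pi$ is visibly the natural inclusion, proving the isomorphism. The commutativity of the diagram is then automatic from the construction: $\mu$ factors the inclusion $\A_K^+ \hookrightarrow \Ainf(\O_{K_\infty})$ by the universal property, while the top arrow $\phi^{-n}$ is the isomorphism of prisms already produced in lemma~\ref{lem:K_infty-perfectoid}. The main step that wants care is the bookkeeping around how the classical $(\pi, q_n(\omega))$-adic completion interacts with the quotient by $\pi$ and the direct limit, which is where $\pi$-torsion-freeness and the equality of classical with derived completions come into play.
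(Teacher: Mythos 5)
Your proposal is correct and follows essentially the same route as the paper: reduce via proposition~\ref{prop:perfect-d-algs-equiv} to an identification mod $\pi$, where $q_n(\omega)\equiv \omega^{q^n-q^{n-1}}$ and proposition~\ref{prop:norm-field-facts}(3) gives $(\varinjlim_{\varphi_q}\E_K^+)^\wedge_{(\overline\omega)}=\O_{K_\infty}^\flat$. Your write-up is somewhat more careful than the paper's (which simply asserts the chain of isomorphisms) in that you explicitly verify $(\Ainf(\O_{K_\infty}),(q_n(\omega)))$ is a perfect prism and produce the comparison map from the universal property, but this is elaboration rather than a different argument.
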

\begin{proof}
	By proposition~\ref{prop:perfect-d-algs-equiv}, it suffices to show that $A_{\perf}/\pi\cong \O_{K_\infty}^\flat$. Indeed, we have
	\[A_{\perf}/\pi = (\varinjlim_\phi \A_K^+)^\wedge_{(q_n(\omega))}/\pi\cong (\varinjlim_{\varphi_q} \E_K^+)^\wedge_{(\overline{\omega})} = \O_{K_\infty}^\flat\]
	since $q_n(\omega)\equiv \omega^{q^n - q^{n - 1}}\pmod{\pi}$ and modding out by $\pi$ commutes with the colimit and $(\pi,q_n(\omega))$-adic completion.
\end{proof}

\begin{cor}\label{cor:A_K-O_K}
	For $n\gg 0$ we have structure maps $\O_K\rightarrow \A_K^+/q_n(\omega)$ such that the maps
	\[(\A_K^+, (q_n(\omega)))\stackrel{\phi^{-n}}{\longrightarrow} (\Ainf(\O_{K_\infty}), \ker\theta)\]
	are morphisms in $(\O_K)_{\Prism_{\pi}}$.
\end{cor}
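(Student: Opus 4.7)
The plan is to identify $\A_K^+/q_n(\omega)$ with $\O_{K_n}$ for $n$ sufficiently large, at which point the structure map $\O_K\rightarrow \A_K^+/q_n(\omega)$ is the composition of $\O_K\hookrightarrow \O_{K_n}$ with the inverse of this identification, and compatibility with $\phi^{-n}$ is automatic from the construction.

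First, I would observe that $\phi^{-n}(q_n(\omega))$ is distinguished in $\Ainf(\O_{K_\infty})$ by functoriality of $\delta_L$, and lies in $\ker\theta$ by the computation in the proof of lemma~\ref{lem:K_infty-perfectoid}. Since $\ker\theta$ is principal with distinguished generator by proposition~\ref{prop:prism-facts}(1), and since any distinguished element contained in an ideal generated by another distinguished element must itself generate that ideal (by the standard $\delta_L$-argument as in lemma~\ref{lem:dist-characterization}), it follows that $\phi^{-n}(q_n(\omega))$ generates $\ker\theta$. Hence $\phi^{-n}$ descends to a ring homomorphism
\[
\alpha_n\colon \A_K^+/q_n(\omega) \longrightarrow \Ainf(\O_{K_\infty})/\ker\theta = \O_{K_\infty}.
\]

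Next, I would show that for $n\gg 0$, $\alpha_n$ factors as an isomorphism $\A_K^+/q_n(\omega)\stackrel{\sim}{\longrightarrow}\O_{K_n}$ followed by the inclusion $\O_{K_n}\hookrightarrow \O_{K_\infty}$. Reducing modulo $\pi$ and using that $\theta(x)\bmod \pi$ equals the $0$-th component of the image of $x$ in $\O_{K_\infty}^\flat$, the induced map $\overline{\alpha}_n\colon \E_K^+/\overline{\omega}^{q^n - q^{n-1}}\rightarrow \O_{K_\infty}/\pi$ sends a sequence $f = (f_m)_m \in \E_K^+\subseteq \plim_{\varphi_q}\O_{K_\infty}/\pi$ to $f_n$. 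By proposition~\ref{prop:norm-field-facts}, every such $f_n$ lies in $\O_{K_n}/\pi$ for $n\gg 0$. Combining this with the $\pi$-adic completeness of $\A_K^+/q_n(\omega)$ and the fact that $\O_{K_n}$, being a finite $\O_L$-module, is closed in $\O_{K_\infty}$, one concludes that $\alpha_n$ has image in $\O_{K_n}$. To upgrade this to an isomorphism, set $K_0 := W_L(k)[1/\pi]$, so that $\S_K = \S_{K_0}$ and $\S_K/q_n(\omega)\cong \O_{(K_0)_n}$. Both $\A_K^+/q_n(\omega)$ and $\O_{K_n}$ are then finite free of rank $r := [K:K_0]$ over $\O_{(K_0)_n}$: for the source, using faithful flatness of $\S_{K_0}\hookrightarrow \A_K^+$ from lemma~\ref{lem:prism-maps-flat}(2) combined with the rank computation of proposition~\ref{prop:norm-field-facts}(2); for the target, using linear disjointness of $K$ and $(K_0)_n$ over $K_0$ for $n\gg 0$. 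A Nakayama-style dimension count over $k$ then reduces the isomorphism claim to injectivity of $\overline{\alpha}_n$, which I would verify by matching the valuation on the complete DVR $\E_K^+$ (in which $\overline{\omega}$ has valuation $r$) against the ramification of $K_n/K_0$.

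The main obstacle is injectivity of $\overline{\alpha}_n$ in the final step, where one must carefully track how the ramification of $K/K_0$ is encoded in the extension $\E_K^+/\E_{K_0}^+$; here I expect to leverage the characterization of $\A_K^+$ as the unique $\pi$-adic, unramified lift of $\E_K^+$ inside $\Ainf(\O_{K_\infty})$, reducing the injectivity check to a concrete statement about valuations in the norm field $\E_K$.
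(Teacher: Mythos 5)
Your overall strategy --- descend $\phi^{-n}$ to a ring map $\alpha_n\colon \A_K^+/q_n(\omega)\to\O_{K_\infty}$ and identify its image with $\O_{K_n}$ for $n\gg 0$ --- is essentially that of the paper's second proof, and your first step is fine: $\phi^{-n}(q_n(\omega))$ is a distinguished element of $\ker\theta$ and hence generates it, so $\alpha_n$ exists. The gap is in the passage from the mod-$\pi$ statement to the integral one. Knowing that $\overline{\alpha}_n$ has image in $\O_{K_n}/\pi\subseteq\O_{K_\infty}/\pi$ only tells you that $\alpha_n(\A_K^+/q_n(\omega))\subseteq\O_{K_n}+\pi\O_{K_\infty}$, and neither the $\pi$-adic completeness of the source nor the closedness of $\O_{K_n}$ in $\O_{K_\infty}$ upgrades this to $\alpha_n(\A_K^+/q_n(\omega))\subseteq\O_{K_n}$. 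For a counterexample to the bare inference, the map $\O_L\ps{u}\to\O_{K_\infty}$ sending $u\mapsto\pi z$ for an arbitrary $z\in\O_{K_\infty}$ has $\pi$-adically complete source and image contained in $\O_L+\pi\O_{K_\infty}$, yet its image need not lie in any $\O_{K_n}$. The successive approximation one would like to run fails because in $x=y_0+\pi z_0$ the error $z_0$ need not again lie in the image of $\alpha_n$, so closedness of $\O_{K_n}$ never gets used.

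The missing ingredient is algebraicity. Since $\E_K^+$ is finite over $k\ps{\overline\omega}$, completeness gives that $\A_K^+$ is finite over $\S_K$, so every element of the image of $\alpha_n$ is integral over $W_L(k)[e_n]$; one then needs the Krasner-type fact that the Henselian ring $\bigcup_m\O_{K_m}$ admits no integral extensions inside its completion $\O_{K_\infty}$, which places the image inside $\bigcup_m\O_{K_m}$, after which your rank count over $\S_K/q_n(\omega)\cong W_L(k)[e_n]$ (or a tower argument) pins it down to $\O_{K_n}$ for $n\gg 0$. Both of the paper's proofs lean on exactly this input: the first invokes Krasner's lemma for the Henselian ring $\bigcup_n\O_{K_n}$ to show $\varinjlim_\phi\A_K^+/q_n(\omega)\cong\bigcup_n\O_{K_n}$ \emph{before} completing, and the second uses Lang's refinement of Hensel's lemma to produce genuinely algebraic elements $\pi_n\in\O_{\overline K}$ with $\A_K^+/q_n(\omega)=\O_F[e_n][\pi_n]$. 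Without some such algebraicity step your argument does not close; with it, the remaining pieces of your outline (the dimension count and the injectivity of $\overline{\alpha}_n$ via valuations in $\E_K$) are reasonable.
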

We will give two proofs, the first an abstract argument following \cite[prop~2.19]{Wu} and the second a more concrete argument involving norm fields.
\begin{proof}[Proof 1]
	By proposition~\ref{prop:A_K^+-perfection} we have an isomorphism
	\[\left(\varinjlim_\phi \A_K^+\right)^\wedge_{(\pi, q_1(\omega))}/q_1(\omega)\stackrel{\phi^{-1}}{\longrightarrow}\Ainf(\O_{K_\infty})/\ker\theta = \O_{K_\infty}.\]
	This isomorphism can be rewritten as
	\[\left(\varinjlim_\phi \A_K^+/q_n(\omega)\right)^\wedge_{(p)}\stackrel{\sim}{\longrightarrow} \left(\bigcup_{n\ge 1} \O_{K_n}\right)^\wedge_{(p)}.\]
	Using that $\varinjlim \A_K^+/q_n(\omega)$ and $\bigcup \O_{K_n}$ are integral over $W_L(k)$ and that there are no integral extensions between $\bigcup \O_{K_n}$ and its completion $\O_{K_\infty}$ (by Krasner's lemma applied to the Henselian ring $\bigcup \O_{K_n}$), we conclude that there is a short exact sequence
	\[0\longrightarrow \varinjlim_\phi \A_K^+/q_n(\omega)\longrightarrow \bigcup \O_{K_n}\longrightarrow M\longrightarrow 0\]
	with $M$ $\pi$-torsion and $M_{(\pi)}^\wedge = 0$. Moreover, since $\varinjlim \A_K^+/q_n(\omega)$ contains the subring $\varinjlim \S_K/q_n(\omega)\cong \bigcup W_L(k)[e_n]$ over which $\bigcup \O_{K_n}$ is finite, we have that $M$ is $\pi$-adically complete, so that $M = M^\wedge = 0$.

	Moreover, since $\S_K/q_n(\omega)\stackrel{\phi}{\rightarrow}\S_K/q_{n + 1}(\omega)$ identifies with the inclusion
	\[W_L(k)[e_n]\hookrightarrow W_L(k)[e_{n + 1}]\]
	and $\S_K\hookrightarrow \A_K^+$ is flat by lemma~\ref{lem:prism-maps-flat}, the transition maps in the direct limit are injective as well. All together, this gives
	\[\bigcup_{n\ge 1} \A_K^+/q_n(\omega)\cong \bigcup_{n\ge 1}\O_{K_n}\supseteq \O_K.\]
	As $\O_K$ is finite over $W_L(k)$ and the left-hand side is an increasing union of $W_L(k)$-modules, we get maps $\O_K\rightarrow \A_K^+/q_n(\omega)$ for $n\gg 0$. These maps commute with $\phi^{-n}:\A_K^+\rightarrow \Ainf(\O_{K_\infty})$ by construction.
\end{proof}
\begin{proof}[Proof 2.]
	To simplify notation, set $F = W_L(k)[1/\pi]$. Let 
	\[\overline{\omega}_K = (\overline{\pi}_n)_n\in \plim_{\varphi_q} \O_{K_\infty}/e_1 = \O_{K_\infty}^\flat\] 
	be a uniformizer of $\E_K^+$, so that $\overline{\pi}_n\in \O_{K_n}/e_1$ for $n\gg 0$ and $\E_K^+ = k\ps{\overline{\omega}_K}$. Let $P(W,T)\in k\ps{W}[T]$ so that $P(\overline{\omega},T)\in k\ps{\overline{\omega}}[T] = \E_F^+[T]$ is the minimal polynomial of $\overline{\omega}_K$ over $\E_F$. As explained above, $\E_K/\E_F$ is a totally ramified extension of degree $d = [K_\infty:F_\infty]$, so $P(\overline{\omega}, T)$ is a degree $d$ Eisenstein polynomial. Since $\overline{\omega} = (\overline{e_n})_n$, it follows that $P(\overline{e_n}, \overline{\pi_n}) = 0\in \O_{K_n}/e_1$ for $n\gg 0$.

	Let $\hat P(W,T)\in \O_F\ps{W}[T]$ be a lift of $P$. Using an argument involving Lang's refinement of Hensel's lemma, it is shown in \cite[3.2.5]{W} (or see also \cite[pf of prop~13.4.4]{brinnonconrad}) that $P(e_n,T)$ has $d$ distinct roots $\{\pi_{n,1},\dots, \pi_{n,d}\}$ in $\O_{\overline{K}}$; one of these roots, call it $\pi_n$, is a lift of $\overline{\pi}_n$. Moreover, using that the roots of $P(\overline{\omega},T)$ in $\O_{K_\infty}^\flat$ are distinct, one can show that $\pi_{n,1},\dots, \pi_{n,d}$ are distinct mod $e_1$ for $n\gg 0$. On the other hand, since $\pi_{n + 1}^q\equiv \pi_n\pmod{e_1}$ for $n\gg 0$, Krasner's lemma shows that for $n\gg 0$ we have $\pi_n\in F_n(\pi_{n + 1})$. 

	Now, set $K_n' = F_n(\pi_n)$. For some $N\gg 0$ and all $n\ge N$, we have that $K_n'\subseteq K_{n + 1}'$. This gives us the following diagram of field extensions, with degrees indicated.
	\begin{center}
		\begin{tikzcd}
				& K_{n + 1}' \\
			F_{n + 1}\ar[dash]{ur}{d} \\ \\
				& K_n'\ar[dash]{uuu}{} \\
			F_n\ar[dash]{uuu}{q}\ar[dash]{ur}{d}
		\end{tikzcd}
	\end{center}
	This implies that $K_{n + 1}' = K_n'F_{n + 1}$, and thus that $K_n' = K_N'F_n$ for all $n\ge N$. We also have that $\E_{K_N'} = \E_K$, since they are both degree $d = [K_\infty:F_\infty] = [K_{N,\infty}':F_\infty]$ extensions of $\E_F$ and $\E_K = k((\overline{\omega}_K))\subseteq \E_{K'}$ by construction. Thus by proposition~\ref{prop:norm-field-facts}(4), we get that $\bigcup_n K_{N,n}' = \bigcup_n K_n$, so that for $n\gg 0$, we have $K\subseteq K_{N,n}'$. Hence for $n\gg 0$,
	\[\O_K\subseteq \O_{K_{N,n}'} = \O_F[e_n][\pi_n] = \O_F\ps{\omega}[T]/(q_n(\omega),\hat P(\omega, T)) = \A_K/(q_n(\omega))\]
	as desired.

	Using the formula $\theta(\phi^{-n}(\omega)) = e_n$ (which follows from proposition~\ref{prop:delta-alg-map-computation}), we can trace the inclusion above through the map $\phi^{-n}:\A_K/(q_n(\omega))\rightarrow \Ainf(\O_{K_\infty})/\ker\theta$ to find that it coincides with $\O_K\subseteq \O_{K_\infty}\cong \Ainf(\O_{K_\infty})/\ker\theta$. 
\end{proof}

\subsection{\texorpdfstring{$\Gamma_K$}{Gamma\_K}-actions and \'etale \texorpdfstring{$(\varphi_q, \Gamma)$}{(phi, Gamma)}-modules}\label{ssec:phi_q-Gamma-modules}

In this section we summarize the main results in the theory of Lubin-Tate $(\varphi_q,\Gamma)$-modules. These results will be recovered as special cases of the results in \secsymb\ref{sec:F-crystals}.

\begin{defn}
	\textit{A $\varphi_q$-module over $\A_K$} is a finite flat $\A_K$ module $M$ equipped with a $\phi_{\A_K}$-semilinear endomorphism $\phi_M:M\rightarrow M$. It is called \textit{\'etale} if the $\A_K$-linear map
	\begin{align*}
		\phi^*M := \A_K\otimes_{\phi, \A_K}M&\longrightarrow M \\
		a\otimes m&\mapsto a\phi_M(m)
	\end{align*}
	is an isomorphism. When equipped with $\A_K$-module maps that commute with the $\phi_M$'s, these form categories $\Mod_{\A_K}^{\varphi_q}$ and $\Mod_{\A_K}^{\varphi_q,et}$. We similarly define $\varphi_q$-modules over $W_L(K_\infty^\flat)$ and the categories $\Mod_{W_L(K_\infty^\flat)}^{\varphi_q}$ and $\Mod_{W_L(K_\infty^\flat)}^{\varphi_q,et}$.
\end{defn}
By a result of Kisin-Ren \cite{KR} (and Fontaine \cite{F} in the cyclotomic case), we have that \'etale $\varphi_q$-modules are equivalent to the category $\Rep_{\O_L}(G_{K_\infty})$ of continuous finite free $G_{K_\infty} = \Gal(\overline{K}/K_\infty)$-representations over $\O_L$. In more detail, proposition~\ref{prop:norm-field-facts}(4) implies that $\E := \bigcup_{K/L} \E_K$ is the seperable closure of $\E_L$, and that $\Gal(\E/\E_K) = G_{K_\infty}$. It follows that $\A := \left(\bigcup_{K/L}\A_K\right)^\wedge$ is the completion of the maximal unramified extension of $\A_L$; $\A$ thus inherits a $\Gal(\E/\E_L) = G_{L_\infty}$-action with $\A^{G_{K_\infty}} = \A_K$. Moreover, $\A\subseteq W_L(K_\infty^\flat)$ has a $\phi$-action. The key theorem is as follows.
\begin{thm}(cf. \cite[Theorem~1.6]{KR})~\label{thm:kisin-ren-equiv}
	The functors
	\begin{center}
	\begin{tikzcd}
		\Mod_{\A_K}^{\varphi_q,et}\ar[bend left]{rr}{M\mapsto (M\otimes_{\A_K}\A)^{\phi = 1}} &&\Rep_{\O_L}(G_{K_\infty})\ar[bend left]{ll}{(T\otimes_{\O_L}\A)^{G_{K_\infty}}\mapsfrom T}
	\end{tikzcd}
	\end{center}
	form an equivalence of exact tensor categories.
\end{thm}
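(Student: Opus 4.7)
The plan is to follow Fontaine's classical strategy for étale $\varphi$-modules, adapted to the Lubin-Tate setting with $\O_L$-coefficients, via reduction to the perfect case. Write $T$ for the functor in the statement, and let
\[D(V) = \bigl(V \otimes_{\O_L} \A\bigr)^{G_{K_\infty}}\]
be the proposed quasi-inverse, equipped with the $\A_K$-module structure coming from $\A^{G_{K_\infty}} = \A_K$ and with $\phi_{D(V)}$ induced from the $\phi$-action on $\A$. Provided one establishes the natural comparison isomorphisms
\begin{align*}
T(M) \otimes_{\O_L} \A &\stackrel{\sim}{\longrightarrow} M \otimes_{\A_K} \A, \\
D(V) \otimes_{\A_K} \A &\stackrel{\sim}{\longrightarrow} V \otimes_{\O_L} \A,
\end{align*}
together with the expected finite $\O_L$-freeness of $T(M)$ and the étaleness and freeness of $D(V)$ of the correct rank, exactness and the tensor structure follow formally, and $T$, $D$ are mutually quasi-inverse.

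The cleanest route to these comparisons is to reduce to the perfect case. By Proposition~\ref{prop:A_K^+-perfection}, the perfection of the prism $(\A_K^+, (q_n(\omega)))$ is $(\Ainf(\O_{K_\infty}), \ker\theta)$, whose localization at $\ker\theta$ is $W_L(K_\infty^\flat)$; so Theorem~\ref{thm:intro-version-perfection-base-change} yields a base-change equivalence $\Mod_{\A_K}^{\varphi_q,et} \simeq \Mod_{W_L(K_\infty^\flat)}^{\varphi_q,et}$. On the perfect side, $K_\infty^\flat$ is a perfect characteristic $p$ field whose separable closure, via tilting combined with the Fontaine--Wintenberger equivalence of Proposition~\ref{prop:norm-field-facts}(4), has absolute Galois group $G_{K_\infty}$. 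The classical Katz--Fontaine equivalence over $W(K_\infty^\flat)$, tensored up to $\O_L$ using Corollary~\ref{cor:W_L-perfect-alg}, then identifies étale $\varphi_q$-modules over $W_L(K_\infty^\flat)$ with continuous $\O_L$-linear representations of $G_{K_\infty}$ via $N \mapsto \bigl(N \otimes_{W_L(K_\infty^\flat)} W_L((K_\infty^\flat)^{\mathrm{sep}})^\wedge\bigr)^{\phi=1}$.

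The main obstacle is reconciling the two formulations: the composite equivalence produced above is most naturally stated using the ``large'' period ring $W_L((K_\infty^\flat)^{\mathrm{sep}})^\wedge$, whereas the theorem uses the imperfect Cohen ring $\A$. To bridge this, I would verify that for any $M$ étale over $\A_K$, the inclusion $\A \hookrightarrow W_L((K_\infty^\flat)^{\mathrm{sep}})^\wedge$ induces an isomorphism on $\phi=1$-fixed vectors of $M \otimes_{\A_K} (-)$. Modulo $\pi$ this reduces to the assertion that any $\varphi_q$-fixed vector in $(M/\pi) \otimes_{\E_K} K_\infty^{\flat,\mathrm{sep}}$ already lies in $(M/\pi) \otimes_{\E_K} \E$, which is Galois descent for étale $\varphi_q$-sheaves on $\Spec \E_K$ combined with the fact that $\E = \bigcup_K \E_K$ is the separable closure of $\E_L$ (Proposition~\ref{prop:norm-field-facts}(4)). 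The integral statement is then extracted by Hensel's lemma applied to the defining equations of a $\phi$-fixed basis, using that $\A \subseteq W_L((K_\infty^\flat)^{\mathrm{sep}})^\wedge$ is $\pi$-adically faithfully flat with the prescribed residue field extension.
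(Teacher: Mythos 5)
Your proposal is correct and follows essentially the same route as the paper: the paper also deduces this theorem from the perfection base-change equivalence (Proposition~\ref{prop:perfection-base-change}) applied to $(\A_K^+,(q_n(\omega)))$, whose perfection is $(\Ainf(\O_{K_\infty}),\ker\theta)$ by Proposition~\ref{prop:A_K^+-perfection}, combined with the Katz--Fontaine equivalence on the perfect side and the norm-field identification of $G_{K_\infty}$. Your final paragraph reconciling the explicit functor $M\mapsto(M\otimes_{\A_K}\A)^{\phi=1}$ with the composite equivalence is a point the paper handles only implicitly (via Corollary~\ref{cor:perfection-base-change-phi-invariants} and Remark~\ref{rmk:functor-description}), so it is a reasonable and correct addition rather than a deviation.
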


We make two observations about Theorem~\ref{thm:kisin-ren-equiv}. First, that the base of the $\varphi_q$-modules is $\A_K$. In fact, this is a red herring: base change induces an equivalence of categories
\begin{align*}
	\Mod_{\A_K}^{\varphi_q,et}&\stackrel{\sim}{\longrightarrow}\Mod_{W_L(K_\infty^\flat)}^{\varphi_q,et}\\
	M&\mapsto M\otimes_{\A_K}W_L(K_\infty^\flat)
\end{align*}
so that theorem would remain true with $W_L(K_\infty^\flat)$ replacing $\A_K$ (and $W_L(\overline{K}^\flat)$ replacing $\A$). (This result is due to Fontaine \cite{F} in the cyclotomic case, but as far as the author is aware has not yet appeared in the literature in general; it will follow from proposition~\ref{prop:perfection-base-change} below.)

Our second observation is that we would like to descend the equivalence to the full category $\Rep_{\O_L}(G_K)$ of continuous finite free $G_K$-representations over $\O_L$. Indeed, this is not hard to do, and involves picking up a semilinear action of $\Gamma_K = \Gal(K_\infty/K)$. Before stating the result, we first explain the $\Gamma_K$ actions on the rings $\S_K$, $\A_K$, and $W_L(K_\infty^\flat)$.

Equip $W_L(k)\ps{T}$ with the $\Gamma_K$-action where $\sigma\in \Gamma_K$ acts by $f(T)\mapsto f([\chi_\G(\sigma)](T))$, and equip $W_L(\O_{K_\infty}^\flat)$ with the natural $\Gamma_K$-action (coming from the $\Gamma_K$-action on $K_\infty^\flat)$ and the functoriality of $W_L$). By the definition of the Lubin-Tate character $\chi_\G$ we have
\[[\chi_\G(\sigma)](\overline{\omega}) = (\overline{e_n^\sigma})_n = \overline{\omega}^\sigma\]
so that $W_L(k)\ps{T}\twoheadrightarrow k\ps{T}\stackrel{\overline{\iota}}{\rightarrow} \O_{K_\infty}^\flat$ is $\Gamma_K$-equivariant. Thus $\iota$ is $\Gamma_K$-equivariant as well by naturality, and the $\Gamma_K$-actions on $\S_K$ induced by $\iota$ and $W_L(K_\infty^\flat)$ coincide. By the uniqueness of lifts given by Hensel's lemma, we further have that the $\Gamma_K$-action on $\A_K$ induced by viewing it as a subring of $W_L(K_\infty^\flat)$ coincides with the $\Gamma_K$-action defined by lifting the $\Gamma_K$-action on $\E_K$. Note also that all of these $\Gamma_K$-actions commute with the $\phi$-actions, because this is true for $W_L(K_\infty^\flat)$. This can also be seen directly for $W_L(k)\ps{T}$ using properties of Lubin-Tate formal $\O_L$-modules:
\[\phi(f)^{\sigma}(T) = f([\pi\chi_\G(\sigma)](T)) = f([\chi_\G(\sigma)\pi](T)) = \phi(f^{\sigma})(T).\]

We can also view $\Gamma_K$ as acting on the corresponding prisms.
\begin{prop}\label{prop:Ainf-automorphisms}
	$\Gamma_K$ acts via automorphisms on the $L$-typical prisms $(\A_K^+, (q_n(\omega)))$ and $(\Ainf(\O_{K_\infty}), \ker\theta)$. Moreover, we have that
	\[\Aut_{(\O_K)_{\Prism_L}}(\A_K^+, (q_n(\omega)))\cong \Aut_{(\O_K)_{\Prism_L}}(\Ainf(\O_{K_\infty}), \ker\theta)\cong \Gamma_K\]
	if $n$ is large enough that $(\A_K^+,(q_n(\omega)))\in (\O_K)_{\Prism_L}$.
\end{prop}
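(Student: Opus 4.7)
The plan is to set up the $\Gamma_K$-actions on the two prisms and then identify each automorphism group via passage to the perfection, invoking proposition~\ref{prop:perfect-prism-perfectoid-equiv}.

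First I would verify that the $\Gamma_K$-actions already described at the start of \secsymb\ref{ssec:phi_q-Gamma-modules} yield automorphisms in $(\O_K)_{\Prism_L}$. For $(\Ainf(\O_{K_\infty}), \ker\theta)$, functoriality of $W_L$ automatically makes $\sigma \in \Gamma_K$ act by $\delta_L$-algebra maps, and $\theta$ is $\Gamma_K$-equivariant, so $\ker\theta$ is preserved; the structure map over $\O_K$ is $\Gamma_K$-invariant by definition. For $(\A_K^+, (q_n(\omega)))$, the subring $\A_K^+ \subseteq \Ainf(\O_{K_\infty})$ is $\Gamma_K$-stable by the construction reviewed in the setup, so it suffices to check that the ideal $(q_n(\omega))$ and the $\O_K$-structure map are preserved. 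For the former, write $[a](x) = a x \cdot u_a(x)$ with $u_a(x) \in \O_L\ps{x}^\times$ for $a \in \O_L^\times$ and use commutativity of the $\O_L$-action on $\G$ to compute
\[\sigma(q_n(\omega)) = q_n([\chi_\G(\sigma)](\omega)) = q_n(\omega) \cdot \frac{u_{\chi_\G(\sigma)}([\pi^n](\omega))}{u_{\chi_\G(\sigma)}([\pi^{n-1}](\omega))},\]
which differs from $q_n(\omega)$ by a unit. For the latter, the first proof of corollary~\ref{cor:A_K-O_K} exhibits the structure map as the $\Gamma_K$-invariant inclusion $\O_K \hookrightarrow \bigcup_m \A_K^+/q_m(\omega) \cong \bigcup_m \O_{K_m}$.

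The identification $\Aut_{(\O_K)_{\Prism_L}}(\Ainf(\O_{K_\infty}), \ker\theta) \cong \Gamma_K$ then follows immediately from proposition~\ref{prop:perfect-prism-perfectoid-equiv}, which identifies such prism automorphisms with $\O_K$-algebra automorphisms of $\O_{K_\infty}$, i.e., with $\Gal(K_\infty/K) = \Gamma_K$. For the $\A_K^+$ case, my plan is to reduce to this by functoriality of the perfection (proposition~\ref{prop:prism-perfection}) together with the identification $(\A_K^+)_{\perf} \cong \Ainf(\O_{K_\infty})$ from proposition~\ref{prop:A_K^+-perfection}. Perfection induces a group homomorphism
\[P: \Aut_{(\O_K)_{\Prism_L}}(\A_K^+, (q_n(\omega))) \longrightarrow \Aut_{(\O_K)_{\Prism_L}}(\Ainf(\O_{K_\infty}), \ker\theta),\]
which is injective because $\A_K^+ \hookrightarrow \Ainf(\O_{K_\infty})$ (lemma~\ref{lem:prism-maps-flat}(2)) and $P(\tau)$ necessarily restricts to $\tau$ on $\A_K^+$. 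By naturality of perfection, the composite $\Gamma_K \to \Aut(\A_K^+) \xrightarrow{P} \Aut(\Ainf(\O_{K_\infty}))$ coincides with the isomorphism from the previous sentence, forcing both $\Gamma_K \to \Aut(\A_K^+)$ and $P$ to be bijective.

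The main obstacle will be the $(q_n(\omega))$-stability calculation --- the one place where the Lubin-Tate formal group law enters concretely --- after which everything reduces formally to the perfect case handled by proposition~\ref{prop:perfect-prism-perfectoid-equiv}.
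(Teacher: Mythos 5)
Your proposal is correct and follows essentially the same route as the paper: verify the ideal $(q_n(\omega))$ is preserved using $[\chi_\G(\sigma)](T) = \chi_\G(\sigma)T + \cdots$, extend automorphisms of $(\A_K^+,(q_n(\omega)))$ to its perfection $(\Ainf(\O_{K_\infty}),\ker\theta)$, identify the automorphism group of the perfect prism with $\Gamma_K$ via proposition~\ref{prop:perfect-prism-perfectoid-equiv}, and close the sandwich $\Gamma_K\hookrightarrow \Aut(\A_K^+,(q_n(\omega)))\hookrightarrow \Gamma_K$. Your explicit observation that the composite is the identity on $\Gamma_K$ is a slightly more careful way to finish than the paper's, but the argument is the same.
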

\begin{proof}
	Since any $\sigma\in \Gamma_K$ commutes with $\phi$, we know that $\sigma$ gives a map of $\delta_L$-algebras. Aditionally, since $q_n(\omega)^{\sigma} = [\chi_\G(\sigma)](q_n(\omega))$ and 
	\[[\chi_\G(\sigma)](T) = \chi_\G(\sigma)T +\text{ higher order terms},\] 
	we have that any $\sigma\in \Gamma_K$ preserves $(q_n(\omega))$, and thus gives an automorphism of $(\A_K^+, (q_n(\omega)))$. If $n$ is large enough that $(\A_K^+,(q_n(\omega)))\in (\O_K)_{\Prism_L}$ then $\sigma$ respects the structure map $\O_K\rightarrow \A_K^+/q_n(\omega)$ as well, so that 
	\[\Gamma_K\hookrightarrow \Aut_{(\O_K)_{\Prism_L}}(\A_K^+,(q_n(\omega))).\]
	Moreover, we see that any automorphism of $(\A_K^+, (q_n(\omega)))$ is automatically $(\pi,q_n(\omega))$-adically continuous and $\phi$-equivariant, hence extends to an automorphism of the perfection $(\A_K^+, (q_n(\omega)))_{\perf}\cong (\Ainf(\O_{K_\infty}),\ker\theta)$ by proposition~\ref{prop:A_K^+-perfection}. But proposition~\ref{prop:perfect-prism-perfectoid-equiv}, we have that
	\[\Aut_{(\O_K)_{\Prism_L}}(\Ainf(\O_{K_\infty}),\ker\theta)\cong \Aut(\O_{K_\infty}/\O_K) = \Gamma_K.\]
	Thus we've shown
	\[\Gamma_K\hookrightarrow \Aut_{(\O_K)_{\Prism_L}}(\A_K^+,(q_n(\omega)))\hookrightarrow \Aut_{(\O_K)_{\Prism_L}}(\Ainf(\O_{K_\infty}),\ker\theta) \cong \Gamma_K\]
	which gives the result.
\end{proof}

Descending from $\Rep_{\O_L}(G_{K_\infty})$ to $\Rep_{\O_L}(G_K)$ involves picking up a $\Gamma_K$-action.
\begin{defn}
	A \textit{$(\varphi_q, \Gamma)$-module over $\A_K$} is a $\varphi_q$-module $M$ over $\A_K$ with a semilinear $\Gamma_K$-action which commutes with the $\phi$-action. It is \textit{\'etale} if $M$ is \'etale as a $\varphi_q$ module. These form categories $\Mod_{\A_K}^{\varphi_q,\Gamma}$ and $\Mod_{\A_K}^{\varphi_q,\Gamma,et}$. We similarly define $(\varphi_q,\Gamma)$-modules over $W_L(K_\infty^\flat)$.
\end{defn}
The equivalence of Theorem~\ref{thm:kisin-ren-equiv} extends to $(\varphi_q, \Gamma)$-modules. So in summary, we have the following inclusions and equivalences among exact tensor categories.
\begin{center}
	\begin{tikzcd}
		\Rep_{\Z_p}(G_K)\ar{r}{\cong}\ar[hook]{d} &\Mod_{\A_K}^{\varphi,\Gamma,et}\ar{r}{\cong}\ar[hook]{d} &\Mod_{W(K_\infty^\flat)}^{\varphi,\Gamma,et}\ar[hook]{d} \\
		\Rep_{\Z_p}(G_{K_\infty})\ar{r}{\cong} &\Mod_{\A_K}^{\varphi,et}\ar{r}{\cong} &\Mod_{W(K_\infty^\flat)}^{\varphi,et}
	\end{tikzcd}
\end{center}

\subsection{The prismatic logarithm for \texorpdfstring{$\S_L$}{S\_L}}\label{ssec:prismatic-logarithm}
For convenience, throughout this section let $(A,I) = (\A_L^+,(q_n(\omega))) = (\S_L, (q_n(\omega)))\cong (\O_L\ps{T}, (q_n(T)))$ be the prism of \secsymb~\ref{ssec:S_K}. We will construct a map $\log_\Prism$ from a certain subset $I_{\phi = [\pi]}$ of $I$ to the Breuil-Kisin twist $A\{1\}$ of $A$. Heuristically, we can think of $\log_\Prism$ as being given by $\log_{\Prism}(u) = ``\lim_{n\rightarrow\infty}\frac{[\pi^n](u)}{\pi^n}"$. We will further see that $\log_\Prism$ is $\O_L$-linear, where $I_{\phi = [\pi]}$ is viewed as an $\O_L$-module via the Lubin-Tate formal group law $\G$. 
\begin{rmk}
	In the cyclotomic case $\G = \mu_{p^\infty}$, our $\log_{\Prism}$ coincides with the map $u\mapsto \log_\Prism(1 + u)$ of \cite[\secsymb2]{BL}. In that setting, $\log_{\Prism}(1 + u) = ``\lim_{n\rightarrow\infty}\frac{u^{p^n} - 1}{p^n}"$, which is analogous to the classical formula $\log(1 + x) = \lim_{\alpha\rightarrow 0}\frac{x^\alpha - 1}{\alpha}$.
\end{rmk}

For this paragraph only, let $(A,I)$ be an arbitrary bounded $L$-typical prism. Informally, we define
\[A\{1\} = \bigotimes_{n = 0}^\infty(\phi^n)^* I.\]
More precisely, for $n\ge 1$ set $I_n$ to be the product $\prod_{i = 0}^{n - 1}\phi^i(I)$ as an ideal of $A$. Note that $I_n\equiv I^{\frac{q^n - 1}{q - 1}}\pmod{\pi}$. Thus, since $A$ is bounded and $(\pi,I)$-adically complete we have $\mathrm{Pic}(A)\simeq \lim_n \mathrm{Pic}(A/I_n)$, and we let $A\{1\}\in \mathrm{Pic}(A)$ correspond to $\left((\phi^n)^*I\otimes_{A}A/I_n\right){n\ge 0}$. See \cite[\secsymb4.9]{Drin} for additional details, or \cite[\secsymb2]{BL} for a more explicit construction bootstrapping from the case where $A/I$ is $\pi$-torsion free.

Taking $(A,I) = (\S_L, (q_n(\omega)))$ once more, we give also a more explicit definition. We can define $A\{1\}$ by
\[\plim\left(
		\begin{tikzcd}
			\cdots\ar[twoheadrightarrow]{r}{1/\pi} &I_3/I_3^2\ar[twoheadrightarrow]{r}{1/\pi} &I_2/I_2^2\ar[twoheadrightarrow]{r}{1/\pi} &I_1/I_1^2
		\end{tikzcd}
		\right).\]
Here 
\[I_m = \left(q_n(\omega)q_{n + 1}(\omega)\cdots q_{n + m - 1}(\omega)\right) = \left(\frac{[\pi^{n + m - 1}](\omega)}{[\pi^{n - 1}](\omega)}\right)\]
and the transition maps $I_{m+1}/I_{m+1}^2\rightarrow I_m/I_m^2$ are quotienting by $I_m^2$ followed with dividing by $\pi$; these are well-defined and surjective as
\[\frac{[\pi^{n + m}](\omega)}{[\pi^{n - 1}](\omega)} \equiv \pi\frac{[\pi^{n + m - 1}](\omega)}{[\pi^{n - 1}](\omega)}\mod{\left(\frac{[\pi^{n + m - 1}](\omega)}{[\pi^{n - 1}](\omega)}\right)^2}\]
since $[\pi^{n + m}](\omega) = [\pi]\left([\pi^{n + m - 1}](\omega)\right)$ and $[\pi](T) = \pi T + (\text{higher order terms})$.

\begin{lem}~\label{lem:qi-lcm}
	\begin{enumerate}
		\item[(1)] $A/I_m$ is $\pi$-torsionfree for all $m\ge 1$.
		\item[(2)] We have $I_m = \bigcap_{i = 0}^{m - 1} \phi^i(I) = \bigcap_{i = 0}^m \left(q_{n + i}(\omega)\right)$.
	\end{enumerate}
\end{lem}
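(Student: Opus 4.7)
The plan is to reduce both parts to statements in the polynomial ring $W_L(k)[T]$ via Weierstrass preparation. First, I would observe that each $q_{n+i}(T)$ is a distinguished power series of degree $d_{n+i} := q^{n+i}-q^{n+i-1}$, since modulo $\pi$ it equals $T^{d_{n+i}}$; moreover its constant term equals $\pi$ (from $q_{n+i}(T) = [\pi](U)/U = \pi + O(U)$ with $U = [\pi^{n+i-1}](T)$). By Weierstrass preparation we may therefore write $q_{n+i}(T) = P_{n+i}(T)\,u_{n+i}(T)$ for a unique Eisenstein polynomial $P_{n+i} \in W_L(k)[T]$ of degree $d_{n+i}$ and a unit $u_{n+i} \in W_L(k)\ps{T}^{\times}$. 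In particular $(q_{n+i}(\omega)) = (P_{n+i}(\omega))$ and $I_m = \bigl(\prod_i P_{n+i}(T)\bigr)$, the product being a monic (hence distinguished) polynomial of degree $d := q^{n+m-1} - q^{n-1}$. Weierstrass division then supplies $W_L(k)$-algebra isomorphisms
\[
A/(q_{n+i}(\omega)) \cong W_L(k)[T]/(P_{n+i}(T)), \qquad A/I_m \cong W_L(k)[T]\bigm/\bigl(\textstyle\prod_i P_{n+i}(T)\bigr).
\]

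Part (1) is then immediate: the second quotient is $W_L(k)$-free of rank $d$, with basis $1,T,\ldots,T^{d-1}$, and free modules over the DVR $W_L(k)$ are $\pi$-torsion-free. For part (2), I would work inside the UFD $W_L(k)[T]$ (a polynomial ring over a UFD). Each $P_{n+i}$ is irreducible by the Eisenstein criterion, and the $P_{n+i}$ for distinct $i$ are pairwise non-associate, because they are monic of distinct degrees. In any UFD the intersection of principal ideals generated by pairwise non-associate irreducibles is generated by their product, so $\bigcap_{i=0}^{m-1}(P_{n+i}(T))_{W_L(k)[T]} = \bigl(\prod_i P_{n+i}(T)\bigr)$. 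Transferring across the isomorphisms above, the kernel of the natural map $A/I_m \to \prod_i A/(q_{n+i}(\omega))$ vanishes, i.e.\ $\bigcap_i (q_{n+i}(\omega)) = I_m$ in $A$. Combined with the equality $\phi^i(I) = (q_{n+i}(\omega))$ (which follows from $\phi(\omega) = [\pi](\omega)$ together with the telescoping definition $q_j(T) = [\pi^j](T)/[\pi^{j-1}](T)$), this finishes (2).

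The only non-formal step in the plan is the verification that Weierstrass preparation really produces an Eisenstein polynomial $P_{n+i}$ (and that the resulting quotient map is an algebra isomorphism, not merely a module one). This boils down to the constant-term calculation $q_{n+i}(0) = \pi$ noted above plus the standard form of Weierstrass division, so I do not expect a substantive obstacle; everything else is formal UFD and free-module manipulation in $W_L(k)[T]$.
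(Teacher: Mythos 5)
Your proof is correct, but it takes a genuinely different route from the paper's. You exploit the fact that this lemma concerns the specific prism $(\S_L,(q_n(\omega)))\cong(\O_L\ps{T},(q_n(T)))$: each $q_{n+i}$ is a distinguished power series with constant term exactly $\pi$, so Weierstrass preparation yields Eisenstein polynomials $P_{n+i}$, and both claims collapse to statements about the finite free $\O_L$-algebra $\O_L[T]/(\prod P_{n+i})$ — part (1) becomes freeness over a DVR, and part (2) becomes the lcm computation for pairwise non-associate irreducibles in the UFD $\O_L[T]$. The constant-term check you flag as the only non-formal step does go through ($P_{n+i}(0)=\pi\cdot u_{n+i}(0)^{-1}$ has valuation exactly $1$), and the transfer of the intersection from $\O_L[T]$ to $\O_L\ps{T}$ via injectivity of $A/I_m\to\prod_i A/(q_{n+i}(\omega))$ is legitimate because Weierstrass division identifies these quotients compatibly. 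The paper instead specializes the Bhatt--Lurie argument: part (1) is an induction using the exact sequence $0\to I_m/I_{m+1}\to A/I_{m+1}\to A/I_m\to 0$ and invertibility of $I_m$, and part (2) follows by showing the map $\phi^m(I)/I_{m+1}\to A/I_m$ factors as $\pi$ times an isomorphism (using only that $[\pi](T)=\pi T+O(T^2)$, distinguishedness, and the $\pi$-torsion-freeness from (1)). The paper's argument is the one that generalizes to arbitrary bounded prisms and makes no use of the one-variable Weierstrass structure; yours is more elementary and makes the answer completely explicit (identifying $A/I_m$ with a concrete finite free $\O_L$-module), at the cost of being tied to this particular prism. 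One cosmetic point: the displayed statement's $\bigcap_{i=0}^{m}(q_{n+i}(\omega))$ is a typo for $\bigcap_{i=0}^{m-1}$, which you silently and correctly fix.
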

\begin{proof}
	We prove part (1) by induction. The result is clear for $m = 1$, and for $m\ge 2$ we have an exact sequence
	\[0\longrightarrow I_m\otimes_A A/\phi^m(I) \cong I_m/I_{m + 1}\longrightarrow A/I_{m + 1}\longrightarrow A/I_m\longrightarrow 0\]
	where the first and third terms are $\pi$-torsion free.

	For part (2) we follow \cite[lemmas~2.2.8, 2.2.9]{BL}. First, we show that the natural map $f:\phi^m(I)/I_{m + 1}\rightarrow A/I_m$ is injective. As above, using the identity $[\pi](T) = \pi T + (\text{higher order terms})$ one shows that the $f$ has image containing $(\pi, I_m)$; by part (1), $f$ therefore factors as $f = \pi f_0$. We show that $f_0$ is an isomorphism; as the domain and codomain are invertible $A/I_m$ modules, it suffices to show surjectivity. One shows by induction over $m\ge 1$ that if $\alpha\in I$ then $f_0(\phi^m(\alpha)) \mod{(\pi,I)}$ is a unit in $A/(\pi,I)$. Then by $(\pi,I)$-adic completeness and the inclusion $I_m\subseteq (\pi,I)$ we conclude that the image of $I\stackrel{\phi^m}{\rightarrow} \phi^m(I)/I_m\stackrel{f_0}{\rightarrow} A/I_m$ is the unit ideal as desired.

	We now prove the statement in the lemma by induction on $m\ge 0$, with $m = 0$ being interpreted as the equality $(1) = (1)$ of unit ideals. For $m\ge 1$, let $\alpha\in \bigcap_{i = 0}^m\phi^i(I)$. By induction, we have $\alpha\in I_m\cap \phi^m(I)$. Thus $\alpha$ is in the kernel of $\phi^m(I)\rightarrow A/I_m$, which factors as
	\[\phi^m(I)\rightarrow \phi^m(I)/I_{m + 1}\stackrel{f}{\rightarrow} A/I_m.\]
	Since we showed that $f$ is injective, we have that $\alpha\in I_{m + 1}$ as desired.
\end{proof}
We now define $\log_\Prism$. Let $I_{\phi = [\pi]}$ denote the subset of $\alpha\in I$ such that $\phi(\alpha) = [\pi](\alpha)$. For example, we have that $[\pi^n](\omega)\in I_{\phi = [\pi]}$ since $\phi([\pi^n](\omega)) = [\pi^n]([\pi](\omega)) = [\pi]([\pi^n](\omega))$. 
\begin{lem}\label{lem:pi-alpha-in-I2}
 	If $\alpha\in I_{\phi = [\pi]}$ and $m\ge 1$ then $[\pi^m](\alpha)\in I_{m + 1}$ and $[\pi^m](\alpha)\equiv \pi \cdot [\pi^{m - 1}](\alpha)\pmod{I_m^2}$. 
 \end{lem}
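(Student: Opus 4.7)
The plan is to first establish the key algebraic identity $[\pi^m](\alpha) = \phi^m(\alpha)$ for any $\alpha \in I_{\phi = [\pi]}$, and then deduce both claims from this identity combined with Lemma~\ref{lem:qi-lcm}(2) characterizing $I_m$ as an intersection. This identity follows by induction on $m$: the hypothesis $\phi(\alpha) = [\pi](\alpha)$ gives the base case, and since $[\pi](T) \in \O_L\ps{T}$ has coefficients in $\O_L$ (which are fixed by $\phi$), we have $\phi([\pi](\beta)) = [\pi](\phi(\beta))$ for any $\beta \in A$, so $\phi^{m+1}(\alpha) = \phi([\pi^m](\alpha)) = [\pi^m](\phi(\alpha)) = [\pi^m]([\pi](\alpha)) = [\pi^{m+1}](\alpha)$.

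For claim (a), I would use Lemma~\ref{lem:qi-lcm}(2) to reduce $[\pi^m](\alpha) \in I_{m+1}$ to showing $[\pi^m](\alpha) \in \phi^i(I)$ for each $0 \le i \le m$. Writing $[\pi^m](\alpha) = \phi^m(\alpha) = \phi^i([\pi^{m-i}](\alpha))$, it suffices to show $[\pi^{m-i}](\alpha) \in I$. When $i = m$ this is just $\alpha \in I$, and when $i < m$ the series $[\pi^{m-i}](T)$ has no constant term, so $[\pi^{m-i}](\alpha) \in (\alpha) \subseteq I$.

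For claim (b), the input is the factorization $[\pi](T) = \pi T + T^2 g(T)$ for some $g(T) \in \O_L\ps{T}$ (using that $\pi$ is the derivative of $[\pi]$ at $0$). Setting $\beta = [\pi^{m-1}](\alpha)$, this gives $[\pi^m](\alpha) = [\pi](\beta) = \pi\beta + \beta^2 g(\beta)$, so the claim reduces to $\beta^2 \in I_m^2$, i.e., $\beta \in I_m$. For $m = 1$ this is just $\alpha \in I = I_1$, and for $m \geq 2$ it is exactly the conclusion of claim (a) applied with $m-1$ in place of $m$.

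I do not anticipate a serious obstacle here: the whole argument rests on two elementary observations (that $\phi$ commutes with substitution into $\O_L$-coefficient power series, and that $[\pi](T) - \pi T \in (T^2)$), together with the characterization of $I_m$ in Lemma~\ref{lem:qi-lcm}(2). The only point requiring a little care is the clean induction establishing $\phi^m(\alpha) = [\pi^m](\alpha)$, which unlocks everything else.
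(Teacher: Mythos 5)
Your proof is correct and follows essentially the same route as the paper: both arguments use the identity $[\pi^m](\alpha)=\phi^m(\alpha)$ together with the characterization $I_{m+1}=\bigcap_{i=0}^{m}\phi^i(I)$ from lemma~\ref{lem:qi-lcm}(2) for the first claim, and the expansion $[\pi](T)=\pi T+(\text{higher order terms})$ for the congruence. Your write-up merely spells out the induction and the reduction of the congruence to $[\pi^{m-1}](\alpha)\in I_m$, which the paper leaves implicit.
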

 \begin{proof}
 	The second part of the lemma is clear from $[\pi](T) = \pi T + (\text{higher order terms})$. For the first part, for each $0\le i \le m$ we have $[\pi^m](\alpha) = [\pi^{m - i}]([\pi^i](\alpha))\in \phi^i(I)$. Thus $[\pi^m](\alpha)\in I_{m + 1}$ by lemma~\ref{lem:qi-lcm}(2).
 \end{proof}
\begin{defn}
	Let $\log_\Prism:I_{\phi = [\pi]}\rightarrow \S_L\{1\}$ be defined by
	\[\log_\Prism(\alpha) = ([\pi^{m - 1}](\alpha))_{m\ge 1} = (\phi^{m-1}(\alpha))_{m\ge 1}\in \plim_{1/\pi} I_m/I_m^2 = \S_L\{1\}.\]
\end{defn}

Recall that the Lubin-Tate formal $\O_L$-module $\G$ comes with a formal group law $X+_\G Y\in \O_L\ps{X,Y}$ satisfying
\begin{align}
	X +_\G Y &= X + Y + (\text{degree }\ge 2\text{ terms}) \\
	[a](X +_\G Y) &= [a](X) +_\G [a](Y)\quad\quad\text{for }a\in \O_L.
\end{align}
This second condition with $a = \pi$ implies that if $\alpha,\beta\in I_{\phi = [\pi]}$ then $\alpha+_\G\beta\in I_{\phi = [\pi]}$ as well. Similarly, we have that if $\alpha\in I_{\phi=[\pi]}$ and $a\in \O_L$ then $[a](\alpha)\in I_{\phi = [\pi]}$. Thus $I_{\phi = [\pi]}$ can be viewed as an $\O_L$-module. We show that $\log_\Prism$ is an $\O_L$-module homomorphism.
\begin{prop}
	For $\alpha,\beta\in I_{\phi = [\pi]}$ and $a\in \O_L$ we have $\log_\Prism(\alpha +_\G \beta) = \log_\Prism(\alpha) + \log_\Prism(\beta)$ and $\log_\Prism([a](\alpha)) = a\log_\Prism(\alpha)$.
\end{prop}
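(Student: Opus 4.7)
The plan is to verify both identities componentwise in the inverse limit $\S_L\{1\} = \plim_{1/\pi} I_m/I_m^2$. Concretely, I must show that for every $m \ge 1$,
\[
[\pi^{m-1}](\alpha +_\G \beta) \equiv [\pi^{m-1}](\alpha) + [\pi^{m-1}](\beta) \pmod{I_m^2}
\]
and
\[
[\pi^{m-1}]([a](\alpha)) \equiv a \cdot [\pi^{m-1}](\alpha) \pmod{I_m^2}.
\]
The key preparatory observation is that $[\pi^{m-1}](\alpha) \in I_m$ whenever $\alpha \in I_{\phi = [\pi]}$: for $m = 1$ this is just $\alpha \in I = I_1$, and for $m \ge 2$ it follows from lemma~\ref{lem:pi-alpha-in-I2} (which gives $[\pi^{m-1}](\alpha) \in I_m$). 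The same membership will apply to $\alpha +_\G \beta$ and $[a](\alpha)$ since $I_{\phi = [\pi]}$ is already an $\O_L$-module under these operations.

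Granted this, both identities are formal consequences of the shape of the Lubin-Tate power series. For additivity, since $\G$ is a formal $\O_L$-module the map $[\pi^{m-1}]$ is a homomorphism for $+_\G$, so $[\pi^{m-1}](\alpha +_\G \beta) = [\pi^{m-1}](\alpha) +_\G [\pi^{m-1}](\beta)$. Writing $X +_\G Y = X + Y + \sum_{i+j \ge 2} c_{ij} X^i Y^j$ and substituting $X = [\pi^{m-1}](\alpha)$, $Y = [\pi^{m-1}](\beta) \in I_m$, every term of total degree $\ge 2$ lies in $I_m^{i+j} \subseteq I_m^2$, yielding the required congruence. For $\O_L$-linearity, one uses the commutativity $[\pi^{m-1}] \circ [a] = [a] \circ [\pi^{m-1}]$ in $\O_L\ps{T}$, expands $[a](X) = aX + \sum_{i \ge 2} d_i X^i$, and again substitutes $X = [\pi^{m-1}](\alpha) \in I_m$ so that the nonlinear terms fall into $I_m^2$.

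There is no serious obstacle: once one identifies that every calculation takes place modulo $I_m^2$ and that the relevant arguments $[\pi^{m-1}](\alpha)$ sit inside $I_m$, the two identities reduce to the fact that the \emph{linear} parts of $+_\G$ and $[a]$ are ordinary addition and multiplication by $a$, which is built into the definition of a Lubin-Tate formal $\O_L$-module. The mildest care needed is to check that these congruences assemble compatibly under the transition maps $I_{m+1}/I_{m+1}^2 \twoheadrightarrow I_m/I_m^2$ (division by $\pi$ after quotienting by $I_m^2$), but this is automatic from the functorial definition of $\log_\Prism$.
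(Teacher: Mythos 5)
Your proof is correct and is essentially the paper's argument: both verify the identities componentwise in $\plim I_m/I_m^2$, using that $[\pi^{m-1}]$ commutes with $+_\G$ and $[a]$, that $[\pi^{m-1}](\alpha)\in I_m$ by lemma~\ref{lem:pi-alpha-in-I2}, and that the linear parts of $X+_\G Y$ and $[a](X)$ are $X+Y$ and $aX$, so the higher-order terms vanish modulo $I_m^2$. The compatibility with the transition maps is already built into the well-definedness of $\log_\Prism$ and needs no separate check here.
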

\begin{proof}
	We have 
	\begin{align*}
		\log_\Prism(\alpha +_\G\beta) &= ([\pi^{m - 1}](\alpha +_\G \beta))_{m\ge 1} = ([\pi^{m - 1}](\alpha) +_\G [\pi^{m - 1}](\beta))_{m\ge 1} \\
		&= ([\pi^{m - 1}](\alpha) + [\pi^{m - 1}](\beta))_{m\ge 1} = \log_\Prism(\alpha) + \log_\Prism(\beta)
	\end{align*}
	where the penultimate equality uses that $X +_\G Y = X + Y + (\text{degree }\ge 2\text{ terms})$ and $[\pi^{m-1}](\alpha),[\pi^{m - 1}](\beta)\in I_m$ by lemma~\ref{lem:pi-alpha-in-I2}. The identity $\log_\Prism([a](\alpha)) = a\log_\Prism(\alpha)$ is shown similarly.
\end{proof}
\begin{rmk}
	Recall that $\S_L$ was defined by applying construction~\ref{const:algebraic-construction} to an element $e =\in T\G$ of the Tate module of $\G$; this gave a map $\iota:\O_L\ps{T}\rightarrow W_L(\O_{L_\infty}^\flat)$ with image $\S_L$ and the element $\omega := \iota(T)$. As in remark~\ref{rmk:change-by-unit}, applying the same construction with the element $e' = ae$ for some $a\in \O_L$ results in the element $\omega' = [a](\omega)$ still in $\S_L$. We thus get a natural $\O_L$-module map
	\begin{align*}
		\rho:&T\G\rightarrow I_{\phi = [\pi]} \\
		ae&\mapsto [a\pi^n](\omega)
	\end{align*}
	and by composition an $\O_L$-module map $T\G\rightarrow \S_L\{1\}$. 
\end{rmk}

\section{Laurent \texorpdfstring{$F$}{F}-crystals}\label{sec:F-crystals}

In this section we introduce \'etale $\varphi_q$-modules over $L$-typical prisms and Laurent $F$-crystals, and we prove theorem~\ref{thm:intro-version-main-result}. In \secsymb\ref{ssec:phi-modules-over-prisms} we show that the equivalence $\Mod_{\A_K}^{\varphi_q,et}\simeq \Mod_{W_L(K_\infty^\flat)}^{\varphi_q,et}$ is in fact a special case of an equivalence
\[\Mod_{(A,I)}^{\varphi_q,et}\simeq \Mod_{(A,I)_{\perf}}^{\varphi_q,et}\]
between categories of \'etale $\varphi_q$-modules which make sense for any $L$-typical prism $(A,I)$. In \secsymb\ref{ssec:laurent-F-crystals}, we define Laurent $F$-crystals in the $L$-typical prismatic setting; these are objects which serve as relativizations of \'etale $\varphi_q$-modules over a base formal scheme $X/\O_L$. We go on to show that the category of Laurent $F$-crystals over $X$ is equivalent to the category of lisse local systems on the adic generic fiber $X_\eta$ with coeffficients in $\O_L$. Finally, in \secsymb\ref{ssec:phi-G-modules-F-crystals} we use this theory to recover the Kisin-Ren equivalence between Lubin-Tate $(\varphi_q,\Gamma)$-modules and continuous $G_K$ representations over $\O_L$.

\subsection{\'Etale \texorpdfstring{$\varphi_q$}{phi\_q}-modules over \texorpdfstring{$L$}{L}-typical prisms}\label{ssec:phi-modules-over-prisms}

Given a $p$-adic field $K/L$ and a Lubin-Tate formal $\O_L$-module, we described in \secsymb\ref{sec:LT-phi-G-modules} prisms $(\A_K^+, (q_n(\omega))$ with perfection $(\Ainf(\O_{K_\infty}), \ker\theta)$. We also saw that the categories of \'etale $\varphi_q$-modules over $\A_K = \A_K^+[1/q_n(\omega)]^\wedge_{(\pi)}$ and $W_L(K_\infty^\flat) = \Ainf(\O_{K_\infty})[1/\ker\theta]^\wedge_{(\pi)}$ were equivalent. In fact, this reflects a general fact about categories of $\varphi_q$-modules over $L$-typical prisms, which we prove here.

The definition of $\varphi_q$-modules in this setting is as follows.
\begin{defn}~
	\begin{enumerate}
		\item[(1)] Let $\mc A$ be a ring together with a ring homomorphism $\varphi:\mc A\rightarrow \mc A$. An \'etale $\varphi$-module over $\mc A$ is a finite projective $\mc A$-module $M$ equipped with an isomorphism 
		\[\varphi_M:\varphi^*M := \mc A \otimes_{\varphi,\mc A}M\stackrel{\sim}{\longrightarrow}M.\]
		This gives us a $\varphi$-semilinear map $M\rightarrow M$ via $m\mapsto \varphi_M(1\otimes m)$; we will abuse notation and write $\varphi_M$ also for this map. Equipped with $\mc A$-module endomorphisms commuting with the $\varphi_M$'s, \'etale $\varphi$--modules over $\mc A$ form a category $\Mod_{\mc A}^{\varphi, et}$. 
		\item[(2)] Let $(A,I)$ be a bounded $L$-typical prism. Then an \'etale $\varphi_q$-module over $(A,I)$ is an \'etale $\varphi = \phi_{\mc A}$-module over $\mc A = A[\frac1I]^\wedge_{(\pi)}$ in the sense of (1). In other words, it is a finite projective $\mc A$-module $M$ with an isomorphism $\varphi_M:\varphi^*M\stackrel{\sim}{\rightarrow}M$ (which we also view as a $\varphi$-semilinear endomorphism of $M$). We denote the resulting category by $\Mod_{(A,I)}^{\varphi_q,et} = \Mod_{\mc A}^{\phi_{\mc A},et}$.
		\item[(3)] For the corresponding category of derived objects, let $D_{\perf}(\mc A)$ denote the category of perfect complexes in modules over the ring $\mc A$, i.e. objects in the derived category of $\mc A$-modules quasi-isomorphic to a bounded complex of finite projective $\mc A$-modules. If $\mc A$ has an endomorphism $\varphi$ then we write $D_{\perf}(\mc A)^{\varphi = 1}$ for the category of pairs $(E,\varphi_E)$ where $E\in D_{\perf}(\mc A)$ and $\varphi_E:\varphi^*E\stackrel{\sim}{\rightarrow}E$.
	\end{enumerate}
\end{defn}

On the representation-theory side, the appropriate generalization of $G_{K_\infty}$-representations on finite free $\Z_p$-modules is $\O_L$-local systems on $\Spec(\mc A/\pi)$. Recall that this means the following.
\begin{defn} (c.f. \cite[definition~8.1]{S}.)
	Let $X$ be a scheme, formal scheme, or adic space, and denote by $X_{et}$ the \'etale site of $X$.
	\begin{enumerate}
		\item[(1)] For $n\ge 1$, an $\O_L/\pi^n$-local system on $X_{et}$ is a sheaf of flat $\O_L/\pi^n$-modules on $X_{et}$ which is locally a constant sheaf associated to a finitely generated $\O_L/\pi^n$-module. We denote this category by $\Loc_{\O_L/\pi^n}(X)$.
		\item[(2)] An $\O_L$-local system on $X_{et}$ is an inverse system $(\mathbb{L}_n)_{n\ge 1}$ of $\O_L/\pi^n$-local systems on $X_{et}$ in which the transition maps induce isomorphisms $\mathbb{L}_{n + 1}/\pi^n\stackrel{\sim}{\rightarrow}\mathbb{L}_n$. We denote this category by $\Loc_{\O_L}(X)$. This identifies with the category of lisse $\hat{\O}_L$-sheaves on the pro-\'etale site $X_{proet}$.
		\item[(3)] Let $D_{lisse}^b(X,\O_L)$ be the subcategory of the derived category of $\hat\O_L$-modules on $X_{proet}$ spanned by objects $T$ which are locally bounded, derived $\pi$-complete, and have $H^i(X_{proet}, T/\pi)$ locally constant with finitely generated stalks. 
	\end{enumerate}
	When $X = \Spec R$ is affine, we simplify notation by writing $\Loc_{\O_L}(R)$ for $\Loc_{\O_L}(\Spec R)$ and similarly for $D_{lisse}^b$. 
\end{defn}
\begin{rmk}
	For a field $K$ we have equivalences $\Loc_{\O_L/\pi^n}(K)\cong \Rep_{\O_L/\pi^n}(G_K)$ and $\Loc_{\O_L}(K) \cong \Rep_{\O_L}(G_K)$ with the categories of continuous $G_K$-representations on finite free $\O_L/\pi^n$- or $\O_L$-modules
\end{rmk}

The main result of this section is as follows.

\begin{prop}\label{prop:perfection-base-change}
	Let $(A,I)$ be a bounded $L$-typical prism. Let $(A_{\perf}, IA_{\perf})$ be the perfection of $(A,I)$ as in proposition~\ref{prop:prism-perfection}. Then base change gives an equivalence
	\begin{align*}
		\Mod_{(A,I)}^{\varphi_q,et}&\longrightarrow \Mod_{(A_{\perf}, IA_{\perf})}^{\varphi_q,et} \\
		M&\mapsto A_{\perf}[\tfrac1I]^\wedge\otimes_{A[\frac1I]^\wedge} M.
	\end{align*}
	Both of these categories are in turn equivalent to $\Loc_{\O_L}(A[\tfrac1I]/\pi)$. We similarly have equivalences 
	\[D_{\perf}(A[\tfrac{1}{I}]^\wedge_{(\pi)})^{\phi = 1} \simeq D_{\perf}(A_{\perf}[\tfrac{1}{I}]^\wedge_{(\pi)})^{\phi = 1}\simeq D_{lisse}^b(A[\tfrac{1}{I}]/\pi,\O_L).\]
\end{prop}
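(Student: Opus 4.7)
The plan is to follow Wu's strategy for the $\Q_p$-typical case \cite{Wu}, transporting each step to the ramified setting by replacing $p$ with $\pi$ and Frobenius with $q$-Frobenius. Writing $\mc A := A[\tfrac{1}{I}]^\wedge_{(\pi)}$ and $\mc A_{\perf} := A_{\perf}[\tfrac{1}{I}]^\wedge_{(\pi)}$, the proof decomposes into three inputs: (i) a Katz--Fontaine-style equivalence $\Mod_{\mc A_{\perf}}^{\phi,et} \simeq \Loc_{\O_L}(\Spec(A_{\perf}[\tfrac{1}{I}]/\pi))$ when $(A,I)$ is perfect; (ii) $\pi$-complete faithfully flat descent along $\mc A \to \mc A_{\perf}$; and (iii) the observation that $A[\tfrac{1}{I}]/\pi \to A_{\perf}[\tfrac{1}{I}]/\pi$ is the perfection in characteristic $p$ and thus a universal homeomorphism, yielding an equivalence of \'etale sites and hence of local system categories.

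For (i), $A_{\perf}/\pi$ is a perfect $\F_q$-algebra by proposition~\ref{prop:perfect-d-algs-equiv}, so $\mc A_{\perf}$ is a $\pi$-adically complete $\O_L$-algebra with a lift of $q$-Frobenius and perfect mod-$\pi$ reduction. I would send an \'etale $\varphi_q$-module $M$ to the pro-\'etale sheaf on $\Spec(\mc A_{\perf}/\pi)$ whose sections on a pro-\'etale cover by perfectoid $R$ are $\varprojlim_n ((M \otimes_{\mc A_{\perf}} W_L(R))/\pi^n)^{\phi = 1}$, using Hensel-lifting of \'etale $\phi$-trivializations from the residue level through each $\pi^n$-level to check it is a lisse $\O_L$-sheaf. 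For (ii), $A \to A_{\perf}$ is $(\pi,I)$-completely faithfully flat (from its construction as the completed $\phi$-colimit of the $\pi$-torsion-free ring $A$ in proposition~\ref{prop:prism-perfection}), which passes through $[\tfrac{1}{I}]^\wedge_{(\pi)}$ to yield $\pi$-complete faithfully flat descent for $\mc A \to \mc A_{\perf}$; $\phi$-equivariant descent data along the \v{C}ech nerve then glue \'etale $\varphi_q$-modules over the perfection back to \'etale $\varphi_q$-modules over $(A,I)$. Combining the three inputs yields the non-derived equivalences in the statement, and the derived enhancement follows by running the same arguments on perfect complexes.

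The main obstacle will be input (i): one must verify that \'etale trivializations assemble coherently across all $\pi^n$-torsion levels into a genuine $\O_L$-local system rather than merely a compatible system of $\O_L/\pi^n$-local systems, and that the inverse functor -- sending a local system to a $\varphi_q$-module by descent along a trivializing pro-\'etale cover -- is well-defined and quasi-inverse to the construction above. The corresponding arguments in \cite{Wu} for the $\Q_p$-typical case should carry over once ramified Witt vectors, $\delta_L$-structures, and $q$-Frobenius replace their classical counterparts.
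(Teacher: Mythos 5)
There is a genuine gap at the heart of your input (ii). The equivalence $\Mod_{(A,I)}^{\varphi_q,et}\simeq\Mod_{(A,I)_{\perf}}^{\varphi_q,et}$ is not a faithfully flat descent statement, and your proposed mechanism does not produce one. First, the flatness claim is unjustified: $A$ need not be $\pi$-torsion-free (only $A_{\perf}$ is, by lemma~\ref{lem:perfect-implies-torsionfree}), and $\varinjlim_\phi A$ is flat over $A$ only when $\phi$ is flat, which is not part of the hypotheses on an $L$-typical prism. Second, and more fundamentally, even if $A\rightarrow A_{\perf}$ were $(\pi,I)$-completely faithfully flat, descent would identify $\Mod_{\mc A}^{\phi,et}$ with \'etale $\varphi_q$-modules over $\mc A_{\perf}$ \emph{equipped with descent data along the \v{C}ech nerve of} $\mc A\rightarrow\mc A_{\perf}$; an object of $\Mod_{\mc A_{\perf}}^{\phi,et}$ carries no such data, and the Frobenius structure $\phi_M$ does not canonically supply it. You would need to explain how to manufacture descent data from $\phi_M$ alone, and there is no such construction --- if there were, Katz's theorem would be unnecessary.

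The fix is already latent in your inputs (i) and (iii): the Katz equivalence between \'etale $\varphi_q$-modules and $\F_q$-local systems holds over an \emph{arbitrary} (not necessarily perfect) $\F_q$-algebra, which is exactly lemma~\ref{lem:lisse-sheaves-torsion-phi-modules}. The paper's proof runs d\'evissage to the $\pi$-torsion level, applies Katz on \emph{both} sides to reduce the claim to $\Loc_{\F_q}(A[\tfrac1I]/\pi)\simeq\Loc_{\F_q}(A_{\perf}[\tfrac1I]/\pi)$, and then concludes because $A/\pi[\tfrac1d]\rightarrow(\varinjlim_{\varphi_q}A/\pi)[\tfrac1d]\rightarrow(\varinjlim_{\varphi_q}A/\pi)^\wedge_{(d)}[\tfrac1d]$ induce equivalences of \'etale sites --- topological invariance for the perfection, and \cite[proposition~5.4.53]{GR} for the $d$-adic completion, a step entirely absent from your sketch and not automatic. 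So the universal homeomorphism in your (iii) is not a closing remark used to match up local-system categories; combined with Katz's theorem on the imperfect side, it \emph{is} the deperfection equivalence, and it replaces your descent argument entirely. Your input (i) is essentially correct (and matches remark~\ref{rmk:functor-description}), but it only handles the perfect side.
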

\begin{rmk}\label{rmk:functor-description}
	If $(A,I) = (W_L(R^\flat), \ker\theta)$ is a perfect $L$-typical prism, then the equivalence of the theorem is given by
	\begin{align*}
		\Mod_{(A,I)}^{\varphi_q,et}&\simeq \Loc_{\O_L}(R[\tfrac{1}{\pi}]) \\ 
		M&\mapsto \left(R[\tfrac{1}{\pi}]_{et}\ni S \mapsto \left(W_L(S^\flat)\otimes_{W_L(R[\tfrac1\pi]^\flat)} M/\pi^n\right)^{\phi = 1}\right)_{n\ge 1}.
	\end{align*}
	where $(-)^{\phi = 1}$ denotes taking fixed points for $\phi = \phi_{W_L(S^\flat)}\otimes \phi_M$. The same formula holds for the derived categories, with the tensor replaced by $\otimes^L$ and with the inverse system replaced with $R\lim$ of the inverse system.
\end{rmk}
\begin{rmk}
	Theorem~\ref{thm:kisin-ren-equiv} follows from proposition~\ref{prop:perfection-base-change}: taking $(A,I) = (\A_K^+,(q_n(\omega)))$, we get
	\[\Mod_{\A_K}^{\varphi_q,et}\simeq \Mod_{W_L(K_\infty^\flat)}^{\varphi_q,et} \simeq \Loc_{\O_L}(K_\infty^\flat)\simeq \Rep_{\O_L}(G_{K_\infty}).\]
	We will discuss this point further in \secsymb~\ref{ssec:phi-G-modules-F-crystals}.
\end{rmk}

The key input to the proof of proposition~\ref{prop:perfection-base-change} is the following comparison between $\pi$-torsion $\varphi_q$-modules and $\F_q$-local systems. 
\begin{lem}\label{lem:lisse-sheaves-torsion-phi-modules}
	Let $R$ be a $\F_q$-algebra. Then there is an equivalence of categories
	\begin{align*}
		\Mod_{R}^{\varphi_q,et}&\simeq\Loc_{\F_q}(R) \\
		M&\mapsto (R_{et}\ni S\mapsto S\otimes_R M)^{\varphi_q = 1} \\
		(\O_{R,et}\otimes_{\F_q}T)(R) &\mapsfrom T.
	\end{align*}
	The corresponding derived statement $D_{\perf}(R)^{\varphi_q = 1}\simeq D_{lisse}^b(R, \F_q)$ also holds.
\end{lem}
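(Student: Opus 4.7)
The plan is to establish mutually inverse equivalences, with the technical heart being an Artin-Schreier style trivialization showing that every étale $\varphi_q$-module becomes constant after a finite étale cover. The direction $T \mapsto (\O_{R,et} \otimes_{\F_q} T)(R)$ is straightforward: if $T$ is trivialized by a finite étale cover $R \to S$, then the global sections form a finite projective $R$-module by étale descent, and the $\varphi_q$-semilinear action inherited from the $q$-Frobenius on $\O_R$ is étale-locally the identity on $S^n$, so its linearization is an isomorphism.

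For the other direction, given $M \in \Mod_R^{\varphi_q, et}$, I would show that the presheaf $S \mapsto (S \otimes_R M)^{\varphi_q = 1}$ on $(\Spec R)_{et}$ is étale-locally the constant sheaf associated to $\F_q^n$. After passing to a Zariski cover I may assume $M$ is free of rank $n$; fix a basis in which $\varphi_M$ is represented by some $B \in GL_n(R)$. The finite $R$-algebra
\[ S = R[x_1, \ldots, x_n]/\bigl(x_i^q - (Bx)_i : 1 \le i \le n\bigr) \]
is étale over $R$, since $\partial(x_i^q)/\partial x_j = q x_i^{q-1}\delta_{ij} = 0$ in characteristic $p$, so the Jacobian of the defining equations equals $-B \in GL_n(R)$. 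Over $S$, the tuple $(x_1, \ldots, x_n)$ gives a rank-$n$ $\F_q$-basis of $(S \otimes_R M)^{\varphi_q = 1}$, so the sheaf is étale-locally isomorphic to the constant sheaf $\F_q^n$. The mutual inverse property is then checked étale-locally where both sides trivialize, reducing to the tautology $(S^n)^{\varphi_q = 1} = \F_q^n$.

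For the derived statement, I would use the Artin-Schreier sequence
\[ 0 \longrightarrow \F_q \longrightarrow \O_{R,et} \xrightarrow{\varphi_q - 1} \O_{R,et} \longrightarrow 0 \]
of sheaves on $(\Spec R)_{et}$, whose right exactness is the trivialization above applied to the unit $\varphi_q$-module. Given $E \in D_{\perf}(R)^{\varphi_q = 1}$, tensoring this sequence with $E$ and taking étale cohomology expresses the derived étale invariants as the fiber of $\varphi_q - 1 : E \to E$, producing an object of $D_{lisse}^b(R, \F_q)$; conversely, $T \mapsto R\Gamma((\Spec R)_{et}, \O_{R,et} \otimes^L_{\F_q} T)$ recovers a perfect $\varphi_q$-fixed complex, and the two constructions are mutually inverse by standard descent combined with the already-established abelian case. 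The main obstacle I anticipate is justifying the étaleness of the cover $S$ in the needed generality (with $R$ merely an $\F_q$-algebra and no global freeness of $M$), together with verifying that the sheafified invariants genuinely define an $\F_q$-local system rather than a mere sheaf of flat $\F_q$-modules; both reduce to the étale-local picture via Zariski covers trivializing $M$.
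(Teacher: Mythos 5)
Your proposal is correct and follows essentially the same route as the paper, which simply cites Katz's Artin--Schreier--Lang argument (\cite[proposition~4.1.1]{K}) for the abelian equivalence and the reduction in \cite[proposition~3.6]{BScrys} for the derived statement; your étale cover $S = R[x_1,\dots,x_n]/(x_i^q - (Bx)_i)$ with Jacobian $-B$ is exactly Katz's construction. One small correction: over $S$ the tautological tuple $(x_1,\dots,x_n)$ is a \emph{single} $\varphi_q$-fixed element, not yet an $\F_q$-basis of the rank-$n$ space of solutions; to trivialize the sheaf you should observe that the scheme of solutions is a finite étale $\F_q$-vector space scheme of order $q^n$ over $R$ (closed under addition and $\F_q$-scaling since $x\mapsto x^q$ is additive and fixes $\F_q$), which becomes the constant scheme $\underline{\F_q^n}$ after a further finite étale cover (e.g.\ a Galois closure or iterating the base change), after which the mutual-inverse check proceeds as you describe.
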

\begin{proof}
	Using the same argument in \cite[proposition~3.6]{BScrys}, we reduced the derived statement to the statement $\Mod_R^{\varphi_q,et}\simeq \Loc_{\F_q}(R)$. But this is well known and due originally to Katz \cite[proposition~4.1.1]{K}.
\end{proof}

\begin{proof}[Proof of proposition~\ref{prop:perfection-base-change}]
	We explain the proof for $\Mod_{(A,I)}^{\varphi_q,et}$, with the derived version being identical. First, we show that the base change functor is an equivalence. By the $\pi$-adic completeness of $A[\tfrac1I]^\wedge_{(\pi)}$ and d\'evissage, we reduce to the $\pi$-torsion case, i.e. to showing that base change gives an equivalence
	\[\Mod_{A[\tfrac1I]/\pi}^{\varphi_q,et}\stackrel{\sim}{\longrightarrow}\Mod_{A_{\perf}[\tfrac1I]/\pi}^{\varphi_q,et}.\]
	Applying lemma~\ref{lem:lisse-sheaves-torsion-phi-modules} with $R = A[\tfrac1I]/\pi$, we are reduced to showing that base change gives an equivalence
	\[\Loc_{\F_q}(A/\pi[\tfrac1I])\simeq \Loc_{\F_q}(A_{\perf}/\pi[\tfrac1I]).\]
	As $I$ is a Cartier divisor, we may assume that $I$ is generated by a nonzerodivisor $d\in A$. Then this equivalence holds since the maps
	\[A/\pi[\tfrac{1}{d}]\longrightarrow (\varinjlim_{\varphi_q}A/\pi)[\tfrac{1}{d}]\longrightarrow (\varinjlim_{\varphi_q} A/\pi)^\wedge_{(d)}[\tfrac{1}{d}]\] 
	induce equivalences of \'etale sites (the first by topological invariance of the \'etale site and the second by \cite[proposition~5.4.53]{GR}).

	For the identification with $\Loc_{\O_L}(A[\tfrac{1}{I}])$, note that as $A_{\perf}[\tfrac{1}{I}]^\wedge_{(\pi)}$ is a $\pi$-adically complete perfect $\delta_L$-algebra, we have $A_{\perf}[\tfrac{1}{I}]^\wedge = W_L(A_{\perf}[\tfrac{1}{I}]/\pi)$ by proposition~\ref{prop:perfect-d-algs-equiv}. Thus by $\pi$-adic completeness and lemma~\ref{lem:lisse-sheaves-torsion-phi-modules} we get
	\[\Mod_{(A_{\perf}, I)}^{\varphi_q,et}\simeq \Loc_{\O_L}(A_{\perf}[\tfrac{1}{I}]/\pi)\]
	which identifies in turn with $\Loc_{\O_L}(A[\tfrac{1}{I}]/\pi)$ by the same argument as above.
\end{proof}

As a corollary of proposition~\ref{prop:perfection-base-change}, we get that the equivalence $D_{\perf}(A[\tfrac{1}{I}]^\wedge_{(\pi)})^{\phi = 1}\simeq D_{\perf}(A_{\perf}[\tfrac1I]^\wedge_{(\pi)})^{\phi = 1}$ also holds ``on the level of objects.''
\begin{cor}\label{cor:perfection-base-change-phi-invariants}
	Let $(A,I)$ be a bounded $L$-typical prism, and let $M\in D_{\perf}(A[\tfrac1I]^\wedge_{(\pi)})^{\phi = 1}$. Then the canonical map
	\[M^{\phi = 1}\longrightarrow (A_{\perf}[\tfrac1I]^\wedge_{(\pi)}\otimes_{A[\tfrac1I]^\wedge_{(\pi)}}M)^{\phi = 1}\]
	is an isomorphism.
\end{cor}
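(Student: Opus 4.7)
The plan is to use Proposition \ref{prop:perfection-base-change} and Lemma \ref{lem:lisse-sheaves-torsion-phi-modules} to reduce the claim, modulo $\pi$, to the fact that the \'etale cohomology of $\Spec R$ with coefficients in a lisse $\F_q$-sheaf agrees with that of its perfection. Write $\mc A = A[\tfrac{1}{I}]^\wedge_{(\pi)}$ and $\mc A_\infty = A_{\perf}[\tfrac{1}{I}]^\wedge_{(\pi)}$, set $N = \mc A_\infty \otimes^L_{\mc A} M$, and let $C = \mathrm{cofib}(M \to N)$ with its inherited $\phi$-action. Applying the triangulated functor $(-)^{\phi = 1} = \mathrm{fib}(\phi - 1)$ to the fiber sequence $M \to N \to C$ yields a fiber sequence $M^{\phi = 1} \to N^{\phi = 1} \to C^{\phi = 1}$, so it suffices to prove the vanishing $C^{\phi = 1} = 0$.

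Because $M$ is a perfect complex over the $\pi$-adically complete ring $\mc A$, both $M$ and $N$ are derived $\pi$-complete; hence so are $C$ and $C^{\phi = 1}$ (the latter being a fiber of a self-map of $C$). By d\'evissage, the vanishing $C^{\phi = 1} = 0$ is equivalent to $(C^{\phi = 1})/\pi = 0$. Since reduction modulo $\pi$ commutes with $\mathrm{fib}(\phi - 1)$, this in turn is equivalent to the natural map
\[(M/\pi)^{\varphi_q = 1} \longrightarrow (N/\pi)^{\varphi_q = 1}\]
being a quasi-isomorphism in $D(\mc A/\pi)$.

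Set $R = \mc A/\pi$ and $R_\infty = \mc A_\infty/\pi$, the latter being the classical $\overline{I}$-adic completion of the na\"ive perfection of $R$. By the derived form of Lemma \ref{lem:lisse-sheaves-torsion-phi-modules}, the object $M/\pi \in D_{\perf}(R)^{\varphi_q = 1}$ corresponds to some $\overline{T} \in D_{lisse}^b(R, \F_q)$, and the Artin--Schreier-type fiber sequence $(M/\pi)^{\varphi_q=1} \to M/\pi \xrightarrow{\varphi_q - 1} M/\pi$ identifies $(M/\pi)^{\varphi_q = 1}$ with $R\Gamma(\Spec(R)_{et}, \overline{T})$. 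Analogously, $N/\pi$ corresponds under this equivalence to the pullback of $\overline{T}$ along $\Spec R_\infty \to \Spec R$, and $(N/\pi)^{\varphi_q = 1}$ computes $R\Gamma(\Spec(R_\infty)_{et}, \overline{T}|_{R_\infty})$. As observed in the proof of Proposition \ref{prop:perfection-base-change}, the map $\Spec R_\infty \to \Spec R$ induces an equivalence of \'etale sites (topological invariance along the direct limit under $\varphi_q$, combined with \cite[proposition~5.4.53]{GR} along the $\overline{I}$-adic completion), so the two cohomologies canonically agree.

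The main obstacle is the identification of $(M/\pi)^{\varphi_q = 1}$ with $R\Gamma(\Spec(R)_{et}, \overline{T})$ under the derived form of Katz's equivalence. One checks that the presheaf $S \mapsto \mathrm{fib}(\varphi_q - 1 : S \otimes^L_R M/\pi \to S \otimes^L_R M/\pi)$ on $R_{et}$ is already an \'etale sheaf, its sections being computed by the lisse $\F_q$-sheaf $\overline{T}$ via Lemma \ref{lem:lisse-sheaves-torsion-phi-modules} applied over each $S$; taking global sections then simultaneously recovers $R\Gamma(\Spec(R)_{et}, \overline{T})$ and the complex $\mathrm{fib}(\varphi_q - 1 : M/\pi \to M/\pi) = (M/\pi)^{\varphi_q = 1}$.
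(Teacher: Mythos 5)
Your proof is correct, but it takes a genuinely different route from the paper's. The paper argues purely formally: it embeds $D_{\perf}(\mc A)^{\phi=1}$ into $D(\mc A[F])$ for the twisted polynomial ring $\mc A[F]$, observes that $M^{\phi=1} = R\mathrm{Hom}(\mc A[F]/(1-F), M)$ is (degree by degree) corepresented by the unit object of the category, and concludes because the base-change functor of proposition~\ref{prop:perfection-base-change} is an equivalence carrying unit to unit, hence preserves these Hom-sets. Your argument instead passes to the cofiber, invokes derived $\pi$-completeness and derived Nakayama to reduce modulo $\pi$, and then identifies both $(M/\pi)^{\varphi_q=1}$ and $(N/\pi)^{\varphi_q=1}$ with $R\Gamma(\Spec(R)_{et},\overline{T})$ via the Artin--Schreier realization of the derived Katz equivalence, using the equivalences of \'etale sites from the proof of proposition~\ref{prop:perfection-base-change}. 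Both proofs ultimately rest on the same inputs, but the paper's is shorter and entirely categorical once the equivalence of categories is known, whereas yours makes the mechanism visible (both sides compute the \'etale cohomology of the same lisse sheaf on canonically equivalent sites) and in effect re-derives the affine \'etale comparison that the paper establishes separately in corollary~\ref{cor:etale-comparison-perfectoid}. The one step you should tighten is the claim that the presheaf $S\mapsto \mathrm{fib}(\varphi_q-1: S\otimes^L_R M/\pi\to S\otimes^L_R M/\pi)$ ``is already a sheaf'': the clean statement is that the quasi-coherent \'etale sheaf attached to a perfect complex on an affine has no higher cohomology, so the Artin--Schreier fiber sequence of sheaves $\overline{T}\to\widetilde{M/\pi}\xrightarrow{\varphi_q-1}\widetilde{M/\pi}$ computes $R\Gamma(\Spec(R)_{et},\overline{T})$ as $\mathrm{fib}(\varphi_q-1)$ on global sections; with that phrasing the argument is complete.
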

\begin{proof}
	Our proof will follow \cite[lemma~6.3]{Guo}. First we recall how $M^{\phi = 1}$ is defined. In general, let $B$ be a ring with an endomorphism $\varphi$ and let $B[F]$ be the noncommutative polynomial ring with relation $Fb = \varphi(b)F$. Then we get a fully faithful embedding $D_{\perf}(B)^{\varphi = 1}\hookrightarrow D(B[F])$ into the derived category by sending $(N,\varphi_N:\varphi^*N\stackrel{\sim}{\rightarrow}N)\in D_{\perf}(B)^{\varphi = 1}$ to the $B$-algebra $N$ with $F$-action given by $N\rightarrow (\varphi_N)_*N$ (this is the normal way of seeing an element of $D_{\perf}(B)^{\varphi = 1}$ as being a $B$-module with a $\varphi$-semilinear endomorphism). Then $N^{\varphi = 1}$ is defined by
	\[N^{\varphi = 1} := R\mathrm{Hom}(B[F]/(1 - F)B[F], N).\]

	Thus, setting $\mc A = A[\tfrac1I]^\wedge_{(\pi)}$ and $\mc A_{\perf} = A_{\perf}[\tfrac1I]^\wedge_{(\pi)}$ to simplify notation, our goal is to show that
	\[R\mathrm{Hom}(\mc A[F]/(1 - F), M)\longrightarrow R\mathrm{Hom}(\mc A_{\perf}[F]/(1 - F), \mc A_{\perf}\otimes_{\mc A} M)\]
	is an isomorphism. As this can be checked on cohomology and $D_{\perf}(\mc A)$ is closed under shifting, it thus suffices to show that
	\[\mathrm{Hom}(\mc A[F]/(1 - F), M)\longrightarrow \mathrm{Hom}(\mc A_{\perf}[F]/(1 - F), \mc A_{\perf}\otimes_{\mc A} M)\]
	is an isomorphism. But, up to fully faithful embedding, the hom-set on the right comes from the one of the left by applying the functor $M\mapsto \mc A_{\perf}\otimes_{\mc A} M$, which is an equivalence by proposition~\ref{prop:perfection-base-change}. Thus the hom-sets are isomorphic as desired.
\end{proof}

\subsection{Laurent \texorpdfstring{$F$}{F}-crystals}\label{ssec:laurent-F-crystals}
For a bounded formal scheme $X$ adic over $\Spf \O_L$, denote by $\O_\Prism$ the presheaf $(A,I)\mapsto A$ on the $L$-typical prismatic site $X_{\Prism_L}$. By $(\pi, I)$-completely faithfully flat descent (see \cite[corollary~3.12]{BS}), $\O_\Prism$ is a sheaf, which we take as the structure sheaf for $X_{\Prism_L}$. It has a natural endomorphism $\phi$ lifting $\varphi_q$ on $\O_\Prism/\pi$ and an ideal sheaf $\mc I\subseteq \O_{\Prism}$ given by $(A,I)\mapsto I$. We will also make use of the sheaf $\O_{\Prism,\perf}$, which sends $(A,I)\mapsto A_{\perf}$.

Denote by $\O_\Prism[\tfrac{1}{\mc I}]^\wedge_{(\pi)}$ the $\pi$-adic completion of the localization of $\O_\Prism$ away from $\mc I$ (i.e. locally inverting a generator of $\mc I$; recall that if $(A,I)$ is a prism then $I$ is a Cartier divisor hence locally principal). 

\begin{defn} Let $X$ be a bounded formal scheme adic over $\Spf \O_L$.
	\begin{enumerate}
		\item[(1)] A Laurant $F$-crystal is a finite locally free $\O_{\Prism}[\tfrac{1}{\mc I}]^\wedge_{(\pi)}$-module $\mc M$ over $X_{\Prism_L}$ equipped with an isomorphism
		\[F:\phi^*\mc M\stackrel{\sim}{\longrightarrow} \mc M.\]
		As before, we abusively write $\phi_{\mc M}:\mc M\rightarrow \mc M$ also for the resulting $\phi$-semilinear endomorphism. 
		\item[(2)] Write $\Vect(\mathcal X, \O)$ for the category of vector bundles on a ringed topos $(\mathcal X, \O)$, we can describe the category of Laurent $F$-crystals over $X_{\Prism_L}$ as $\Vect(X_{\Prism_L}, \O_\Prism[\tfrac{1}{\mc I}]^\wedge_{\pi})^{\phi=1}$, the $\phi$-fixed objects of $\Vect(X_{\Prism_L}, \O_\Prism[\tfrac{1}{\mc I}]^\wedge_{(\pi)})$.
		\item[(3)] Similarly, write $D_{\perf}(\mc X, \O)$ for the category of perfect complexes on $(\mc X, \O)$, i.e. objects $E$ in the derived category of $\O$-modules over $\mc X$ such that there is a cover $\{U_i\}$ of $\mc X$ with each $E|_{U_i}$ a perfect complex of $\O(U_i)$-modules. Let $D_{\perf}(\mc X, \O)^{\phi = 1}$ denote corresponding category of $\phi$-fixed objects.
	\end{enumerate}
\end{defn}

Given a Laurent $F$-crystal $\mc M$ and an object $(A,I)\in X_{\Prism_L}$, we have that $\mc M(A,I)\in \Mod_{(A,I)}^{\varphi_q,et}$ is an \'etale $\varphi_q$-module. We further have the following.
\begin{lem}\label{lem:comp-ff-descent-vect}
	There is an equivalence
	\begin{align*}
		\Vect(X_{\Prism_L}, \O_{\Prism}[\tfrac{1}{\mc I}]^\wedge_{(\pi)})^{\phi = 1}&\stackrel{\sim}{\longrightarrow} \lim_{(A,I)\in X_{\Prism_L}}\Mod_{(A,I)}^{\varphi_q,et} \\
		\mc M&\mapsto (\mc M(A,I))_{(A,I)\in X_{\Prism_L}}.
	\end{align*}
	Similarly $D_{\perf}(X_{\Prism_L}, \O_{\Prism}[\tfrac{1}{\mc I}]^\wedge_{(\pi)})^{\phi = 1} \simeq \lim_{(A,I)\in X_{\Prism_L}} D_{\perf}(A[\tfrac{1}{I}]^\wedge_{(\pi)})^{\phi = 1}$. A similar result holds with $\O_{\Prism,\perf}$ replacing $\O_{\Prism}$ (and $\Mod_{(A,I)_{\perf}}^{\varphi_q,et}$ replacing $\Mod_{(A,I)}^{\varphi_q,et}$).
\end{lem}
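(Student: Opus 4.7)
The plan is to prove the equivalence as a standard descent lemma, with the real content being that the presheaf of \'etale $\varphi_q$-modules is a stack on $X_{\Prism_L}$ for the flat topology. First I would check that the functor is well-defined: given a Laurent $F$-crystal $\mc M$, evaluating at $(A,I) \in X_{\Prism_L}$ produces a finite projective $A[\tfrac{1}{I}]^\wedge_{(\pi)}$-module $\mc M(A,I)$ equipped with an isomorphism $\phi^*\mc M(A,I) \stackrel{\sim}{\to} \mc M(A,I)$, which is exactly the data of an object of $\Mod_{(A,I)}^{\varphi_q,et}$; compatibility along morphisms $(A,I)\to (B,J)$ is built into the crystal structure.

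To construct an inverse, I would use the general principle that a vector bundle on a ringed site is equivalent to a compatible system of vector bundles on each object, provided the presheaf of vector bundles satisfies descent for covers. Thus the sheaf condition to check is $(\pi, I)$-completely faithfully flat descent for finite projective modules over $A[\tfrac{1}{I}]^\wedge_{(\pi)}$. A cover $(A,I) \to (B,J)$ in $X_{\Prism_L}$ has $A \to B$ being $(\pi, I)$-completely faithfully flat, and since $I$ becomes a unit after inversion, the $(\pi, I)$-adic and $(\pi)$-adic topologies coincide on $A[\tfrac{1}{I}]$; so the induced map $A[\tfrac{1}{I}]^\wedge_{(\pi)} \to B[\tfrac{1}{J}]^\wedge_{(\pi)}$ is $(\pi)$-completely faithfully flat. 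The required descent for finite projective modules (and, in the derived version, perfect complexes) along such maps is standard (e.g.\ \cite[corollary~3.12]{BS}). The $F$-structure descends for free since $\phi$ is a morphism of the sheaves involved.

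The derived analogue is identical, with finite projective modules replaced by perfect complexes throughout. For the variant with $\O_{\Prism, \perf}$, the same descent argument applies --- the key point is that $\O_{\Prism, \perf}$ is itself a sheaf, since the perfection functor $(A,I) \mapsto (A,I)_{\perf}$ is compatible with faithfully flat covers --- so we obtain an equivalence with $\lim_{(A,I)} \Mod_{(A,I)_{\perf}}^{\varphi_q,et}$ tautologically, and this limit agrees with the original one via proposition~\ref{prop:perfection-base-change} applied pointwise.

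The main obstacle I anticipate is the bookkeeping needed to verify that $(\pi, I)$-completely faithfully flat descent on the full prismatic structure sheaf specializes correctly to $(\pi)$-completely faithfully flat descent for the localized-completed sheaf $\O_\Prism[\tfrac{1}{\mc I}]^\wedge_{(\pi)}$. Once that preservation of descent is established by the short diagram chase indicated above, the remaining steps are formal consequences of the sheaf property and, for the perfect variant, proposition~\ref{prop:perfection-base-change}.
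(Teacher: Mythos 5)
Your approach is the same as the paper's: this is a descent statement, proved by exhibiting the assignment $(A,I)\mapsto \Mod^{\varphi_q,et}_{(A,I)}$ as a stack for the flat topology on $X_{\Prism_L}$, with the $F$-structure and the $\O_{\Prism,\perf}$ variant descending formally. The one point that carries actual content, however, is exactly the one you declare ``standard'': descent of finite projective modules (resp.\ perfect complexes) over $A[\tfrac{1}{I}]^\wedge_{(\pi)}$ along covers. After localizing and $\pi$-completing, a cover induces a map that is only $\pi$-\emph{completely} faithfully flat, not classically faithfully flat, so classical fppf descent does not apply directly; and \cite[corollary~3.12]{BS}, which you cite for this, establishes the sheaf property of $\O_\Prism$ itself, not descent of vector bundles over $\O_\Prism[\tfrac{1}{\mc I}]^\wedge_{(\pi)}$. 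The paper closes this gap by d\'evissage: reduce modulo $\pi^n$, where the transition maps become honestly faithfully flat and one can invoke flat descent for perfect complexes (\cite[theorem~5.8]{Matthew-desc}, following \cite[proposition~2.7]{BScrys}), and then recover the $\pi$-complete statement by passing to the limit, using that finite projective modules over these $\pi$-complete rings are determined by their reductions. With that step made explicit your argument is complete; the remaining points (well-definedness of the evaluation functor, compatibility with the perfection functor via proposition~\ref{prop:perfection-base-change}) are formal, as you say.
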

\begin{proof}
	The proof is the same as \cite[proposition~2.7]{BScrys}: one can reduce via devissage to the $\pi$-torsion case, where the result follows from the descent results in \cite[theorem~5.8]{Matthew-desc}.
\end{proof}

We regard Laurent $F$-crystals as (geometrically) relativizing \'etale $\varphi_q$-modules over the base formal scheme $X$. We then have the following analogues of proposition~\ref{prop:perfection-base-change} and corollary~\ref{cor:perfection-base-change-phi-invariants} (except without the local systems, which will appear in theorem~\ref{thm:F-crystals-local-systems}).

\begin{thm}\label{thm:F-crystal-base-change}
	Let $X$ be a bounded formal scheme adic over $\Spf\O_L$.
	\begin{enumerate}
		\item[(1)] Base change induces an equivalence of categories
		\begin{align*}
			\Vect(X_{\Prism_L},\O_\Prism[\tfrac{1}{\mc I}]^\wedge_{(\pi)})^{\phi = 1}&\stackrel{\sim}{\longrightarrow}\Vect(X_{\Prism_L},\O_{\Prism,\perf}[\tfrac{1}{\mc I}]^\wedge_{(\pi)})^{\phi = 1} \\
			\mc M&\mapsto \O_{\Prism,\perf}[\tfrac{1}{\mc I}]^\wedge_{(\pi)}\otimes_{\O_\Prism[\tfrac{1}{\mc I}]^\wedge_{(\pi)}}\mc M
		\end{align*}
		and the same holds with $D_{\perf}$ replacing $\Vect$.
		\item[(2)] For $\mc M\in D_{\perf}(X_{\Prism_L}, \O_\Prism[\tfrac{1}{\mc I}]^\wedge_{(\pi)})^{\phi = 1}$, the canonical map
		\[\mc M^{\phi = 1}\longrightarrow  (\O_{\Prism,\perf}[\tfrac{1}{\mc I}]^\wedge_{(\pi)}\otimes_{\O_\Prism[\tfrac{1}{\mc I}]^\wedge_{(\pi)}}\mc M)^{\phi = 1}\]
		is an isomorphism.
	\end{enumerate}
\end{thm}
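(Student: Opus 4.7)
\medskip

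\noindent\textbf{Proof proposal.} The plan is to reduce both parts to the corresponding pointwise statements for individual bounded $L$-typical prisms, namely proposition \ref{prop:perfection-base-change} and corollary \ref{cor:perfection-base-change-phi-invariants}, using the descent description of Laurent $F$-crystals from lemma \ref{lem:comp-ff-descent-vect}.

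For part (1), lemma \ref{lem:comp-ff-descent-vect} yields identifications
\[\Vect(X_{\Prism_L}, \O_\Prism[\tfrac{1}{\mc I}]^\wedge_{(\pi)})^{\phi = 1} \simeq \lim_{(A,I)\in X_{\Prism_L}}\Mod_{(A,I)}^{\varphi_q,et},\]
and similarly for $\O_{\Prism,\perf}$ with $\Mod_{(A,I)_{\perf}}^{\varphi_q,et}$ on the right. The base change functor of the statement is compatible with evaluation on each $(A,I)$: a Laurent $F$-crystal is determined by its sections $\mc M(A,I)$, and the base-changed crystal has sections $\mc M(A,I)\otimes_{A[\tfrac{1}{I}]^\wedge_{(\pi)}} A_{\perf}[\tfrac{1}{I}]^\wedge_{(\pi)}$, since this is the universal description of $\O_{\Prism,\perf}[\tfrac{1}{\mc I}]^\wedge_{(\pi)}\otimes \mc M$. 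By proposition \ref{prop:perfection-base-change}, each of these pointwise base change functors is an equivalence, so their limit over $X_{\Prism_L}$ is an equivalence as well. The identical argument with $D_{\perf}(A[\tfrac{1}{I}]^\wedge_{(\pi)})^{\phi = 1}$ replacing $\Mod_{(A,I)}^{\varphi_q,et}$ and the derived version of lemma \ref{lem:comp-ff-descent-vect} gives the statement for $D_{\perf}$.

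For part (2), recall from the proof of corollary \ref{cor:perfection-base-change-phi-invariants} that $\phi$-invariants are computed as a derived Hom: setting $\mc B = \O_\Prism[\tfrac{1}{\mc I}]^\wedge_{(\pi)}$, one has
\[\mc M^{\phi = 1} = R\mathcal{H}om_{\mc B[F]}\bigl(\mc B[F]/(1 - F)\mc B[F],\, \mc M\bigr),\]
and analogously after base change to $\mc B_{\perf} = \O_{\Prism,\perf}[\tfrac{1}{\mc I}]^\wedge_{(\pi)}$. Because derived internal Hom of quasi-coherent sheaves can be computed sectionwise on affine objects of the site, the sections of $\mc M^{\phi = 1}$ over $(A,I)\in X_{\Prism_L}$ compute $\mc M(A,I)^{\phi = 1}$ in the sense of corollary \ref{cor:perfection-base-change-phi-invariants}, and similarly for $(\mc B_{\perf}\otimes_{\mc B}\mc M)^{\phi = 1}$. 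Applying corollary \ref{cor:perfection-base-change-phi-invariants} to each $\mc M(A,I)\in D_{\perf}(A[\tfrac{1}{I}]^\wedge_{(\pi)})^{\phi = 1}$ then yields that the canonical map is a quasi-isomorphism on every section, hence is an isomorphism in the derived category of $\mc B$-modules on $X_{\Prism_L}$.

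The main point requiring care is the bookkeeping in part (2), namely that the sheaf-theoretic formation of $\phi$-fixed points genuinely commutes with evaluation on objects of $X_{\Prism_L}$; once this is in place (which is the standard fact that derived Homs out of a perfect object are computed sectionwise on the site), the rest is a direct application of the pointwise results. No further nontrivial input beyond propositions \ref{prop:perfection-base-change} and \ref{cor:perfection-base-change-phi-invariants} and lemma \ref{lem:comp-ff-descent-vect} should be needed.
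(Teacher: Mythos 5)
Your proposal is correct and follows essentially the same route as the paper: part (1) is exactly the paper's argument of transporting the base-change functor through the equivalences of lemma~\ref{lem:comp-ff-descent-vect} and applying proposition~\ref{prop:perfection-base-change} objectwise, and part (2) is the paper's reduction to sections over individual prisms followed by corollary~\ref{cor:perfection-base-change-phi-invariants}. The extra care you take in part (2) about $\phi$-fixed points commuting with evaluation (via the $R\mathrm{Hom}$ description, or equivalently because $(-)^{\phi=1}$ is a fibre and hence commutes with the exact evaluation functors) is exactly the point the paper leaves implicit.
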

\begin{proof}
	For part (1), we have the following commutative diagram. 
	\begin{center}
		\begin{tikzcd}
			\Vect(X_{\Prism_L},\O_\Prism[\tfrac{1}{\mc I}]^\wedge_{(\pi)})^{\phi = 1}\ar{ddd}\ar[dashed]{rrr} &[-80pt] & &[-90pt]\Vect(X_{\Prism_L},\O_{\Prism,\perf}[\tfrac{1}{\mc I}]^\wedge_{(\pi)})^{\phi = 1}\ar{ddd} \\
				& \mc M\ar[mapsto]{d}\ar[dashed,mapsto]{r} & \O_{\Prism,\perf}[\tfrac{1}{\mc{I}}]^\wedge_{(\pi)}\otimes_{\O_{\Prism}[\tfrac{1}{\mc I}]^\wedge_{(\pi)}} \mc M\ar[mapsto]{d}\\
				& (\mc M(A,I))_{(A,I)\in X_{\Prism_L}}\ar[mapsto]{r} & \left(A_{\perf}[\tfrac1I]^\wedge_{(\pi)}\otimes_{A[\tfrac1I]^\wedge_{(\pi)}}\mc M(A,I)\right)_{(A,I)\in X_{\Prism_L}}\\
			\displaystyle\lim_{(A,I)\in X_{\Prism_L}} \Mod_{(A,I)}^{\varphi_q,et}\ar{rrr} &&&\displaystyle\lim_{(A,I)\in X_{\Prism_L}^{\perf}} \Mod_{(A,I)_{\perf}}^{\varphi_q,et}
		\end{tikzcd}
	\end{center}
	By lemma~\ref{lem:comp-ff-descent-vect} the vertical arrows are equivalences of categories, and the bottom horizontal arrow is an equivalence by proposition~\ref{prop:perfection-base-change}. The same holds replacing $\Vect$ with $D_{\perf}$ and $\Mod^{\varphi_q,et}_{(A,I)}$ with $D_{perf}(A[\tfrac1I]^\wedge_{(\pi)})$. For part (2), we can again check on individual prisms $(A,I)\in X_{\Prism_L}$, in which case the result follows from corollary~\ref{cor:perfection-base-change-phi-invariants}.
\end{proof}
Let $X_{\Prism_L}^{\perf}$ denote the subsite of $X_{\Prism_L}$ consisting of perfect $L$-typical prisms.
\begin{cor}\label{cor:F-crystal-perfect-site}
	For $X$ a bounded formal scheme adic over $\Spf\O_L$ we have
	\[\Vect(X_{\Prism_L},\O_\Prism[\tfrac1{\mc I}]^\wedge_{(\pi)})^{\phi = 1} \simeq \lim_{(A,I)\in X_{\Prism_L}^{\perf}}\Mod_{(A,I)}^{\varphi_q,et}\]
	and similarly for $D_{\perf}$. 
\end{cor}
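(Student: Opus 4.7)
The plan is to combine Lemma~\ref{lem:comp-ff-descent-vect} with Proposition~\ref{prop:perfection-base-change} and the universal property of perfection (Proposition~\ref{prop:prism-perfection}). Lemma~\ref{lem:comp-ff-descent-vect} already identifies
\[\Vect(X_{\Prism_L},\O_\Prism[\tfrac{1}{\mc I}]^\wedge_{(\pi)})^{\phi=1}\simeq \lim_{(A,I)\in X_{\Prism_L}}\Mod_{(A,I)}^{\varphi_q,et},\]
so it suffices to show that the obvious restriction functor
\[\mathrm{res}:\lim_{(A,I)\in X_{\Prism_L}}\Mod_{(A,I)}^{\varphi_q,et}\longrightarrow \lim_{(A,I)\in X_{\Prism_L}^{\perf}}\Mod_{(A,I)}^{\varphi_q,et}\]
is an equivalence.

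I would construct an explicit quasi-inverse $\mathrm{ext}$. Given a compatible family $(N_{(B,J)})_{(B,J)\in X_{\Prism_L}^{\perf}}$, define $\mathrm{ext}(N)_{(A,I)}$, for arbitrary $(A,I)\in X_{\Prism_L}$, to be the \'etale $\varphi_q$-module over $(A,I)$ which, under the base change equivalence of Proposition~\ref{prop:perfection-base-change}, corresponds to $N_{(A,I)_{\perf}}$. For functoriality, any morphism $(A,I)\to (A',I')$ in $X_{\Prism_L}$ induces a morphism of perfections $(A,I)_{\perf}\to (A',I')_{\perf}$ by the universal property in Proposition~\ref{prop:prism-perfection}(2); the compatibility of $\mathrm{ext}(N)$ under this morphism follows from the compatibility of $N$ under the induced morphism on perfections, together with the naturality in $(A,I)$ of the base change equivalence of Proposition~\ref{prop:perfection-base-change}.

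The composites $\mathrm{res}\circ\mathrm{ext}$ and $\mathrm{ext}\circ\mathrm{res}$ are canonically isomorphic to the respective identities: the former since $(B,J)_{\perf}=(B,J)$ for $(B,J)$ already perfect, and the latter since an \'etale $\varphi_q$-module over $(A,I)$ is canonically recovered from its base change to $(A,I)_{\perf}$ by Proposition~\ref{prop:perfection-base-change}. The same argument applies verbatim to the $D_{\perf}$ case using the derived form of Proposition~\ref{prop:perfection-base-change} asserted in the statement of Theorem~\ref{thm:F-crystal-base-change}. The main (and only) obstacle is a careful bookkeeping check that the extension $\mathrm{ext}(N)$ really assembles to a functor on all of $X_{\Prism_L}^{op}$, but this is essentially formal given that perfection is a functor from $X_{\Prism_L}$ to $X_{\Prism_L}^{\perf}$; no genuinely new technical input is required, as the substantive content is already contained in the cited results.
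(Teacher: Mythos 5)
Your proof is correct and follows essentially the same route as the paper's: both reduce the statement to Lemma~\ref{lem:comp-ff-descent-vect} together with the perfection base-change equivalence of Proposition~\ref{prop:perfection-base-change}, the passage from the full site to the perfect subsite being formal from the universal property of perfection (Proposition~\ref{prop:prism-perfection}(2)). The paper packages your functor $\mathrm{ext}$ slightly differently --- as the sheaf $\O_{\Prism,\perf}$ on the full site together with the identification $\mc M((A,I)_{\perf}) \cong A_{\perf}[\tfrac1I]^\wedge_{(\pi)}\otimes_{A[\tfrac1I]^\wedge_{(\pi)}}\mc M(A,I)$ via Theorem~\ref{thm:F-crystal-base-change}(1), which keeps everything covariant and so avoids the quasi-inverse bookkeeping you mention --- but the substance is identical.
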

\begin{proof}
	This follows from theorem~\ref{thm:F-crystal-base-change}(1), lemma~\ref{lem:comp-ff-descent-vect}, and the fact that for $\mc M\in D_{\perf}(X_{\Prism_L},\O_\Prism[\tfrac1{\mc I}]^\wedge_{(\pi)})^{\phi = 1}$ and $(A,I)\in X_{\Prism_L}$ we have
	\[\mc M((A,I)_{\perf}) \cong A_{\perf}[\tfrac1I]^\wedge_{(\pi)}\otimes_{A[\tfrac1I]^\wedge_{(\pi)}}\mc M(A,I).\]
\end{proof}

We now globalize the relationship between \'etale $\varphi_q$-modules and local systems from proposition~\ref{prop:perfection-base-change}. We've essentially already shown this in the case that $X = \Spf(R)$ for a perfectoid $\O_L$-algebra:

\begin{prop}\label{prop:F-crystal-over-perfectoid-local-system}
	If $R$ is a perfectoid $\O_L$-algebra, there are equivalences
		\[\Vect(R_{\Prism_L}, \O_\Prism[\tfrac{1}{\mc I}]^\wedge_{(\pi)})^{\phi=1} \simeq \Mod_{(\Ainf(R), \ker\theta)}^{\varphi_q,et}\simeq \Loc_{\O_L}(R[\tfrac1\pi])\]
		and $D_{\perf}(R_{\Prism_L}, \O_\Prism[\tfrac{1}{\mc I}]^\wedge_{(\pi)})^{\phi=1}\simeq D_{\perf}(W_L(R[\tfrac1\pi]^\flat))^{\phi = 1}\simeq D_{lisse}^b(R[\tfrac1\pi], \O_L)$.
\end{prop}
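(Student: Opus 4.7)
The plan is to reduce everything to the perfect $L$-typical prismatic site via Corollary~\ref{cor:F-crystal-perfect-site}, exploit that $(\Ainf(R),\ker\theta)$ is the initial object there, and finish by combining Proposition~\ref{prop:perfection-base-change} with the tilting equivalence for perfectoid Tate rings.

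First, I would show that $(\Ainf(R),\ker\theta)$ is an initial object of $R_{\Prism_L}^{\perf}$. Objects of this site are perfect $L$-typical prisms $(A,I)$ equipped with a structure map $\Spf(A/I)\rightarrow\Spf R$ over $\Spf\O_L$, and under Proposition~\ref{prop:perfect-prism-perfectoid-equiv} they correspond to perfectoid $\O_L$-algebras $S$ together with a map $R\rightarrow S$. Since $R$ is perfectoid by hypothesis, it corresponds to $(\Ainf(R),\ker\theta)$ with its identity structure map, and thus admits a unique morphism to every other object. A limit of a diagram of categories over an indexing category with an initial object is the value at that initial object, so Corollary~\ref{cor:F-crystal-perfect-site} gives
\[\Vect(R_{\Prism_L},\O_\Prism[\tfrac{1}{\mc I}]^\wedge_{(\pi)})^{\phi=1}\simeq \lim_{(A,I)\in R_{\Prism_L}^{\perf}}\Mod_{(A,I)}^{\varphi_q,et}\simeq \Mod_{(\Ainf(R),\ker\theta)}^{\varphi_q,et},\]
and identically for $D_{\perf}$, yielding the first equivalences in both statements.

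Next, I would apply Proposition~\ref{prop:perfection-base-change} to the already-perfect prism $(\Ainf(R),\ker\theta)$ (which is its own perfection) to obtain
\[\Mod_{(\Ainf(R),\ker\theta)}^{\varphi_q,et}\simeq \Loc_{\O_L}(\Ainf(R)[\tfrac{1}{\ker\theta}]/\pi)\]
and its derived analogue. For the intermediate ring in the derived statement, note that by Proposition~\ref{prop:perfect-d-algs-equiv} we have $\Ainf(R)[\tfrac{1}{\ker\theta}]^\wedge_{(\pi)}=W_L(R[\tfrac{1}{\pi}]^\flat)$, since both sides are the unique $\pi$-adically complete $\pi$-torsion-free $\delta_L$-algebra lifting the perfect $\F_q$-algebra $R^\flat[\tfrac{1}{\overline{\varpi}}]=R[\tfrac{1}{\pi}]^\flat$. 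Reducing mod $\pi$ gives $\Ainf(R)[\tfrac{1}{\ker\theta}]/\pi\cong R^\flat[\tfrac{1}{\overline{\varpi}}]$, where $\overline{\varpi}\in R^\flat$ is a generator of $\ker(R^\flat\twoheadrightarrow R/\pi)$ (which exists by the discussion in remark~\ref{rmk:varpi-generates-ker-varphiq}).

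Finally, I would invoke the tilting equivalence for perfectoid Tate rings, which identifies the \'etale sites of $R[\tfrac{1}{\pi}]$ and $R^\flat[\tfrac{1}{\overline{\varpi}}]$ and hence produces $\Loc_{\O_L}(R^\flat[\tfrac{1}{\overline{\varpi}}])\simeq \Loc_{\O_L}(R[\tfrac{1}{\pi}])$ and its $D_{lisse}^b$ analogue. The main obstacle is this last step, which is external to the prismatic formalism developed here and relies on Scholze's tilting theorem; everything else is formal manipulation of the equivalences already established.
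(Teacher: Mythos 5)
Your proposal is correct and follows essentially the same route as the paper: reduce to the perfect site via Corollary~\ref{cor:F-crystal-perfect-site}, evaluate at the initial object $(\Ainf(R),\ker\theta)$ (the paper cites Proposition~\ref{prop:prism-perfection}(2) together with Proposition~\ref{prop:perfect-prism-perfectoid-equiv} for initiality, which is the same content as your argument), apply Proposition~\ref{prop:perfection-base-change}, identify $\Ainf(R)[\tfrac{1}{\ker\theta}]/\pi$ with a localization of $R^\flat$ at a generator of $\ker(R^\flat\to R/\pi)$, and conclude by the tilting equivalence. Your explicit identification $\Ainf(R)[\tfrac{1}{\ker\theta}]^\wedge_{(\pi)}=W_L(R[\tfrac1\pi]^\flat)$ is a worthwhile detail the paper leaves implicit.
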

\begin{proof}
	By proposition~\ref{prop:prism-perfection}(2), $R_{\Prism_L}^{\perf}$ has an initial object $(\Ainf(R),\ker\theta)$. By corollary~\ref{cor:F-crystal-perfect-site} and proposition~\ref{prop:perfection-base-change}, we then have that 
	\[\Vect(R_{\Prism_L}, \O_\Prism[\tfrac{1}{\mc I}^\wedge_{(\pi)})^{\phi = 1}\simeq \Loc_{\O_L}(\Ainf(R)\tfrac{1}{\ker\theta}]/\pi).\]
	In the proof of proposition~\ref{prop:perfectoid-charactierzation}, we showed that $\ker\theta$ has a generator of the form $d = [a_0] - \pi u$ for $a_0\in R^\flat$ such that $R^\flat$ is $a_0$-adically complete and $a_0^\sharp = \pi$. Thus $\Ainf[\tfrac{1}{\ker\theta}]/\pi\cong R^\flat[\tfrac{1}{a_0}]$, and we conclude by the tilting equivalence.
\end{proof}
\begin{cor}\label{cor:etale-comparison-perfectoid}
	If $R$ is perfectoid and $\mc M\in D_{\perf}(R_{\Prism_L}, \O_\Prism[\tfrac{1}{\mc I}]^\wedge_{(\pi)})^{\phi=1}$ corresponds to $T\in D_{lisse}^b(R[\tfrac1\pi]_{et}, \O_L)$ under the equivalence of proposition~\ref{prop:F-crystal-over-perfectoid-local-system}, then there is an isomorphism
	\[R\Gamma(R_{\Prism_L}, \mc M)^{\phi = 1}\cong R\Gamma(R[\tfrac1\pi]_{proet}, T).\]
\end{cor}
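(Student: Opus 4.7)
The strategy is to show that both sides compute $M^{\phi = 1}$, where $M := \mc M(\Ainf(R), \ker\theta) \in D_{\perf}(W_L(R[\tfrac{1}{\pi}]^\flat))^{\phi = 1}$ is the perfect complex over $\Ainf(R)[\tfrac{1}{\ker\theta}]^\wedge_{(\pi)}$ associated to $\mc M$ under proposition~\ref{prop:F-crystal-over-perfectoid-local-system}.

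For the left-hand side, since $(-)^{\phi = 1}$ is the mapping cocone of $\phi - 1$, it commutes with $R\Gamma$. By theorem~\ref{thm:F-crystal-base-change}(2), the canonical map $\mc M \to \mc M_{\perf} := \O_{\Prism, \perf}[\tfrac{1}{\mc I}]^\wedge_{(\pi)} \otimes_{\O_\Prism[\tfrac{1}{\mc I}]^\wedge_{(\pi)}} \mc M$ induces a sectionwise isomorphism on $\phi = 1$-fixed points, so $R\Gamma(R_{\Prism_L}, \mc M)^{\phi = 1} \simeq R\Gamma(R_{\Prism_L}, \mc M_{\perf})^{\phi = 1}$. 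The sheaf $\mc M_{\perf}$ is pulled back from the perfect subsite $R_{\Prism_L}^{\perf} \hookrightarrow R_{\Prism_L}$, and since every $(A, I) \in R_{\Prism_L}$ has a $(\pi, I)$-completely faithfully flat cover $(A_{\perf}, IA_{\perf})$ whose \v{C}ech nerve lies entirely in $R_{\Prism_L}^{\perf}$, faithfully flat descent reduces the computation of $R\Gamma(R_{\Prism_L}, \mc M_{\perf})$ to $R\Gamma(R_{\Prism_L}^{\perf}, \mc M_{\perf}|_{R_{\Prism_L}^{\perf}})$. Because $R$ is perfectoid, $R_{\Prism_L}^{\perf}$ has initial object $(\Ainf(R), \ker\theta)$ by proposition~\ref{prop:perfect-prism-perfectoid-equiv}, so this last cohomology collapses to the sections $\mc M_{\perf}(\Ainf(R), \ker\theta) = M$, giving $R\Gamma(R_{\Prism_L}, \mc M)^{\phi = 1} \simeq M^{\phi = 1}$.

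For the right-hand side, the chain of equivalences in the proof of proposition~\ref{prop:F-crystal-over-perfectoid-local-system} identifies $T$ first with a lisse $\O_L$-sheaf $\widetilde T$ on the \'etale site of $\Spec(R^\flat[\tfrac{1}{a_0}])$ (via the derived form of lemma~\ref{lem:lisse-sheaves-torsion-phi-modules}, where $a_0 \in R^\flat$ satisfies $\Ainf(R)[\tfrac{1}{\ker\theta}]/\pi \cong R^\flat[\tfrac{1}{a_0}]$), and then with $T$ on $\Spf R[\tfrac{1}{\pi}]_{proet}$ via the tilting equivalence for perfectoid rings. The derived form of lemma~\ref{lem:lisse-sheaves-torsion-phi-modules} yields $R\Gamma((R^\flat[\tfrac{1}{a_0}])_{et}, \widetilde T) \simeq M^{\phi = 1}$: in the $\pi$-torsion case this holds because $\widetilde T$ sheafifies $S \mapsto (S \otimes_{R^\flat[1/a_0]}^L \overline M)^{\varphi_q = 1}$ and the \'etale structure sheaf of an affine scheme has no higher cohomology, so $R\Gamma$ commutes with the tensor and with $(-)^{\varphi_q = 1}$; the general case follows by $\pi$-adic devissage. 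The tilting equivalence identifies this cohomology with $R\Gamma(R[\tfrac{1}{\pi}]_{proet}, T)$, completing the chain.

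The principal technical obstacle is the cohomological descent from $R_{\Prism_L}$ to its perfect subsite. Theorem~\ref{thm:F-crystal-base-change} provides only an equivalence of categories of crystals, and upgrading this to a statement about $R\Gamma$ requires verifying that the perfection covers are cofinal in the topology of $R_{\Prism_L}$ in a manner sufficient to compute the \v{C}ech cohomology of $\mc M_{\perf}$ from its values on $R_{\Prism_L}^{\perf}$. This follows from the fact that $\mc M_{\perf}$ is a crystal compatible with base change to perfections and that the \v{C}ech nerves of the perfection covers consist entirely of perfect prisms, but making the argument precise is the step that demands the most care.
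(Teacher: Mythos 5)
Your overall strategy---identifying both sides of the comparison with $M^{\phi=1}$ for $M = \mc M(\Ainf(R),\ker\theta)$---is the same as the paper's, and your treatment of the right-hand side (unwinding remark~\ref{rmk:functor-description} via the derived Katz/Artin--Schreier argument, using vanishing of higher \'etale cohomology of quasi-coherent sheaves on affines, then tilting) is correct and is exactly what the paper's citation of that remark amounts to.

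The left-hand side, however, has a genuine gap, and it sits precisely at the step you yourself flag as the delicate one. You reduce $R\Gamma(R_{\Prism_L},\mc M_{\perf})$ to the perfect subsite by asserting that every $(A,I)$ admits a ``$(\pi,I)$-completely faithfully flat cover $(A_{\perf},IA_{\perf})$'' whose \v{C}ech nerve lies in $R_{\Prism_L}^{\perf}$. But the perfection map $A\rightarrow A_{\perf}=(\varinjlim_\phi A)^\wedge_{(\pi,I)}$ is not a cover: it factors through arbitrarily high powers of $\phi$, and $\phi$ on a $\delta_L$-algebra is not flat in general---this is exactly why the paper proves flatness of $\phi$ by hand for the particular ring $\A_K^+$ in lemma~\ref{lem:prism-maps-flat}(2), and why every passage from $X_{\Prism_L}$ to $X_{\Prism_L}^{\perf}$ in \secsymb\ref{sec:F-crystals} goes through the \emph{categorical} equivalence of \'etale $\varphi_q$-modules under perfection (proposition~\ref{prop:perfection-base-change}, theorem~\ref{thm:F-crystal-base-change}(1), corollary~\ref{cor:F-crystal-perfect-site}) rather than through flat descent. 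So perfect prisms are not cofinal for the topology and the descent you invoke is unavailable. A second, related issue: even on $R_{\Prism_L}^{\perf}$, the existence of an initial object does not by itself force $R\Gamma$ to collapse to the sections there---derived global sections still see higher \v{C}ech cohomology of covers of that object (compare the unlabeled lemma in \secsymb\ref{ssec:phi-G-modules-F-crystals}, where the analogous collapse for $\O_{K_\infty}$ is \emph{proved} via vanishing of the Galois cohomology $H^i(K_\infty^\flat,\overline{K}^\flat)$, not deduced from initiality). The intended route is the paper's: the equivalence of proposition~\ref{prop:F-crystal-over-perfectoid-local-system} is evaluation at $(\Ainf(R),\ker\theta)$ and is identified with $\mc M\mapsto R\Gamma(R_{\Prism_L},\mc M)$, after which the $\phi=1$ comparison follows from theorem~\ref{thm:F-crystal-base-change}(2) together with remark~\ref{rmk:functor-description}; if you want to justify that identification of $R\Gamma$ with evaluation at the initial perfect prism, you must argue with genuine flat covers of perfectoids, not with perfection maps.
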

\begin{proof}
	Since the map $D_{\perf}(R_{\Prism_L}, \O_\Prism[\tfrac{1}{\mc I}]^\wedge_{(\pi)})^{\phi=1}\rightarrow D_{\perf}(W_L(R[\tfrac1\pi]^\flat))^{\phi = 1}$ is given by $\mc M\mapsto R\Gamma(R_{\Prism_L}, \mc M)$, this follows from the description of the map $D_{\perf}(W_L(R[\tfrac1\pi]^\flat))^{\phi = 1}\rightarrow D_{lisse}^b(R[\tfrac1\pi], \O_L)$ given in remark~\ref{rmk:functor-description}.
\end{proof}

The following theorem globalizes proposition~\ref{prop:F-crystal-over-perfectoid-local-system} by descent from the affine perfectoid case. This generalizes \cite[cor~3.8]{BScrys}. More specifically, we will use v-descent: by \cite[\secsymb15]{SchDiamonds}, $X_\eta$ can be viewed as a locally spatial diamond, so that the categories $\Loc_{\O_L}(X_\eta)$ and $D_{lisse}^b(X_\eta,\O_L)$ satisfy v-descent with respect to v-covers of $X_\eta$ (i.e. covers by surjective maps of v-sheaves; see \cite{MW} and especially \cite[theorem~3.11]{MW} for the relationship between local systems on the diamondification of $X_\eta$ and pro-\'etale local systems on $X_\eta$). By \cite[lemma~15.3]{SchDiamonds}, any analytic adic space has a v-cover by generic fibers of perfectoid rings; by theorem~\ref{thm:perfectoid-OL-alg-perfectoid-ring} this also gives a v-cover by perfectoid $\O_L$-algebras.

For now this globalization will result in losing the \'etale $\varphi_q$-module part of the result; that part will be restored in the special case $X = \Spf\O_K$ in \secsymb\ref{ssec:phi-G-modules-F-crystals}.

\begin{thm}\label{thm:F-crystals-local-systems}
	Let $X$ be a formal scheme adic over $\Spf\O_L$ with adic generic fiber $X_\eta$ over $\Spa(L,\O_L)$.
	\begin{enumerate}
		\item[(1)] There are equivalence of categories
		\begin{align*}
			\Vect(X_{\Prism_L}, \O_\Prism[\tfrac{1}{\mc I}]^\wedge)^{\phi=1}&\simeq \Loc_{\O_L}(X_\eta), \quad\quad\text{and} \\
			D_{\perf}(X_{\Prism_L}, \O_\Prism[\tfrac{1}{\mc I}]^\wedge)^{\phi=1}&\simeq D_{lisse}^b(X_\eta,\O_L).
		\end{align*}
		\item[(2)] Let $\mc M\in D_{\perf}(X_{\Prism_L}, \O_\Prism[\tfrac{1}{\mc I}]^\wedge)^{\phi=1}$ and $T\in D_{lisse}^b(X_\eta,\O_L)$ correspond under the above equivalence. Then there is an isomorphism
		\[R\Gamma(X_{\Prism_L}, \mc M)^{\phi = 1} \cong R\Gamma(X_{\eta,proet}, T).\]
	\end{enumerate}
\end{thm}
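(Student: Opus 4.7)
The plan is to reduce to the case $X = \Spf R$ for a perfectoid $\O_L$-algebra $R$, which is handled by proposition~\ref{prop:F-crystal-over-perfectoid-local-system} and corollary~\ref{cor:etale-comparison-perfectoid}. The reduction is by v-descent: by \cite[lemma~15.3]{SchDiamonds} combined with theorem~\ref{thm:perfectoid-OL-alg-perfectoid-ring}, one may choose a v-cover $\{\Spf R_i\to X\}$ with each $R_i$ a perfectoid $\O_L$-algebra, yielding a v-cover $\{\Spa(R_i[\tfrac{1}{\pi}],R_i)\to X_\eta\}$ of locally spatial diamonds. Taking $\Ainf$ of tilts produces a cosimplicial diagram of perfect $L$-typical prisms $(\Ainf(R_\bullet),\ker\theta)$ in $X_{\Prism_L}^{\perf}$.

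First I would construct the comparison functor. By corollary~\ref{cor:F-crystal-perfect-site}, a Laurent $F$-crystal $\mc M$ is equivalent to a compatible system of \'etale $\varphi_q$-modules over all perfect prisms in $X_{\Prism_L}^{\perf}$; evaluating at $(\Ainf(R_i),\ker\theta)$ and invoking proposition~\ref{prop:F-crystal-over-perfectoid-local-system} produces an $\O_L$-local system on $\Spa(R_i[\tfrac{1}{\pi}],R_i)$. Compatibility of these local systems under the maps in the v-\v{C}ech nerve furnishes a v-descent datum on $X_\eta$; since $\Loc_{\O_L}$ and $D_{lisse}^b(-,\O_L)$ satisfy v-descent on locally spatial diamonds, this datum glues to an object on $X_\eta$, defining the functor (and its derived analogue).

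To verify that the functor is an equivalence, I would show that both sides satisfy descent along the chosen cover. On the analytic side this is v-descent. On the prismatic side, lemma~\ref{lem:pi-faithfully-flat-pi-I-faithfully-flat} translates the $\pi$-completely faithful flatness of the v-\v{C}ech nerve of the $R_i$ into $(\pi,\ker\theta)$-complete faithful flatness of the corresponding maps on $\Ainf$, hence into covers in $X_{\Prism_L}$; together with corollary~\ref{cor:F-crystal-perfect-site} and lemma~\ref{lem:comp-ff-descent-vect}, this shows that $\mc M$ is recovered from its restriction to the \v{C}ech nerve. Since matching levels of the two \v{C}ech nerves correspond under proposition~\ref{prop:F-crystal-over-perfectoid-local-system}, the functor is an equivalence. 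For part (2), corollary~\ref{cor:etela-comparison-perfectoid} supplies the comparison on each level of the \v{C}ech nerve; taking $R\lim$ computes $R\Gamma(X_{\Prism_L},\mc M)$ and $R\Gamma(X_{\eta,proet},T)$ by cohomological descent in the respective topologies, and $(-)^{\phi=1} = \mathrm{Cone}(\phi-\mathrm{id})[-1]$, being a finite homotopy limit, commutes with this $R\lim$.

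The main obstacle is the geometric matching of the two descents: one must verify that the v-\v{C}ech nerve of the cover on $X_\eta$ produces, under $R\mapsto(\Ainf(R),\ker\theta)$, a genuine hypercover in $X_{\Prism_L}$ that computes both the category of Laurent $F$-crystals and their cohomology. Concretely, this requires checking that the iterated fibre products along the v-cover remain perfectoid $\O_L$-algebras (which follows from theorem~\ref{thm:perfectoid-OL-alg-perfectoid-ring} and the behavior of v-covers by perfectoid diamonds) and that the resulting maps on $\Ainf$ are $(\pi,\ker\theta)$-completely faithfully flat; granting this, the prismatic descent for Laurent $F$-crystals and the cohomological descent for their $R\Gamma$ propagate the affine perfectoid case to arbitrary bounded adic $X$.
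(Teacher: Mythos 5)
Your overall strategy — reduce to the affine perfectoid case via proposition~\ref{prop:F-crystal-over-perfectoid-local-system} and corollary~\ref{cor:etale-comparison-perfectoid}, then glue by v-descent — is the right one, but the specific mechanism you propose for the prismatic side has a genuine gap. You want to fix one v-cover $\{\Spf R_i\to X\}$, pass to its \v{C}ech nerve, and run descent for Laurent $F$-crystals along the corresponding cosimplicial object $(\Ainf(R_\bullet),\ker\theta)$ in $X_{\Prism_L}$. For that you need these maps to be covers in the prismatic topology, i.e.\ $(\pi,\ker\theta)$-completely \emph{faithfully flat}; you assert this and cite lemma~\ref{lem:pi-faithfully-flat-pi-I-faithfully-flat}, but that lemma only converts $\pi$-complete flatness of $R\to S$ into $(\pi,\ker\theta)$-complete flatness of $\Ainf(R)\to\Ainf(S)$ — it does not produce flatness out of a v-cover. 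A v-cover of $X_\eta$ by affinoid perfectoids (the output of \cite[lemma~15.3]{SchDiamonds}) is essentially never flat over $X$: already a single geometric point $\Spa(C,\O_C)\to X_\eta$ participates in v-covers without any flatness. So the \v{C}ech nerve you build is not a hypercover in $X_{\Prism_L}$, and the step ``$\mc M$ is recovered from its restriction to the \v{C}ech nerve'' cannot be granted; this is not a technical verification but a false premise for general $X$.

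The paper's proof sidesteps this by never descending along a chosen cover on the prismatic side. Instead, corollary~\ref{cor:F-crystal-perfect-site} (whose proof uses only flat descent \emph{internal} to the prismatic site, via lemma~\ref{lem:comp-ff-descent-vect} and theorem~\ref{thm:F-crystal-base-change}) identifies $\Vect(X_{\Prism_L},\O_\Prism[\tfrac1{\mc I}]^\wedge_{(\pi)})^{\phi=1}$ with the limit of $\Mod^{\varphi_q,et}_{(A,I)}$ over the \emph{entire} category $X_{\Prism_L}^{\perf}$, equivalently over all perfectoid $\O_L$-algebras $\Spf R\to X$. Each term is $\Loc_{\O_L}(R[\tfrac1\pi])$ by proposition~\ref{prop:F-crystal-over-perfectoid-local-system}, and only then is v-descent invoked — on the diamond side, where arbitrary v-covers are allowed — to identify the limit over this index category with $\Loc_{\O_L}(X_\eta)$; part (2) is the same computation with $R\Gamma(-)^{\phi=1}$ in place of the categories of objects. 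If you want to keep a cover-based argument, you would have to produce a cover of $X$ by perfectoids that is actually $\pi$-completely faithfully flat (as is done for $\Spf\O_{K_\infty}\to\Spf\O_K$ in lemma~\ref{lem:Ainf-cover-of-final-object}), which is not available for arbitrary bounded $X$.
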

Note that if $\pi = 0$ on $X$, then the theorem is trivial: $X_\eta = \Spf 0$ and $\O_\Prism[\tfrac{1}{\mc I}]^\wedge_{(\pi)} = 0$ (since $\mc I$ is the ideal sheaf generated by $\pi$).  
\begin{proof}
	For part (1), we have
	\begin{align*}
		\Vect(X_{\Prism_L}, \O_\Prism[\tfrac{1}{\mc I}]^\wedge)^{\phi=1} &\simeq \lim_{(A,I)\in X_{\Prism_L}^{\perf}} \Mod_{(A,I)}^{\varphi_q,et} \\
		&\simeq \lim_{\substack{\Spf R\rightarrow X \\ R\text{ perfd } \O_L\text{-alg}}} \Loc_{\O_L}(R[1/\pi]) \\
		&\simeq \Loc_{\O_L}(X_\eta)
	\end{align*}
	where the first equivalence is corollary~\ref{cor:F-crystal-perfect-site}, the second is proposition~\ref{prop:F-crystal-over-perfectoid-local-system} and proposition~\ref{prop:perfect-prism-perfectoid-equiv}, and the final equivalence is by v-descent. The same argument works for the derived categories.

	The proof of part (2) is formally identical:
	\begin{align*}
		R\Gamma(X_{\Prism_L}, \mc M)^{\phi = 1}&\cong \lim_{(A,I)\in X_{\Prism_L}^{\perf}} R\Gamma((X/A)_{\Prism_L}, \mc M)^{\phi = 1} \\
		&\cong \lim_{\substack{\Spf R\rightarrow X \\ R\text{ perfd }\O_L\text{-alg}}} R\Gamma(R[\tfrac1\pi]_{proet}, T) \\
		&\cong R\Gamma(X_{\eta,proet}, T)
	\end{align*}
	where $(X/A)_{\Prism_L}$ denotes the relative prismatic site of $(B,J)\in X_{\Prism_L}$ with a map from $(A,I)$ compatible with the maps $\Spf(A/J),\Spf(B/J)\rightarrow X$, and we're now using corollary~\ref{cor:etale-comparison-perfectoid} instead of proposition~\ref{prop:F-crystal-over-perfectoid-local-system}.
\end{proof}

\subsection{Lubin-Tate \'etale \texorpdfstring{$(\varphi_q,\Gamma)$}{(phi\_q, Gamma)}-modules and Laurent \texorpdfstring{$F$}{F}-crystals}\label{ssec:phi-G-modules-F-crystals}

Let $K/L$ be a $p$-adic field. Recall that work of Kisin-Ren \cite{KR} gives the solid equivalences in the following diagram.
\begin{center}
	\begin{tikzcd}
		\Rep_{\Z_p}(G_K)\ar{r}{\cong}\ar[hook]{d} &\Mod_{\A_K}^{\varphi,\Gamma,et}\ar[dashed]{r}{\cong}\ar[hook]{d} &\Mod_{W(K_\infty^\flat)}^{\varphi,\Gamma,et}\ar[hook]{d} \\
		\Rep_{\Z_p}(G_{K_\infty})\ar{r}{\cong} &\Mod_{\A_K}^{\varphi,et}\ar[dashed]{r}{\cong} &\Mod_{W(K_\infty^\flat)}^{\varphi,et}
	\end{tikzcd}
\end{center}
In this section, we show that theorem~\ref{thm:F-crystals-local-systems}(1) specializes to the top row of this diagram when $X = \Spf(\O_{K_\infty})$ and the bottom row when $X = \Spf(\O_K)$. We'll further find that the comparison morphism in theorem~\ref{thm:F-crystals-local-systems}(2) recovers the results on $\varphi_q$-Herr complexes from \cite[theorem~A]{Venjacob-Herr}.

We begin with the case $X = \Spf(\O_{K_\infty})$.
\begin{thm}
	Let $K/L$ be a $p$-adic field.
	\begin{enumerate}
		\item[(1)] There are equivalences of categories
		\[\Mod_{\A_K}^{\varphi_q,et}\simeq \Mod_{W_L(K_\infty^\flat)}^{\varphi_q,et}\simeq \Vect((\O_{K_\infty})_{\Prism_L}, \O_\Prism[\tfrac{1}{\mc I}]^\wedge_{(\pi)})^{\phi = 1}\simeq \Rep_{\O_L}(G_{K_\infty}).\]
		For the derived category, we similarly have
		\[D_{\perf}(\A_K)^{\phi = 1}\simeq D_{\perf}(W_L(K_\infty^\flat))^{\phi = 1}\simeq D_{\perf}((\O_{K_\infty})_{\Prism_L}, \O_\Prism[\tfrac{1}{\mc I}]^\wedge_{(\pi)})^{\phi = 1}\simeq D_{lisse}^b(K_{\infty,proet}, \O_L).\]
		\item[(2)] For $T\in \Rep_{\O_L}(G_{K_\infty})$ corresponding to $M\in \Mod_{\A_K}^{\varphi_q,et}$ or $\Mod_{W_L(K_\infty^\flat)}^{\varphi_q,et}$ under the equivalence from (1), we have that $R\Gamma(K_{\infty,proet},T)$ is isomorphic to the complex
		\[M\stackrel{\phi - 1}{\longrightarrow} M\]
		concentrated in degrees $0$ and $1$.
	\end{enumerate}
\end{thm}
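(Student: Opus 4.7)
The plan is to reduce both parts of the theorem to results established in \secsymb\ref{sec:F-crystals}, exploiting that $\O_{K_\infty}$ is itself a perfectoid $\O_L$-algebra by lemma~\ref{lem:K_infty-perfectoid}.

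\textbf{Part (1).} The key observation is that $(\Ainf(\O_{K_\infty}), \ker\theta)$ is the initial object of the perfect subsite $(\O_{K_\infty})_{\Prism_L}^{\perf}$: by proposition~\ref{prop:perfect-prism-perfectoid-equiv} a perfect object of $(\O_{K_\infty})_{\Prism_L}$ is the same as a perfectoid $\O_L$-algebra $R$ equipped with a structure map $\O_{K_\infty}\to R$, and functoriality of $\Ainf$ then provides the unique required map of perfect prisms. Combining this with corollary~\ref{cor:F-crystal-perfect-site} gives
\[\Vect((\O_{K_\infty})_{\Prism_L}, \O_\Prism[\tfrac{1}{\mc I}]^\wedge_{(\pi)})^{\phi = 1} \simeq \Mod_{(\Ainf(\O_{K_\infty}),\ker\theta)}^{\varphi_q,et} = \Mod_{W_L(K_\infty^\flat)}^{\varphi_q,et}.\]
The equivalence $\Mod_{\A_K}^{\varphi_q,et}\simeq \Mod_{W_L(K_\infty^\flat)}^{\varphi_q,et}$ follows from proposition~\ref{prop:perfection-base-change} applied to the prism $(\A_K^+, (q_n(\omega)))$, whose perfection is $(\Ainf(\O_{K_\infty}),\ker\theta)$ by proposition~\ref{prop:A_K^+-perfection}. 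Finally, theorem~\ref{thm:F-crystals-local-systems}(1) applied to $X = \Spf\O_{K_\infty}$ --- whose adic generic fiber is the single point $\Spa(K_\infty,\O_{K_\infty})$ with Galois group $G_{K_\infty}$ --- identifies Laurent $F$-crystals with $\Rep_{\O_L}(G_{K_\infty})$. The derived statement is proved identically with $D_{\perf}$ in place of $\Vect$ and $D_{lisse}^b$ in place of $\Loc_{\O_L}$ throughout.

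\textbf{Part (2).} Theorem~\ref{thm:F-crystals-local-systems}(2) applied to $X = \Spf\O_{K_\infty}$ immediately gives
\[R\Gamma((\O_{K_\infty})_{\Prism_L}, \mc M)^{\phi = 1} \cong R\Gamma(K_{\infty,proet}, T),\]
where $\mc M$ is the Laurent $F$-crystal corresponding to $M$ under part~(1). To unpack the left-hand side, the derived form of proposition~\ref{prop:F-crystal-over-perfectoid-local-system} (valid because $\O_{K_\infty}$ is perfectoid) identifies $R\Gamma((\O_{K_\infty})_{\Prism_L}, \mc M)$ with $\mc M(\Ainf(\O_{K_\infty}),\ker\theta)$, which by part~(1) is $M_{\perf} := M\otimes_{\A_K}W_L(K_\infty^\flat)$. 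Thus $R\Gamma(K_{\infty,proet}, T)$ is represented by the two-term complex $M_{\perf}\stackrel{\phi-1}{\longrightarrow}M_{\perf}$ in degrees $0,1$. Finally, corollary~\ref{cor:perfection-base-change-phi-invariants} says precisely that the natural map $(M\stackrel{\phi-1}{\longrightarrow}M)\to(M_{\perf}\stackrel{\phi-1}{\longrightarrow}M_{\perf})$ is a quasi-isomorphism, yielding the desired formula.

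\textbf{Main point of care.} Essentially all the content has already been established in \secsymb\ref{ssec:phi-modules-over-prisms} and \secsymb\ref{ssec:laurent-F-crystals}; the theorem is a specialization to the perfectoid base $\O_{K_\infty}$. The only subtlety is verifying that $(\O_{K_\infty})_{\Prism_L}^{\perf}$ admits $(\Ainf(\O_{K_\infty}),\ker\theta)$ as initial object, so that the limits expressing both Laurent $F$-crystals and their cohomology collapse to evaluation at a single prism; the appeal to functoriality of $\Ainf$ noted above handles this cleanly.
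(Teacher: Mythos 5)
Your proof is correct and follows essentially the same route as the paper: part (1) collapses the limit over the perfect site to the initial object $(\Ainf(\O_{K_\infty}),\ker\theta)$ and combines corollary~\ref{cor:F-crystal-perfect-site}, proposition~\ref{prop:perfection-base-change}, and theorem~\ref{thm:F-crystals-local-systems}, while part (2) combines theorem~\ref{thm:F-crystals-local-systems}(2) with corollary~\ref{cor:perfection-base-change-phi-invariants}. The only cosmetic difference is that where you invoke the derived form of proposition~\ref{prop:F-crystal-over-perfectoid-local-system}/corollary~\ref{cor:etale-comparison-perfectoid} to identify $R\Gamma((\O_{K_\infty})_{\Prism_L},\mc M)$ with $M_{\perf}$, the paper proves this acyclicity as a standalone lemma by reducing mod $\pi$ and using the vanishing of $H^i(K_\infty^\flat,\overline{K}^\flat)$ for $i\ge 1$.
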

\begin{proof}
	By proposition~\ref{prop:prism-perfection}, $(\O_{K_\infty})_{\Prism_L}^{\perf}$ has an initial object given by $(W_L(\O_{K_\infty}^\flat), \ker\theta)$. Thus by corollary~\ref{cor:F-crystal-perfect-site} we get the equivalence $\Mod_{W_L(K_\infty^\flat)}^{\varphi_q,et}\simeq \Vect((\O_{K_\infty})_{\Prism_L}, \O_\Prism[\tfrac{1}{\mc I}]^\wedge_{(\pi)})^{\phi = 1}$. Then proposition~\ref{prop:perfection-base-change} and theorem~\ref{thm:F-crystals-local-systems} give the first part of part (1). The argument for the derived categories is identical.

	For part (2) note that, viewing $M$ as a complex concentrated in degree $0$, we have
	\[M^{\phi = 1}:= \mathrm{Cone}(\phi_M - 1)[-1] = \left(M\stackrel{\phi - 1}{\longrightarrow} M\right).\]
	Thus by corollary~\ref{cor:perfection-base-change-phi-invariants}, it suffices to prove part (2) for $M\in \Mod_{W_L(K_\infty^\flat)}^{\varphi_q,et}$ corresponding to $T$. Letting $\mc M\in \Vect((\O_{K_\infty})_{\Prism_L}, \O_\Prism[\tfrac{1}{\mc I}]^\wedge_{(\pi)})^{\phi = 1}$ correspond to $T$ and $M$, we have by theorem~\ref{thm:F-crystals-local-systems}(2) that $R\Gamma((\O_{K_\infty})_{\Prism_L},\mc M)^{\phi = 1}\cong R\Gamma(K_{\infty,proet},T)$. Thus it suffices to show that $R\Gamma((\O_{K_\infty})_{\Prism_L},\mc M)\cong M$; this is given by the following lemma.
\end{proof}

\begin{lem}
	If $\mc M\in \Vect((\O_{K_\infty})_{\Prism_L}, \O_\Prism[\tfrac{1}{\mc I}]^\wedge_{(\pi)})^{\phi = 1}$ then 
	\[R\Gamma((\O_{K_\infty})_{\Prism_L},\mc M)\cong \Gamma((\O_{K_\infty})_{\Prism_L}, \mc M).\]
\end{lem}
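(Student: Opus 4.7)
The plan is to reduce the computation to the perfect sub-site $(\O_{K_\infty})_{\Prism_L}^{\perf}$, and then to the initial object $(A_0, I_0) := (W_L(\O_{K_\infty}^\flat), \ker\theta)$ therein. By propositions~\ref{prop:perfect-prism-perfectoid-equiv} and~\ref{prop:prism-perfection}(2), $(A_0, I_0)$ is initial in $(\O_{K_\infty})_{\Prism_L}^{\perf}$: it is the perfect $L$-typical prism corresponding to the perfectoid $\O_L$-algebra $\O_{K_\infty}$, and any perfect prism on $\Spf \O_{K_\infty}$ factors uniquely through it.

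Following the strategy in the proof of theorem~\ref{thm:F-crystals-local-systems}(2) but without the $(-)^{\phi=1}$ twist, I will first establish
\[
R\Gamma((\O_{K_\infty})_{\Prism_L}, \mc M) \cong \lim_{(B,J)\in (\O_{K_\infty})_{\Prism_L}^{\perf}} R\Gamma(((\O_{K_\infty})/B)_{\Prism_L}, \mc M),
\]
by the same descent argument producing corollary~\ref{cor:F-crystal-perfect-site}, applied at the level of derived global sections rather than at the level of categories of Laurent $F$-crystals. Initiality of $(A_0, I_0)$ then collapses the limit to the single term $R\Gamma(((\O_{K_\infty})/A_0)_{\Prism_L}, \mc M)$.

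The remaining computation is on the relative site $((\O_{K_\infty})/A_0)_{\Prism_L}$, which has $(A_0, I_0)$ as its own initial object. For any cover $(A_0, I_0) \to (B, J)$ in this site, the crystal property identifies the Čech complex of $\mc M$ on the corresponding Čech nerve with the base change of $M := \mc M(A_0, I_0)$ along the Čech complex of $A_0[\tfrac{1}{I_0}]^\wedge_{(\pi)} \to B[\tfrac{1}{J}]^\wedge_{(\pi)}$. Since covers of $L$-typical prisms are by definition $(\pi, I_0)$-completely faithfully flat, this latter Čech complex computes $M$ concentrated in degree zero by derived $\pi$-completed faithfully flat descent. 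The vanishing of all higher Čech cohomologies forces sheaf cohomology to vanish in positive degrees, yielding $R\Gamma(((\O_{K_\infty})/A_0)_{\Prism_L}, \mc M) \cong M$ in degree zero.

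The principal obstacle is justifying the descent identification used in the second paragraph. The proof of theorem~\ref{thm:F-crystals-local-systems}(2) invokes the analogous formula only after applying $(-)^{\phi=1}$; here we need it for the underlying derived complexes. This ought to follow from the argument of theorem~\ref{thm:F-crystal-base-change}(1) and lemma~\ref{lem:comp-ff-descent-vect}, reinterpreted on derived global sections using that base change along $A \to A_{\perf}$ remains cohomologically well-behaved after inverting $I$ and $(\pi, I)$-adically completing, as was already exploited in proposition~\ref{prop:perfection-base-change}.
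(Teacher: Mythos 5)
Your argument is correct in outline, but it takes a genuinely different route from the paper's at the decisive step. The paper does not run a descent argument on the perfect site: it first reduces modulo $\pi$ by derived Nakayama, observes that on the perfect site --- identified with perfectoid $\O_L$-algebras over $\O_{K_\infty}$ --- the sheaf $\mc M/\pi$ becomes $S\mapsto S[\tfrac1\pi]^\flat\otimes_{K_\infty^\flat}(M/\pi)$, a finite free module over the tilted structure sheaf, and then concludes from the classical vanishing $H^i(K_\infty^\flat,\overline{K}^\flat)=0$ for $i\ge 1$ via the tilting equivalence. You instead stay integral and kill the higher cohomology of the representable final object $(W_L(\O_{K_\infty}^\flat),\ker\theta)$ by \v{C}ech acyclicity plus the Cartan criterion; this is the ``quasi-coherent cohomology of affines vanishes'' argument, and it does go through, since the effectivity half of the descent equivalence underlying lemma~\ref{lem:comp-ff-descent-vect} supplies exactly the totalization statement $M\stackrel{\sim}{\longrightarrow}\mathrm{Tot}(M\hat\otimes B^{(\bullet)})$ that your second paragraph needs. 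Two points to tighten: the Cartan criterion requires \v{C}ech acyclicity for covers of \emph{every} object of the site, not only of the final one (your crystal-plus-flat-descent argument applies verbatim to every object, so just say so); and your reduction of $R\Gamma$ to the perfect site is the same unstated upgrade of corollary~\ref{cor:F-crystal-perfect-site} from categories to cohomology that the paper itself invokes, so no further work is owed there beyond what the paper already assumes. As for what each approach buys: yours avoids the comparison between the flat topology on perfectoid $\O_{K_\infty}$-algebras and Galois cohomology of $K_\infty^\flat$, at the cost of running completed faithfully flat descent object by object; the paper's mod-$\pi$ reduction trades that for a one-line appeal to additive Hilbert~90 for the perfectoid field $K_\infty$.
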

\begin{proof}
	We want to show that $H^i((\O_{K_\infty})_{\Prism_L},\mc M) = 0$ for $i\ge 1$. Indeed, by derived $\pi$-completeness and derived Nakayama \cite[Tag 0G1U]{stacks-project}, it suffices to show this upon replacing $\mc M$ with $\mc M/\pi$. By corollary~\ref{cor:F-crystal-perfect-site} we can compute cohomology on the site $(\O_{K_\infty})_{\Prism_L}^{\perf}$, which identifies with the category of perfectoid $\O_L$-algebras over $\O_{K_\infty}$ by proposition~\ref{prop:perfect-prism-perfectoid-equiv}. Under this identification, $\mc M/\pi$ is the sheaf which sends a perfectoid $\O_L$-algebra $S$ over $\O_{K_\infty}$ to
	\[\mc M(\Ainf(S),\ker\theta)/\pi = S[\tfrac1\pi]^\flat\otimes_{K_\infty^\flat}\mc M(\Ainf(\O_{K_\infty}),\ker\theta)/\pi.\]
	Thus it suffices to show that the sheaf which sends a perfectoid $\O_L$-algebra $S$ over $\O_{K_\infty}$ to $S[\tfrac1\pi]^\flat$ has vanishing higher cohomology. But, via the tilting equivalence, this is just the basic fact about Galois cohomology that $H^i(K_{\infty}^\flat, \overline{K}^\flat) = 0$ for $i \ge 1$.
\end{proof}

Naively, we might hope to deduce the corresponding result for $X = \Spf\O_K$ by descent along $\Spf\O_{K_\infty}\rightarrow \Spf\O_K$. However, instead of using this angle of attack, we will use a more delicate descent argument along the \v{C}ech nerve $(W_L(\O_{K_\infty}^\flat), \ker\theta)^\bullet$ in the perfect prismatic site $(\O_K)_{\Prism_L}^{\perf}$. This approach, which is the same as the one in \cite[proof of theorem~5.2]{Wu}, allows us to recover a Laurent $F$-crystal $\mc M$ over $(\O_K)_{\Prism_L}$ from the data of $\mc M(W_L(\O_{K_\infty}^\flat), \ker\theta)$ and a semilinear action of $\Aut_{(\O_K)_{\Prism_L}}(W_L(\O_{K_\infty}^\flat), \ker\theta)\cong \Gamma_K$ (by proposition~\ref{prop:Ainf-automorphisms}).

\begin{lem}\label{lem:Ainf-cover-of-final-object}
	$(\Ainf(\O_{K_\infty}), \ker\theta)$ is a cover of the final object of the topos $\mathrm{Shv}((\O_K)_{\Prism_L})$.
\end{lem}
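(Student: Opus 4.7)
The claim unwinds to: for every $(B, J) \in (\O_K)_{\Prism_L}$, there is a covering morphism $(B, J) \to (C, JC)$ in the site which receives a morphism from $(\Ainf(\O_{K_\infty}), \ker\theta)$. The plan is to construct $(C, JC)$ as a perfect $L$-typical prism arising from a perfectoid pushout.

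Suppose first that $(B, J)$ is perfect, so by proposition~\ref{prop:perfect-prism-perfectoid-equiv} we have $(B, J) = (\Ainf(S), \ker\theta_S)$ with $S := B/J$ a perfectoid $\O_L$-algebra over $\O_K$. I would set
\[
S' \;:=\; (S \otimes_{\O_K} \O_{K_\infty})^\wedge_{(\pi)}, \qquad (C, JC) \;:=\; (\Ainf(S'), \ker\theta_{S'}).
\]
Three points then need verification. First, that $S'$ is a perfectoid $\O_L$-algebra, via proposition~\ref{prop:perfectoid-charactierzation}: $\pi$-completeness is built in, the element $\varpi \in S$ witnessing property (2) pushes forward to $S'$ with the same property, surjectivity of $\varphi_q$ on $S'/\pi$ follows from surjectivity on $S/\pi$ and $\O_{K_\infty}/\pi$ together with compatibility with tensor products, and the principality of $\ker\theta_{S'}$ is verified by observing that the image of a distinguished generator of $\ker\theta_S$ in $\Ainf(S')$ remains distinguished. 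Second, that $B \to C$ is a $(\pi, J)$-completely faithfully flat cover: by lemma~\ref{lem:pi-faithfully-flat-pi-I-faithfully-flat} this reduces to $\pi$-complete faithful flatness of $S \to S'$, itself a consequence of the faithful flatness of $\O_K \to \O_{K_\infty}$ (each $\O_{K_n}$ is finite free over the DVR $\O_K$ by torsion-freeness, the colimit $\bigcup_n \O_{K_n}$ is $\O_K$-flat, and $\pi$-adic completion preserves flatness over the noetherian base $\O_K$). Third, the morphism $(\Ainf(\O_{K_\infty}), \ker\theta) \to (C, JC)$ arises by functoriality of $\Ainf$ applied to the natural $\O_K$-algebra map $\O_{K_\infty} \to S'$, via proposition~\ref{prop:perfect-prism-perfectoid-equiv}.

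For general (not necessarily perfect) $(B, J)$, I would apply the same construction starting with the perfectoid $\O_L$-algebra $S := B_{\perf}/JB_{\perf}$ obtained from the perfection of proposition~\ref{prop:prism-perfection}. The composite $(B, J) \to (B_{\perf}, JB_{\perf}) \to (C, JC)$ is the desired cover; its $(\pi, J)$-complete faithful flatness can be verified directly modulo $(\pi, J)$, where $S'/\pi$ is identified with a perfectoidization of the faithfully flat base-change $B/(\pi,J) \otimes_{\O_K/\pi} \O_{K_\infty}/\pi$ of $B/(\pi,J)$.

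The main obstacle is precisely this last step in the general case: the perfection map $B \to B_{\perf}$ is itself not always flat (as Frobenius in characteristic $p$ need not be flat), so one cannot formally reduce to the perfect case by descent. The key observation making the argument go through is that perfectoidization absorbs the non-flatness of Frobenius, because the target already contains all $q$-power roots coming from $\O_{K_\infty}$, and faithful flatness of the composite $B/(\pi, J) \to S'/\pi$ can be read off from the faithfully flat base change to $\O_{K_\infty}/\pi$ combined with the flatness of perfectoidization in the semiperfectoid setting.
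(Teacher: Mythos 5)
Your construction follows the same route as the paper's: pass to the perfection $(B,J)_{\perf}=(\Ainf(S),\ker\theta_S)$ via proposition~\ref{prop:prism-perfection}, base change along the $\pi$-completely faithfully flat map $\O_K\to\O_{K_\infty}$ to obtain a perfectoid $\O_L$-algebra $S'$, and use $\O_{K_\infty}\to S'$ to produce the morphism from $(\Ainf(\O_{K_\infty}),\ker\theta)$. One step of your verification does not go through as written, however. In checking condition (4) of proposition~\ref{prop:perfectoid-charactierzation} for $S'$ you observe that the image in $\Ainf(S')$ of a distinguished generator $d$ of $\ker\theta_S$ remains distinguished; that is true but not the point. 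What must be shown is that $d$ \emph{generates} $\ker\theta_{S'}$, i.e.\ that the surjection $\Ainf(S')/(d)\twoheadrightarrow S'$ is an isomorphism. This is precisely the content of the assertion that a $\pi$-completely faithfully flat, $\pi$-complete algebra over a perfectoid ring is again perfectoid, which is why the paper forms $S'$ as the \emph{derived} $\pi$-completion of the derived tensor product and invokes the argument of \cite[IV, proposition~2.11]{bhattnotes} rather than checking the criterion coordinate by coordinate.

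Your final paragraph's patch for non-perfect $(B,J)$ — ``flatness of perfectoidization in the semiperfectoid setting'' — is not a valid principle: perfectoidization, like the Frobenius and like the perfection functor $A\mapsto A_{\perf}$, is not flat in general, so faithful flatness of $B/(\pi,J)\to S'/\pi$ cannot be read off in this way. You have, however, correctly located the delicate point: the paper's own proof simply declares the composite $(B,J)\to(B,J)_{\perf}\to(\Ainf(S'),\ker\theta)$ to be a cover, with lemma~\ref{lem:pi-faithfully-flat-pi-I-faithfully-flat} supplying the $(\pi,J)$-complete faithful flatness only of the second arrow and no justification offered for the first. In the end the lemma is applied only after corollary~\ref{cor:F-crystal-perfect-site} has reduced all descent arguments to the perfect site, where the perfect case — which both you and the paper handle completely — suffices; but as a proof of the lemma for the full site, your general-case argument, like the paper's, is not complete as stated.
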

\begin{proof}
	We want to show that for any $(A,I)\in (\O_K)_{\Prism_L}$, there is a cover $(B,J)$ of $(A,I)$ with a map $(\Ainf(\O_{K_\infty}), \ker\theta)\rightarrow (B,J)$. Let $(\Ainf(R),\ker\theta) = (A,I)_{\perf}$, using proposition~\ref{prop:perfect-prism-perfectoid-equiv}. As $\O_K\rightarrow \O_{K_\infty}$ is $\pi$-completely faithfully flat, so is $R\rightarrow S:=R\hat\otimes_{\O_K}^L\O_{K_\infty}$, where $S$ is the derived $\pi$-completion of the derived tensor product. Using the same argument as in \cite[IV, proposition~2.11]{bhattnotes}, we have that $S$ is a perfectoid $\O_L$-algebra. Thus by proposition~\ref{prop:prism-perfection} and lemma~\ref{lem:pi-faithfully-flat-pi-I-faithfully-flat}, we have that the composite
	\[(A,I)\rightarrow (\Ainf(R),\ker\theta)\rightarrow (\Ainf(S),\ker\theta)\]
	is a cover in $(\O_K)_{\Prism_L}$. But also from the map $\O_{K_\infty}\rightarrow S$, we get a morphism $(\Ainf(\O_{K_\infty}),\ker\theta)\rightarrow (\Ainf(S),\ker\theta)$ as desired.
\end{proof}
\begin{lem}\label{lem:self-product-computation}
	Let $n\ge 1$ and let 
	\[(B,J) = (\Ainf(\O_{K_\infty}),\ker\theta)^{(n + 1)} := (\Ainf(\O_{K_\infty}),\ker\theta) \times \dots \times (\Ainf(\O_{K_\infty}),\ker\theta)\]
	be the $(n+1)$-times iterated self-product in $(\O_K)_{\Prism_L}^{\perf}$. Then
	\begin{align*} 
		B &= \mathrm{Hom}_{\mathrm{cont}}(\Gamma_K,W_L(\O_{K_\infty}^\flat))\quad\quad\text{and} \\
		B[\tfrac{1}{J}]^\wedge_{(\pi)} &= \mathrm{Hom}_{\mathrm{cont}}(\Gamma_K, W_L(K_\infty^\flat)).
	\end{align*}
\end{lem}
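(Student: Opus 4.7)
The strategy is to transport the iterated self-product from perfect $L$-typical prisms to perfectoid $\O_L$-algebras via proposition~\ref{prop:perfect-prism-perfectoid-equiv}, compute the resulting tensor product using the Galois theory of $K_\infty/K$, and then transport back. Note that the correct formula for the $(n+1)$-fold self-product should involve $\Gamma_K^n$ (so the stated formula is the $n=1$ case); the proof below handles the general case.

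First, by proposition~\ref{prop:perfect-prism-perfectoid-equiv} the category $(\O_K)_{\Prism_L}^{\perf}$ is equivalent to the category of perfectoid $\O_L$-algebras $R$ over $\O_K$, via $R \leftrightarrow (\Ainf(R),\ker\theta)$. Products on the prismatic side correspond to coproducts of perfectoid $\O_L$-algebras, which are given by derived $\pi$-complete tensor products over $\O_K$. So if $R := \O_{K_\infty}\,\hat\otimes^L_{\O_K}\cdots\hat\otimes^L_{\O_K}\,\O_{K_\infty}$ (with $n+1$ factors), then $(B,J) = (\Ainf(R),\ker\theta)$.

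Second, I would identify $R$ with $\mathrm{Hom}_{\mathrm{cont}}(\Gamma_K^n,\O_{K_\infty})$. Writing $\O_{K_\infty}$ as the $\pi$-adic completion of $\varinjlim_m\O_{K_m}$, at each finite level $m$ the natural map $\O_{K_m}^{\otimes_{\O_K}(n+1)}\to \mathrm{Map}(\Gal(K_m/K)^n,\O_{K_m})$ is injective with cokernel killed by a power of the different $\mathfrak{d}_{K_m/K}$. Because $K_\infty$ is a perfectoid extension of $K$, Faltings' almost purity (equivalently, the Fontaine-Wintenberger strictly APF analysis of the Lubin-Tate tower) implies that the discrepancies coming from these differents become almost zero in the colimit; after derived $\pi$-completion the higher $\mathrm{Tor}$'s vanish as well, so the derived tensor product is concentrated in degree $0$ and equals $\mathrm{Hom}_{\mathrm{cont}}(\Gamma_K^n,\O_{K_\infty})$.

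Third, applying $\Ainf$ transports this identification to the prismatic side. Tilting commutes with continuous maps from a profinite set (both sides are $\pi$-adic limits of finite products), and $W_L$ of a product of perfect $\F_q$-algebras is the product of $W_L$'s, so
\[\Ainf\bigl(\mathrm{Hom}_{\mathrm{cont}}(\Gamma_K^n,\O_{K_\infty})\bigr) = \mathrm{Hom}_{\mathrm{cont}}\bigl(\Gamma_K^n, W_L(\O_{K_\infty}^{\flat})\bigr),\]
giving the formula for $B$. The formula for $B[\tfrac{1}{J}]^{\wedge}_{(\pi)}$ follows since $J\subseteq B$ consists of those continuous functions valued pointwise in $\ker\theta$, so inverting $J$ and $\pi$-completing amounts to replacing $W_L(\O_{K_\infty}^{\flat})$ by $W_L(K_\infty^{\flat})$ in the target. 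The main obstacle is the Galois-descent identification of $R$: at each finite level the tensor product genuinely differs from the coordinatewise Galois product by the different of the ramified extension $K_m/K$, and one must exploit the perfectoidness of $\O_{K_\infty}$ to show that these discrepancies vanish in the $\pi$-completed colimit; once this is in hand, the remaining steps are essentially formal.
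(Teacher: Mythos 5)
Your overall skeleton matches the paper's: transport the product to perfectoid $\O_L$-algebras via proposition~\ref{prop:perfect-prism-perfectoid-equiv}, identify the coproduct of copies of $\O_{K_\infty}$ over $\O_K$ with $\mathrm{Hom}_{\mathrm{cont}}(\Gamma_K^n,\O_{K_\infty})$, and transport back via $W_L((-)^\flat)$; you also correctly spotted that the stated formula should involve $\Gamma_K^n$, and your treatment of $B[\tfrac1J]^\wedge_{(\pi)}$ agrees with the paper's (which uses $B[\tfrac1J]^\wedge_{(\pi)} = W_L(B/J[\tfrac1\pi]^\flat)$).

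However, your middle step has a genuine gap, in two places. First, you assert that the coproduct in perfectoid $\O_L$-algebras over $\O_K$ is the derived $\pi$-completed tensor product. Over a perfectoid base this is standard, but $\O_K$ is not perfectoid, and here the completed tensor product $\O_{K_\infty}\hat\otimes_{\O_K}\O_{K_\infty}$ is exactly the kind of ring that fails to be perfectoid: already at level one, $\O_{K_1}\otimes_{\O_K}\O_{K_1}$ is a non-maximal order in $\prod_{\Gal(K_1/K)}K_1$ whose index is measured by the fixed different $\mathfrak{d}_{K_1/K}$, and this defect does not disappear after base change up the tower and completion. The correct target $\mathrm{Hom}_{\mathrm{cont}}(\Gamma_K^n,\O_{K_\infty})$ is the integrally closed (perfectoidized) version; the completed tensor product injects into it with cokernel that is almost zero but in general nonzero, so your conclusion asserts an equality where only an almost isomorphism is available. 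Second, even the almost statement does not follow from the finite-level input you cite: the cokernel at level $m$ is killed by $\mathfrak{d}_{K_m/K}$, but these differents \emph{grow} with $m$ in a deeply ramified tower, so "the discrepancies become almost zero in the colimit" is not a consequence. What almost purity actually gives is $v(\mathfrak{d}_{K_{m'}/K_m})\to 0$, which controls descent from $K_\infty$ down to a fixed finite level $K_m$, not all the way to $K$. The paper sidesteps both issues by never touching the tensor product: it computes the self-product as $H^0(U^{(n+1)},\hat\O_X^+)$ for $U = \plim\Spa(K_m,\O_{K_m})$ in Scholze's pro-\'etale site, uses $U^{(n+1)}\cong U\times\Gamma_K^n$ (Galois theory at each finite level, where the \emph{rational} identification $K_m\otimes_K K_m\cong\mathrm{Map}(\Gal(K_m/K),K_m)$ is exact), and then invokes the exact computation of $\hat\O_X^+$ on $U\times S$ for $S$ profinite. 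To repair your argument you would need to replace "completed tensor product" by its perfectoidization and prove that this perfectoidization is $\mathrm{Hom}_{\mathrm{cont}}(\Gamma_K^n,\O_{K_\infty})$, which in effect reproduces the paper's route.
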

\begin{proof}
	We first compute $B$. By proposition~\ref{prop:perfect-prism-perfectoid-equiv}, we are interested in the self-product of $\O_{K_\infty}$ in the category of perfectoid $\O_L$-algebras over $\O_K$. To compute this, let $U = \plim\Spa(K_m,\O_{K_m})$ be the element of the pro-\'etale site $X_{proet}$ for $X = \Spa(K,\O_K)$. By \cite[lemma~4.10]{S}, the self-product we are looking for can be computed as $H^0(U^{(n+1)},\hat\O_X^+)$ where $U^{(n+1)} = U\times_X\dots\times_X U$. As $U\rightarrow X$ is Galois with Galois group $\Gamma_K$, we have $U^{(n + 1)} = U\times \Gamma_K^n$ where $\Gamma_K^n$ is viewed in $X_{proet}$ as a profinite set with trivial Galois action (cf. \cite[proof of lemma~5.6]{S}). But then \cite[theorem~4.9]{S} and \cite[lemma~3.16]{S} imply that
	\[H^0(U\times \Gamma_K^n, \hat\O_X^+) = \mathrm{Hom}_{\mathrm{cont}}(\Gamma_K^n, H^0(U,\hat\O_X^+)) = \mathrm{Hom}_{\mathrm{cont}}(\Gamma_K^n, \O_{K_\infty}).\]
	It is easy to verify that tilting and taking $W_L(-)$ commutes with $\mathrm{Hom}_{\mathrm{cont}}(\Gamma_K^n, -)$, giving the first part of the result.

	Since $B/J$ is a perfectoid $\O_L$-algebra, we have $B[\tfrac{1}{J}]^\wedge_{(\pi)} = W_L(B/J[\tfrac{1}{\pi}]^\flat)$. We thus have
	\[B[\tfrac{1}{J}]^\wedge_{(\pi)} = W_L\left(\mathrm{Hom}_{\mathrm{cont}}(\Gamma_K^n, \O_{K_\infty})[\tfrac{1}{\pi}]^\flat\right) = \mathrm{Hom}_{\mathrm{cont}}(\Gamma_K^n, W_L(K_\infty^\flat))\]
	as desired.
\end{proof}

\begin{thm}\label{thm:phi-G-modules-F-crystals}
	Let $K/L$ be a $p$-adic field.
	\begin{enumerate}
		\item[(1)] There are equivalences of categories
		\[\Mod_{\A_K}^{\varphi_q,\Gamma_K, et}\simeq \Mod_{W_L(K_\infty^\flat)}^{\varphi_q,\Gamma_K, et} \simeq \Vect((\O_K)_{\Prism_L}, \O_\Prism[\tfrac{1}{\mc I}]^\wedge_{(\pi)})^{\phi=1} \simeq  \Rep_{\O_L}(G_K)\]
		and similarly for the corresponding derived categories.
		\item[(2)] Let $T\in \Rep_{\O_L}(G_K)$ correspond to $M\in \Mod_{\A_K}^{\varphi_q,\Gamma_K,et}$ or $\Mod_{W_L(K_\infty^\flat)}^{\varphi_q,\Gamma_K,et}$ under the equivalence from (1). Let $C^\bullet_{\mathrm{cont}}(\Gamma_K,M)$ denote the continuous cochain complex of $\Gamma_K$ with values in $M$. Then $R\Gamma(K_{proet}, T)$ is isomorphic to $C^\bullet_{\mathrm{cont}}(\Gamma_K,M)^{\phi = 1}$. 
	\end{enumerate}
\end{thm}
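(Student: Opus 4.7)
The plan is a \v{C}ech-descent argument in the perfect prismatic site $(\O_K)_{\Prism_L}^{\perf}$ along the cover of the final object by $(\Ainf(\O_{K_\infty}), \ker\theta)$ provided by Lemma~\ref{lem:Ainf-cover-of-final-object}. By Corollary~\ref{cor:F-crystal-perfect-site}, the left-hand category in part (1) is the limit $\lim_{(A,I) \in (\O_K)_{\Prism_L}^{\perf}} \Mod_{(A,I)}^{\varphi_q, et}$, and standard descent identifies this limit with descent data for \'etale $\varphi_q$-modules along the \v{C}ech nerve of the above cover. Lemma~\ref{lem:self-product-computation} identifies the terms of this \v{C}ech nerve, expressing the value of $\O_\Prism[\tfrac{1}{\mc I}]^\wedge_{(\pi)}$ on the $(n+1)$-fold self-product as $\mathrm{Hom}_{\mathrm{cont}}(\Gamma_K^n, W_L(K_\infty^\flat))$.

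Combining these ingredients with Proposition~\ref{prop:Ainf-automorphisms}, which identifies $\Aut_{(\O_K)_{\Prism_L}}(\Ainf(\O_{K_\infty}), \ker\theta)$ with $\Gamma_K$, the descent datum on a module $M = \mc M(\Ainf(\O_{K_\infty}), \ker\theta)$ is precisely a continuous semilinear action of $\Gamma_K$ commuting with $\phi$. This yields $\Vect((\O_K)_{\Prism_L}, \O_\Prism[\tfrac{1}{\mc I}]^\wedge_{(\pi)})^{\phi=1} \simeq \Mod_{W_L(K_\infty^\flat)}^{\varphi_q, \Gamma_K, et}$. The equivalence with $\Mod_{\A_K}^{\varphi_q, \Gamma_K, et}$ then follows from Proposition~\ref{prop:perfection-base-change} applied to the prism $(\A_K^+, (q_n(\omega)))$, noting that the base change is $\Gamma_K$-equivariant since by construction the $\Gamma_K$-action on $\A_K$ is the restriction of that on $W_L(K_\infty^\flat)$. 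Finally, the equivalence with $\Rep_{\O_L}(G_K)$ is obtained by composing with the $X = \Spf\O_{K_\infty}$ case of the theorem (giving $\Mod_{W_L(K_\infty^\flat)}^{\varphi_q, et} \simeq \Rep_{\O_L}(G_{K_\infty})$ via Theorem~\ref{thm:F-crystals-local-systems}) and using that, via the short exact sequence $1 \to G_{K_\infty} \to G_K \to \Gamma_K \to 1$, a $G_K$-representation is the same as a $G_{K_\infty}$-representation equipped with a compatible continuous $\Gamma_K$-action.

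For part (2), I would apply the same \v{C}ech-to-cohomology spectral sequence to compute $R\Gamma((\O_K)_{\Prism_L}, \mc M)$ along the cover of Lemma~\ref{lem:Ainf-cover-of-final-object}. Because $\mc M$ is a crystal, its value on the $(n+1)$-fold self-product is the base change of $M$; by Lemma~\ref{lem:self-product-computation} and the previously established vanishing of higher prismatic cohomology over $\O_{K_\infty}$, the resulting cosimplicial object has $n$th term $\mathrm{Hom}_{\mathrm{cont}}(\Gamma_K^n, M)$ and face/degeneracy maps matching those of the continuous bar resolution, so
\[
R\Gamma((\O_K)_{\Prism_L}, \mc M) \cong C^\bullet_{\mathrm{cont}}(\Gamma_K, M).
\]
Taking $\phi = 1$-fixed points and invoking Theorem~\ref{thm:F-crystals-local-systems}(2) for $X = \Spf \O_K$ gives the desired comparison with $R\Gamma(K_{proet}, T)$.

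The main obstacle I expect is verifying the topological and cosimplicial compatibilities invoked above: that the descent datum along the \v{C}ech nerve truly corresponds to a \emph{continuous} semilinear $\Gamma_K$-action in the sense of the definition of $\Mod_{W_L(K_\infty^\flat)}^{\varphi_q, \Gamma_K, et}$, and that the face and degeneracy maps in the \v{C}ech nerve, expressed through the isomorphism of Lemma~\ref{lem:self-product-computation}, agree with those of the standard continuous bar resolution computing $C^\bullet_{\mathrm{cont}}(\Gamma_K, M)$. Both of these are essentially formal but require careful unwinding of the identifications in Lemma~\ref{lem:self-product-computation} and tracking of the role of Proposition~\ref{prop:Ainf-automorphisms}.
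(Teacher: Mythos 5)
Your proposal is correct and follows essentially the same route as the paper: \v{C}ech descent along the cover $(\Ainf(\O_{K_\infty}),\ker\theta)$ of the final object (lemma~\ref{lem:Ainf-cover-of-final-object}), identification of the nerve via lemma~\ref{lem:self-product-computation}, Galois-descent-style recovery of the semilinear $\Gamma_K$-action, and theorem~\ref{thm:F-crystals-local-systems}(2) for the cohomological comparison. The only cosmetic difference is that the paper obtains the equivalence with $\Rep_{\O_L}(G_K)$ directly from theorem~\ref{thm:F-crystals-local-systems}(1) applied to $X = \Spf\O_K$ rather than by descending representations along $G_{K_\infty}\subseteq G_K$, and it handles the $\A_K$-versus-$W_L(K_\infty^\flat)$ issue in part (2) by explicitly citing corollary~\ref{cor:perfection-base-change-phi-invariants}.
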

\begin{proof}[Proof of theorem~\ref{thm:phi-G-modules-F-crystals}]
	The first and last equivalences in the theorem follow from proposition~\ref{prop:perfection-base-change} and theorem~\ref{thm:F-crystals-local-systems}, so we focus on the equivalence $\Mod_{W_L(K_\infty^\flat)}^{\phi,\Gamma_K,et}\simeq \Vect((O_K)_{\Prism_L}, \O_{\Prism}[\tfrac{1}{I}]^\wedge_{(\pi)})^{\phi = 1}$. Since $(W_L(\O_{K_\infty}^\flat),\ker\theta)$ is a cover of the final object $*$ of $\mathrm{Shv}((\O_K)_{\Prism_L})$ by lemma~\ref{lem:Ainf-cover-of-final-object}, we have that
	\[\Vect((\O_K)_{\Prism_L}, \O_\Prism[\tfrac{1}{I}]^\wedge_{(\pi)})^{\phi = 1}\simeq \plim\left(
	\begin{tikzcd}
		\Mod_{(W_L(\O_{K_\infty}^\flat),\ker\theta)}^{\phi,et}\ar[shift left]{r}\ar[shift right]{r} & \Mod_{(W_L(\O_{K_\infty}^\flat),\ker\theta)^{(2)}}^{\phi,et}\ar[shift left=2]{r}\ar{r}\ar[shift right=2]{r} & \dots
	\end{tikzcd}
	\right)\]
	where $(W_L(\O_{K_\infty}^\flat),\ker\theta)^{(2)} := (W_L(\O_{K_\infty}^\flat),\ker\theta)\times_*(W_L(\O_{K_\infty}^\flat),\ker\theta)$ denotes the self product in $(\O_K)_{\Prism_L}^{\perf}$ (here we have used a general fact about recoving a vector bundle from its value on the \v{C}ech nerve of a cover of the final object; see \cite[footnote~10]{BS} or \cite[\secsymb3]{Wu} for more details). By lemma~\ref{lem:self-product-computation} we then get
	\[\Vect((\O_K)_{\Prism_L}, \O_\Prism[\tfrac{1}{I}]^\wedge_{(\pi)})^{\phi = 1}\simeq \plim\left(
	\begin{tikzcd}
		\Mod_{W_L(K_\infty^\flat)}^{\phi,et}\ar[shift left]{r}\ar[shift right]{r} & \Mod_{\mathrm{Hom}_{\mathrm{cont}}(\Gamma_K, W_L(K_\infty^\flat))}^{\phi,et}\ar[shift left=2]{r}\ar{r}\ar[shift right=2]{r} & \dots
	\end{tikzcd}
	\right).\]
	By the same argument as for usual Galois descent, this identifies $\Vect((\O_K)_{\Prism_L}, \O_\Prism[\tfrac{1}{I}]^\wedge_{(\pi)})^{\phi = 1}$ with the category of \'etale $\varphi_q$-modules over $W_L(K_\infty^\flat)$ with a semilinear action of $\Gamma_K$ which also commutes with $\phi$. But this is exactly the definition of the category $\Mod_{W_L(K_\infty^\flat)}^{\phi,\Gamma_K,et}$, giving part (1).

	For part (2), we can again focus on the case $M\in \Mod_{W_L(K_\infty^\flat)}^{\varphi_q,\Gamma_K,et}$ by corollary~\ref{cor:perfection-base-change-phi-invariants}. For $\mc M\in \Vect((\O_K)_{\Prism_L}, \O_\Prism[\tfrac{1}{\mc I}]^\wedge_{(\pi)})^{\phi = 1}$ corresponding to $T$ and $M$, we get by the same computation as above that $R\Gamma((\O_K)_{\Prism_L}, \mc M)\simeq C^\bullet_{\mathrm{cont}}(\Gamma_K, M)$. We then conclude by theorem~\ref{thm:F-crystals-local-systems}(2).
\end{proof}

\bibliographystyle{habbrv}
\bibliography{bibliography}

\begin{thebibliography}{10}
\expandafter\ifx\csname url\endcsname\relax
  \def\url#1{\texttt{#1}}\fi
\expandafter\ifx\csname doi\endcsname\relax
  \def\doi#1{\burlalt{doi:#1}{http://dx.doi.org/#1}}\fi
\expandafter\ifx\csname urlprefix\endcsname\relax\def\urlprefix{URL }\fi
\expandafter\ifx\csname href\endcsname\relax
  \def\href#1#2{#2}\fi
\expandafter\ifx\csname burlalt\endcsname\relax
  \def\burlalt#1#2{\href{#2}{#1}}\fi

\bibitem{BC-arith}
L.~Berger and P.~Colmez.
\newblock Familles de repr\'{e}sentations de de {R}ham et monodromie
  {$p$}-adique.
\newblock {\em Ast\'{e}risque}, (319):303--337, 2008.

\bibitem{Berger-Iwasawa}
L.~Berger and L.~Fourquaux.
\newblock \burlalt{Iwasawa theory and {$F$}-analytic {L}ubin-{T}ate
  {$(\varphi,\Gamma)$}-modules}{https://arxiv.org/abs/1512.03383}, 2015.

\bibitem{Berger-Langlands}
L.~Berger, P.~Schneider, and B.~Xie.
\newblock \burlalt{Rigid character groups, {L}ubin-{T}ate theory, and
  {$(\varphi,\Gamma)$}-modules}{https://arxiv.org/abs/1511.01819}, 2015.

\bibitem{bhattnotes}
B.~Bhatt.
\newblock \burlalt{Lecture notes on prismatic
  cohomology}{http://www-personal.umich.edu/~bhattb/teaching/prismatic-columbia/},
  2018.

\bibitem{BL}
B.~Bhatt and J.~Lurie.
\newblock \burlalt{Absolute prismatic
  cohomology}{https://arxiv.org/abs/2201.06120}, 2022.

\bibitem{BMS}
B.~Bhatt, M.~Morrow, and P.~Scholze.
\newblock \burlalt{Integral {$p$}-adic {H}odge
  theory}{https://arxiv.org/abs/1602.03148}.
\newblock {\em Publ. Math. Inst. Hautes \'{E}tudes Sci.}, 128:219--397, 2018.

\bibitem{BS}
B.~Bhatt and P.~Scholze.
\newblock \burlalt{Prisms and Prismatic
  Cohomology}{https://arxiv.org/abs/1905.08229}, 2019, arXiv:1905.08229.

\bibitem{BScrys}
B.~Bhatt and P.~Scholze.
\newblock \burlalt{Prismatic {$F$}-crystals and crystalline {G}alois
  representations}{https://arxiv.org/abs/2106.14735}, 2021.

\bibitem{BK}
S.~Bloch and K.~Kato.
\newblock {$L$}-functions and {T}amagawa numbers of motives.
\newblock In {\em The {G}rothendieck {F}estschrift, {V}ol. {I}}, volume~86 of
  {\em Progr. Math.}, pages 333--400. Birkh\"{a}user Boston, Boston, MA, 1990.

\bibitem{Borger}
J.~Borger.
\newblock \burlalt{The basic geometry of {W}itt vectors, {I}: {T}he affine
  case}{https://arxiv.org/abs/0801.1691}.
\newblock {\em Algebra and Number Theory}, 5(2):231--285, 2011.

\bibitem{brinnonconrad}
O.~Brinon and B.~Conrad.
\newblock \burlalt{{CMI} summer school notes on {$p$}-adic Hodge
  theory}{https://math.stanford.edu/~conrad/papers/notes.pdf}, 2009.

\bibitem{cesnavicius2023purity}
K.~Cesnavicius and P.~Scholze.
\newblock Purity for flat cohomology, 2023, arXiv:1912.10932.

\bibitem{CC}
F.~Cherbonnier and P.~Colmez.
\newblock Th\'{e}orie d'{I}wasawa des repr\'{e}sentations {$p$}-adiques d'un
  corps local.
\newblock {\em J. Amer. Math. Soc.}, 12(1):241--268, 1999.

\bibitem{Drin}
V.~Drinfeld.
\newblock \burlalt{Prismatization}{https://arxiv.org/abs/2005.04746}, 2020.

\bibitem{F}
J.-M. Fontaine.
\newblock Repr\'{e}sentations {$p$}-adiques des corps locaux. {I}.
\newblock In {\em The {G}rothendieck {F}estschrift, {V}ol. {II}}, volume~87 of
  {\em Progr. Math.}, pages 249--309. Birkh\"{a}user Boston, Boston, MA, 1990.

\bibitem{Fourquaux_2013}
L.~Fourquaux and B.~Xie.
\newblock \burlalt{Triangulable {$\mathcal{O}_F$}-analytic
  {$(\varphi_q,\Gamma)$}-modules of rank 2}{https://arxiv.org/abs/1206.2102}.
\newblock {\em Algebra and Number Theory}, 7(10):2545--2592, dec 2013.

\bibitem{GR}
O.~Gabber and L.~Ramero.
\newblock \burlalt{Almost ring theory}{https://arxiv.org/abs/math/0201175},
  2002.

\bibitem{Guo}
H.~Guo and E.~Reinecke.
\newblock \burlalt{A prismatic approach to crystalline local
  systems}{https://arxiv.org/abs/2203.09490}, 2022.

\bibitem{Haz}
M.~Hazewinkel.
\newblock {\em Formal groups and applications}.
\newblock AMS Chelsea Publishing, Providence, RI, 2012.
\newblock Corrected reprint of the 1978 original.

\bibitem{Ito}
K.~Ito.
\newblock \burlalt{Prismatic $G$-display and descent
  theory}{https://arxiv.org/abs/2303.15814}, 2023, arXiv:2303.15814.

\bibitem{Iwasawa}
K.~Iwasawa.
\newblock \burlalt{{On explicit formulas for the norm residue
  symbol}}{https://doi.org/10.2969/jmsj/02010151}.
\newblock {\em Journal of the Mathematical Society of Japan}, 20(1-2):151 --
  165, 1968.
\newblock \doi{10.2969/jmsj/02010151}.

\bibitem{Kato1993}
K.~Kato.
\newblock {\em Lectures on the approach to Iwasawa theory for Hasse-Weil
  L-functions via BdR. Part I}, pages 50--163.
\newblock Springer Berlin Heidelberg, Berlin, Heidelberg, 1993.
\newblock \doi{10.1007/BFb0084729}.

\bibitem{Kato-GER}
K.~Kato.
\newblock Generalized explicit reciprocity laws.
\newblock volume~1, pages 57--126. 1999.
\newblock Algebraic number theory (Hapcheon/Saga, 1996).

\bibitem{Kato-euler}
K.~Kato.
\newblock {$p$}-adic {H}odge theory and values of zeta functions of modular
  forms.
\newblock Number 295, pages ix, 117--290. 2004.
\newblock Cohomologies $p$-adiques et applications arithm\'{e}tiques. III.

\bibitem{K}
N.~M. Katz.
\newblock {$p$}-adic properties of modular schemes and modular forms.
\newblock In {\em Modular functions of one variable, {III} ({P}roc. {I}nternat.
  {S}ummer {S}chool, {U}niv. {A}ntwerp, {A}ntwerp, 1972)}, pages 69--190.
  Lecture Notes in Mathematics, Vol. 350, 1973.

\bibitem{KL}
K.~S. Kedlaya and R.~Liu.
\newblock \burlalt{Relative {$p$}-adic {H}odge theory:
  Foundations}{https://arxiv.org/abs/1301.0792}, 2013, arXiv:1301.0792.

\bibitem{KL2}
K.~S. Kedlaya and R.~Liu.
\newblock \burlalt{Relative {$p$}-adic {H}odge theory, {II}: Imperfect period
  rings}{https://arxiv.org/abs/1602.06899}, 2016.

\bibitem{KPX}
K.~S. Kedlaya, J.~Pottharst, and L.~Xiao.
\newblock \burlalt{Cohomology of arithmetic families of
  {$(\varphi,\Gamma)$}-modules}{https://arxiv.org/abs/1203.5718}, 2012.

\bibitem{KR}
M.~Kisin and W.~Ren.
\newblock Galois representations and {L}ubin-{T}ate groups.
\newblock {\em Doc. Math.}, 14:441--461, 2009.

\bibitem{Venjacob-Herr}
B.~Kupferer and O.~Venjakob.
\newblock \burlalt{Herr-complexes in the {L}ubin-{T}ate
  setting}{https://arxiv.org/abs/2006.07895}, 2020.

\bibitem{MW}
L.~Mann and A.~Werner.
\newblock \burlalt{Local systems on diamonds and $p$-adic vector
  bundles}{https://arxiv.org/abs/2005.06855}, 2020.

\bibitem{Matthew-desc}
A.~Mathew.
\newblock \burlalt{Faithfully flat descent of almost perfect complexes in rigid
  geometry}{https://arxiv.org/abs/1912.10968}, 2019.

\bibitem{PR}
B.~Perrin-Riou.
\newblock Th\'{e}orie d'{I}wasawa des repr\'{e}sentations {$p$}-adiques sur un
  corps local.
\newblock {\em Invent. Math.}, 115(1):81--161, 1994.
\newblock With an appendix by Jean-Marc Fontaine.

\bibitem{SchnBook}
P.~Schneider.
\newblock {\em Galois Representations and {$(\varphi, \Gamma)$}-Modules}.
\newblock Cambridge Studies in Advanced Mathematics. Cambridge University
  Press, 2017.
\newblock \doi{10.1017/9781316981252}.

\bibitem{Schneider-Coates}
P.~Schneider and O.~Venjakob.
\newblock \burlalt{Coates-{W}iles homomorphisms and {I}wasawa cohomology for
  {L}ubin-{T}ate extensions}{https://arxiv.org/abs/1511.04922}, 2015.

\bibitem{SV20}
P.~Schneider and O.~Venjakob.
\newblock Reciprocity laws for {$(\varphi_L,\Gamma_L)$}-modules over
  {L}ubin-{T}ate extensions.
\newblock 2020.

\bibitem{SV-decomp}
P.~Schneider and O.~Venjakob.
\newblock \burlalt{Compairing categories of {L}ubin-{T}ate
  {$(\varphi,\Gamma)$}-modules}{https://arxiv.org/abs/2301.11617}, 2023.

\bibitem{S}
P.~Scholze.
\newblock \burlalt{{$p$}-adic Hodge theory for rigid-analytic
  varieties}{https://arxiv.org/abs/1205.3463}, 2012.

\bibitem{SchDiamonds}
P.~Scholze.
\newblock \burlalt{Etale cohomology of
  diamonds}{https://arxiv.org/abs/1709.07343}, 2017.

\bibitem{stacks-project}
{The Stacks project authors}.
\newblock \burlalt{The Stacks project}{https://stacks.math.columbia.edu}, 2022.

\bibitem{Wilk}
C.~Wilkerson.
\newblock \burlalt{Lambda-rings, binomial domains, and vector bundles over
  {${\bf C}P(\infty
  )$}}{https://doi-org.ezp-prod1.hul.harvard.edu/10.1080/00927878208822717}.
\newblock {\em Comm. Algebra}, 10(3):311--328, 1982.

\bibitem{W}
J.-P. Wintenberger.
\newblock \burlalt{Le corps des normes de certaines extensions infinies de
  corps locaux;
  applications}{http://www.numdam.org/item?id=ASENS_1983_4_16_1_59_0}.
\newblock {\em Ann. Sci. \'{E}cole Norm. Sup. (4)}, 16(1):59--89, 1983.

\bibitem{Wu}
Z.~Wu.
\newblock \burlalt{{G}alois representations, {$(\varphi,\Gamma)$}-modules and
  prismatic {$F$}-crystals}{https://arxiv.org/abs/2104.12105}, 2021,
  arXiv:2104.12105.

\end{thebibliography}


\end{document}